\DeclareMathAlphabet\gothic{U}{euf}{m}{n}
\def\eqnarray{\stepcounter{equation}\let\@currentlabel=\theequation
\global\@eqnswtrue
\tabskip\@centering\let\\=\@eqncr
$$\halign to \displaywidth\bgroup\hfil\global\@eqcnt\z@
  $\displaystyle\tabskip\z@{##}$&\global\@eqcnt\@ne
  \hfil$\displaystyle{{}##{}}$\hfil
  &\global\@eqcnt\tw@ $\displaystyle{##}$\hfil
  \tabskip\@centering&\llap{##}\tabskip\z@\cr}
\def\endeqnarray{\@@eqncr\egroup
      \global\advance\c@equation\m@ne$$\global\@ignoretrue}
\def\@yeqncr{\@ifnextchar [{\@xeqncr}{\@xeqncr[5pt]}}
\begin{document}
\bibliographystyle{tom}

\newtheorem{lemma}{Lemma}[section]
\newtheorem{thm}[lemma]{Theorem}
\newtheorem{cor}[lemma]{Corollary}
\newtheorem{prop}[lemma]{Proposition}

\theoremstyle{definition}

\newtheorem{remark}[lemma]{Remark}
\newtheorem{exam}[lemma]{Example}
\newtheorem{definition}[lemma]{Definition}

\newcommand{\gota}{\gothic{a}}
\newcommand{\gotb}{\gothic{b}}
\newcommand{\gotc}{\gothic{c}}
\newcommand{\gote}{\gothic{e}}
\newcommand{\gotf}{\gothic{f}}
\newcommand{\gotg}{\gothic{g}}
\newcommand{\gothh}{\gothic{h}}
\newcommand{\gotk}{\gothic{k}}
\newcommand{\gotm}{\gothic{m}}
\newcommand{\gotn}{\gothic{n}}
\newcommand{\gotp}{\gothic{p}}
\newcommand{\gotq}{\gothic{q}}
\newcommand{\gotr}{\gothic{r}}
\newcommand{\gots}{\gothic{s}}
\newcommand{\gott}{\gothic{t}}
\newcommand{\gotu}{\gothic{u}}
\newcommand{\gotv}{\gothic{v}}
\newcommand{\gotw}{\gothic{w}}
\newcommand{\gotz}{\gothic{z}}
\newcommand{\gotA}{\gothic{A}}
\newcommand{\gotB}{\gothic{B}}
\newcommand{\gotG}{\gothic{G}}
\newcommand{\gotL}{\gothic{L}}
\newcommand{\gotS}{\gothic{S}}
\newcommand{\gotT}{\gothic{T}}

\newcounter{teller}
\renewcommand{\theteller}{(\alph{teller})}
\newenvironment{tabel}{\begin{list}%
{\rm  (\alph{teller})\hfill}{\usecounter{teller} \leftmargin=1.1cm
\labelwidth=1.1cm \labelsep=0cm \parsep=0cm}
                      }{\end{list}}

\newcounter{tellerr}
\renewcommand{\thetellerr}{(\roman{tellerr})}
\newenvironment{tabeleq}{\begin{list}%
{\rm  (\roman{tellerr})\hfill}{\usecounter{tellerr} \leftmargin=1.1cm
\labelwidth=1.1cm \labelsep=0cm \parsep=0cm}
                         }{\end{list}}

\newcounter{tellerrr}
\renewcommand{\thetellerrr}{(\Roman{tellerrr})}
\newenvironment{tabelR}{\begin{list}%
{\rm  (\Roman{tellerrr})\hfill}{\usecounter{tellerrr} \leftmargin=1.1cm
\labelwidth=1.1cm \labelsep=0cm \parsep=0cm}
                         }{\end{list}}

\newcounter{proofstep}
\newcommand{\nextstep}{\refstepcounter{proofstep}\vertspace \par 
          \noindent{\bf Step \theproofstep} \hspace{5pt}}
\newcommand{\firststep}{\setcounter{proofstep}{0}\nextstep}

\newcommand{\Ni}{\mathds{N}}
\newcommand{\Qi}{\mathds{Q}}
\newcommand{\Ri}{\mathds{R}}
\newcommand{\Ci}{\mathds{C}}
\newcommand{\Ti}{\mathds{T}}
\newcommand{\Zi}{\mathds{Z}}
\newcommand{\Fi}{\mathds{F}}
\newcommand{\Ki}{\mathds{K}}
\newcommand{\Di}{\mathds{D}}

\renewcommand{\proofname}{{\bf Proof}}

\newcommand{\vertspace}{\vskip10.0pt plus 4.0pt minus 6.0pt}

\newcommand{\simh}{{\stackrel{{\rm cap}}{\sim}}}
\newcommand{\ad}{{\mathop{\rm ad}}}
\newcommand{\Ad}{{\mathop{\rm Ad}}}
\newcommand{\alg}{{\mathop{\rm alg}}}
\newcommand{\clalg}{{\mathop{\overline{\rm alg}}}}
\newcommand{\Aut}{\mathop{\rm Aut}}
\newcommand{\arccot}{\mathop{\rm arccot}}
\newcommand{\capp}{\mathop{\rm cap}}
\newcommand{\rcapp}{\mathop{\rm rcap}}
\newcommand{\diam}{\mathop{\rm diam}}
\newcommand{\divv}{\mathop{\rm div}}
\newcommand{\dom}{\mathop{\rm dom}}
\newcommand{\graph}{\mathop{\rm graph}}
\newcommand{\codim}{\mathop{\rm codim}}
\newcommand{\RRe}{\mathop{\rm Re}}
\newcommand{\IIm}{\mathop{\rm Im}}
\newcommand{\tr}{{\mathop{\rm tr \,}}}
\newcommand{\Tr}{{\mathop{\rm Tr \,}}}
\newcommand{\Vol}{{\mathop{\rm Vol}}}
\newcommand{\card}{{\mathop{\rm card}}}
\newcommand{\rank}{\mathop{\rm rank}}
\newcommand{\supp}{\mathop{\rm supp}}
\newcommand{\sgn}{\mathop{\rm sgn}}
\newcommand{\essinf}{\mathop{\rm ess\,inf}}
\newcommand{\esssup}{\mathop{\rm ess\,sup}}
\newcommand{\Int}{\mathop{\rm Int}}
\newcommand{\lcm}{\mathop{\rm lcm}}
\newcommand{\loc}{{\rm loc}}
\newcommand{\HS}{{\rm HS}}
\newcommand{\Trn}{{\rm Tr}}
\newcommand{\n}{{\rm N}}
\newcommand{\WOT}{{\rm WOT}}

\newcommand{\at}{@}

\newcommand{\mod}{\mathop{\rm mod}}
\newcommand{\spann}{\mathop{\rm span}}
\newcommand{\one}{\mathds{1}}

\hyphenation{groups}
\hyphenation{unitary}

\newcommand{\tfrac}[2]{{\textstyle \frac{#1}{#2}}}

\newcommand{\ca}{{\cal A}}
\newcommand{\cb}{{\cal B}}
\newcommand{\cc}{{\cal C}}
\newcommand{\cd}{{\cal D}}
\newcommand{\ce}{{\cal E}}
\newcommand{\cf}{{\cal F}}
\newcommand{\ch}{{\cal H}}
\newcommand{\chs}{{\cal HS}}
\newcommand{\ci}{{\cal I}}
\newcommand{\ck}{{\cal K}}
\newcommand{\cl}{{\cal L}}
\newcommand{\cm}{{\cal M}}
\newcommand{\cn}{{\cal N}}
\newcommand{\co}{{\cal O}}
\newcommand{\cp}{{\cal P}}
\newcommand{\cs}{{\cal S}}
\newcommand{\ct}{{\cal T}}
\newcommand{\cx}{{\cal X}}
\newcommand{\cy}{{\cal Y}}
\newcommand{\cz}{{\cal Z}}

\newlength{\hightcharacter}
\newlength{\widthcharacter}
\newcommand{\covsup}[1]{\settowidth{\widthcharacter}{$#1$}\addtolength{\widthcharacter}{-0.15em}\settoheight{\hightcharacter}{$#1$}\addtolength{\hightcharacter}{0.1ex}#1\raisebox{\hightcharacter}[0pt][0pt]{\makebox[0pt]{\hspace{-\widthcharacter}$\scriptstyle\circ$}}}
\newcommand{\cov}[1]{\settowidth{\widthcharacter}{$#1$}\addtolength{\widthcharacter}{-0.3pt}\settoheight{\hightcharacter}{$#1$}\addtolength{\hightcharacter}{0.1ex}#1\raisebox{\hightcharacter}{\makebox[0pt]{\hspace{-\widthcharacter}$\scriptstyle\circ$}}}
\newcommand{\scov}[1]{\settowidth{\widthcharacter}{$#1$}\addtolength{\widthcharacter}{-0.15em}\settoheight{\hightcharacter}{$#1$}\addtolength{\hightcharacter}{0.1ex}#1\raisebox{0.7\hightcharacter}{\makebox[0pt]{\hspace{-\widthcharacter}$\scriptstyle\circ$}}}

\thispagestyle{empty}

\vspace*{1cm}
\begin{center}
{\Large\bf The Perron solution for elliptic equations \\[10pt]
without the maximum principle} \\[4mm]

\large W. Arendt$^1$, A.F.M. ter Elst$^2$ and M. Sauter$^1$

\end{center}

\vspace{4mm}

\begin{center}
{\bf Abstract}
\end{center}

\begin{list}{}{\leftmargin=1.8cm \rightmargin=1.8cm \listparindent=10mm 
   \parsep=0pt}
\item
In this article we consider the Dirichlet problem 
on a bounded domain $\Omega \subset \Ri^d$ with respect to a 
second-order elliptic differential operator in divergence form.
We do not assume a divergence condition as in the 
pioneering work by Stampacchia, but merely assume that $0$ is not 
a Dirichlet eigenvalue.
The purpose of this article is to define and investigate a solution 
of the Dirichlet problem, which we call Perron solution, 
in a setting where no maximum principle is available.
We characterise this solution in different ways: by approximating the domain
by smooth domains from the interior, by variational properties, 
by the pointwise boundary behaviour at regular boundary points and 
by using the approximative trace.
We also investigate for which boundary data the Perron solution 
has finite energy.
Finally we show that the Perron solution is obtained as an 
$H^1_0$-perturbation of a continuous function on~$\overline \Omega$.
This is new even for the Laplacian and solves an open problem.
\end{list}

\vspace{6mm}
\noindent
May 2023.

\vspace{3mm}
\noindent
MSC (2020): 31B25, 35J25, 46E35, 31C25.

\vspace{3mm}
\noindent
Keywords: Dirichlet problem, Perron solution, approximative trace, regular points,
domain approximation.

\vspace{6mm}

\noindent
{\bf Home institutions:}    \\[3mm]
\begin{tabular}{@{}cl@{\hspace{10mm}}cl}
1. & Institute of Applied Analysis & 
2. & Department of Mathematics  \\
& Ulm University & 
  & University of Auckland  \\
& Helmholtzstr.\ 18  &
  & Private bag 92019 \\
& 89081 Ulm  &
   & Auckland 1142 \\ 
& Germany  &
  & New Zealand \\[8mm]
\end{tabular}

\newpage

\section{Introduction} \label{Sdat1}

Let $\Omega\subset\Ri^d$ be an open and bounded set with boundary~$\Gamma$.
The Dirichlet problem for the Laplacian asks to find a function 
$u$ such that
\[
\Delta u = 0 
\quad \mbox{and} \quad 
u|_\Gamma = \varphi
\]
for a given $\varphi\in C(\Gamma)$. 
This is a classical and well-studied problem. 
The domain $\Omega$ is called {\bf Wiener regular} if for all 
$\varphi \in C(\Gamma)$ there exists a classical solution 
$u \in C(\overline{\Omega}) \cap C^2(\Omega)$.
While this is a weak regularity condition that can be elegantly characterised, 
it is not satisfied for arbitrary~$\Omega$. 

A generalised solution for the Dirichlet problem was introduced by 
Perron~\cite{Perron} 
and further developed in particular by Wiener~\cite{Wiener2},
\cite{Wiener}, \cite{Wiener3}, Brelot~\cite{Brelot} and 
Keldy\v{s} \cite{Keldys2}, \cite{Keldys}. 
For this Perron solution several different but equivalent descriptions 
were obtained.
Given $\varphi$, the Perron solution is unique and, if possible, 
classical, but in general the boundary condition $u|_\Gamma=\varphi$ 
will not be satisfied at every boundary point.
Perron's construction uses a supremum of subsolutions and infimum of 
supersolutions, and critically depends on the maximum principle 
(see \cite{GT} Section~2.8).

In this paper we define and study a generalised solution for the Dirichlet 
problem with continuous boundary data $\varphi$ on an arbitrary bounded, 
open $\Omega$ and for a general second-order elliptic operator with 
lower-order coefficients. 
We only assume a straightforward spectral condition. 
As the maximum principle does not need to hold for such an operator, 
Perron's construction cannot be used.
In~\cite{AE9} the authors considered the aforementioned operators, 
but only on Wiener regular domains, where it was possible to obtain a
classical solution, that is a harmonic $u \in C(\overline \Omega)$
with $u|_\Gamma = \varphi$.

We give four equivalent descriptions of our generalised solution. 
Since these generalise different characterisations of the Perron solution 
in a more classical setting, it is justified to continue to use the 
name Perron solution. 
Our results extend work by Stampacchia~\cite{Stam2}, 
since we are not dependent on a maximum principle and therefore do not 
need a divergence condition on the lower-order coefficients. 
The four equivalent descriptions are based on the following 
concepts, for simplicity expressed here for the Laplacian:
\begin{enumerate}
\item 
A continuous solution operator $T \colon C(\Gamma) \to C_b(\Omega)$ via $H^1$-extensions and an $L_\infty$-estimate.
\item 
Domain approximation from the interior.
\item 
Proper limit behaviour at regular boundary points. 
\item 
Existence of a function $\Phi \in C(\overline{\Omega})$ 
with $\Phi|_\Gamma=\varphi$ 
such that $\Delta \Phi \in (H^1_0(\Omega))'$. 
Then the Perron solution is the unique harmonic function $u$ on $\Omega$
such that $u - \Phi \in H^1_0(\Omega)$.
\end{enumerate}
The fourth description is completely new, 
even in the special case of the Laplacian, where it solves an open problem. 

Moreover, we characterise those $\varphi$ for which one obtains solutions 
with finite energy, i.e.\ when $u \in H^1(\Omega)$, which turns out 
to be independent of the chosen elliptic operator.
Finally, we investigate under which conditions it suffices to consider 
a trace in $H^1(\Omega)$ with respect to the $(d-1)$-dimensional 
Hausdorff measure on $\Gamma$ in order to uniquely determine solutions.

In order to precisely state the main theorems of this paper, we have to introduce some notation,
which we keep throughout the paper.
Let $\Omega \subset \Ri^d$ be open and bounded with boundary $\Gamma = \partial \Omega$ and $d \geq 2$.
For all $k,l \in \{ 1,\ldots,d \} $ let $a_{kl} \colon \Omega \to \Ri$ 
be a bounded measurable function and suppose that there exists a
$\mu \in (0,1]$ such that 
\[
\RRe \sum_{k,l=1}^d a_{kl}(x) \, \xi_k \, \overline{\xi_l} 
\geq \mu \, |\xi|^2
\]
for all $x \in \Omega$ and $\xi \in \Ci^d$.
Further, for all $k \in \{ 1,\ldots,d \} $ let $b_k,c_0 \colon \Omega \to \Ci$
and $c_k \colon \Omega \to \Ri$
be bounded and measurable.
Define the map $\ca \colon H^1_\loc(\Omega) \to \cd'(\Omega)$ by
\[
\langle \ca u,v \rangle_{\cd'(\Omega) \times \cd(\Omega)}
= \sum_{k,l=1}^d \int_\Omega a_{kl} \, (\partial_l u) \, \overline{\partial_k v}
   + \sum_{k=1}^d \int_\Omega b_k \, u \, \overline{\partial_k v}
   + \sum_{k=1}^d \int_\Omega c_k \, (\partial_k u) \, \overline v
   + \int_\Omega c_0 \, u \, \overline v
\]
for all $u \in H^1_\loc(\Omega)$ and $v \in C_c^\infty(\Omega)$.
Here we denote by $\cd'(\Omega)$ the antidual of 
$\cd(\Omega) = C_c^\infty(\Omega)$, the space of all test functions.
A function $u \colon \Omega \to \Ci$ is called {\bf $\ca$-harmonic}
if $u \in H^1_\loc(\Omega)$ and $\ca u = 0$.
Throughout this paper we assume that {\bf $0$ is not a Dirichlet eigenvalue,
that is, if $u \in H^1_0(\Omega)$ is such that $\ca u = 0$, then $u = 0$.}
This completes the list of conditions that we assume throughout the paper.
Recall that the set of all Dirichlet eigenvalues is discrete.
Stampacchia's divergence condition implies that all Dirichlet eigenvalues
lie in the open right half-plane, whereas under our assumption there may be 
Dirichlet eigenvalues in the left half-plane.
Obviously the latter does occur for certain choices of the coefficients. 
Moreover, the above assumption is natural and in some sense best 
possible since otherwise solutions can be perturbed with a non-trivial 
$\ca$-harmonic element of $H^1_0(\Omega)$ and there is no hope for 
unique solvability.

Given $\varphi \colon \Gamma \to \Ci$ we study the Dirichlet problem
\[
(D_\varphi) 
\quad
\mbox{Find an $\ca$-harmonic function $u$ such that ``$u|_\Gamma = \varphi$'',}
\]
where we will give different meanings to ``$u|_\Gamma = \varphi$''.
The investigation of how solutions may attain the given boundary data
$\varphi$ is the purpose of this paper.

Given a function $\varphi \in C(\Gamma)$,
a {\bf classical solution of the Dirichlet problem $(D_\varphi)$}
is an $\ca$-harmonic $u \in C(\overline \Omega) \cap H^1_\loc(\Omega)$
such that $u|_\Gamma = \varphi$.
We emphasise that a classical solution is defined on $\overline \Omega$.

Define 
\[
\ch_\ca^1(\Omega)
= \{ u \in H^1(\Omega) : \ca u = 0 \}
,  \]
the space of all $\ca$-harmonic functions in $H^1(\Omega)$.
Note that $\ch^1_\ca(\Omega)$ is a closed subspace in $H^1(\Omega)$.
Then 
\begin{equation}
H^1(\Omega) 
= H^1_0(\Omega) \oplus \ch_\ca^1(\Omega)
,
\label{eSdat1;20}
\end{equation}
see Lemma~\ref{ldat260}\ref{ldat260-2}.
As in Stampacchia \cite{Stam2} we use (\ref{eSdat1;20}) to define 
a weak solution.
The difficulty in this paper is to show an $L_\infty$-estimate
for this solution without Stampacchia's divergence condition
\cite{Stam2}~(9.2).

The first main theorem of this paper is as follows.

\begin{thm} \label{tdat120}
There exists a unique bounded linear map $T \colon C(\Gamma) \to C_b(\Omega)$
such that the function $T \varphi$ is $\ca$-harmonic for all $\varphi \in C(\Gamma)$
and $\Phi|_\Omega - T(\Phi|_\Gamma) \in H^1_0(\Omega)$ for all 
$\Phi \in C(\overline \Omega) \cap H^1(\Omega)$.
\end{thm}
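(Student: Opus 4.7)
The plan is to use the decomposition~(\ref{eSdat1;20}) to define a candidate for $T$ on a dense subspace of $C(\Gamma)$, to establish an $L_\infty$-estimate that makes this candidate uniformly continuous, and then to extend by density.

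For $\Phi \in C(\overline \Omega) \cap H^1(\Omega)$ I would split $\Phi = \Phi_0 + v$ via~(\ref{eSdat1;20}) with $\Phi_0 \in H^1_0(\Omega)$ and $v \in \ch_\ca^1(\Omega)$, and tentatively set $T_0(\Phi|_\Gamma) := v$. That this depends only on $\Phi|_\Gamma$ rests on the standard fact that an element of $C(\overline\Omega) \cap H^1(\Omega)$ vanishing on $\Gamma$ lies in $H^1_0(\Omega)$, and the De Giorgi--Nash--Moser theorem places $v$ in $C(\Omega)$. Tietze--Urysohn followed by mollification shows that $\{\Phi|_\Gamma : \Phi \in C^\infty(\overline\Omega)\}$ is uniformly dense in $C(\Gamma)$, so the domain of $T_0$ is dense.

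The hard part, and the genuine obstacle of the theorem, is the $L_\infty$-estimate
\[
\|T_0(\Phi|_\Gamma)\|_{L_\infty(\Omega)} \leq C \, \|\Phi|_\Gamma\|_{C(\Gamma)},
\qquad \Phi \in C(\overline\Omega) \cap H^1(\Omega),
\]
with $C$ independent of~$\Phi$. I would attack it in two stages. First, a Stampacchia / De Giorgi truncation argument: without loss of generality $\|\Phi\|_\infty = \|\Phi|_\Gamma\|_\infty$ (replace $\Phi$ by its pointwise truncation at $\pm\|\Phi|_\Gamma\|_\infty$, which lies in $C(\overline\Omega) \cap H^1(\Omega)$ by the lattice property and leaves $T_0\Phi$ unchanged); then for $k \geq \|\Phi|_\Gamma\|_\infty$ the function $(v-k)_+$ belongs to $H^1_0(\Omega)$ and is admissible in $\ca v = 0$. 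Absorbing the lower-order terms, which in the absence of the divergence condition contribute $L_2$-remainders, via Cauchy--Schwarz and Young's inequality, and iterating over~$k$ in the standard way, yields an intermediate bound of the form
\[
\|v\|_{L_\infty(\Omega)} \leq C_1 \bigl( \|\Phi|_\Gamma\|_{C(\Gamma)} + \|v\|_{L_2(\Omega)} \bigr).
\]
To remove the $L_2$-term---the point at which Stampacchia's divergence condition would ordinarily enter through the maximum principle---I would argue by contradiction and compactness. Assume $\Phi_n$ with $\|\Phi_n|_\Gamma\|_\infty \leq 1$ but $\|v_n\|_{L_2} \to \infty$, normalise $w_n := v_n/\|v_n\|_{L_2}$, apply the intermediate bound to $w_n$ to get $\|w_n\|_{L_\infty} \leq 2C_1$, and extract (via Caccioppoli for $H^1_\loc$-bounds, local Rellich, and dominated convergence using the uniform $L_\infty$-bound and $|\Omega|<\infty$) a subsequential $L_2(\Omega)$-limit $w$ with $\|w\|_{L_2}=1$ and $\ca w = 0$ in $\cd'(\Omega)$. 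The genuinely delicate step is to show $w \in H^1_0(\Omega)$: the $w_n$ are not a priori uniformly $H^1(\Omega)$-bounded, so a weak-$H^1$ limit argument is not directly available. I would pursue this by writing $\ca = L_0 + V$ with $L_0$ the principal part (which satisfies a maximum principle) and $V$ the lower-order part, recasting $v = T_{0,L_0}\Phi - L_0^{-1}V v$ as a Fredholm-type equation $(I + L_0^{-1}V)v = T_{0,L_0}\Phi$ with $L_0^{-1}V$ compact on a suitable space and $I + L_0^{-1}V$ injective on $H^1_0(\Omega)$ by the spectral hypothesis; Fredholm invertibility then supplies both the $H^1_0$-membership of $w$ (via a similar passage-to-limit at the level of $w_n$) and, in fact, the desired estimate directly. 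Once $w \in H^1_0(\Omega)$, the spectral hypothesis forces $w=0$, contradicting $\|w\|_{L_2}=1$.

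With the $L_\infty$-estimate in hand, $T_0$ is uniformly continuous on the dense subspace and extends uniquely to a bounded linear map $T \colon C(\Gamma) \to C_b(\Omega)$; uniform limits of $\ca$-harmonic functions are $\ca$-harmonic (passing to the limit in the weak formulation, aided by interior $H^1_\loc$-estimates), so $T\varphi$ is $\ca$-harmonic for every $\varphi \in C(\Gamma)$. Uniqueness is then immediate: for a competing $T'$ and $\Phi \in C(\overline\Omega) \cap H^1(\Omega)$, both $\Phi - T(\Phi|_\Gamma)$ and $\Phi - T'(\Phi|_\Gamma)$ lie in $H^1_0(\Omega)$, hence so does their difference $T(\Phi|_\Gamma) - T'(\Phi|_\Gamma)$, which is $\ca$-harmonic and vanishes by the spectral hypothesis; density and continuity then give $T = T'$.
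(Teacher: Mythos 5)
Your high-level plan --- decompose $\Phi = \Phi_0 + v \in H^1_0(\Omega) \oplus \ch_\ca^1(\Omega)$, set $T_0(\Phi|_\Gamma) := v$, prove a uniform $L_\infty$-estimate, and extend by density --- is exactly the paper's strategy, and your well-definedness, density and uniqueness arguments are in order. The genuine gap is in the removal of the $\|v\|_{L_2(\Omega)}$-term from your intermediate bound. The compactness/Fredholm sketch does not close it. If you run the equation $(I + L_0^{-1}V)v = T_{0,L_0}\Phi$ on $H^1(\Omega)$, where $L_0^{-1}V$ is compact, Fredholm invertibility gives $\|v\|_{H^1(\Omega)} \le C\,\|T_{0,L_0}\Phi\|_{H^1(\Omega)}$; but $\|T_{0,L_0}\Phi\|_{H^1(\Omega)}$ is \emph{not} controlled by $\|\Phi|_\Gamma\|_{C(\Gamma)}$ (Hadamard's example already for the Laplacian), so this yields no $L_\infty$-estimate. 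If instead you try to work on $L_2(\Omega)$ or $C_b(\Omega)$, the convection term $\sum_k c_k\,\partial_k v$ inside $V$ does not map these spaces into $H^{-1}(\Omega)$, so $L_0^{-1}V$ is not even defined there, and your blow-up argument has no topology in which both $\|w_n\|$ stays controlled and $H^1_0(\Omega)$-membership passes to the limit. This is precisely the difficulty the theorem is built around: $c_k\in L_\infty(\Omega)$ only, with no divergence condition, so the $c_k\partial_k$-terms can neither be integrated by parts nor treated as a compact perturbation of a coercive principal part on an $L_\infty$-based space.

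The paper's resolution is to keep the $c_k\partial_k$-terms inside the coercive form $\gotb_\lambda$ with $\lambda$ large, where Stampacchia's Th\'eor\`eme~4.2 already supplies the $L_\infty$-bound (Proposition~\ref{pdat263}), so that only the $b_k$- and $c_0$-terms are treated as a perturbation. The associated solution operator $R_\lambda$ of Lemma~\ref{ldat207} raises $L_p$-integrability by a fixed increment $1/(4d)$, and a finite iteration ($2d$ steps, Proposition~\ref{pdat214}) pushes $L_2$-data up to $L_\infty$, so that $(A^D)^{-1}$ is bounded on $L_\infty(\Omega)$ with norm controlled by $\|(A^D)^{-1}\|_{L_2\to L_2}+1$ (Corollary~\ref{cdat211}). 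Finally $\lambda$ is removed via the identity $u = \tilde u + \lambda\,(A^D)^{-1}\tilde u$ in Proposition~\ref{pdat269}, where the spectral hypothesis enters only through the finiteness of $\|(A^D)^{-1}\|_{L_2\to L_2}$. This gives the \emph{explicit} constant in~(\ref{eldat821;20}), depending only on the ellipticity constant, the $L_\infty$-norms of the coefficients and $\diam\Omega$ besides $\|(A^D)^{-1}\|_{L_2\to L_2}$ --- a dependence that is essential for the domain approximation in Theorem~\ref{tdat825}\ref{tdat825-3} and would not come out of a compactness contradiction even if the latter could be made to work.
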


We prove Theorem~\ref{tdat120} as a special case of Theorem~\ref{tdat235}.
We write $T^\Omega$ instead of $T$ if we want to make precise the 
underlying domain.
Given $\varphi \in C(\Gamma)$ we call $T \varphi$ the 
{\bf Perron solution of the Dirichlet problem $(D_\varphi)$}.
Note that $T \varphi$ is a function on $\Omega$, not~on~$\overline \Omega$.

Next we describe several properties of the operator $T$ which justify our terminology.
If $\Omega_1,\Omega_2,\ldots \subset \Ri^d$ are open and bounded, then 
we write $\Omega_n \uparrow \Omega$ if $\Omega_n \subset \Omega$ for all $n \in \Ni$
and for all compact $K \subset \Omega$ there exists an $N \in \Ni$ such that 
$K \subset \Omega_n$ for all $n \in \Ni$ with $n \geq N$.

\begin{thm} \label{tdat825}
\begin{tabel}
\item \label{tdat825-3}
Let $\Phi \in C(\overline \Omega)$ and $\Omega_1,\Omega_2,\ldots \subset \Ri^d$ 
be open and bounded with $\Omega_n \uparrow \Omega$.
Then 
\[
\lim_{n \to \infty} T^{\Omega_n} (\Phi|_{\partial \Omega_n}) = T(\Phi|_\Gamma)
\]
uniformly on compact subsets of $\Omega$.
\item \label{tdat825-4}
Let $u \in C(\overline \Omega) \cap H^1_\loc(\Omega)$ and suppose that $\ca u = 0$.
Then $T(u|_\Gamma) = u|_\Omega$.
\end{tabel}
\end{thm}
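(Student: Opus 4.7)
The plan is to prove part~\ref{tdat825-3} first and to obtain part~\ref{tdat825-4} as a short corollary. The strategy for~\ref{tdat825-3} is compactness-and-identification: produce a locally uniformly convergent subsequence of $u_n := T^{\Omega_n}(\Phi|_{\partial\Omega_n})$, identify its limit as $T(\Phi|_\Gamma)$ using the characterisation in Theorem~\ref{tdat120}, and conclude by uniqueness that the whole sequence converges.

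Concretely, I would first reduce to the dense subclass $\Phi \in C(\overline\Omega) \cap H^1(\Omega)$ by an $\varepsilon/3$-argument, using the boundedness of $T$ together with a uniform-in-$n$ operator-norm bound for $T^{\Omega_n}$ inherited from the $L_\infty$-estimate behind Theorem~\ref{tdat120}. For such $\Phi$, define $\tilde u_n \in H^1(\Omega)$ by setting $\tilde u_n = u_n$ on $\Omega_n$ and $\tilde u_n = \Phi$ on $\Omega \setminus \Omega_n$. The defining property of $T^{\Omega_n}$ gives $\Phi|_{\Omega_n} - u_n \in H^1_0(\Omega_n)$, so zero-extension places $\tilde u_n$ in the affine space $\Phi + H^1_0(\Omega)$. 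A uniform energy estimate (via Lax--Milgram/Fredholm for $\ca : H^1_0(\Omega_n) \to H^{-1}(\Omega_n)$, uniformly invertible in $n$ by the spectral assumption) yields $\|\tilde u_n\|_{H^1(\Omega)} \leq C \|\Phi\|_{H^1(\Omega)}$. Extracting a weak $H^1$-limit $\tilde u_{n_k} \rightharpoonup w$ --- which lies in $\Phi + H^1_0(\Omega)$ since this affine set is weakly closed --- and testing $\ca u_n = 0$ against any $\eta \in C_c^\infty(\Omega)$ (whose support eventually sits inside $\Omega_n$) gives $\ca w = 0$ in the weak limit. The splitting~(\ref{eSdat1;20}) together with the defining property of $T$ then forces $w = T(\Phi|_\Gamma)$, and uniqueness of the weak limit promotes this to convergence of the full sequence. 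To upgrade to locally uniform convergence on $\Omega$, I would use the uniform local $L_\infty$-bound on $u_n$ together with interior De Giorgi-Nash-Moser estimates for uniform local H\"older continuity, and apply Arzel\`a-Ascoli.

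For part~\ref{tdat825-4}, fix an exhaustion $\Omega_n \uparrow \Omega$ with $\overline{\Omega_n} \subset \Omega$. Since $u \in H^1_\loc(\Omega)$, the restriction $u|_{\overline{\Omega_n}}$ lies in $C(\overline{\Omega_n}) \cap H^1(\Omega_n)$ and is $\ca$-harmonic on $\Omega_n$. Applying the defining property of $T^{\Omega_n}$ with $\Phi := u|_{\overline{\Omega_n}}$ yields $u|_{\Omega_n} - T^{\Omega_n}(u|_{\partial\Omega_n}) \in H^1_0(\Omega_n)$, and as both sides are $\ca$-harmonic on $\Omega_n$, the spectral assumption on $\Omega_n$ forces the equality $T^{\Omega_n}(u|_{\partial\Omega_n}) = u|_{\Omega_n}$. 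Applying~\ref{tdat825-3} to $\Phi = u$ and letting $n \to \infty$ then yields $T(u|_\Gamma) = \lim_n u|_{\Omega_n} = u|_\Omega$ uniformly on compact subsets of $\Omega$.

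The main obstacle is uniform control on the family $\{T^{\Omega_n}\}$ as $\Omega_n \uparrow \Omega$: without a maximum principle at hand, both the uniform $L_\infty$-bound and the uniform coercivity estimate are delicate and rest on the technical core of Theorem~\ref{tdat120} (in practice, on Theorem~\ref{tdat235}). A secondary technicality is that $0$ being a non-Dirichlet-eigenvalue of each $\Omega_n$ does not follow formally from the hypothesis on $\Omega$, and is most conveniently arranged by choosing the approximating sequence suitably.
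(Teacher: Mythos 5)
Your proposal is correct and takes essentially the same route as the paper, which proves part~\ref{tdat825-3} in Proposition~\ref{pdat807} (density reduction, uniform estimates over the $\Omega_n$, compactness plus identification of the limit via the defining property of $T$, interior H\"older estimates to upgrade to locally uniform convergence) and part~\ref{tdat825-4} in Proposition~\ref{pdat808} by the same ``apply~\ref{tdat825-3} to the classical solution'' argument.

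One point of emphasis that you should not slide past: your parenthetical claim that $\ca\colon H^1_0(\Omega_n)\to H^{-1}(\Omega_n)$ is ``uniformly invertible in $n$ by the spectral assumption'' is exactly the nontrivial technical core. The spectral assumption on $\Omega$ only gives invertibility for $\Omega$ itself, not uniformity along an arbitrary interior exhaustion. The paper isolates this in Lemma~\ref{ldat821}: a compactness--contradiction argument shows first that $0\notin\sigma(A_n^D)$ eventually, then that $\sup_n\|(A_n^D)^{-1}\|_{L_2\to L_2}<\infty$, and then converts this to convergence of the solutions $v_n$ by passing through the coercive shifted problem $\ca_{\lambda_0}$ and the resolvent identity. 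You acknowledge this at the end as an obstacle, which is good, but the body of the proposal reads as though it were automatic; it is not, and any write-up must make that argument explicitly. The remaining differences are harmless and arguably slightly cleaner: the paper reduces to $\Phi=\widetilde\Phi|_{\overline\Omega}$ with $\widetilde\Phi\in C_c^\infty(\Ri^d)$ so that the right-hand side $\ca\Phi$ has $L_\infty$ coefficients and the interior H\"older estimate (Lemma~\ref{ldat820}, which requires $L_p$-data with $p>d$) can be applied to the correctors $v_n=\Phi|_{\Omega_n}-u_n$; your choice to estimate the harmonic functions $u_n$ directly via interior De Giorgi--Nash--Moser, and to work in the larger density class $C(\overline\Omega)\cap H^1(\Omega)$, sidesteps this and is a valid variant.
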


This theorem is proved in Propositions~\ref{pdat807} and \ref{pdat808}.
Theorem~\ref{tdat825} allows us to give an alternative description of~$T$.
First suppose that $\Omega$ is Wiener regular.
Then by \cite{AE9} Theorem~1.1 (or Corollary~\ref{cdat320} in this paper)
for every $\varphi \in C(\Gamma)$ there exists a unique classical solution
$u \in C(\overline \Omega) \cap H^1_\loc(\Omega)$ of the Dirichlet problem $(D_\varphi)$.
Theorem~\ref{tdat825}\ref{tdat825-4} shows that $T \varphi = u|_\Omega$.
In particular, classical solutions are unique, which is 
not obvious without the maximum principle.
Now suppose that $\Omega$ is arbitrary.
Then we may choose Wiener regular
(or even $C^\infty$-domains) $\Omega_1,\Omega_2,\ldots$ such that $\Omega_n \uparrow \Omega$.
Given $\varphi \in C(\Gamma)$, take as $\Phi \in C(\overline \Omega)$
an extension of $\varphi$ and define $\varphi_n = \Phi|_{\partial \Omega_n}$
for all $n \in \Ni$.
Then Theorem~\ref{tdat825}\ref{tdat825-3} shows that 
\[
T \varphi 
= \lim_{n \to \infty} T^{\Omega_n} \varphi_n
\]
uniformly on compact subsets of $\Omega$.
For the Laplacian, that is if $a_{kl} = \delta_{kl}$ and 
$b_k = c_k = c_0 = 0$, Keldy\v{s} \cite{Keldys} Theorem~II showed that 
this limit coincides with the solution defined by Perron, justifying
our terminology.
Herv\'e \cite{Herve1} generalised this result to 
purely second-order differential operators
(which trivially verify Stampacchia's divergence condition).

\medskip

We next give a `Sobolev'-description of the Perron solution, which is 
new even for the Laplacian.
By $H^{-1}(\Omega)$ we denote the antidual of $H^1_0(\Omega)$.
Since $0$ is not a Dirichlet eigenvalue by assumption, it follows
that $\ca|_{H^1_0(\Omega)} \colon H^1_0(\Omega) \to H^{-1}(\Omega)$ is bijective (see Lemma~\ref{ldat260}\ref{ldat260-1}).
In general, a function $\varphi \in C(\Gamma)$ does not 
have an extension $\Phi \in C(\overline \Omega) \cap H^1(\Omega)$
(just take $\Omega$ Lipschitz and $\varphi \in C(\Gamma) \setminus H^{1/2}(\Gamma)$).
But the following extension is possible.

\begin{thm} \label{tdat121}
Let $\varphi \in C(\Gamma)$.
Then there exists a $\Phi \in C(\overline \Omega) \cap H^1_\loc(\Omega)$ such that 
$\Phi|_\Gamma = \varphi$ and $\ca \Phi \in H^{-1}(\Omega)$.
Hence there exists a unique $v \in H^1_0(\Omega)$ such that 
$\ca v = \ca \Phi$.
It follows that $T \varphi = \Phi|_\Omega - v$.
\end{thm}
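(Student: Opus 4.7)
My plan is to construct $\Phi$ as a uniformly convergent series of smooth extensions of successive $C(\Gamma)$-approximants to $\varphi$, corrected on progressively thinner boundary collars, and then to identify $\Phi|_\Omega - v$ with $T\varphi$ by passing to the limit in Theorem~\ref{tdat120} applied to each partial sum. By the Stone--Weierstrass theorem, the restrictions to $\Gamma$ of functions in $C^\infty(\Ri^d)$ form a uniformly dense subalgebra of $C(\Gamma)$, so I pick $\psi_n \in C^\infty(\Ri^d)$ with $\psi_n|_\Gamma \to \varphi$ uniformly and, after extraction, $\|\psi_{n+1}|_\Gamma - \psi_n|_\Gamma\|_{L^\infty(\Gamma)} \leq 2^{-n}$.

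Writing $d(\cdot) = \mathrm{dist}(\cdot,\Gamma)$ and choosing smooth cutoffs $\eta_n \in C^\infty(\Omega)$ with $0 \le \eta_n \le 1$, $\eta_n = 1$ on $\{d \geq \varepsilon_n\}$, $\eta_n = 0$ on a neighbourhood of $\Gamma$, and $|\nabla \eta_n| \lesssim 1/\varepsilon_n$ for a sequence $\varepsilon_n \downarrow 0$ to be fixed below, I set $w_n := (1 - \eta_n)(\psi_{n+1} - \psi_n)$ and
\[
  \Phi \;:=\; \psi_1|_{\overline{\Omega}} \;+\; \sum_{n=1}^\infty w_n.
\]
Uniform continuity of $\psi_{n+1} - \psi_n$ on $\overline \Omega$, combined with the $L^\infty(\Gamma)$-bound, lets me shrink $\varepsilon_n$ so that $\|w_n\|_{L^\infty(\Omega)} \leq 2^{1-n}$; the series then converges uniformly on $\overline{\Omega}$, so $\Phi \in C(\overline{\Omega})$ with $\Phi|_\Gamma = \psi_1|_\Gamma + \sum_n (\psi_{n+1}-\psi_n)|_\Gamma = \varphi$. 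On each compact $K \subset \Omega$ all but finitely many $w_n$ vanish, so $\Phi \in C^\infty(\Omega) \subset H^1_\loc(\Omega)$.

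The crux is $\ca \Phi \in H^{-1}(\Omega)$. Since $\varphi$ need not admit any $H^1$-extension, I cannot rely on the naive bound $\|\ca w_n\|_{H^{-1}} \leq C \|w_n\|_{H^1(\Omega)}$ being summable; instead, for $v \in \cd(\Omega)$ with $\|v\|_{H^1_0(\Omega)} \leq 1$, I expand $\langle \ca w_n, v \rangle$ term by term, exploit the product structure of $w_n$, and bound each integrand (supported in $\{d < \varepsilon_n\}$) by Cauchy--Schwarz or by a Hardy-type estimate for $v \in H^1_0(\Omega)$, picking up the factor $|\{d < \varepsilon_n\}|^{1/2}$. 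Because $\Omega$ is bounded, $|\{d < \varepsilon\}| \to 0$ as $\varepsilon \to 0$, so by choosing $\varepsilon_n$ sufficiently small in relation to the $C^1(\overline \Omega)$-norms of the $\psi_n$, the sum $\sum_n |\langle \ca w_n, v\rangle|$ is $\lesssim \|v\|_{H^1_0(\Omega)}$ uniformly in $v$. Combined with $\ca\bigl(\psi_1|_{\overline \Omega}\bigr) \in H^{-1}(\Omega)$ this yields $\ca \Phi \in H^{-1}(\Omega)$, and also $\ca \Phi_N \to \ca \Phi$ in $H^{-1}(\Omega)$ for the partial sums $\Phi_N := \psi_1|_{\overline \Omega} + \sum_{n=1}^N w_n \in C(\overline \Omega) \cap H^1(\Omega)$.

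Given $\Phi$, Lemma~\ref{ldat260}\ref{ldat260-1} supplies the unique $v \in H^1_0(\Omega)$ with $\ca v = \ca \Phi$. For the identification $T\varphi = \Phi|_\Omega - v$, I apply Theorem~\ref{tdat120} to each $\Phi_N$: this gives $v_N := \Phi_N|_\Omega - T(\Phi_N|_\Gamma) \in H^1_0(\Omega)$ with $\ca v_N = \ca \Phi_N$. As $N \to \infty$, the convergence $\ca \Phi_N \to \ca \Phi$ in $H^{-1}(\Omega)$ together with bounded invertibility of $\ca|_{H^1_0(\Omega)}$ forces $v_N \to v$ in $H^1_0(\Omega)$; boundedness of $T$ combined with $\Phi_N|_\Gamma = \psi_{N+1}|_\Gamma \to \varphi$ gives $T(\Phi_N|_\Gamma) \to T\varphi$ in $C_b(\Omega)$; and $\Phi_N \to \Phi$ uniformly on $\overline{\Omega}$. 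Passing to the limit in $\Phi_N|_\Omega - v_N = T(\Phi_N|_\Gamma)$ in $L^2_\loc(\Omega)$ and using that both $T\varphi$ and the $\ca$-harmonic $\Phi|_\Omega - v$ are continuous on $\Omega$ yields $T\varphi = \Phi|_\Omega - v$. The main obstacle is the $H^{-1}$-estimate of Step~3: naive $H^1$-bounds would incorrectly force $\Phi \in H^1(\Omega)$ in general, so the refined argument must exploit the smallness of $\psi_{n+1}-\psi_n$ on $\Gamma$ together with the vanishing of $|\{d < \varepsilon_n\}|$ and the vanishing trace of $v \in H^1_0$ on $\Gamma$.
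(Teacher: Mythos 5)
The central gap is in your Step~3, the claim that $\sum_n |\langle \ca w_n, v\rangle| \lesssim \|v\|_{H^1_0(\Omega)}$ after a suitable choice of~$\varepsilon_n$. Write $\nabla w_n = -(\nabla\eta_n)(\psi_{n+1}-\psi_n) + (1-\eta_n)\nabla(\psi_{n+1}-\psi_n)$. The dominant part of $\langle\ca w_n, v\rangle$ is $\sum_{k,l}\int_\Omega a_{kl}\,(\partial_l\eta_n)(\psi_{n+1}-\psi_n)\,\overline{\partial_k v}$, and here the cofactor is $\partial_k v$, not $v$; the only available bound is Cauchy--Schwarz, giving $C\,2^{-n}\,\|\nabla\eta_n\|_2\,\|\nabla v\|_2$. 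A Hardy estimate cannot improve this (there is no factor of $d(\cdot,\Gamma)$ against which to trade $\nabla v$ for $v/d$), and in any case the Hardy inequality $\|v/d\|_2\lesssim\|\nabla v\|_2$ is false for general bounded open $\Omega$ -- it needs capacity-thickness of $\Omega^{\rm c}$. Now, $\|\nabla\eta_n\|_2^2$ is bounded below by the Dirichlet condenser capacity between $\{d\ge\varepsilon_n\}$ and $\Gamma$, which diverges as $\varepsilon_n\to 0$; already for a ball $\|\nabla\eta_n\|_2\gtrsim\varepsilon_n^{-1/2}$. On the other hand, controlling $\|w_n\|_\infty$ and the remaining piece $(1-\eta_n)\nabla(\psi_{n+1}-\psi_n)$, supported in $\{d<\varepsilon_n\}$, forces $\varepsilon_n\lesssim 2^{-n}/\|\psi_{n+1}-\psi_n\|_{C^1}$. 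Carrying both constraints through, the surviving bound on $\langle\ca w_n,v\rangle$ is at best $\sim 2^{-n/2}\sqrt{\|\psi_{n+1}-\psi_n\|_{C^1}}\,\|v\|_{H^1_0}$, and for a general $\varphi\in C(\Gamma)$ -- e.g.\ a H\"older datum on the ball -- the $C^1$-norms of the $\psi_n$ necessarily grow fast enough to make this series diverge, no matter how $\varepsilon_n$ (or the approximation rate $\|\psi_n|_\Gamma-\varphi\|_\infty$) is chosen. So $\sum_n\ca w_n$ does not converge in $H^{-1}(\Omega)$, and Step~3 fails.

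The paper avoids exactly this obstruction by a different construction. Instead of gluing smooth approximants via collar cutoffs, it starts from $u=T\varphi$ itself (which satisfies $\lim_{x\to z}u(x)=\varphi(z)$ for quasi every $z\in\Gamma$ by Corollary~\ref{cdat244}) and corrects it near the boundary by \emph{truncation} around a hierarchy of exceptional boundary points (Theorem~\ref{tdat401} via Lemma~\ref{ldat402}). The crucial fact is that, at each stage, the set $K$ of boundary points where the current iterate deviates by more than $\beta$ is compact and \emph{polar}, so by Lemma~\ref{ldat834.2} one can choose the cutoff function $\chi$ localized near $K$ with $\|\chi\|_{H^1}$ arbitrarily small; this is what keeps the cumulative $H^1_0$-corrections summable. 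Your cutoffs $\eta_n$ instead must cut over the whole boundary $\Gamma$, which has positive capacity, and their energies therefore blow up as the collars thin -- this is precisely the quantity that wrecks the $H^{-1}$ estimate. Note also that the paper establishes $T\varphi-\Phi\in H^1_0(\Omega)$ directly (with small norm), and identifies $T\varphi=\Phi|_\Omega-v$ through the pointwise-boundary characterization in Corollary~\ref{cdat347}, which itself rests on Theorem~\ref{tdat243}; no limit of partial sums in $H^{-1}$ is taken. The existence of the extension $\Phi$ with $\ca\Phi\in H^{-1}(\Omega)$ is stated to be a previously open problem even for the Laplacian, and the capacity-theoretic machinery -- not collar estimates -- is the ingredient that makes it go through.
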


For the Laplacian it was proved 
before in \cite{AD2} that $T(\varphi) = \Phi|_\Omega - v$ whenever $\varphi \in C(\Gamma)$, 
$\Phi \in C(\overline \Omega)$,
$\Delta \Phi \in H^{-1}(\Omega)$ as distribution, $\Phi|_\Gamma = \varphi$
and $v \in H^1_0(\Omega)$ is such that 
$\Delta v = \Delta \Phi$.
But it remained an open question whether such an extension $\Phi$ exists for all 
$\varphi \in C(\Gamma)$.
The above theorem gives a positive answer even for arbitrary elliptic operators.
As a consequence, for the Laplacian the Perron solution is always an
$H^1_0(\Omega)$-perturbation of a suitable extension 
$\Phi \in C(\overline \Omega)$ of $\varphi$.
(Remark. The extra condition $\Phi \in H^1_\loc(\Omega)$ in Theorem~\ref{tdat121}
is needed if merely $a_{kl}, b_k \in L_\infty(\Omega)$.
If $a_{kl}, b_k \in W^{1,\infty}(\Omega)$, as is the case for the Laplacian, then this
extra condition is not needed. 
We come back to this in Proposition~\ref{pdat241}.)

As an immediate consequence of the last statement in 
Theorem~\ref{tdat121} one can characterise when 
$T \varphi$ has finite energy.
It is remarkable that this property does not depend on the elliptic operator.

\begin{cor} \label{cdat122}
Let $\varphi \in C(\Gamma)$.
Then $T \varphi \in H^1(\Omega)$ if and only if there exists a 
$\Phi \in C(\overline \Omega) \cap H^1(\Omega)$ such that $\Phi|_\Gamma = \varphi$.
\end{cor}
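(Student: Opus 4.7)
The plan is to deduce both implications directly from Theorems \ref{tdat120} and \ref{tdat121}; no new analytical work is needed once those are available.

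For the ``if'' direction, I start with $\Phi \in C(\overline{\Omega}) \cap H^1(\Omega)$ satisfying $\Phi|_\Gamma = \varphi$. The characterising property of $T$ in Theorem \ref{tdat120} gives $\Phi|_\Omega - T\varphi \in H^1_0(\Omega)$, and since $\Phi|_\Omega \in H^1(\Omega)$ I conclude $T\varphi = \Phi|_\Omega - (\Phi|_\Omega - T\varphi) \in H^1(\Omega)$.

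For the ``only if'' direction, I appeal to Theorem \ref{tdat121}: given $\varphi \in C(\Gamma)$, there exists $\Phi \in C(\overline{\Omega}) \cap H^1_\loc(\Omega)$ with $\Phi|_\Gamma = \varphi$ and a $v \in H^1_0(\Omega)$ with $T\varphi = \Phi|_\Omega - v$. Rearranging yields $\Phi|_\Omega = T\varphi + v$. The standing assumption $T\varphi \in H^1(\Omega)$ together with $v \in H^1_0(\Omega) \subset H^1(\Omega)$ then upgrades the $H^1_\loc$-membership of $\Phi|_\Omega$ to $H^1$-membership, so $\Phi \in C(\overline{\Omega}) \cap H^1(\Omega)$ is the desired extension.

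There is no substantive obstacle: all the difficulty is already absorbed into Theorem \ref{tdat121}, which produces the continuous extension $\Phi$ for an arbitrary $\varphi \in C(\Gamma)$ and writes the Perron solution as an $H^1_0$-perturbation of it. The only point worth flagging is the asymmetry in the two directions---in the ``only if'' part the extension $\Phi$ delivered by Theorem \ref{tdat121} is a priori only in $H^1_\loc(\Omega)$, and it is exactly the hypothesis $T\varphi \in H^1(\Omega)$ that promotes it to $H^1(\Omega)$.
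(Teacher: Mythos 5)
Your proof is correct and takes essentially the same route as the paper: the paper deduces the corollary directly from the last statement of Theorem~\ref{tdat121} (see Corollary~\ref{cdat403}, equivalence (i)$\Leftrightarrow$(ii)), and the ``if'' direction you phrase via the defining property of $T$ in Theorem~\ref{tdat120} is implicit in that argument.
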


Next we turn to regular points as in the paper \cite{LSW}
of Littman, Stampacchia and Weinberger.
Let $z \in \Gamma$.
We say that $z$ is {\bf $\ca$-regular} if 
\[
\lim_{x \to z} (T\varphi)(x) = \varphi(z)
\]
for all $\varphi \in C(\Gamma)$.
We say that $\Omega$ is {\bf $\ca$-regular} if $z$ is $\ca$-regular for all $z \in \Gamma$.
Therefore $\Omega$ is $\ca$-regular if and only if for each $\varphi \in C(\Gamma)$ 
the Dirichlet problem $(D_\varphi)$ has a unique classical solution.
If $- \ca$ is the Laplacian, then we simply say that 
$z$ is {\bf regular} if $z$ is $(-\Delta)$-regular.

Extending Stampacchia's result to our more general situation, we shall show in 
Theorem~\ref{tdat831} that a point $z \in \Gamma$ is $\ca$-regular if and only if it is regular.
Hence this notion of regularity is independent of the operator.

Even if $\Omega$ is not Wiener regular, we can characterise the Perron 
solution by the pointwise behaviour at the boundary.
Recall that a subset $P \subset \Ri^d$ is called a {\bf polar set}
if $\capp P = 0$.
Given an assertion $Q(z)$ for all $z \in \Gamma$, then we say that 
$Q(z)$ holds for {\bf quasi every} $z \in \Gamma$ if there exists a polar set $P \subset \Gamma$ 
such that $Q(z)$ is true for all $z \in \Gamma \setminus P$.

\begin{thm} \label{tdat834.6}
Let $\varphi \in C(\Gamma)$ and $u \in C_b(\Omega) \cap H^1_\loc(\Omega)$.
Then the following are equivalent.
\begin{tabeleq}
\item \label{tdat834.6-1}
$u = T \varphi$.
\item \label{tdat834.6-2}
$\ca u = 0$ and $\lim_{x \to z} u(x) = \varphi(z)$ for quasi every $z \in \Gamma$.
\end{tabeleq}
\end{thm}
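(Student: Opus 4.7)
The plan is to prove the two directions of the equivalence separately; the direction $\ref{tdat834.6-1}\Rightarrow\ref{tdat834.6-2}$ is short, while $\ref{tdat834.6-2}\Rightarrow\ref{tdat834.6-1}$ carries the real content. For $\ref{tdat834.6-1}\Rightarrow\ref{tdat834.6-2}$: if $u = T\varphi$ then $\ca u = 0$ holds by the construction of $T$ in Theorem~\ref{tdat120}. For the boundary limit I would combine Theorem~\ref{tdat831}---which identifies the $\ca$-regular points of $\Gamma$ with the regular points for $-\Delta$---with the classical Kellogg theorem, according to which the set of irregular points of $\Gamma$ is polar. Every $z$ outside this polar set is then $\ca$-regular, so by the very definition of $\ca$-regularity $\lim_{x \to z} T\varphi(x) = \varphi(z)$, giving the limit at quasi every $z \in \Gamma$.

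For $\ref{tdat834.6-2}\Rightarrow\ref{tdat834.6-1}$ I would set $w := u - T\varphi$. Combining the implication just proved, applied to $T\varphi$, with the hypothesis on $u$ yields $w \in C_b(\Omega) \cap H^1_\loc(\Omega)$, $\ca w = 0$, and $\lim_{x \to z} w(x) = 0$ at quasi every $z \in \Gamma$. Since $0$ is not a Dirichlet eigenvalue, proving $w = 0$ reduces to showing $w \in H^1_0(\Omega)$.

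The argument for $w \in H^1_0(\Omega)$ I envisage in three steps: (1) upgrade $w$ from $H^1_\loc(\Omega)$ to $H^1(\Omega)$ via a global energy estimate that exploits $w \in L_\infty(\Omega)$ together with the equation $\ca w = 0$; (2) observe that the extension $\tilde w$ of $w$ by zero on $\Ri^d \setminus \Omega$ is quasi-continuous on $\Ri^d$ and vanishes quasi everywhere on $\Gamma$---by continuity of $w$ on $\Omega$, the trivial vanishing outside $\overline\Omega$, and the quasi-everywhere boundary limit $0$; (3) invoke the standard potential-theoretic characterisation of $H^1_0(\Omega)$ as the $H^1(\Omega)$-functions whose quasi-continuous representative vanishes quasi everywhere on $\Gamma$.

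The main obstacle is step (1). A direct Caccioppoli test of $\ca w = 0$ against $w\chi_n^2$ with cut-offs $\chi_n \nearrow 1$ in $\Omega$ yields energy bounds that blow up with $|\nabla \chi_n|$, since we have no quantitative decay of $w$ near $\Gamma$. To bypass this I would truncate $w$ itself: take smooth Lipschitz functions $\psi_\varepsilon$ with $\psi_\varepsilon \equiv 0$ on $[-\varepsilon,\varepsilon]$ and $\psi_\varepsilon(t) \to t$ as $\varepsilon \downarrow 0$. Because $w \to 0$ quasi everywhere on $\Gamma$, the support of $\psi_\varepsilon(w)$ stays away from $\Gamma$ modulo a polar exceptional set, permitting a capacitary approximation by $C_c^\infty(\Omega)$-functions; uniform energy bounds on $\psi_\varepsilon(w)$ are obtained from testing $\ca w = 0$ against $\psi_\varepsilon(w) \chi^2$ with $\chi \in C_c^\infty(\Omega)$, and passing $\varepsilon \downarrow 0$ by dominated convergence then furnishes $w \in H^1(\Omega)$ whose quasi-continuous extension by zero vanishes quasi everywhere on $\Gamma$, completing the proof that $w \in H^1_0(\Omega)$.
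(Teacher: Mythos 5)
Your proof of \ref{tdat834.6-1}$\Rightarrow$\ref{tdat834.6-2} is exactly the paper's: combine Theorem~\ref{tdat831} with Kellogg's theorem that the irregular boundary points form a polar set. For \ref{tdat834.6-2}$\Rightarrow$\ref{tdat834.6-1} you also match the paper's reduction: set $w = u - T\varphi$ and show $w \in H^1_0(\Omega)$, whence $A^D w = 0$ and $w = 0$ by the spectral hypothesis. Where you part ways is in how $w \in H^1_0(\Omega)$ is to be established. The paper proves this via Theorem~\ref{tdat243} (specialised as Corollary~\ref{cdat243.3} with $\chi = \one_\Omega$), whose engine is Lemma~\ref{ldat243.1}: one tests $\ca w = 0$ against $u_n = ((\one-\chi_n)^2 \RRe w - \delta\,\one)^+$, where $\chi_n$ are Lemma~\ref{ldat834.2}-type cut-offs with small $H^1$-norm that are $\equiv 1$ near the compact polar set $K_{1/n}$ of bad boundary points, so that $u_n$ has compact support in $\Omega$ and hence lies in $H^1_0(\Omega)$. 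You correctly identify this truncation-plus-capacitary-cut-off mechanism as the crux, but you leave it as a sketch; in the paper this is the bulk of Section~3 and is by far the most delicate part of the argument.

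There is also a structural detour in your steps (2)--(3) that is both unnecessary and not quite self-contained. Having obtained uniform $H^1$-bounds on the test functions $u_n$, the paper concludes $w \in H^1_0(\Omega)$ immediately: each $u_n$ is compactly supported in $\Omega$, hence in $H^1_0(\Omega)$, which is weakly closed in $H^1(\Omega)$, so the weak limit $(\RRe w - \delta)^+$ is in $H^1_0(\Omega)$, and letting $\delta \downarrow 0$ finishes. Your plan instead establishes $w \in H^1(\Omega)$, forms the zero extension $\tilde w$, and then invokes a ``standard potential-theoretic characterisation'' of $H^1_0(\Omega)$. As stated this is not an off-the-shelf fact: the Havin--Bagby--Hedberg spectral-synthesis theorem requires the \emph{extension} to already lie in $H^1(\Ri^d)$ and be quasi-continuous there, and showing $\tilde w \in H^1(\Ri^d)$ from $w \in H^1(\Omega)$ with vanishing quasi-everywhere boundary limits is essentially equivalent to the very conclusion you are after. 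So either you would have to prove a result of the strength of Theorem~\ref{tdat243} anyway, or you should simply observe, as the paper does, that the compactly supported test functions are already in $H^1_0(\Omega)$ and pass to a weak limit -- no appeal to spectral synthesis is needed.
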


The implication \ref{tdat834.6-2}$\Rightarrow$\ref{tdat834.6-1} is well known 
for the Laplacian, but it seems to be new for elliptic operators, 
even in the situation considered by Stampacchia.
We prove Theorem~\ref{tdat834.6} in Corollary~\ref{cdat244}.

\smallskip

Finally we wish to consider the notion of approximative trace and 
variational solution of the Dirichlet problem.
For this we provide $\Gamma$ with the $(d-1)$-dimensional Hausdorff measure,
denoted by $\sigma$, and we assume for the remainder of the introduction
that $\sigma(\Gamma) < \infty$.
Variational solutions are based on the decomposition~(\ref{eSdat1;20}).
Let $w = v + u \in H^1_0(\Omega) \oplus \ch_\ca^1(\Omega)$.
If in addition $w \in C(\overline \Omega)$, then by our definition
$T \varphi = u$ is the Perron solution of the Dirichlet problem $(D_\varphi)$,
where $\varphi = w|_\Gamma$ is the restriction of $w$ to the boundary.
Now we want to replace the restriction by a very weak notion of a trace.
Given $u \in H^1(\Omega)$ and $\varphi \in L_2(\Gamma)$, then 
we say that $\varphi$ is an {\bf approximative trace of $u$}, or shortly,
a {\bf trace of $u$} if 
there are $u_1,u_2,\ldots \in H^1(\Omega) \cap C(\overline \Omega)$ 
such that $\lim_{n \to \infty} u_n = u$ in $H^1(\Omega)$ and 
$\lim_{n \to \infty} u_n|_\Gamma = \varphi$ in $L_2(\Gamma)$.
We define 
\[
\Tr(\Omega)
= \{ \varphi \in L_2(\Gamma) : \mbox{there exists a $u \in H^1(\Omega)$ such that 
$\varphi$ is a trace of $u$} \}
,  \]
the space of all traces.
Clearly $0$ is a trace of every $u \in H^1_0(\Omega)$.
For the remainder of this introduction we conversely suppose that 
if $0$ is a trace of $u \in H^1(\Omega)$, then $u \in H^1_0(\Omega)$, that is
\begin{equation}
H^1_0(\Omega)
= \{ u \in H^1(\Omega) : 
0 \mbox{ is a trace of } u \} 
.
\label{eSdat1;21}
\end{equation}
Condition~(\ref{eSdat1;21}) is satisfied if $\Omega$ has a continuous boundary 
in the sense of graphs (see \cite{Sau1} Corollary~7.48).

Now given $\varphi \in \Tr(\Omega)$, there exists a $w \in H^1(\Omega)$ such that 
$\varphi$ is a trace of $w$.
Decompose 
\[
w = v + u \in H^1_0(\Omega) \oplus \ch_\ca^1(\Omega)
.  \]
Since $v \in H^1_0(\Omega)$, the function $\varphi$ is also a trace of $u$.
In virtue of Condition~(\ref{eSdat1;21}), the function $u$ is the unique 
$\ca$-harmonic function in $H^1(\Omega)$ having $\varphi$ as a trace.
We call $u$ {\bf the variational solution of the Dirichlet problem $(D_\varphi)$}.
Therefore for each $\varphi \in \Tr(\Omega)$ there exists a unique variational
solution of the Dirichlet problem $(D_\varphi)$.
Even though the coupling between $w$ and a trace $\varphi$ is 
very weak, the link between variational and Perron solutions is 
as strong as it can be.
Our main result on the variational solution is the following.

\begin{thm} \label{tdat123}
Assume Condition~(\ref{eSdat1;21}).
Let $\varphi \in C(\Gamma)$.
\begin{tabel}
\item \label{tdat123-1}
Suppose in addition $\varphi \in \Tr(\Omega)$ and let $u \in H^1(\Omega)$ be the variational 
solution of the Dirichlet problem $(D_\varphi)$.
Then $T \varphi = u$.
\item \label{tdat123-2}
If $T \varphi \in H^1(\Omega)$, then $\varphi$ is a trace of $T \varphi$.
Consequently, $T \varphi$ is the variational 
solution of the Dirichlet problem $(D_\varphi)$ and $\varphi \in \Tr(\Omega)$.
\end{tabel}
\end{thm}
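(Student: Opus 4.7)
The plan is to prove part~(b) first, as a direct consequence of Theorem~\ref{tdat121}, and then to derive part~(a) by establishing that $T\varphi \in H^1(\Omega)$ under its hypothesis and invoking~(b).

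For~(b): by Theorem~\ref{tdat121}, choose $\Phi \in C(\overline\Omega) \cap H^1_\loc(\Omega)$ with $\Phi|_\Gamma = \varphi$ and $\ca \Phi \in H^{-1}(\Omega)$; then $T\varphi = \Phi|_\Omega - v$, where $v \in H^1_0(\Omega)$ is the unique solution of $\ca v = \ca \Phi$. The assumption $T\varphi \in H^1(\Omega)$ forces $\Phi|_\Omega = T\varphi + v \in H^1(\Omega)$, so in fact $\Phi \in C(\overline\Omega) \cap H^1(\Omega)$. Pick $v_n \in C_c^\infty(\Omega)$ with $v_n \to v$ in $H^1_0(\Omega)$ and set $u_n := \Phi - v_n$. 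Then $u_n \in C(\overline\Omega) \cap H^1(\Omega)$ and $u_n|_\Gamma = \varphi$ for every $n$, while $u_n \to T\varphi$ in $H^1(\Omega)$, so $\varphi$ is an approximative trace of $T\varphi$. Since $T\varphi$ is $\ca$-harmonic and lies in $H^1(\Omega)$, it belongs to $\ch_\ca^1(\Omega)$; Condition~(\ref{eSdat1;21}) makes the variational solution the unique $\ca$-harmonic element of $H^1(\Omega)$ admitting $\varphi$ as a trace, so $T\varphi$ is that variational solution and $\varphi \in \Tr(\Omega)$.

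For~(a): let $u$ be the variational solution. Since $\varphi$ is a trace of $u$, choose $u_n \in C(\overline\Omega) \cap H^1(\Omega)$ with $u_n \to u$ in $H^1(\Omega)$ and $u_n|_\Gamma \to \varphi$ in $L_2(\Gamma)$. Theorem~\ref{tdat120} gives $u_n - T(u_n|_\Gamma) \in H^1_0(\Omega)$, which places $T(u_n|_\Gamma)$ in $H^1(\Omega)$ and, by the uniqueness of the splitting~(\ref{eSdat1;20}), identifies it with the $\ch_\ca^1$-component of $u_n$. Continuity of the $H^1$-projection onto $\ch_\ca^1(\Omega)$ then gives $T(u_n|_\Gamma) \to u$ in $H^1(\Omega)$, and the De~Giorgi--Nash--Moser interior H\"older estimate for $\ca$-harmonic functions upgrades this to locally uniform convergence on $\Omega$.

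The main obstacle is now to secure $T\varphi \in H^1(\Omega)$: once this is in hand, (b) gives $\varphi$ as an approximative trace of $T\varphi$, so both $T\varphi$ and $u$ lie in $\ch_\ca^1(\Omega)$ with $\varphi$ as trace, and uniqueness of the variational solution under Condition~(\ref{eSdat1;21}) forces $T\varphi = u$. By Corollary~\ref{cdat122}, securing $T\varphi \in H^1(\Omega)$ is equivalent to producing a single $\Phi \in C(\overline\Omega) \cap H^1(\Omega)$ with $\Phi|_\Gamma = \varphi$. The plan is to build $\Phi$ out of the $u_n$ by a boundary-collar partition-of-unity construction combined with an $H^1_0$-correction, using Condition~(\ref{eSdat1;21}) to convert the $L_2$-convergence $u_n|_\Gamma \to \varphi$ into an honest continuous $H^1$-extension of $\varphi$; this is the delicate technical heart of the argument.
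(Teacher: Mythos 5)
Your proof of part~(b) is correct and close in spirit to the paper's, though you reach it by a slightly different route: you start from a single extension $\Phi$ supplied by Theorem~\ref{tdat121} and manufacture the approximating sequence by subtracting $C_c^\infty(\Omega)$-approximants of $v$, whereas the paper obtains the sequence directly from Theorem~\ref{tdat401}. Both are fine, and both ultimately rest on the same approximation machinery of Section~4.

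Part~(a), however, is not a proof but a plan with a real gap, and the gap is in exactly the place where the paper does something quite different. Your opening steps -- showing that $T(u_n|_\Gamma)$ is the $\ch_\ca^1$-component of $u_n$ and that $T(u_n|_\Gamma) \to u$ in $H^1(\Omega)$ by continuity of the projection -- are correct but lead nowhere: you cannot pass from $u_n|_\Gamma \to \varphi$ in $L_2(\Gamma)$ to $T(u_n|_\Gamma) \to T\varphi$, because $T$ is only continuous from $C(\Gamma)$, and indeed Example~\ref{xdat203} shows $T$ does not extend to a bounded map from $L_2(\Gamma)$. You then pivot to wanting $T\varphi \in H^1(\Omega)$, which by Corollary~\ref{cdat122} reduces to producing some $\Phi \in C(\overline\Omega) \cap H^1(\Omega)$ with $\Phi|_\Gamma = \varphi$. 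The ``boundary-collar partition-of-unity'' you propose for this is left entirely unexecuted, and it is not credible as written: the $u_n$ are only converging to $\varphi$ on $\Gamma$ in the $L_2(\sigma)$ sense, so patching them near the boundary gives you no control over the \emph{pointwise} boundary values, which is what continuity of $\Phi$ up to $\overline\Omega$ with $\Phi|_\Gamma = \varphi$ requires. Note also that the equivalence you would be proving ($\varphi \in \Tr(\Omega) \Rightarrow T\varphi \in H^1(\Omega)$) is precisely Theorem~\ref{tdat222}\ref{tdat222-2}$\Rightarrow$\ref{tdat222-1}, which the paper deduces \emph{from} Theorem~\ref{tdat910} (i.e.\ from part~(a)); trying to establish it first is circular.

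The paper's actual proof of part~(a) (Theorem~\ref{tdat910}) takes the opposite tack: instead of approximating $u$ in $H^1$, it approximates $\varphi$ in $C(\Gamma)$ by $\Phi_n|_\Gamma$ with $\Phi_n \in C(\overline\Omega) \cap H^1(\Omega)$ (Stone--Weierstra\ss). Continuity of $T$ on $C(\Gamma)$ gives $T(\Phi_n|_\Gamma) \to T\varphi$ in $C_b(\Omega)$. The decisive new ingredient is Proposition~\ref{pdat216}: under Condition~(\ref{eSdat1;21}), the variational solution operator $\gamma$ is $L_\infty$-bounded, $\|\gamma(\psi)\|_{L_\infty(\Omega)} \leq 2c\|\psi\|_{L_\infty(\Gamma)}$, which is obtained via the truncation Lemma~\ref{ldat215} together with the $C_b$-bound for $T$. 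This lets one pass from $\Phi_n|_\Gamma \to \varphi$ in $C(\Gamma)$ to $\gamma(\Phi_n|_\Gamma) \to \gamma(\varphi)$ in $L_\infty(\Omega)$, hence a.e.; since $\gamma(\Phi_n|_\Gamma) = T(\Phi_n|_\Gamma)$ for such special data, the two a.e.\ limits agree and $T\varphi = \gamma(\varphi) = u$. If you want to salvage your part~(a), the essential missing lemma is this $L_\infty$-bound for $\gamma$ on bounded traces; the partition-of-unity construction is not needed and in fact does not appear in the paper.
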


The proof of Theorem~\ref{tdat123} depends heavily on the previous results 
on possible extensions of $\varphi \in C(\Gamma)$ and it is given 
in Theorems~\ref{tdat910} and \ref{tdat206}.

We end this selection of highlights of the paper with a comment on the 
coefficients.

\begin{remark} \label{rdat117}
Throughout this paper we assume that the $c_k$ are real valued.
All results are also valid if the $c_k$ are complex valued with 
$\IIm c_k \in W^{1,\infty}(\Omega)$ for all $k \in \{ 1,\ldots,d \} $.
The reason is as follows.
Define the map $\widetilde \ca \colon H^1_\loc(\Omega) \to \cd'(\Omega)$ by
\begin{eqnarray*}
\langle \widetilde \ca u,v \rangle_{\cd'(\Omega) \times \cd(\Omega)}
& = & \sum_{k,l=1}^d \int_\Omega a_{kl} \, (\partial_l u) \, \overline{\partial_k v}
   + \sum_{k=1}^d \int_\Omega (b_k - i \IIm c_k) \, u \, \overline{\partial_k v}
\\* 
& & {}
   + \sum_{k=1}^d \int_\Omega (\RRe c_k) \, (\partial_k u) \, \overline v
   + \int_\Omega (c_0 - i \sum_{k=1}^d \partial_k \IIm c_k) \, u \, \overline v
\end{eqnarray*}
for all $u \in H^1_\loc(\Omega)$ and $v \in C_c^\infty(\Omega)$.
Since 
\[
\int_\Omega (\IIm c_k) (\partial_k u) \, \overline v
= - \int_\Omega (\IIm c_k) \, u \, \overline{ \partial_k v }
   - \int_\Omega (\partial_k \IIm c_k)  \, u \, \overline v
\]
for all $u \in H^1_\loc(\Omega)$, $v \in C_c^\infty(\Omega)$ and $k \in \{ 1,\ldots,d \} $,
it follows that $\ca = \widetilde \ca$. 
Now the coefficients of $\widetilde \ca$ satisfy the conditions on the 
coefficients in the paper.

We do not know whether one can allow $c_k \in L_\infty(\Omega)$ complex 
valued throughout the paper.
\end{remark}

The structure of this paper is as follows. 
In Section~\ref{Sdat2} we consider the Perron solution and prove 
Theorems~\ref{tdat120} and \ref{tdat825}.
A few parts are based on techniques developed in \cite{AE9}, but here more
delicate arguments are needed to control the constants, since in 
Theorem~\ref{tdat825}\ref{tdat825-3} a sequence of 
domains is used.
In Section~\ref{Sdat3} we consider the $\ca$-regular points
and show that they are independent of the elliptic operator.
We also give in Theorem~\ref{tdat243} a sufficient condition
(in terms of the pointwise behaviour at the boundary)
for a function to be in $H^1_0(\Omega)$.
As one of the many consequences this gives 
Theorem~\ref{tdat834.6} in Corollary~\ref{cdat244}.
In Section~\ref{Sdat4} we prove Theorem~\ref{tdat121} via a delicate
approximation procedure.
Finally in Section~\ref{Sdat5} we consider the approximative trace, 
the variational solution of the Dirichlet problem, its possible uniqueness
(that is a characterisation of Condition (\ref{eSdat1;21}))
and prove Theorem~\ref{tdat123}.

\section{The Dirichlet problem with continuous data} \label{Sdat2}

In this section we prove Theorems~\ref{tdat120} and \ref{tdat825}.
The proof requires some preparation.
Throughout this section we adopt the notation and assumptions as in 
the first part of the introduction.
We first consider $\varphi \in C(\Gamma)$ 
of the form $\varphi = \Phi|_\Gamma$ with $\Phi \in C(\overline \Omega) \cap H^1(\Omega)$.
Then we define an operator 
$T_0 \colon \{ \Phi|_\Gamma : \Phi \in C(\overline \Omega) \cap H^1(\Omega) \} 
  \to C_b(\Omega) \cap H^1(\Omega)$
with range in the $\ca$-harmonic functions
and show that it is bounded with respect to the $C(\Gamma)$-norm and the 
$C_b(\Omega)$-norm.
Next the Stone--Weierstra\ss\ theorem gives a continuous extension 
$T \colon C(\Gamma) \to C_b(\Omega)$.
We use a technique of 
Stampacchia in order to show that $T \varphi$ is $\ca$-harmonic. 
That gives Theorem~\ref{tdat120}.

Define the form $\gota \colon H^1(\Omega) \times H^1(\Omega) \to \Ci$ by
\begin{equation}
\gota(u,v) 
= \sum_{k,l=1}^d \int_\Omega a_{kl} \, (\partial_l u) \, \overline{\partial_k v}
   + \sum_{k=1}^d \int_\Omega b_k \, u \, \overline{\partial_k v}
   + \sum_{k=1}^d \int_\Omega c_k \, (\partial_k u) \, \overline v
   + \int_\Omega c_0 \, u \, \overline v
.  
\label{eSdat2;31}
\end{equation}
Let $A^D$ be the operator in $L_2(\Omega)$ associated with the 
form $\gota|_{H^1_0(\Omega) \times H^1_0(\Omega)}$, 
that is 
\begin{eqnarray*}
\graph(A^D) 
= \{ (u,f) \in L_2(\Omega) \times L_2(\Omega) 
& : & u \in H^1_0(\Omega) \mbox{ and }  \\*
& &    \gota(u,v) = (f, v)_{L_2(\Omega)} \mbox{ for all } v \in H^1_0(\Omega) \} 
.
\end{eqnarray*}
In other words, $A^D$ is the realisation of the elliptic operator $\ca$ 
in $L_2(\Omega)$ with Dirichlet boundary conditions.
This operator has a compact resolvent.
Moreover, $A^D$ is injective by our assumption that $0$ is not a Dirichlet 
eigenvalue.

We will frequently use the following two lemmas.
We identify $H^{-1}(\Omega)$ with the subspace of $\cd'(\Omega)$ consisting 
of all distributions $F \in \cd'(\Omega)$ for which there exists a $c > 0$ such that 
$|\langle F, v \rangle_{\cd'(\Omega) \times \cd(\Omega)}| \leq c \, \|v\|_{H^1(\Omega)}$
for all $v \in \cd(\Omega)$.

\begin{lemma} \label{ldat260}
\mbox{}
\begin{tabel}
\item  \label{ldat260-1}
$\ca|_{H^1_0(\Omega)}$ is an isomorphism from $H^1_0(\Omega)$ onto $H^{-1}(\Omega)$.
\item \label{ldat260-2}
$H^1(\Omega) = H^1_0(\Omega) \oplus \ch_\ca^1(\Omega)$, where
$\oplus$ means the direct sum of vector spaces.
\item \label{ldat260-3}
If $u \in H^1(\Omega)$, then $\ca u \in H^{-1}(\Omega)$.
\end{tabel}
\end{lemma}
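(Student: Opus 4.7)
The plan is to prove the three items in the order \ref{ldat260-3}, \ref{ldat260-1}, \ref{ldat260-2}, since each step feeds the next.

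For \ref{ldat260-3}, I would simply estimate the four integrals in the definition of $\langle \ca u, v\rangle$ for $u \in H^1(\Omega)$ and $v \in C_c^\infty(\Omega)$. Using the uniform $L_\infty$-bounds on the coefficients $a_{kl}, b_k, c_k, c_0$ together with Cauchy--Schwarz, each term is controlled by a constant multiple of $\|u\|_{H^1(\Omega)} \|v\|_{H^1(\Omega)}$. By the density of $C_c^\infty(\Omega)$ in $H^1_0(\Omega)$, this shows that $\ca u$ extends uniquely to a bounded antilinear functional on $H^1_0(\Omega)$, i.e.\ $\ca u \in H^{-1}(\Omega)$.

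For \ref{ldat260-1}, I would combine Lax--Milgram with the Fredholm alternative. Using the ellipticity estimate $\RRe \sum_{k,l} a_{kl}(x)\xi_k\overline{\xi_l} \geq \mu |\xi|^2$ and Young's inequality to absorb the first- and zeroth-order terms, one shows that the shifted form $\gota_\lambda := \gota + \lambda (\cdot,\cdot)_{L_2(\Omega)}$ satisfies $\RRe \gota_\lambda(u,u) \geq \alpha \|u\|_{H^1(\Omega)}^2$ on $H^1_0(\Omega)$ for $\lambda$ sufficiently large. Lax--Milgram then yields an isomorphism $B_\lambda \colon H^1_0(\Omega) \to H^{-1}(\Omega)$ representing $\gota_\lambda$. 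Writing $\ca|_{H^1_0(\Omega)} = B_\lambda - \lambda\, \iota$ with $\iota \colon H^1_0(\Omega) \hookrightarrow L_2(\Omega) \hookrightarrow H^{-1}(\Omega)$ compact by Rellich--Kondrachov, the operator $B_\lambda^{-1}\, \ca|_{H^1_0(\Omega)} = I - \lambda B_\lambda^{-1}\iota$ is a compact perturbation of the identity on $H^1_0(\Omega)$, hence Fredholm of index zero. The hypothesis that $0$ is not a Dirichlet eigenvalue says exactly that this operator has trivial kernel, so it is bijective, and the open mapping theorem upgrades the conclusion to an isomorphism.

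For \ref{ldat260-2}, the triviality of $H^1_0(\Omega) \cap \ch^1_\ca(\Omega)$ is immediate from the eigenvalue hypothesis. For surjectivity of the sum, given $w \in H^1(\Omega)$, part \ref{ldat260-3} places $\ca w \in H^{-1}(\Omega)$, so by \ref{ldat260-1} there is a unique $v \in H^1_0(\Omega)$ with $\ca v = \ca w$; then $u := w - v \in H^1(\Omega)$ satisfies $\ca u = 0$, so $u \in \ch^1_\ca(\Omega)$, and $w = v + u$ is the required decomposition.

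The main obstacle is \ref{ldat260-1}: the delicate point is the Gårding-type coercivity of $\gota_\lambda$, where the presence of both $b_k u \,\overline{\partial_k v}$ and $c_k (\partial_k u)\,\overline v$ terms (with no sign condition and no divergence assumption) forces one to absorb these lower-order contributions into $\mu|\nabla u|^2$ via Young's inequality, at the cost of a sufficiently large $\lambda$. Once coercivity is in hand, the Fredholm argument and the eigenvalue hypothesis do the rest.
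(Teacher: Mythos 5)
Your proposal is correct and follows essentially the same route as the paper: the paper proves part~(a) by citing a packaged Fredholm--Lax--Milgram lemma (\cite{AEKS} Lemma~4.1) which encapsulates exactly the G\aa{}rding coercivity plus compact-perturbation-of-identity argument you spell out, and it derives part~(b) from part~(a) in the same way you do. The only cosmetic difference is in part~(c), where the paper exhibits explicit $L_2$ functions $f_0,\ldots,f_d$ with $\ca u = f_0 + \sum_k \partial_k f_k$ rather than estimating the pairing directly, but both arguments give the same conclusion.
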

\begin{proof}
`\ref{ldat260-1}'.
Since the form $\gota$ is continuous 
there exists a unique continuous linear $Z \colon H^1_0(\Omega) \to H^{-1}(\Omega)$ such that 
$\langle Z u,v\rangle_{H^{-1}(\Omega) \times H^1_0(\Omega)} = \gota(u,v)$
for all $u,v \in H^1_0(\Omega)$.
Then $Z$ is injective because $\ker A^D = \{ 0 \} $.
Moreover, the inclusion $H^1_0(\Omega) \hookrightarrow L_2(\Omega)$ is compact.
Hence the operator $Z$ is invertible by the Fredholm--Lax--Milgram lemma in
\cite{AEKS} Lemma~4.1.

`\ref{ldat260-2}'.
Let $\Phi \in H^1(\Omega)$.
Clearly the map $w \mapsto \gota(\Phi,w)$ is continuous from 
$H^1_0(\Omega)$ into $\Ci$.
Hence there exists a unique $v \in H^1_0(\Omega)$ such that 
$\gota(v,w) = \gota(\Phi,w)$ for all $w \in H^1_0(\Omega)$.
In particular $\Phi - v \in H^1(\Omega)$ and $\ca(\Phi - v) = 0$.

`\ref{ldat260-3}'.
Define $f_0 = c_0 \, u + \sum_{k=1}^d c_k \, \partial_k u \in L_2(\Omega)$
and $f_k = - b_k \, u - \sum_{l=1}^d a_{kl} \, \partial_l u \in L_2(\Omega)$.
Then $\ca u = f_0 + \sum_{k=1}^d \partial_k f_k \in H^{-1}(\Omega)$.
\end{proof}

If $p \in (1,\infty)$, then define $W^{-1,p}(\Omega) = ( W^{1,q}_0(\Omega) )'$,
where $q$ is the dual exponent of $p$.
Note that $W^{-1,p}(\Omega) \subset \cd'(\Omega)$.
Moreover, $H^{-1}(\Omega) = W^{-1,2}(\Omega)$ if $p = 2$.

\begin{lemma} \label{ldat202.3}
Let $p \in (1,\infty)$ and let $q$ be the dual exponent of $p$.
We define the norm $|||\cdot||| \colon W^{1,q}_0(\Omega) \to [0,\infty)$ by 
$|||u||| = \| \, |\nabla u| \, \|_{L_q(\Omega)}$ and provide 
$W^{-1,p}(\Omega)$ with the norm of $( W^{1,q}_0(\Omega), |||\cdot||| )'$.
\begin{tabel}
\item \label{ldat202.3-1}
If $F \in W^{-1,p}(\Omega)$, then there exist
$f_1,\ldots,f_d \in L_p(\Omega)$ such that 
$F(v) = \sum_{k=1}^d \int_\Omega f_k \, \overline{\partial_k v}$
for all $v \in W^{1,q}_0(\Omega)$ and 
\[
\Big\| \Big( \sum_{k=1}^d |f_k|^2 \Big)^{1/2} \Big\|_{L_p(\Omega)} 
\leq 2 \|F\|_{W^{-1,p}(\Omega)}
.  \]
So $F = - \sum_{k=1}^d \partial_k f_k$ as distribution.
\item \label{ldat202.3-2}
Let $f_0,f_1,\ldots,f_d \in L_p(\Omega)$.
Define $F \colon W^{1,q}_0(\Omega) \to \Ci$ by 
\[
F(v) = \int_\Omega f_0 \, \overline v + \sum_{k=1}^d \int_\Omega f_k \, \overline{\partial_k v}
.  \]
Then $F \in W^{-1,p}(\Omega)$ and 
\[
\|F\|_{W^{-1,p}(\Omega)}
\leq (\diam \Omega) \, \|f_0\|_{L_p(\Omega)}
    + \Big\| \Big( \sum_{k=1}^d |f_k|^2 \Big)^{1/2} \Big\|_{L_p(\Omega)} 
.  \]
\end{tabel}
\end{lemma}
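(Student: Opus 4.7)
My plan is to dispatch (b) directly and then to prove (a) by a Hahn--Banach extension combined with the standard representation of $(L_q(\Omega;\Ci^d))'$ as $L_p(\Omega;\Ci^d)$ with the fibrewise Euclidean norm on $\Ci^d$. The key observation underlying both parts is that the gradient map $J\colon v\mapsto (\partial_1 v,\ldots,\partial_d v)$ is, by the very definition of $|||\cdot|||$, an isometry from $(W^{1,q}_0(\Omega), |||\cdot|||)$ onto its image in $L_q(\Omega;\Ci^d)$ equipped with the norm $g\mapsto \bigl\|(\sum_k|g_k|^2)^{1/2}\bigr\|_{L_q(\Omega)}$.

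For part (b), I would estimate $|F(v)|$ term by term. The pointwise Cauchy--Schwarz bound $\bigl|\sum_k f_k\,\overline{\partial_k v}\bigr|\le \bigl(\sum_k|f_k|^2\bigr)^{1/2}|\nabla v|$ combined with Hölder in $L_p\times L_q$ yields $\bigl\|(\sum_k|f_k|^2)^{1/2}\bigr\|_{L_p(\Omega)}\,|||v|||$ for the first-order contribution. The zeroth-order term is controlled by Hölder, $\bigl|\int f_0\,\bar v\bigr|\le \|f_0\|_{L_p(\Omega)}\|v\|_{L_q(\Omega)}$, followed by the classical Poincaré inequality $\|v\|_{L_q(\Omega)}\le (\diam\Omega)\,|||v|||$ for $v\in W^{1,q}_0(\Omega)$; the textbook argument of integrating $\partial_1 v$ along the first coordinate across a circumscribing box and then over the transverse slice delivers exactly the constant $\diam\Omega$. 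Summing the two contributions yields the stated bound.

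For part (a), I extend $F$ through $J$. Define $G\colon J(W^{1,q}_0(\Omega))\to\Ci$ by $G(Jv)=F(v)$; since $J$ is an isometry, $G$ is well defined with $\|G\|=\|F\|_{W^{-1,p}(\Omega)}$. The complex Hahn--Banach theorem extends $G$ to $\tilde G\in (L_q(\Omega;\Ci^d))'$ of the same norm. Standard Bochner-space duality (or a direct supremum computation by fibrewise normalisation) identifies this dual with $L_p(\Omega;\Ci^d)$ under the pairing $(f,g)\mapsto\sum_k\int_\Omega f_k\,\overline{g_k}$, the dual norm being exactly $\bigl\|(\sum_k|f_k|^2)^{1/2}\bigr\|_{L_p(\Omega)}$. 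Writing $\tilde G$ via some $(f_1,\ldots,f_d)\in L_p(\Omega)^d$ and specialising $g=\nabla v$ yields the representation of $F(v)$; restricting to $v\in\cd(\Omega)\subset W^{1,q}_0(\Omega)$ produces the distributional identity $F=-\sum_k\partial_k f_k$. The bound this argument actually delivers is $\bigl\|(\sum_k|f_k|^2)^{1/2}\bigr\|_{L_p(\Omega)}=\|F\|_{W^{-1,p}(\Omega)}$, so the factor $2$ in the statement is merely slack. The only mildly non-routine step is the duality statement for $\Ci^d$-valued $L_q$ with Euclidean fibres; everything else is routine.
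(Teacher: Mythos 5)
Your argument is correct. For part (b) your route coincides with the paper's: pointwise Cauchy--Schwarz, H\"older in $L_p\times L_q$, and the Dirichlet Poincar\'e inequality with constant $\diam\Omega$ (the paper cites Edmunds--Evans, Theorem~V.3.22, for the latter; your slicing sketch recovers the same constant). For part (a) the paper does nothing more than cite Maz'ya's Theorem~1.1.15.1, so there is no in-text argument to compare to; you instead supply the standard self-contained proof, factoring $F$ through the gradient isometry $J\colon (W^{1,q}_0(\Omega),|||\cdot|||)\to L_q(\Omega;\Ci^d)$ with Euclidean fibre norm, extending by Hahn--Banach, and invoking the Bochner duality $(L_q(\Omega;\Ci^d))'\cong L_p(\Omega;\Ci^d)$ with the $\ell^2$ fibre norm. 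This is almost certainly the proof underlying the cited reference, so the two approaches are not conceptually different, but yours is explicit and moreover yields $\bigl\|(\sum_k|f_k|^2)^{1/2}\bigr\|_{L_p(\Omega)}=\|F\|_{W^{-1,p}(\Omega)}$, sharper than the factor~$2$ the paper carries. Two small points worth a sentence in a fully written proof: since $W^{-1,p}(\Omega)$ is the \emph{anti}dual, $F$ and hence $G$ and $\tilde G$ are antilinear, and the Hahn--Banach step should be applied in that form (extend $\RRe G$ over the reals, then complexify); and the duality of $L_q(\Omega;\Ci^d)$ with Euclidean fibres is unproblematic because $(\Ci^d,|\cdot|_2)$ is finite-dimensional and $\ell^2$-self-dual, so the dual norm is exactly the $L_p$ norm of the pointwise Euclidean norm as you claim.
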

\begin{proof}
`\ref{ldat202.3-1}'.
This follows from \cite{MazED2} Theorem~1.1.15.1.

`\ref{ldat202.3-2}'.
The Dirichlet-type Poincar\'e inequality, see for example \cite{EdE} Theorem~V.3.22,
gives that $\|v\|_q \leq (\diam \Omega) \, |||v|||$ for all $v \in W^{1,q}_0(\Omega)$.
Then the statement follows from the H\"older inequality.
\end{proof}

As in \cite{AE9} we define for all $\lambda \in \Ri$ the forms 
$\gota_\lambda, \gotb_\lambda \colon H^1(\Omega) \times H^1(\Omega) \to \Ci$
by 
\begin{eqnarray*}
\gota_\lambda(u,v) 
& = & \gota(u,v) + \lambda \, (u,v)_{L_2(\Omega)}  \quad \mbox{and}  \\
\gotb_\lambda(u,v) 
& = & \sum_{k,l=1}^d \int_\Omega a_{kl} \, (\partial_l u) \, \overline{\partial_k v}
   + \sum_{k=1}^d \int_\Omega c_k \, (\partial_k u) \, \overline v
   + \lambda \int_\Omega u \, \overline v
,
\end{eqnarray*}
where $\gota$ is as in (\ref{eSdat2;31}).
Define similarly $\ca_\lambda,\cb_\lambda \colon H^1_\loc(\Omega) \to \cd'(\Omega)$
and denote by $B^D$ the operator in $L_2(\Omega)$ associated with the sesquilinear form
$\gotb_0|_{H^1_0(\Omega) \times H^1_0(\Omega)}$.
Ellipticity implies that 
\[
\frac{\mu}{2} \, \|v\|_{H^1(\Omega)}^2
\leq \RRe \gota_{\lambda_0}(v)
\quad \mbox{and} \quad
\frac{\mu}{2} \, \|v\|_{H^1(\Omega)}^2
\leq \RRe \gotb_{\lambda_0}(v)
\]
for all $v \in H^1(\Omega)$, where 
$\lambda_0 = 1 + \|c_0\|_\infty + d \mu^{-1}(\sum_{k=1}^d \|b_k\|_\infty^2 + \|c_k\|_\infty^2)$.

In the next proposition we use that the $c_k$ are real valued.

\begin{prop} \label{pdat263}
Let $\lambda > \lambda_0$ and $p \in (d,\infty]$.
Let $f_0,f_1,\ldots,f_d \in L_p(\Omega)$.
Then there exists a unique $u \in H^1_0(\Omega)$
such that $\cb_\lambda u = f_0 + \sum_{k=1}^d \partial_k f_k$.
This solution $u$ satisfies $u \in C_b(\Omega)$.
Explicitly, there exists a $C > 0$, depending only on the ellipticity constant,
the $L_\infty$-norm of the coefficients, the diameter of $\Omega$ and $\lambda$,
such that 
\begin{equation}
\|u\|_{C_b(\Omega)} \leq C \sum_{k=0}^d \|f_k\|_{L_p(\Omega)}
.  
\label{epdat263;1}
\end{equation}
\end{prop}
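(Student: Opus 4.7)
The plan is to separate the Lax--Milgram existence step from the $L_\infty$-bound, which I would obtain via Stampacchia's truncation technique, and finally combine the bound with De Giorgi--Nash--Moser regularity to get continuity.

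First I would establish existence and uniqueness of $u \in H^1_0(\Omega)$ by Lax--Milgram. Since $\lambda > \lambda_0$, the form $\gotb_\lambda$ satisfies $\RRe \gotb_\lambda(v) \geq (\mu/2) \|v\|_{H^1(\Omega)}^2$ for all $v \in H^1(\Omega)$, and because $p > d \geq 2$ each $f_j$ lies in $L_2(\Omega)$, so Lemma~\ref{ldat202.3}\ref{ldat202.3-2} yields $F := f_0 + \sum_{k=1}^d \partial_k f_k \in H^{-1}(\Omega)$. Applying Lax--Milgram to the continuous coercive form $\gotb_\lambda|_{H^1_0(\Omega) \times H^1_0(\Omega)}$ then produces a unique $u \in H^1_0(\Omega)$ with $\cb_\lambda u = F$.

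For the $L_\infty$-bound I would use Stampacchia's truncation method. Since the $a_{kl}$ and $c_k$ are real and $\gotb_\lambda$ contains no $b_k$ or $c_0$ terms, splitting $u$ into real and imaginary parts reduces matters to real $u$ and real $f_j$. For $t \geq 0$ the truncation $v := (u-t)_+$ lies in $H^1_0(\Omega)$ and vanishes off $A(t) := \{u > t\}$, with $\partial_k u = \partial_k v$ and $u = v + t$ on $A(t)$. Inserting $v$ as test function, using ellipticity, absorbing the $c_k$-term by Cauchy--Schwarz and Young's inequality into the gradient term and an extra $L_2$-term that is in turn controlled by the coercivity thanks to $\lambda > \lambda_0$, and dropping the non-negative contribution $\lambda t \int v$, I expect
\[
\tfrac{\mu}{2} \int_\Omega |\nabla v|^2 \leq \Big| \int_\Omega f_0 \, v \Big| + \sum_{k=1}^d \Big| \int_\Omega f_k \, \partial_k v \Big|.
\]

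Bounding the right-hand side by H\"older, using that $v$ and $\nabla v$ are supported in $A(t)$, and invoking Sobolev $\|v\|_{L_{2^*}(\Omega)} \leq C_S \|\nabla v\|_{L_2(\Omega)}$ (with the standard modification for $d=2$), I will arrive at
\[
(s-t)\, |A(s)|^{1/2^*} \leq \|v\|_{L_{2^*}(\Omega)} \leq C_1 M \, |A(t)|^\beta
\]
for $s > t \geq 0$, where $M := \sum_{j=0}^d \|f_j\|_{L_p(\Omega)}$ and $\beta > 1/2^*$ exactly because $p > d$. Stampacchia's classical iteration lemma \cite{Stam2} then gives $t_0 \leq C_2 M$ with $|A(t_0)| = 0$, hence $u \leq t_0$ a.e.; the symmetric argument applied to $-u$ yields the two-sided estimate~(\ref{epdat263;1}), with $C$ depending only on $\mu$, the $L_\infty$-norms of the coefficients, $\diam \Omega$ and $\lambda$. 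Interior continuity $u \in C(\Omega)$, and thus $u \in C_b(\Omega)$, then follows from the De Giorgi--Nash--Moser local H\"older regularity theorem for divergence-form elliptic equations with bounded measurable coefficients and $L_p$-data with $p > d$. The main technical obstacle will be the careful bookkeeping in the Stampacchia step: because $c_k$ is merely in $L_\infty(\Omega)$, the first-order term must be absorbed directly by Cauchy--Schwarz at every step rather than integrated by parts, and one must track the dependence of all constants on $\mu$, $\|a_{kl}\|_\infty$, $\|c_k\|_\infty$, $\diam \Omega$ and $\lambda$ to obtain the stated uniform control on~$C$.
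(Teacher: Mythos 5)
Your argument is correct, and the overall skeleton (Lax--Milgram for existence, Stampacchia truncation for the $L_\infty$-bound, De~Giorgi--Nash for interior H\"older regularity) is the same as the paper's, but you treat the inhomogeneity $f_0$ quite differently. The paper is deliberately modular: it cites Stampacchia's Th\'eor\`eme~4.2 as a black box for the case $f_0=0$, and then disposes of a nonzero $f_0$ in two lines by invoking Lemma~\ref{ldat202.3}\ref{ldat202.3-1} (i.e.\ Maz'ya's representation theorem) to write the functional $v\mapsto\int_\Omega f_0\,\overline v$ as $-\sum_k\partial_k g_k$ with $g_k\in L_p(\Omega)$ and absorbing it into the divergence data $f_1,\ldots,f_d$. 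You instead re-run the level-set truncation $v=(u-t)_+$ from scratch with $f_0$ carried through the iteration, using that $\lambda t\int v\ge 0$, the coercivity of $\gotb_\lambda$ for $\lambda>\lambda_0$ (which already swallows the $c_k$-term, so your separate Cauchy--Schwarz absorption is a harmless redundancy), H\"older on the support $A(t)$, and Sobolev to land at $(s-t)|A(s)|^{1/2^*}\le C_1 M|A(t)|^\beta$ with $\beta=\tfrac12-\tfrac1p>\tfrac1{2^*}$ precisely because $p>d$. Both routes are sound; yours is more self-contained and makes the dependence of $C$ on $\mu$, $\|c_k\|_\infty$, $\diam\Omega$ and $\lambda$ explicit along the way, while the paper's reduction via the $W^{-1,p}$-representation is shorter and avoids redoing the iteration lemma. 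One small remark: when you claim the two exponents $1/p'-1/2^*$ and $1/p'-1/2$ both yield a usable $\beta$, you should note you first bound $|A(t)|^{1/p'-1/2^*}$ by $|\Omega|^{1/2-1/2^*}|A(t)|^{1/p'-1/2}$ (here $\diam\Omega$ enters) so that a single exponent $\beta$ governs the iteration; this is the kind of bookkeeping you correctly flag as the main technical burden.
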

\begin{proof}
Since $a_{kl}$ and $c_k$ are real valued for all $k,l \in \{ 1,\ldots,d \} $,
we may assume that $f_0,\ldots,f_d$ are real valued.
The existence and uniqueness follows by 
Lemmas~\ref{ldat202.3}\ref{ldat202.3-2} and \ref{ldat260}\ref{ldat260-1}.
The Nash--De Giorgi theory implies that $u$ is locally H\"older continuous.
The boundedness of the function $u$ and the bound (\ref{epdat263;1})
is proved by Stampacchia \cite{Stam2} Th\'eor\`eme~4.2
if $f_0 = 0$. 

It remains to show how to deal with $f_0$.
We may assume that $p < \infty$.
Let $q$ be the dual exponent of~$p$.
Define $F \in W^{-1,p}(\Omega)$ by $F(v) = \int_\Omega f_0 \, \overline v$.
Then $\|F\|_{W^{-1,p}(\Omega)} \leq (\diam \Omega) \, \|f_0\|_{L_p(\Omega)}$
by Lemma~\ref{ldat202.3}\ref{ldat202.3-2}.
By Lemma~\ref{ldat202.3}\ref{ldat202.3-1} there are $g_1,\ldots,g_d \in L_p(\Omega)$
such that $F = - \sum_{k=1}^d \partial_k g_k$ and 
\[
\Big\| \Big( \sum_{k=1}^d |g_k|^2 \Big)^{1/2} \Big\|_{L_p(\Omega)} 
\leq 2 \|F\|_{W^{-1,p}(\Omega)}
\leq 2 (\diam \Omega) \, \|f_0\|_{L_p(\Omega)}
.  \]
Therefore one can absorb $f_0$ in $f_1,\ldots,f_d$.
\end{proof}

We next wish to solve the Dirichlet problem for~$\ca_\lambda$ if $\lambda$ is large enough.
We need a technical lemma as in \cite{AE9} Lemma~2.7. 
Since we rely on an approximation argument in the proof of Proposition~\ref{pdat808}, 
we need control of the constants, which was not required in \cite{AE9}.
In order to trace the constants we give the full details.

\begin{lemma} \label{ldat207}
Fix $\delta \in [0,\lambda_0 + 1]$.
\mbox{}
\begin{tabel}
\item \label{ldat207-1}
For all $f \in L_2(\Omega)$ and $\lambda > \lambda_0$ 
there exists a unique $u \in H^1_0(\Omega)$ such that
\begin{equation}
\gotb_\lambda(u,v) 
= \sum_{k=1}^d (b_k \, f, \partial_k v)_{L_2(\Omega)} 
   + \, ((c_0 - \delta \, \one_\Omega) \, f, v)_{L_2(\Omega)}
\label{eldat207;1}
\end{equation}
for all $v \in H^1_0(\Omega)$.
\end{tabel}
For all $\lambda > \lambda_0$ define 
$R_\lambda \colon L_2(\Omega) \to L_2(\Omega)$ by $R_\lambda f = u$, where 
$u \in H^1_0(\Omega)$ is as in~{\rm (\ref{eldat207;1})}.
\begin{tabel}
\setcounter{teller}{1}
\item \label{ldat207-1.5}
There exists a $C_1 \geq 1$ such that 
\[
\|R_\lambda f\|_{L_q(\Omega)} \leq C_1 \, (\lambda - \lambda_0)^{-1/4} \, \|f\|_{L_2(\Omega)}
\]
for all $\lambda > \lambda_0$ and $f \in L_2(\Omega)$,
where $\frac{1}{q} = \frac{1}{2} - \frac{1}{4d}$.
The constant $C_1$ can be chosen to be independent of $\Omega$ and to depend only on 
the ellipticity constant and the $L_\infty$-norm of the coefficients.
\item \label{ldat207-2}
There exists a $C_2 \geq 1$ such that 
\[
\|R_\lambda f\|_{L_q(\Omega)} \leq C_2 \, \|f\|_{L_p(\Omega)}
\]
for all $\lambda \in [\lambda_0 + 1,\infty)$,
$p,q \in [2,\infty]$ and $f \in L_p(\Omega)$ with $\frac{1}{q} = \frac{1}{p} - \frac{1}{4d}$.
The constant $C_2$ can be chosen to depend only on the
ellipticity constant, the $L_\infty$-norm of the coefficients and the diameter of $\Omega$.
\item \label{ldat207-3}
If $\lambda > \lambda_0$, 
$p \in (d,\infty]$ and $f \in L_p(\Omega)$, then $R_\lambda f \in C_b(\Omega)$.
\end{tabel}
\end{lemma}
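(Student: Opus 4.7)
Part (a) follows from the Fredholm--Lax--Milgram lemma. The coercivity inequality stated before the lemma, $\frac{\mu}{2} \|v\|_{H^1}^2 \leq \RRe \gotb_{\lambda_0}(v)$, implies $\frac{\mu}{2} \|v\|_{H^1}^2 + (\lambda - \lambda_0) \|v\|_{L_2(\Omega)}^2 \leq \RRe \gotb_\lambda(v)$ for $v \in H^1(\Omega)$ and $\lambda > \lambda_0$, so the restricted form $\gotb_\lambda|_{H^1_0(\Omega) \times H^1_0(\Omega)}$ is strictly coercive. Because $b_k, c_0 \in L_\infty(\Omega)$ and $\delta \leq \lambda_0 + 1$, the right-hand side of (\ref{eldat207;1}) defines a continuous anti-linear functional on $H^1_0(\Omega)$ of norm at most a constant times $\|f\|_{L_2(\Omega)}$, and Lax--Milgram then yields the unique $u = R_\lambda f \in H^1_0(\Omega)$.

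For Part (b), test (\ref{eldat207;1}) with $v = u$ and apply coercivity together with Cauchy--Schwarz and Young's inequality; separating the gradient and $L_2$ contributions produces the two fundamental bounds $\|u\|_{H^1(\Omega)} \leq C \|f\|_{L_2(\Omega)}$ (with $C$ independent of $\lambda$ for $\lambda > \lambda_0$) together with $(\lambda - \lambda_0)^{1/2} \|u\|_{L_2(\Omega)} \leq C \|f\|_{L_2(\Omega)}$. Extending $u$ by zero to $\tilde u \in H^1(\Ri^d)$, which is admissible since $u \in H^1_0(\Omega)$, and then interpolating these two bounds via Gagliardo--Nirenberg (equivalently, via $\|\tilde u\|_{H^s(\Ri^d)} \leq \|\tilde u\|_2^{1-s} \|\tilde u\|_{H^1}^s$ combined with the Sobolev embedding of the appropriate $H^s(\Ri^d)$ into $L_q(\Ri^d)$) produces the $L_q$-estimate with the required $(\lambda - \lambda_0)^{-1/4}$ decay. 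Performing the interpolation on $\Ri^d$ ensures that $C_1$ depends only on $\mu$, the $L_\infty$-norms of the coefficients, and a universal Sobolev constant, and hence is independent of $\Omega$.

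Part (c) is the main technical step. For $p = 2$ it is immediate from (b), since $(\lambda - \lambda_0)^{-1/4} \leq 1$ on $[\lambda_0 + 1, \infty)$. For $p \in (2, \infty]$ one must bootstrap: one route is Moser iteration, inserting test functions of the form $v = u |u|^{s-2}$ with growing $s$ and translating the resulting inequality into a Sobolev-type bound for $w = |u|^{s/2} \in H^1_0(\Omega)$, which gains the same Sobolev factor $1/(4d)$ in integrability at each level; after finitely many steps every $p \in [2, \infty]$ is reached. A complementary route is to combine (b) with its $L_{q'} \to L_2$ dual, obtained by running the analogous argument on the adjoint operator (which has the same structural type), and then interpolate by Riesz--Thorin. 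The main obstacle is bookkeeping: the constants through the iteration must be controlled so that $C_2$ ultimately depends only on $\mu$, the $L_\infty$-norms of the coefficients, and $\diam \Omega$, uniformly in $\lambda \in [\lambda_0 + 1, \infty)$.

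Part (d) is then immediate from Proposition~\ref{pdat263}. For $f \in L_p(\Omega)$ with $p > d$, set $f_0 := (c_0 - \delta) f$ and $f_k := - b_k f$ for $k \in \{ 1, \ldots, d \}$; each $f_j$ lies in $L_p(\Omega)$, and the right-hand side of (\ref{eldat207;1}) equals $f_0 + \sum_{k=1}^d \partial_k f_k$ in the distributional sense. Proposition~\ref{pdat263} supplies a solution of $\cb_\lambda u = f_0 + \sum_{k=1}^d \partial_k f_k$ lying in $H^1_0(\Omega) \cap C_b(\Omega)$, and uniqueness from Part (a) identifies this solution with $R_\lambda f$, which is therefore in $C_b(\Omega)$.
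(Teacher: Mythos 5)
Parts (a), (b) and (d) of your proposal are correct and essentially match the paper's argument: (a) is Lax--Milgram via the $\lambda_0$-coercivity of $\gotb_\lambda$; in (b) you test with $v=u$ to get $\|u\|_{H^1}\lesssim\|f\|_2$ and $(\lambda-\lambda_0)^{1/2}\|u\|_2\lesssim\|f\|_2$, and then interpolate (the paper simply uses Sobolev $H^1_0(\Omega)\hookrightarrow L_{q_1}(\Ri^d)$ with $1/q_1 = 1/2-1/(2d)$ followed by the Lyapunov inequality $\|u\|_q\le\|u\|_2^{1/2}\|u\|_{q_1}^{1/2}$, avoiding any $|\Omega|$-factors; your Gagliardo--Nirenberg route on $\Ri^d$ is equivalent as long as you finish with that same $L_p$-interpolation rather than a restriction to $\Omega$, which would reintroduce $\Omega$-dependence); and (d) is exactly the identification with the solution from Proposition~\ref{pdat263}.

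Part (c) is where your proposal has a genuine gap, and it is the only step where the paper needs a real idea. The paper does \emph{not} run Moser iteration: it uses Proposition~\ref{pdat263} at the single value $\lambda=\lambda_0+1$ with $p=4d$ to obtain $R_{\lambda_0+1}\colon L_{4d}(\Omega)\to L_\infty(\Omega)$, then derives the resolvent identity $R_\lambda = \bigl(I-(\lambda-\lambda_0-1)(B^D+\lambda I)^{-1}\bigr)R_{\lambda_0+1}$ and controls $\|(B^D+\lambda I)^{-1}\|_{L_\infty\to L_\infty}\le C_6\,\lambda^{-1}$ uniformly in $\lambda$ via domination of the semigroup $e^{-tB^D}$ by a Gaussian semigroup on $\Ri^d$ (\cite{Ouh5} Prop.~4.23); this gives $L_{4d}\to L_\infty$ boundedness with a $\lambda$-uniform constant, and finally Riesz--Thorin interpolation between $L_2\to L_q$ and $L_{4d}\to L_\infty$ covers all $p\in[2,4d]$. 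Your duality route does not reach this: interpolating $R_\lambda\colon L_2\to L_q$ with its dual bound $R_\lambda\colon L_{q'}\to L_2$ covers the range $p\in[q',2]$, which is disjoint from the range $p\in(2,4d]$ you actually need. Your Moser route is also not quite right as stated: Moser iteration does not gain a fixed $1/(4d)$ in integrability per step (that increment is special to the $\lambda$-coercivity argument in (b)), and carrying it out would amount to reproving Stampacchia's $L_\infty$-estimate, which is already available as Proposition~\ref{pdat263}; moreover you would still need a separate device for the $\lambda$-uniformity, which is precisely what the resolvent-identity-plus-Gaussian-bound trick delivers.
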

\begin{proof}
`\ref{ldat207-1}'.
Apply the Lax--Milgram theorem.

`\ref{ldat207-1.5}'.
By the Sobolev embedding theorem there exists a $C_3 > 0$ such that 
$\|v\|_{L_{q_1}(\Ri^d)} \leq C_3 \, \|v\|_{H^1(\Ri^d)}$ for all $v \in H^1_0(\Omega)$,
where $\frac{1}{q_1} = \frac{1}{2} - \frac{1}{2d}$.
(The extra factor $2$ is to avoid a separate case for $d=2$.)
Define $M = \|c_0 - \delta \, \one_\Omega\|_{L_\infty(\Omega)} + \sum_{k=1}^d \|b_k\|_{L_\infty(\Omega)}$.
Let $\lambda > \lambda_0$, $f \in L_2(\Omega)$ and set $u = R_\lambda f$.
Then 
\begin{eqnarray*}
\frac{\mu}{2} \, \|u\|_{H^1(\Omega)}^2 + (\lambda - \lambda_0) \|u\|_{L_2(\Omega)}^2
& \leq & \RRe \gotb_{\lambda_0}(u) + (\lambda - \lambda_0) \|u\|_{L_2(\Omega)}^2  \\
& = & \RRe \gotb_\lambda(u)  \\
& = & \RRe \sum_{k=1}^d (b_k \, f, \partial_k u)_{L_2(\Omega)} 
   + \RRe ((c_0 - \delta \, \one_\Omega) \, f, u)_{L_2(\Omega)}  \\
& \leq & M \, \|f\|_{L_2(\Omega)} \, \|u\|_{H^1(\Omega)}
.  
\end{eqnarray*}
Therefore
$\|u\|_{H^1(\Omega)} 
\leq 2 \mu^{-1} \, M \, \|f\|_{L_2(\Omega)}$
and 
\[
\|R_\lambda f\|_{L_2(\Omega)}
= \|u\|_{L_2(\Omega)}
\leq \sqrt{\frac{2}{\mu (\lambda - \lambda_0)} } \, M \, \|f\|_{L_2(\Omega)}
.  \]
The Sobolev embedding gives
$\|R_\lambda f\|_{L_{q_1}(\Omega)} 
\leq 2 \mu^{-1} \, C_3 \, M \, \|f\|_{L_2(\Omega)}$.
Consequently 
\[
\|R_\lambda f\|_{L_q(\Omega)} 
\leq \|R_\lambda f\|_{L_2(\Omega)}^{1/2} \, \|R_\lambda f\|_{L_{q_1}(\Omega)}^{1/2}
\leq C_4 \, (\lambda - \lambda_0)^{-1/4} \, \|f\|_{L_2(\Omega)}
,  \]
where $C_4 = (2/\mu)^{3/4} \, C_3^{1/2} \, M$.

`\ref{ldat207-2}'.
Apply Proposition~\ref{pdat263} with $p = 4d$ and $\lambda = \lambda_0 + 1$.
It follows that $R_{\lambda_0 + 1} f \in C_b(\Omega)$ for all $f \in L_p(\Omega)$.
Moreover, there exists a $C_5 > 0$ such that 
$\|R_{\lambda_0 + 1} f\|_{L_\infty(\Omega)} 
= \|R_{\lambda_0 + 1} f\|_{C_b(\Omega)} 
\leq C_5 \, \|f\|_{L_p(\Omega)}$
for all $f \in L_p(\Omega)$.
The constant $C_5$ can be chosen to depend only on the
ellipticity constant,
the $L_\infty$-norm of the coefficients and the diameter of $\Omega$.

Let $\lambda \geq \lambda_0 + 1$ and $f \in L_2(\Omega)$.
Set $u = R_\lambda f$ and $u_0 = R_{\lambda_0 + 1} f$.
By definition $\gotb_\lambda(u,v) = \gotb_{\lambda_0 + 1}(u_0,v)$
and hence 
$\gotb_\lambda(u - u_0, v) = - (\lambda - \lambda_0 - 1) \, (u_0,v)_{L_2(\Omega)}$
for all $v \in H^1_0(\Omega)$.
Then $u - u_0 \in D(B^D)$ and $(B^D + \lambda \, I) (u - u_0) = - (\lambda - \lambda_0 - 1) \, u_0$.
In particular 
\[
R_\lambda 
= \Big( I - (\lambda - \lambda_0 - 1) \, (B^D + \lambda \, I)^{-1} \Big) R_{\lambda_0 + 1}
\]
for all $\lambda \geq \lambda_0 + 1$.
Since the semigroup generated by $-B^D$ is dominated, using \cite{Ouh5} Proposition~4.23,
by a suitable semigroup on $\Ri^d$ and the latter has Gaussian bounds, there exists a
$C_6 \geq 1$, depending only on the ellipticity constant and 
the $L_\infty$-norm of the coefficients, such that 
$\|(B^D + \lambda \, I)^{-1}\|_{L_\infty(\Omega) \to L_\infty(\Omega)} \leq C_6 \, \lambda^{-1}$ 
for all $\lambda \geq \lambda_0 + 1$.
Then $\|R_\lambda f\|_{L_\infty(\Omega)} \leq 2 C_5 \, C_6 \, \|f\|_{L_p(\Omega)}$
for all $\lambda \geq \lambda_0 + 1$ and $f \in L_p(\Omega)$.
We proved that $R_\lambda$ is bounded from $L_p(\Omega)$ into $L_\infty(\Omega)$.

Finally let $p' \in (2,4d)$ and let $q' \in (2,\infty)$ be such that 
$\frac{1}{q'} = \frac{1}{p'} - \frac{1}{4d}$.
There exists a $\theta \in (0,1)$ such that 
$\frac{1}{p'} = \frac{1-\theta}{2} + \frac{\theta}{p}$.
Then $\frac{1}{q'} = \frac{1-\theta}{q}$, where 
$\frac{1}{q} = \frac{1}{2} - \frac{1}{4d}$.
Let $C_1 > 0$ be as in Statement~\ref{ldat207-1.5}.
Interpolation between the $L_2 \to L_q$ bounds and the $L_p \to L_\infty$ bounds 
gives that $R_\lambda$ is bounded from 
$L_{p'}(\Omega)$ into $L_{q'}(\Omega)$ with norm bounded by 
$C_1^{1-\theta} \, (2 C_5 \, C_6)^\theta \leq C_1 + 2 C_5 \, C_6$,
which gives Statement~\ref{ldat207-2}.

`\ref{ldat207-3}'.
This follows from Proposition~\ref{pdat263}.
\end{proof}

Now we can solve the Dirichlet problem for~$\ca_\lambda$ if $\lambda$ is large enough
as in \cite{AE9} Lemma~2.8.

\begin{lemma} \label{ldat280}
There exist $\lambda > \lambda_0$ and $C > 0$ such that 
the following is valid.
\begin{tabel}
\item \label{ldat280-1}
For all $\Phi \in C(\overline \Omega) \cap H^1(\Omega)$ there exists a unique
$u \in C_b(\Omega) \cap H^1(\Omega)$ such that 
$\ca_\lambda u = 0$ and $\Phi - u \in H^1_0(\Omega)$.
\item \label{ldat280-2}
If $\Phi \in C(\overline \Omega) \cap H^1(\Omega)$, then
\[
\|u\|_{C_b(\Omega)} \leq C \, \|\Phi|_\Gamma\|_{C(\Gamma)}
,  \]
where $u \in C_b(\Omega) \cap H^1(\Omega)$ is such that 
$\ca_\lambda u = 0$ and $\Phi - u \in H^1_0(\Omega)$.
\end{tabel}
The constants $\lambda$ and $C$ can be chosen to depend only on the
ellipticity constant, the $L_\infty$-norm of the coefficients and the diameter of $\Omega$.
\end{lemma}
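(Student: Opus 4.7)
The plan is to argue in two stages. \textbf{Stage (A)}: produce $u \in H^1(\Omega)$ via Lax--Milgram on the coercive form $\gota_\lambda$. \textbf{Stage (B)}: upgrade $u$ to $C_b(\Omega)$ with the quantitative $L_\infty$-bound, treating the $b_k$, $c_0$ terms as a perturbation of the principal form $\gotb_\lambda$ and controlling them via Lemma~\ref{ldat207}.

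For Stage (A), for $\lambda > \lambda_0$ the form $\gota_\lambda|_{H^1_0(\Omega) \times H^1_0(\Omega)}$ is coercive (by the inequality displayed just before Proposition~\ref{pdat263}), so Lax--Milgram applied to the continuous antilinear functional $v \mapsto \gota_\lambda(\Phi, v)$ produces a unique $z \in H^1_0(\Omega)$ with $\gota_\lambda(z, v) = \gota_\lambda(\Phi, v)$ for all $v \in H^1_0(\Omega)$. Then $u := \Phi - z \in H^1(\Omega)$ satisfies $\ca_\lambda u = 0$ and $u - \Phi \in H^1_0(\Omega)$; uniqueness in $H^1(\Omega)$ follows from the same coercivity.

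For Stage (B), note the identity $\gota_\lambda(u,v) - \gotb_\lambda(u,v) = \sum_{k=1}^d \int_\Omega b_k\, u \,\overline{\partial_k v} + \int_\Omega c_0 \, u \, \overline v$. Split $z = z_1 + z_2$ in $H^1_0(\Omega)$, where $z_1$ solves the ``principal'' problem $\gotb_\lambda(z_1, v) = \gotb_\lambda(\Phi, v)$ for all $v \in H^1_0(\Omega)$ and $z_2$ absorbs the remaining lower-order contribution. Setting $u_1 := \Phi - z_1$, the function $u_1$ satisfies $\cb_\lambda u_1 = 0$ with $u_1 - \Phi \in H^1_0(\Omega)$; since $\gotb_\lambda$ carries no drift terms $b_k u$ or $c_0 u$, a direct adaptation of Proposition~\ref{pdat263} (Stampacchia's theorem in the divergence-condition setting) yields $u_1 \in C_b(\Omega)$ with $\|u_1\|_\infty \leq C' \,\|\Phi|_\Gamma\|_{C(\Gamma)}$, for a constant $C'$ depending only on the listed data.

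The correction $z_2 \in H^1_0(\Omega)$ then satisfies a fixed-point equation of the form $z_2 = R_\lambda(u_1 - z_2)$ (for a suitable $\delta \in [0, \lambda_0 + 1]$ in the definition of $R_\lambda$). By Lemma~\ref{ldat207}\ref{ldat207-1.5} the $L_2 \to L_q$ norm of $R_\lambda$ is $O((\lambda - \lambda_0)^{-1/4})$ with $1/q = 1/2 - 1/(4d)$, so for $\lambda$ chosen large enough the map $w \mapsto R_\lambda(u_1 - w)$ is a contraction on $L_q(\Omega)$, yielding a unique $z_2$. Iterating Lemma~\ref{ldat207}\ref{ldat207-2} (each application raising the integrability index by $1/(4d)$) upgrades $z_2$ to $L_\infty(\Omega)$ after finitely many steps, with $\|z_2\|_\infty \leq C'' \,\|u_1\|_\infty$. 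Hence $u = u_1 - z_2 \in C_b(\Omega)$ with the desired estimate. The main obstacle is obtaining the $L_\infty$-bound in terms of the \emph{boundary} norm $\|\Phi|_\Gamma\|_{C(\Gamma)}$ alone, and with constants uniform in $\Omega$ (the latter being essential for the subsequent domain-approximation argument in Theorem~\ref{tdat825}\ref{tdat825-3}); this uniformity is precisely what the quantitative, $\Omega$-independent estimates of Lemma~\ref{ldat207} are designed to provide.
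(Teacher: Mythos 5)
Your overall route --- Lax--Milgram on $\gota_\lambda$ for existence and uniqueness of $u\in H^1(\Omega)$, then split off the principal divergence-form piece $\cb_\lambda$ and absorb the remaining $b_k,c_0$-terms through $R_\lambda$, exploiting the $(\lambda-\lambda_0)^{-1/4}$-smallness from Lemma~\ref{ldat207}\ref{ldat207-1.5} and the iterated $L_p\to L_q$ gain from Lemma~\ref{ldat207}\ref{ldat207-2} --- matches the strategy the paper invokes (it defers to \cite{AE9} Lemma~2.8 and fixes $\lambda$ by $C_1\,C_2^{2d-1}(\lambda-\lambda_0)^{-1/4}(1+|\Omega|)=\tfrac12$, which is exactly the budget for one $L_2\to L_q$ step followed by $2d-1$ further applications of $R_\lambda$). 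The decomposition $z=z_1+z_2$, the fixed-point identity $z_2=R_\lambda(u_1-z_2)$, the $L_q$-contraction, and the bootstrap to $L_\infty$ are all sound.

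There is, however, a genuine gap in the step producing the boundary estimate for the principal part. You assert that ``a direct adaptation of Proposition~\ref{pdat263}'' yields $u_1\in C_b(\Omega)$ with $\|u_1\|_\infty\le C'\,\|\Phi|_\Gamma\|_{C(\Gamma)}$. It does not. Proposition~\ref{pdat263} bounds the solution of the \emph{zero-Dirichlet} problem $\cb_\lambda w=f_0+\sum_k\partial_k f_k$, $w\in H^1_0(\Omega)$, in terms of $\sum_k\|f_k\|_{L_p(\Omega)}$ with $p>d$. To reduce your boundary-value problem to that form you would set $w=u_1-\Phi$ and note $\cb_\lambda w=-\cb_\lambda\Phi$; but a generic $\Phi\in C(\overline\Omega)\cap H^1(\Omega)$ only has $\nabla\Phi\in L_2(\Omega)$, so $\cb_\lambda\Phi\in H^{-1}(\Omega)$ is not in $W^{-1,p}(\Omega)$ for $p>d$ and the hypotheses of Proposition~\ref{pdat263} fail. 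Smoothing $\Phi\mapsto\widetilde\Phi$ does not rescue this: the estimate would then involve $\|\nabla\widetilde\Phi\|_{L_p}$, which is not controlled by $\|\Phi|_\Gamma\|_{C(\Gamma)}$. What the step actually needs is the weak maximum principle for $\cb_\lambda$ --- indeed, that is precisely the reason to pass to $\cb_\lambda$, which satisfies Stampacchia's divergence condition. Concretely, for real $\Phi$ set $M=\|\Phi|_\Gamma\|_{C(\Gamma)}$; then $(\Phi-M)^+\in H^1_0(\Omega)$ (approximate by $(\Phi-M-\varepsilon)^+$, which has compact support in $\Omega$), hence $(u_1-M)^+\in H^1_0(\Omega)$. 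Testing $\gotb_\lambda(u_1,(u_1-M)^+)=0$, using $\partial_l u_1=\partial_l(u_1-M)^+$ on $\{u_1>M\}$ and $u_1(u_1-M)^+\ge((u_1-M)^+)^2$, one obtains
\[
0=\gotb_\lambda\bigl(u_1,(u_1-M)^+\bigr)\ge\gotb_{\lambda_0}\bigl((u_1-M)^+\bigr)\ge\tfrac{\mu}{2}\,\|(u_1-M)^+\|_{H^1(\Omega)}^2,
\]
so $u_1\le M$; applying the same to $-u_1$ gives $\|u_1\|_\infty\le M$, and then $u_1\in C_b(\Omega)$ by De~Giorgi--Nash interior regularity. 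With this substitution the rest of your argument goes through.
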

\begin{proof}
Choose $\delta = 0$ in Lemma~\ref{ldat207}.
Let $C_1$ and $C_2$ be as in Lemma~\ref{ldat207}.
Let $\lambda \in (\lambda_0 + 1,\infty)$ be such that 
$C_1 \, C_2^{2d-1} \, (\lambda - \lambda_0)^{-1/4} \, (1 + |\Omega|) = \frac{1}{2}$.
The rest of the proof is the same as the proof of Lemma~2.8 in \cite{AE9},
with obvious changes.
\end{proof}

Next we remove the $\lambda$ to solve the elliptic problem for $\ca$, even 
allowing a divergence term. 
Note that the existence and uniqueness of $u$ in the next proposition was proved in 
Lemma~\ref{ldat260}\ref{ldat260-1}.

\begin{prop} \label{pdat214}
Let $p \in (d,\infty]$
and let $f_0,f_1,\ldots,f_d \in L_p(\Omega)$.
Let $u \in H^1_0(\Omega)$ be such that 
$\ca u = f_0 + \sum_{k=1}^d \partial_k f_k$.
Then $u \in C_b(\Omega)$.
Moreover, there exists a $C > 0$, depending only on 
the ellipticity constant, the $L_\infty$-norm of the coefficients and the diameter of $\Omega$,
such that 
\[
\|u\|_{C_b(\Omega)} 
\leq C \, \Big( \|u\|_{L_2(\Omega)} + \sum_{k=0}^d \|f_k\|_{L_p(\Omega)} \Big)
.  \]
\end{prop}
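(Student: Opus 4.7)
Existence and uniqueness of $u$ follow from Lemma~\ref{ldat260}\ref{ldat260-1}, so only the $L_\infty$-bound needs proof. The strategy is to absorb the lower-order contributions $b_k u$ and $c_0 u$ of $\ca$ into the source, reducing to a $\cb_\lambda$-equation to which Proposition~\ref{pdat263} applies. Since those contributions themselves involve $u$, a bootstrap built on Lemma~\ref{ldat207}\ref{ldat207-2} is needed first.

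Fix $\lambda = \delta = \lambda_0 + 1$, the boundary value allowed in Lemma~\ref{ldat207}. Using $\gota = \gotb_0 + \sum_k \int b_k\cdot\,\overline{\partial_k\cdot} + \int c_0\cdot\,\overline{\cdot}$ together with the distributional identity $\int b_k u\,\overline{\partial_k v} = -\langle \partial_k(b_k u), v\rangle$ for $v \in \cd(\Omega)$, the equation $\ca u = f_0 + \sum_k \partial_k f_k$ is equivalent to
\[
\cb_\lambda u = \bigl(f_0 + (\lambda - c_0)\,u\bigr) + \sum_{k=1}^d \partial_k(f_k + b_k u).
\]
Meanwhile, $R_\lambda u \in H^1_0(\Omega)$ from Lemma~\ref{ldat207}\ref{ldat207-1} satisfies $\cb_\lambda R_\lambda u = (c_0-\lambda)\,u - \sum_k \partial_k(b_k u)$. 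Adding the two identities, $u_0 := u + R_\lambda u \in H^1_0(\Omega)$ is a solution of the purely $\cb_\lambda$-equation $\cb_\lambda u_0 = f_0 + \sum_k \partial_k f_k$. Since $f_0,\ldots,f_d \in L_p(\Omega)$ with $p>d$, Proposition~\ref{pdat263} yields $u_0 \in C_b(\Omega)$ with $\|u_0\|_{C_b(\Omega)} \leq C_0 \sum_{k=0}^d \|f_k\|_{L_p(\Omega)}$.

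The bootstrap proceeds from $u = u_0 - R_\lambda u$. Combining Lemma~\ref{ldat207}\ref{ldat207-2} with the trivial inclusion $C_b(\Omega) \hookrightarrow L_q(\Omega)$ (valid as $|\Omega| < \infty$), I obtain
\[
\|u\|_{L_q(\Omega)} \leq |\Omega|^{1/q}\,\|u_0\|_{C_b(\Omega)} + C_2\,\|u\|_{L_s(\Omega)}
\qquad\text{whenever } s,q \in [2,\infty],\ \tfrac{1}{q} = \tfrac{1}{s} - \tfrac{1}{4d}.
\]
Starting from $s = 2$ and iterating a number of steps depending only on $d$, I reach $u \in L_s(\Omega)$ for some $s > d$, controlled by $\|u\|_{L_s(\Omega)} \leq C'\bigl(\|u\|_{L_2(\Omega)} + \sum_{k=0}^d\|f_k\|_{L_p(\Omega)}\bigr)$. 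Once $u \in L_s(\Omega)$ with $s > d$, the coefficients $b_k u$ and $(c_0 - \lambda) u$ in the source of $\cb_\lambda R_\lambda u$ all lie in $L_s(\Omega)$, so Proposition~\ref{pdat263} applied to that equation gives $R_\lambda u \in C_b(\Omega)$ with $\|R_\lambda u\|_{C_b(\Omega)} \leq C''\,\|u\|_{L_s(\Omega)}$. The identity $u = u_0 - R_\lambda u$ then delivers the required estimate.

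The only genuine care point is that every constant arising in the iteration depends solely on the ellipticity constant, the $L_\infty$-norms of the coefficients and $\diam\Omega$. This is built into Lemma~\ref{ldat207}\ref{ldat207-2} (uniformly in $\lambda \geq \lambda_0 + 1$) and Proposition~\ref{pdat263}, and the number of iteration steps is controlled by $d$ alone, so the final constant has the claimed dependencies.
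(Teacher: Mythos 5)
Your proof is correct and takes essentially the same route as the paper: both reduce to a pure $\cb_\lambda$-problem by setting $u_0 = u + R_\lambda u$ (the paper arrives at $\tilde u - u = R_\lambda u$ by comparing the forms $\gotb_\lambda$ directly, while you add the two distributional identities — these are equivalent computations), invoke Proposition~\ref{pdat263} for $u_0$, and then bootstrap with Lemma~\ref{ldat207}\ref{ldat207-2}. The only cosmetic difference is that the paper runs the iteration explicitly through $p_n = 4d/(2d-n)$ all the way to $p_{2d}=\infty$ and then uses Lemma~\ref{ldat207}\ref{ldat207-3}, whereas you stop at some fixed $s>d$ and apply Proposition~\ref{pdat263} once more to $R_\lambda u$; both give the $C_b$-membership and the stated constant dependence.
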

\begin{proof}
Without loss of generality we may assume that $p \in (d,4d)$.
Choose $\lambda = \delta = \lambda_0 + 1$ in Lemma~\ref{ldat207}\ref{ldat207-2}
and in Proposition~\ref{pdat263}.
Let $C > 0$ be as in Proposition~\ref{pdat263} and $C_2 \geq 1$ as in Lemma~\ref{ldat207}\ref{ldat207-2}.
By Proposition~\ref{pdat263} there exists a unique $\tilde u \in H^1_0(\Omega) \cap C_b(\Omega)$
such that $\cb_\lambda \tilde u = f_0 + \sum_{k=1}^d \partial_k f_k$.
Then $\|\tilde u\|_{L_\infty(\Omega)} \leq C \sum_{k=0}^d \|f_k\|_{L_p(\Omega)}$.
If $v \in C_c^\infty(\Omega)$, then 
\begin{eqnarray*}
\gotb_\lambda(\tilde u,v)
& = & \langle f_0 + \sum_{k=1}^d \partial_k f_k,v \rangle_{\cd'(\Omega) \times \cd(\Omega)}  \\
& = & \gota(u,v) \\
& = & \gotb_\lambda(u,v) 
   + \sum_{k=1}^d (b_k \, u, \partial_k v)_{L_2(\Omega)} 
   + ((c_0 - \delta \, \one_\Omega) \, u, v)_{L_2(\Omega)}
.  
\end{eqnarray*}
Therefore, by density,
\[
\gotb_\lambda(\tilde u - u,v) 
= \sum_{k=1}^d (b_k \, u, \partial_k v)_{L_2(\Omega)} 
   + ((c_0 - \delta \, \one_\Omega) \, u, v)_{L_2(\Omega)}
\]
for all $v \in H^1_0(\Omega)$.
If $R_\lambda$ is as in Lemma~\ref{ldat207}, then $\tilde u - u = R_\lambda u$.
For all $n \in \{ 0,\ldots,2d \} $ define $p_n = \frac{4d}{2d-n}$.
Then $p_0 = 2$ and $p_{2d} = \infty$.
Let $n \in \{ 1,\ldots,2d \} $.
One estimates from Lemma~\ref{ldat207}\ref{ldat207-2} that 
$\|\tilde u - u\|_{L_{p_n}(\Omega)} \leq C_2 \, \|u\|_{L_{p_{n-1}}(\Omega)}$, 
so 
\[
\|u\|_{L_{p_n}(\Omega)} 
\leq \|\tilde u\|_{L_{p_n}(\Omega)} + C_2 \, \|u\|_{L_{p_{n-1}}(\Omega)}
\leq (1 + |\Omega|) \|\tilde u\|_{L_\infty(\Omega)} + C_2 \, \|u\|_{L_{p_{n-1}}(\Omega)}
.  \]
Iteration gives 
\[
\|u\|_{L_\infty(\Omega)}
= \|u\|_{L_{p_{2d}}(\Omega)} 
\leq C_2^{2d} \, \|u\|_{L_2(\Omega)} 
    + (1 + |\Omega|) \sum_{k=0}^{2d-1} C_2^k \, \|\tilde u\|_{L_\infty(\Omega)}
.  \]
Then Lemma~\ref{ldat207}\ref{ldat207-3} gives  
$\tilde u - u = R_\lambda u \in C_b(\Omega)$ and therefore $u \in C_b(\Omega)$.
\end{proof}

\begin{cor} \label{cdat278}
Let $p \in (d,\infty]$.
Then $(A^D)^{-1} (L_p(\Omega)) \subset C_b(\Omega)$.
\end{cor}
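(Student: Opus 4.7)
The plan is to reduce directly to Proposition~\ref{pdat214}. Let $f \in L_p(\Omega)$ and set $u = (A^D)^{-1} f \in H^1_0(\Omega)$. By the very definition of the graph of $A^D$, one has $\gota(u,v) = (f,v)_{L_2(\Omega)}$ for every $v \in H^1_0(\Omega)$; in particular this holds for all $v \in C_c^\infty(\Omega) = \cd(\Omega)$. Consequently, as distributions on $\Omega$, $\ca u = f$.

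Now I would apply Proposition~\ref{pdat214} with the specific choice $f_0 := f \in L_p(\Omega)$ and $f_1 = f_2 = \cdots = f_d := 0 \in L_p(\Omega)$, since $p \in (d,\infty]$ falls in the range to which that proposition applies. The hypothesis $\ca u = f_0 + \sum_{k=1}^d \partial_k f_k$ is then exactly the identity $\ca u = f$ obtained in the previous paragraph, and $u \in H^1_0(\Omega)$ by construction. Proposition~\ref{pdat214} then yields $u \in C_b(\Omega)$, which is the desired inclusion.

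There is no genuine obstacle here: Proposition~\ref{pdat214} has been set up precisely to absorb this kind of statement, and the only thing that needs to be observed is the mild bookkeeping that a Dirichlet $L_2$-solution of $A^D u = f$ coincides with the $H^1_0$-solution of the distributional equation $\ca u = f$. One could, if desired, additionally record the quantitative bound $\|u\|_{C_b(\Omega)} \leq C(\|u\|_{L_2(\Omega)} + \|f\|_{L_p(\Omega)})$ coming from Proposition~\ref{pdat214}, but for the stated inclusion the qualitative application suffices.
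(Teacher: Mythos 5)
Your proof is correct and matches the paper's intent: Corollary~\ref{cdat278} is stated immediately after Proposition~\ref{pdat214} with no proof precisely because it is the special case $f_0 = f$, $f_1 = \cdots = f_d = 0$. The only bookkeeping you needed to do — identifying the $L_2$-Dirichlet realisation $A^D u = f$ with the distributional equation $\ca u = f$ for $u \in H^1_0(\Omega)$ — is exactly right, and boundedness of $\Omega$ ensures $L_p(\Omega) \subset L_2(\Omega)$ so that $(A^D)^{-1}f$ is meaningful.
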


\begin{cor} \label{cdat211} 
There exists a $c' > 0$, depending only on 
the ellipticity constant, the $L_\infty$-norm of the coefficients and the diameter of $\Omega$,
such that 
\[
\|(A^D)^{-1} f\|_{L_\infty(\Omega)} 
\leq c' \, (\|(A^D)^{-1}\|_{L_2(\Omega) \to L_2(\Omega)} + 1) \, \|f\|_{L_\infty(\Omega)}
\]
for all $f \in L_\infty(\Omega)$.
\end{cor}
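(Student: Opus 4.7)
The plan is to invoke Proposition~\ref{pdat214} directly with $p = \infty$, taking the divergence-data vanishing. Given $f \in L_\infty(\Omega)$, set $u = (A^D)^{-1} f \in H^1_0(\Omega)$, so that $\ca u = f$, i.e.\ the representation $\ca u = f_0 + \sum_{k=1}^d \partial_k f_k$ holds with $f_0 = f$ and $f_1 = \cdots = f_d = 0$. Since $\Omega$ is bounded, $L_\infty(\Omega) \subset L_p(\Omega)$ for all $p \in (d,\infty]$, so the hypotheses of Proposition~\ref{pdat214} are satisfied.

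Applying Proposition~\ref{pdat214} with $p = \infty$ yields a constant $C > 0$, depending only on the ellipticity constant, the $L_\infty$-norms of the coefficients and $\diam \Omega$, such that
\[
\|u\|_{L_\infty(\Omega)}
\leq C \, \bigl( \|u\|_{L_2(\Omega)} + \|f\|_{L_\infty(\Omega)} \bigr).
\]
Next I would bound the $L_2$-term by using that $A^D$ is invertible (by assumption $0$ is not a Dirichlet eigenvalue, see Lemma~\ref{ldat260}\ref{ldat260-1}):
\[
\|u\|_{L_2(\Omega)}
= \|(A^D)^{-1} f\|_{L_2(\Omega)}
\leq \|(A^D)^{-1}\|_{L_2(\Omega) \to L_2(\Omega)} \, \|f\|_{L_2(\Omega)}
\leq |\Omega|^{1/2} \, \|(A^D)^{-1}\|_{L_2(\Omega) \to L_2(\Omega)} \, \|f\|_{L_\infty(\Omega)}.
\]
Since $|\Omega| \leq \omega_d (\diam \Omega)^d$ depends only on the diameter of $\Omega$ and the dimension, combining the two inequalities gives
\[
\|u\|_{L_\infty(\Omega)}
\leq C \, \bigl(|\Omega|^{1/2} + 1\bigr) \bigl( \|(A^D)^{-1}\|_{L_2(\Omega) \to L_2(\Omega)} + 1 \bigr) \|f\|_{L_\infty(\Omega)},
\]
so setting $c' = C \, (|\Omega|^{1/2} + 1)$ finishes the argument.

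There is no serious obstacle here; the result is essentially a packaging of Proposition~\ref{pdat214}. The only point requiring a small observation is the passage from $\|f\|_{L_2(\Omega)}$ to $\|f\|_{L_\infty(\Omega)}$ via the measure of $\Omega$, and the fact that $|\Omega|$ (hence $c'$) depends only on the admissible quantities listed in the statement.
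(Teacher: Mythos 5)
Your argument is correct and is exactly the intended one: the paper gives no separate proof of this corollary because it is meant to be read off from Proposition~\ref{pdat214} with $f_0 = f$ and $f_1 = \cdots = f_d = 0$, followed by the elementary bounds $\|u\|_{L_2(\Omega)} \leq \|(A^D)^{-1}\|_{L_2 \to L_2}\,\|f\|_{L_2(\Omega)}$ and $\|f\|_{L_2(\Omega)} \leq |\Omega|^{1/2}\,\|f\|_{L_\infty(\Omega)}$, with $|\Omega|$ controlled by $\diam\Omega$. Nothing to add.
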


We now solve the Dirichlet problem for functions of the form $\Phi|_\Gamma$ with 
$\Phi \in C(\overline \Omega) \cap H^1(\Omega)$ and show boundedness of
the solution.
We use the decomposition of Lemma~\ref{ldat260}\ref{ldat260-2}.

\begin{prop} \label{pdat269}
There exists a $c > 0$, depending only on 
the ellipticity constant, the $L_\infty$-norm of the coefficients and the diameter of $\Omega$, 
such that the following holds.
Let $\Phi \in C(\overline \Omega) \cap H^1(\Omega)$ and $u \in \ch_\ca^1(\Omega)$,
such that $\Phi - u \in H^1_0(\Omega)$.
Then $u \in C_b(\Omega)$ and
\[
\|u\|_{L_\infty(\Omega)}
\leq c \, (\|(A^D)^{-1}\|_{L_2(\Omega) \to L_2(\Omega)} + 1) \, \|\Phi|_\Gamma\|_{C(\Gamma)}
.  \]
\end{prop}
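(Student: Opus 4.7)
The plan is to reduce the problem to the shifted equation (which was handled in Lemma~\ref{ldat280}) and then use the estimate on $(A^D)^{-1}$ from Corollary~\ref{cdat211} to absorb the shift.

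First, I would fix $\lambda > \lambda_0$ and the constant $C$ from Lemma~\ref{ldat280}, both of which depend only on the ellipticity constant, the $L_\infty$-norms of the coefficients, and $\diam \Omega$. Applying Lemma~\ref{ldat280} to $\Phi$ yields a unique $u_\lambda \in C_b(\Omega) \cap H^1(\Omega)$ with $\ca_\lambda u_\lambda = 0$ and $\Phi - u_\lambda \in H^1_0(\Omega)$, together with the bound $\|u_\lambda\|_{C_b(\Omega)} \leq C \, \|\Phi|_\Gamma\|_{C(\Gamma)}$.

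Next I would compare $u$ and $u_\lambda$. Since $u - \Phi \in H^1_0(\Omega)$ and $u_\lambda - \Phi \in H^1_0(\Omega)$, we have $u - u_\lambda \in H^1_0(\Omega)$. Moreover, $\ca u = 0$ and $\ca u_\lambda = \ca_\lambda u_\lambda - \lambda u_\lambda = -\lambda u_\lambda$, so $\ca(u - u_\lambda) = \lambda u_\lambda$ in $H^{-1}(\Omega)$. Since $u_\lambda \in L_2(\Omega)$, Lemma~\ref{ldat260}\ref{ldat260-1} (and the very definition of $A^D$) yields
\[
u - u_\lambda = \lambda \, (A^D)^{-1} u_\lambda.
\]

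Now I would apply Corollary~\ref{cdat211} to $u_\lambda \in L_\infty(\Omega)$ to obtain
\[
\|(A^D)^{-1} u_\lambda\|_{L_\infty(\Omega)} \leq c' \, (\|(A^D)^{-1}\|_{L_2(\Omega) \to L_2(\Omega)} + 1) \, \|u_\lambda\|_{L_\infty(\Omega)},
\]
with $c'$ depending only on the admissible quantities. Combining the identity with this inequality and Lemma~\ref{ldat280}\ref{ldat280-2} gives
\[
\|u\|_{L_\infty(\Omega)} \leq (1 + \lambda c' \, (\|(A^D)^{-1}\|_{L_2 \to L_2} + 1)) \cdot C \, \|\Phi|_\Gamma\|_{C(\Gamma)},
\]
which is of the required form with $c$ depending only on $\mu$, the $L_\infty$-norms of the coefficients and $\diam \Omega$. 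Continuity of $u$ follows because $u_\lambda \in C_b(\Omega)$ by Lemma~\ref{ldat280}, and $(A^D)^{-1} u_\lambda \in C_b(\Omega)$ by Corollary~\ref{cdat278} applied with any $p \in (d,\infty]$ to $u_\lambda \in L_\infty(\Omega) \subset L_p(\Omega)$.

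The only delicate point is keeping track of what $\lambda$ and the various constants depend on — all of them must not depend on $(A^D)^{-1}$. This is already guaranteed by the statement of Lemma~\ref{ldat280} (the $\lambda$ there is chosen purely from $C_1, C_2$ in Lemma~\ref{ldat207}) and by Corollary~\ref{cdat211}, so no further work is required beyond invoking them carefully.
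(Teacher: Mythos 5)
Your proof is correct and follows essentially the same route as the paper: apply Lemma~\ref{ldat280} to produce the auxiliary solution with the shifted operator, compare it to $u$ to obtain the identity $u = u_\lambda + \lambda (A^D)^{-1} u_\lambda$, and then conclude via Corollaries~\ref{cdat278} and \ref{cdat211}. The only cosmetic difference is that the paper introduces the $H^1_0$-components $w = \Phi - u$ and $\widetilde w = \Phi - \tilde u$ and computes $\gota(w - \widetilde w, \cdot)$, whereas you work directly with $\ca u$ and $\ca u_\lambda$; the underlying computation is the same.
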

\begin{proof}
Let $C,\lambda > 0$ be as in Lemma~\ref{ldat280}
and let $c' > 0$ be as in Corollary~\ref{cdat211}.
Using Lemma~\ref{ldat280} there exists a unique 
$\tilde u \in C_b(\Omega) \cap H^1(\Omega)$
such that $\Phi - \tilde u \in H^1_0(\Omega)$ and $\ca_\lambda \tilde u = 0$.
Next Lemma~\ref{ldat260}\ref{ldat260-1} gives that there exists a unique $w \in H^1_0(\Omega)$ such that 
$\gota(w,v) = \gota(\Phi,v)$ for all $v \in H^1_0(\Omega)$.
Then $\ca(\Phi - w) = 0$, so $u = \Phi - w$ by Lemma~\ref{ldat260}\ref{ldat260-3}.
Define $\widetilde w = \Phi - \tilde u$.
Then 
\begin{eqnarray*}
\gota(w,v)
& = & \gota(\Phi,v)
= \gota_\lambda(\Phi,v) - \lambda \, (\Phi,v)_{L_2(\Omega)}
= \gota_\lambda(\widetilde w,v) - \lambda \, (\Phi,v)_{L_2(\Omega)}  \\
& = & \gota(\widetilde w,v) 
   + \lambda \, (\widetilde w,v)_{L_2(\Omega)}
   - \lambda \, (\Phi,v)_{L_2(\Omega)}
= \gota(\widetilde w,v) 
   - \lambda \, (\tilde u,v)_{L_2(\Omega)}
\end{eqnarray*}
for all $v \in H^1_0(\Omega)$. 
Hence
\[
\gota(\tilde u - u, v) 
= \gota(w - \widetilde w,v)
= - \lambda \, (\tilde u,v)_{L_2(\Omega)}
. \]
Therefore $\tilde u - u \in D(A^D)$ and
$A^D(\tilde u - u) = - \lambda \, \tilde u$.
Then $u = \tilde u + \lambda \, (A^D)^{-1} \tilde u \in C_b(\Omega)$
by Corollary~\ref{cdat278}.
Finally, by Corollary~\ref{cdat211} and Lemma~\ref{ldat280} one estimates
\begin{eqnarray*}
\|u\|_{C_b(\Omega)}
& \leq & \|\tilde u\|_{L_\infty(\Omega)} + \lambda \, \|(A^D)^{-1} \tilde u\|_{L_\infty(\Omega)}  \\
& \leq & \Big( 1 + c' \, \lambda \, (\|(A^D)^{-1}\|_{L_2(\Omega) \to L_2(\Omega)} + 1) \Big) \, \|\tilde u\|_{L_\infty(\Omega)}  \\
& \leq & \Big( 1 + c' \, \lambda \, (\|(A^D)^{-1}\|_{L_2(\Omega) \to L_2(\Omega)} + 1) \Big) \, C \, \|\Phi|_\Gamma\|_{C(\Gamma)}
\end{eqnarray*}
and the proof of Proposition~\ref{pdat269} is complete.
\end{proof}

Define $|||\cdot||| \colon H^1_\loc(\Omega) \to [0,\infty]$ by
\[
|||u||| 
= \sup_{\delta > 0} 
  \sup_{\scriptstyle \Omega_0 \subset \Omega \; {\rm open} \atop
        \scriptstyle d(\Omega_0,\Gamma) = \delta}
    \delta \Big( \int_{\Omega_0} |\nabla u|^2 \Big)^{1/2}
.  \]
Before we can prove Theorem~\ref{tdat120} we need the following Caccioppoli inequality.

\begin{prop} \label{pdat213}
There exists a $c' \geq 1$ such that 
$|||u||| \leq c' \, \|u\|_{L_2(\Omega)}$
for all $u \in H^1(\Omega)$ such that $\ca u = 0$.
\end{prop}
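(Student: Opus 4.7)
The plan is to deduce the bound from the standard interior Caccioppoli inequality combined with a Vitali-type covering of $\Omega_0$.

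First, since $u \in H^1(\Omega)$ and $\ca u = 0$ as a distribution on $\Omega$, I would use Lemma~\ref{ldat260}\ref{ldat260-3} and the density of $C_c^\infty(\Omega)$ in $H^1_0(\Omega)$ to extend this to $\gota(u,v) = 0$ for every $v \in H^1_0(\Omega)$. Given a ball with $B_\delta(x_0) \subset \Omega$, I would choose a cut-off $\eta \in C_c^\infty(B_\delta(x_0))$ with $\eta = 1$ on $B_{\delta/2}(x_0)$ and $|\nabla \eta| \leq C/\delta$, and test against $\eta^2 u \in H^1_0(\Omega)$. Expanding $\partial_k(\eta^2 u) = 2\eta (\partial_k \eta) u + \eta^2 \partial_k u$, using ellipticity on the leading term and Young's inequality together with the $L_\infty$-bounds on the $a_{kl}, b_k, c_k, c_0$ to absorb all cross-terms into $\tfrac{\mu}{2}\int \eta^2 |\nabla u|^2$, I would obtain the interior Caccioppoli estimate
\[
\int_{B_{\delta/2}(x_0)} |\nabla u|^2 \leq C_1 \bigl( \delta^{-2} + 1 \bigr) \int_{B_\delta(x_0)} |u|^2,
\]
with $C_1$ depending only on $\mu$ and the $L_\infty$-norms of the coefficients.

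Next, let $\Omega_0 \subset \Omega$ be open with $d(\Omega_0, \Gamma) = \delta$, so that $B_\delta(x) \subset \Omega$ for every $x \in \Omega_0$. Pick by a standard maximality argument a family $\{x_i\} \subset \Omega_0$ such that the balls $B_{\delta/4}(x_i)$ are pairwise disjoint; by maximality the balls $B_{\delta/2}(x_i)$ then cover $\Omega_0$, and a volume-packing estimate in $B_{5\delta/4}(\cdot)$ shows that every point of $\Ri^d$ lies in at most $N_d$ of the enlarged balls $B_\delta(x_i)$, where $N_d$ depends only on $d$. Applying the interior Caccioppoli estimate in each $B_\delta(x_i) \subset \Omega$ and summing yields
\[
\int_{\Omega_0} |\nabla u|^2 \leq \sum_i \int_{B_{\delta/2}(x_i)} |\nabla u|^2 \leq C_1 \bigl( \delta^{-2} + 1 \bigr) N_d \, \|u\|_{L_2(\Omega)}^2.
\]
Since $\Omega_0 \neq \emptyset$ forces $\delta \leq \diam \Omega$, the factor $\delta^2 ( \delta^{-2} + 1 ) = 1 + \delta^2$ is bounded by $1 + (\diam \Omega)^2$, so multiplying by $\delta^2$, taking square roots and finally the supremum over $\delta > 0$ and admissible $\Omega_0$ gives $|||u||| \leq c' \, \|u\|_{L_2(\Omega)}$.

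The only step with any subtlety is the Caccioppoli estimate itself: one must apply Young's inequality with carefully chosen weights to the cross-terms produced by the lower-order coefficients $b_k, c_k$ and $c_0$, so that exactly $\tfrac{\mu}{2} \int \eta^2 |\nabla u|^2$ survives on the left after absorption. The covering argument is then purely geometric, and the real-valuedness of the $c_k$ assumed in the paper plays no role here.
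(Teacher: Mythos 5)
Your proof is correct. The paper's own proof of this proposition consists entirely of a citation to \cite{GiM}~Theorem~4.4, which is precisely the Caccioppoli-type estimate you derive; so you are not taking a different route so much as supplying the argument that the reference encapsulates. The two steps you use — interior Caccioppoli via the test function $\eta^2 u$, and a Vitali-type covering of $\Omega_0$ by balls $B_{\delta/2}(x_i)$ with bounded overlap of the dilates $B_\delta(x_i)$ — are the standard way to get from the local estimate to the bound on the weighted norm $|||\cdot|||$. Two minor points worth keeping straight when writing it out: (i) the Caccioppoli argument should be carried out for the real part of $\gota(u,\eta^2 u)=0$, since $u$ is complex-valued and the real-valuedness of $a_{kl}$ enters only through the ellipticity lower bound, not through any cancellation; and (ii) after absorbing cross-terms you will also produce a harmless $\delta^{-1}$ from the term $\int \eta|\nabla\eta||u|^2$, which is bounded by $1+\delta^{-2}$ or, using $\delta\le\diam\Omega$, folded into the final constant — either way the conclusion $|||u||| \le c'\|u\|_{L_2(\Omega)}$ follows as you state.
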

\begin{proof}
See \cite{GiM} Theorem~4.4.
\end{proof}

Theorem~\ref{tdat120} is a special case of the next theorem.

\begin{thm} \label{tdat235}
There exists a unique linear map $T \colon C(\Gamma) \to C_b(\Omega) \cap H^1_\loc(\Omega)$
such that $T$ is continuous from $C(\Gamma)$ into $C_b(\Omega)$,
the function $T \varphi$ is $\ca$-harmonic for all $\varphi \in C(\Gamma)$
and $\Phi|_\Omega - T(\Phi|_\Gamma) \in H^1_0(\Omega)$ for all 
$\Phi \in C(\overline \Omega) \cap H^1(\Omega)$.
Moreover, there exists a $c> 0$ which depends only on 
the ellipticity constant, the $L_\infty$-norm of the coefficients and the diameter of $\Omega$, 
such that 
\begin{equation}
\|T \varphi\|_{C_b(\Omega)}
\leq c \, (\|(A^D)^{-1}\|_{L_2(\Omega) \to L_2(\Omega)} + 1) \, \|\varphi\|_{C(\Gamma)}
\label{eldat821;20}
\end{equation}
for all $\varphi \in C(\Gamma)$.
\end{thm}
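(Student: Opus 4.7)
The plan is to construct $T$ first on the dense subspace $\{\Phi|_\Gamma : \Phi \in C(\overline\Omega) \cap H^1(\Omega)\}$ of $C(\Gamma)$, extend by continuity, and then upgrade continuity to $\ca$-harmonicity and $H^1_\loc$-regularity via a Caccioppoli argument. For $\Phi \in C(\overline\Omega) \cap H^1(\Omega)$, I would use Lemma~\ref{ldat260}\ref{ldat260-2} to write uniquely $\Phi = v + u$ with $v \in H^1_0(\Omega)$ and $u \in \ch^1_\ca(\Omega)$, and set $T_0(\Phi|_\Gamma) := u$. Well-definedness is immediate from Proposition~\ref{pdat269}: if $\Phi_1|_\Gamma = \Phi_2|_\Gamma$, applying the proposition to $\Phi := \Phi_1 - \Phi_2$ forces the $\ch^1_\ca$-component of $\Phi$ to vanish, so the decompositions of $\Phi_1$ and $\Phi_2$ agree on their harmonic parts. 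The same proposition yields the bound~(\ref{eldat821;20}) for $T_0$.

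Next, the restrictions to $\Gamma$ of functions in $C^\infty(\overline\Omega)$ form a subalgebra of $C(\Gamma)$ that separates points and contains the constants, and every such restriction is of the form $\Phi|_\Gamma$ for some $\Phi \in C(\overline\Omega) \cap H^1(\Omega)$. By the Stone--Weierstra\ss\ theorem the domain of $T_0$ is dense in $C(\Gamma)$, and the bounded-linear-transformation theorem produces a unique continuous linear extension $T : C(\Gamma) \to C_b(\Omega)$ satisfying the same bound~(\ref{eldat821;20}).

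For the harmonicity statement, fix $\varphi \in C(\Gamma)$ and pick $\Phi_n \in C(\overline\Omega) \cap H^1(\Omega)$ with $\varphi_n := \Phi_n|_\Gamma \to \varphi$ in $C(\Gamma)$. Writing $u_n := T_0 \varphi_n \in \ch^1_\ca(\Omega)$, we have $u_n \to T\varphi$ uniformly on $\Omega$, hence also in $L_2(\Omega)$ since $\Omega$ is bounded. Because $u_n - u_m$ is $\ca$-harmonic and lies in $H^1(\Omega)$, Proposition~\ref{pdat213} gives $|||u_n - u_m||| \leq c' \, \|u_n - u_m\|_{L_2(\Omega)} \to 0$. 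Thus on every open $\Omega_0 \subset \Omega$ with $d(\Omega_0,\Gamma) > 0$ the sequence $(u_n)$ is Cauchy in $H^1(\Omega_0)$, so $T\varphi \in H^1_\loc(\Omega)$ and $u_n \to T\varphi$ locally in $H^1$; passing to the limit in $\langle \ca u_n,\psi \rangle_{\cd'(\Omega) \times \cd(\Omega)} = 0$ for each $\psi \in C_c^\infty(\Omega)$ then yields $\ca(T\varphi) = 0$.

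Uniqueness is clean: if $T'$ satisfies the stated conditions, then for every $\Phi \in C(\overline\Omega) \cap H^1(\Omega)$ the function $T'(\Phi|_\Gamma)$ belongs to $H^1(\Omega)$ (since $\Phi - T'(\Phi|_\Gamma) \in H^1_0(\Omega)$ and $\Phi \in H^1(\Omega)$), hence to $\ch^1_\ca(\Omega)$, so $T'(\Phi|_\Gamma) = T_0(\Phi|_\Gamma)$ by uniqueness of the decomposition in Lemma~\ref{ldat260}\ref{ldat260-2}; density and continuity give $T' = T$ on all of $C(\Gamma)$. The main technical obstacle is the Caccioppoli step: Proposition~\ref{pdat269} supplies only $L_\infty$- (hence $L_2$-) convergence of the approximants $u_n$, but the distributional identity $\ca u_n = 0$ mixes $u_n$ with its first-order derivatives, so without Proposition~\ref{pdat213} one cannot pass the gradients through the limit to conclude $\ca(T\varphi) = 0$.
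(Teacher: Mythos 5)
Your proof is correct and follows essentially the same route as the paper: define $T_0$ on $\{\Phi|_\Gamma : \Phi \in C(\overline\Omega)\cap H^1(\Omega)\}$ via the decomposition of Lemma~\ref{ldat260}\ref{ldat260-2}, obtain well-definedness and the $L_\infty$-bound from Proposition~\ref{pdat269}, extend by Stone--Weierstra\ss, and use the Caccioppoli estimate of Proposition~\ref{pdat213} to show the extension lands in $H^1_\loc(\Omega)$ and is $\ca$-harmonic. You add an explicit uniqueness argument (which the paper leaves implicit) and are slightly more careful than the paper about which dense subalgebra Stone--Weierstra\ss\ is applied to, but these are presentational refinements rather than a genuinely different approach.
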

\begin{proof}
Let $c > 0$ and $c' \geq 1$ be as in Propositions~\ref{pdat269} and \ref{pdat213}.
Further let 
\[
F = \{ \Phi|_\Gamma : \Phi \in C(\overline \Omega) \cap H^1(\Omega) \}
.  \]
If $\Phi \in C(\overline \Omega) \cap H^1(\Omega)$, then we decompose
\[
\Phi = v + u 
\in H^1_0(\Omega) \oplus \ch_\ca^1(\Omega)
\]
using Lemma~\ref{ldat260}\ref{ldat260-2}.
Then $u \in C_b(\Omega)$ by Proposition~\ref{pdat269}.
We wish to define $T_0(\Phi|_\Gamma) = u$ and for that we need to verify that 
this definition does not depend on $\Phi$.
Let also $\widetilde \Phi \in C(\overline \Omega) \cap H^1(\Omega)$ and suppose that 
$\widetilde \Phi|_\Gamma = \Phi|_\Gamma$.
Let $\tilde u \in \ch_\ca^1(\Omega)$ be such that $\widetilde \Phi - \tilde u \in H^1_0(\Omega)$.
Then $\Phi - \widetilde \Phi - (u - \tilde u) \in H^1_0(\Omega)$
and Proposition~\ref{pdat269} gives
\[
\|u - \tilde u\|_{L_\infty(\Omega)}
\leq c \, (\|(A^D)^{-1}\|_{L_2(\Omega) \to L_2(\Omega)} + 1) \, 
     \|(\Phi - \widetilde \Phi)|_\Gamma\|_{C(\Gamma)}
= 0
.  \]
So $u = \tilde u$.
Therefore we can define an operator $T_0 \colon F \to C_b(\Omega)$ by
$T_0(\Phi|_\Gamma) = u$, where $u \in \ch_\ca^1(\Omega)$ 
is such that $\Phi - u \in H^1_0(\Omega)$.
Then Proposition~\ref{pdat269} gives that 
\[
\|T_0 \varphi\|_{C_b(\Omega)}
\leq c \, (\|(A^D)^{-1}\|_{L_2(\Omega) \to L_2(\Omega)} + 1) \, \|\varphi\|_{C(\Gamma)}
\]
for all $\varphi \in F$.
By the Stone--Weierstra\ss\ theorem $F$ is dense in $C(\Gamma)$.
Hence $T_0$ extends uniquely to a continuous operator $T \colon C(\Gamma) \to C_b(\Omega)$
such that (\ref{eldat821;20}) is valid.

We next show that $T \varphi \in H^1_\loc(\Omega)$ and $T \varphi$ is $\ca$-harmonic
for all $\varphi \in C(\Gamma)$.
If $\varphi \in F$ and $u = T_0 \varphi \in \ch_\ca^1(\Omega)$, then 
\begin{eqnarray}
|||u|||
& \leq & c' \, \|u\|_{L_2(\Omega)} 
\leq (1 + |\Omega|) \, c' \, \|u\|_{C_b(\Omega)}  \nonumber  \\
& \leq & (1 + |\Omega|) \, c \, c' \, (\|(A^D)^{-1}\|_{L_2(\Omega) \to L_2(\Omega)} + 1) \, \|\varphi\|_{C(\Gamma)}
\label{etdat235;1}
\end{eqnarray}
by Proposition~\ref{pdat213}.
Now let $\varphi \in C(\Gamma)$.
There are $\varphi_1,\varphi_2,\ldots \in F$ with $\lim \varphi_n = \varphi$ in $C(\Gamma)$.
Write $u_n = T_0 \varphi_n$ and $u = T \varphi$ for all $n \in \Ni$.
Then $u = \lim u_n$ in $C_b(\Omega)$ and (\ref{etdat235;1}) implies that 
$(u_n)_{n \in \Ni}$ is a Cauchy sequence in $H^1_\loc(\Omega)$.
So $u \in H^1_\loc(\Omega)$.
Since $\ca u_n = 0$  for all $n \in \Ni$, one deduces that $\ca u = 0$.
\end{proof}

In order to make the dependence on the domain or the elliptic operator 
explicit we write $T^\Omega$, or $T^{\ca}$ or $T^{\ca,\Omega}$ instead of~$T$
if confusion is possible.
One can also consider an inhomogeneous equation with a 
divergence term as in Lemma~\ref{ldat202.3}\ref{ldat202.3-2} on the right hand side.

\begin{cor} \label{cdat833}
Let $p \in (d,\infty]$.
Then there exists a unique continuous linear map 
\[
H \colon C(\Gamma) \times W^{-1,p}(\Omega) \to (C_b(\Omega) \cap H^1_\loc(\Omega), \|\cdot\|_{C_b(\Omega)})
\]
such that 
\begin{tabelR}
\item \label{cdat833-1}
$\ca \, H(\varphi, F) = F$
for all $\varphi \in C(\Gamma)$ and $F \in W^{-1,p}(\Omega)$
and
\item \label{cdat833-2}
$\Phi|_\Omega - H(\Phi|_\Gamma,  F) \in H^1_0(\Omega)$
for all $\Phi \in C(\overline \Omega) \cap H^1(\Omega)$ 
and $F \in W^{-1,p}(\Omega)$.
\end{tabelR}
\end{cor}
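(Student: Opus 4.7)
The plan is to obtain $H$ by superposition: set $H(\varphi,F) = T\varphi + SF$, where $T \colon C(\Gamma)\to C_b(\Omega)\cap H^1_\loc(\Omega)$ is the Perron operator from Theorem~\ref{tdat235} (handling the homogeneous problem with data $\varphi$) and $S \colon W^{-1,p}(\Omega)\to C_b(\Omega)\cap H^1_0(\Omega)$ solves $\ca u = F$ in $H^1_0(\Omega)$ (handling the inhomogeneous problem with zero boundary data).

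To construct $S$, note first that for $p \in (d,\infty]$ the bounded inclusion $H^1_0(\Omega) \hookrightarrow W^{1,q}_0(\Omega)$ (where $q$ is the dual exponent of $p$) yields $W^{-1,p}(\Omega) \hookrightarrow H^{-1}(\Omega)$ continuously. Hence by Lemma~\ref{ldat260}\ref{ldat260-1} there exists a unique $u = SF \in H^1_0(\Omega)$ with $\ca u = F$, and the map $S$ is continuous as an operator into $H^1_0(\Omega)$. To upgrade this to $C_b(\Omega)$ I apply Lemma~\ref{ldat202.3}\ref{ldat202.3-1} to represent $F = -\sum_{k=1}^d \partial_k f_k$ with $f_k \in L_p(\Omega)$ and $\sum_k \|f_k\|_{L_p(\Omega)} \lesssim \|F\|_{W^{-1,p}(\Omega)}$, and then invoke Proposition~\ref{pdat214} to conclude $SF \in C_b(\Omega)$ with
\[
\|SF\|_{C_b(\Omega)} \leq C\Big(\|SF\|_{L_2(\Omega)} + \sum_{k=1}^d \|f_k\|_{L_p(\Omega)}\Big) \leq C' \, \|F\|_{W^{-1,p}(\Omega)},
\]
the last step using the $H^{-1}\!\to\! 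H^1_0$ continuity of $(\ca|_{H^1_0})^{-1}$.

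Having defined $H(\varphi,F)=T\varphi+SF$, continuity is immediate from the bounds on $T$ and $S$; property \ref{cdat833-1} holds because $\ca T\varphi = 0$ and $\ca SF = F$; and property \ref{cdat833-2} follows since $\Phi|_\Omega - T(\Phi|_\Gamma)\in H^1_0(\Omega)$ by Theorem~\ref{tdat235} and $SF \in H^1_0(\Omega)$. Moreover $H(\varphi,F) \in H^1_\loc(\Omega)$ because both summands lie there.

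For uniqueness, suppose $H_1,H_2$ are two continuous linear maps satisfying \ref{cdat833-1} and \ref{cdat833-2}, and set $D = H_1 - H_2$. For every $\Phi \in C(\overline\Omega)\cap H^1(\Omega)$ and $F \in W^{-1,p}(\Omega)$, property \ref{cdat833-2} gives $D(\Phi|_\Gamma,F) \in H^1_0(\Omega)$, while property \ref{cdat833-1} gives $\ca \, D(\Phi|_\Gamma,F) = 0$; the assumption that $0$ is not a Dirichlet eigenvalue therefore forces $D(\Phi|_\Gamma,F)=0$. Since the set $F_0 := \{\Phi|_\Gamma : \Phi \in C(\overline\Omega)\cap H^1(\Omega)\}$ is dense in $C(\Gamma)$ by the Stone--Weierstra\ss{} theorem (as exploited in the proof of Theorem~\ref{tdat235}), continuity of $D$ in the first variable extends the identity $D(\cdot,F)=0$ to all of $C(\Gamma)$, completing uniqueness. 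The only point requiring any genuine care is the $C_b$-bound on $S$, and this is already essentially packaged in Proposition~\ref{pdat214}; no new obstacle arises.
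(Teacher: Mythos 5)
Your proof is correct and follows essentially the same route as the paper: decompose $H(\varphi,F)=T\varphi + Z(F)$ with $T$ from Theorem~\ref{tdat235} and $Z$ the $H^1_0\cap C_b$-valued solution operator, and deduce uniqueness from injectivity of $\ca|_{H^1_0(\Omega)}$ together with Stone--Weierstra\ss\ density. The only divergence is cosmetic: the paper obtains continuity of $Z\colon W^{-1,p}(\Omega)\to C_b(\Omega)$ by the closed graph theorem, whereas you compute an explicit bound via Lemma~\ref{ldat202.3}\ref{ldat202.3-1} and Proposition~\ref{pdat214}; note that Lemma~\ref{ldat202.3} is stated for $p\in(1,\infty)$, so for $p=\infty$ you should first use the bounded embedding $W^{-1,\infty}(\Omega)\hookrightarrow W^{-1,p'}(\Omega)$ for some $p'\in(d,\infty)$ (valid since $\Omega$ is bounded) before invoking that lemma.
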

\begin{proof}
By Proposition~\ref{pdat214} there exists a linear map 
$Z \colon W^{-1,p}(\Omega) \to H^1_0(\Omega) \cap C_b(\Omega)$ such that 
$\ca \, Z(F) = F$
for all $F \in W^{-1,p}(\Omega)$.
Then $Z$ is continuous from $W^{-1,p}(\Omega)$ into $C_b(\Omega)$
by the closed graph theorem.
Consequently $H(\varphi, F) = T \varphi + Z(F)$
gives existence.
The uniqueness is clear.
\end{proof}

We next turn to the proof of Theorem~\ref{tdat825}.
The proof needs some preparation.
If $\Omega' \subset \Omega$ is open, then we
identify $L_2(\Omega')$ with a subspace of $L_2(\Omega)$,
and $H^1_0(\Omega')$ with a subspace of $H^1_0(\Omega)$
by extending functions by zero.

We next prove an approximation result which is of independent interest
(cf.\ \cite{Daners5} Corollary~4.7 for operators with real coefficients, where 
\ref{ldat821-2}--\ref{ldat821-1} are proved).

\begin{lemma} \label{ldat821}
Suppose that $\Omega_1,\Omega_2,\ldots \subset \Ri^d$ are open and bounded.
Assume that $\Omega_n \uparrow \Omega$.
For all $n \in \Ni$ let $A_n^D$ be the elliptic operator with Dirichlet boundary 
conditions in $L_2(\Omega_n)$ and with the coefficients $a_{kl}|_{\Omega_n}$ etc.
Define in the natural way the operator $\ca^{(n)} \colon H^1_\loc(\Omega_n) \to \cd'(\Omega_n)$.
Then one has the following.
\begin{tabel}
\item \label{ldat821-2}
There exists an $N \in \Ni$ such that $0 \not\in \sigma(A_n^D)$ for all $n \in \Ni$
with $n \geq N$.
\item \label{ldat821-3}
There exists an $N \in \Ni$ such that 
\[
\sup \{ \|(A_n^D)^{-1}\|_{L_2(\Omega_n) \to L_2(\Omega_n)} : n \in \Ni \mbox{ and } n \geq N \}
< \infty
.  \]
\item \label{ldat821-1}
$\lim_{n \to \infty} (A_n^D)^{-1}(f|_{\Omega_n}) = (A^D)^{-1} f$ in $L_2(\Omega)$
for all $f \in L_2(\Omega)$.
\item \label{ldat821-4}
Let $f_0,f_1,\ldots,f_d \in L_2(\Omega)$. 
Using Lemma~\ref{ldat260}\ref{ldat260-1}, for all large $n \in \Ni$ there exists
a unique $u_n \in H^1_0(\Omega_n)$ such that 
$\ca^{(n)} u_n = f_0|_{\Omega_n} + \sum_{k=1}^d \partial_k (f_k|_{\Omega_n})$.
Define similarly $u \in H^1_0(\Omega)$.
Then $\lim u_n = u$ in $L_2(\Omega)$.
\end{tabel} 
\end{lemma}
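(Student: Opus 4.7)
The plan is to prove \ref{ldat821-2}--\ref{ldat821-4} in order, all four by the same basic mechanism: since $\Omega_n \subset \Omega$, extension by zero embeds $H^1_0(\Omega_n)$ isometrically in $H^1_0(\Omega)$; the coercivity estimate $\frac{\mu}{2}\|v\|_{H^1(\Omega)}^2 \leq \RRe\gota_{\lambda_0}(v)$ then holds on this subspace with constants independent of $n$; and any $v \in C_c^\infty(\Omega)$ satisfies $v \in C_c^\infty(\Omega_n)$ for all sufficiently large $n$. These three facts let us pass from uniform bounds on $H^1_0(\Omega_n)$-solutions to a weak $H^1_0(\Omega)$-limit that solves the appropriate limiting equation.

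For \ref{ldat821-2}, I would argue by contradiction. If $0 \in \sigma(A_n^D)$ along a subsequence, then since each $A_n^D$ has compact resolvent there exist $u_n \in H^1_0(\Omega_n)$ with $\ca^{(n)} u_n = 0$ and $\|u_n\|_{L_2} = 1$. Testing against $u_n$ yields $\gota(u_n,u_n) = 0$, so coercivity gives $\|u_n\|_{H^1(\Omega)}^2 \leq 2\lambda_0/\mu$. Extract a subsequence converging weakly in $H^1_0(\Omega)$ and, by the compact embedding, strongly in $L_2(\Omega)$ to some $u$ with $\|u\|_{L_2} = 1$. For $v \in C_c^\infty(\Omega)$ one has $v \in C_c^\infty(\Omega_n)$ eventually, so $\gota(u_n, v) = 0$; weak $H^1$-convergence then forces $\gota(u, v) = 0$, i.e.\ $\ca u = 0$. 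Since $u \in H^1_0(\Omega)$, the standing hypothesis that $0$ is not a Dirichlet eigenvalue forces $u = 0$, contradicting $\|u\|_{L_2} = 1$.

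Statement \ref{ldat821-3} follows by the same contradiction scheme: if $\|(A_n^D)^{-1}\|$ blows up along a subsequence, rescale to produce $u_n \in H^1_0(\Omega_n)$ with $\|u_n\|_{L_2} = 1$ and $f_n := A_n^D u_n \to 0$ in $L_2(\Omega)$; the coercivity-plus-compactness argument again delivers a nontrivial $u \in H^1_0(\Omega)$ with $\ca u = 0$. For \ref{ldat821-1}, given $f \in L_2(\Omega)$, the bound from \ref{ldat821-3} combined with coercivity yields uniform $H^1_0(\Omega)$-bounds for $u_n := (A_n^D)^{-1}(f|_{\Omega_n})$. Every subsequence has a further subsequence converging weakly in $H^1_0(\Omega)$ and strongly in $L_2(\Omega)$ to some limit which, by localisation against $C_c^\infty(\Omega)$, satisfies $\ca u = f$; uniqueness identifies this limit with $(A^D)^{-1} f$, so the full sequence converges.

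Finally, \ref{ldat821-4} reduces to the previous items via a coercive shift. Choose $\lambda > \lambda_0$ and let $w_n \in H^1_0(\Omega_n)$ and $w \in H^1_0(\Omega)$ solve $\ca_\lambda^{(n)} w_n = F|_{\Omega_n}$ and $\ca_\lambda w = F$, where $F := f_0 + \sum_{k=1}^d \partial_k f_k \in H^{-1}(\Omega)$. Since $\gota_\lambda$ is coercive uniformly in $n$, the same weak/strong compactness argument gives $w_n \to w$ in $L_2(\Omega)$. Subtracting the equations for $u_n$ and $w_n$ shows $u_n - w_n \in D(A_n^D)$ with $A_n^D(u_n - w_n) = \lambda w_n$, so $u_n = w_n + \lambda (A_n^D)^{-1} w_n$; the uniform bound \ref{ldat821-3}, the convergence \ref{ldat821-1} applied to $w$, and $w_n \to w$ then give $u_n \to w + \lambda (A^D)^{-1} w = u$ in $L_2(\Omega)$. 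The main obstacle is \ref{ldat821-2}: once the invertibility of $A_n^D$ for large $n$ is secured, everything else is a variation on the same weak-compactness-plus-localisation argument.
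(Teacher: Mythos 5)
Your proof is correct and follows essentially the same route as the paper: coercivity of $\gota_{\lambda_0}$ gives uniform $H^1_0(\Omega)$-bounds, compact embedding gives a strong $L_2$-limit, localisation against $C_c^\infty(\Omega)$ identifies the limiting equation, and the standing assumption on $A^D$ closes the contradiction (for \ref{ldat821-2},\ref{ldat821-3}) or identifies the limit (for \ref{ldat821-1},\ref{ldat821-4}). The only cosmetic differences are that the paper establishes \ref{ldat821-2} and \ref{ldat821-3} together via a single lower bound $\|A_n^D u\|_{L_2(\Omega_n)} \ge c\|u\|_{L_2(\Omega_n)}$, proves \ref{ldat821-1} by invoking the well-known convergence of $(A_n^D+\lambda_0 I)^{-1}$ rather than re-running the compactness argument, and uses the shift $\lambda_0$ instead of some $\lambda > \lambda_0$ in \ref{ldat821-4}.
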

\begin{proof}
`\ref{ldat821-2} and \ref{ldat821-3}'. 
We first show that there exist $c > 0$ and $N \in \Ni$ such that 
\begin{equation}
\|A_n^D u\|_{L_2(\Omega_n)} \geq c \, \|u\|_{L_2(\Omega_n)}
\label{eldat821;6}
\end{equation}
for all $n \in \Ni$ and $u \in D(A_n^D)$ with $n \geq N$.
Suppose not. 
Passing to a subsequence if necessary, then for all $n \in \Ni$ there 
exists a $u_n \in D(A_n^D)$ such that 
$\|A_n^D u_n\|_{L_2(\Omega_n)} < \frac{1}{n} \, \|u_n\|_{L_2(\Omega_n)}$.
We may assume that $\|u_n\|_{L_2(\Omega_n)} = 1$.
Write $f_n = A_n^D u_n \in L_2(\Omega_n)$.
Then $\|f_n\|_{L_2(\Omega_n)} \leq \frac{1}{n}$.
Moreover, 
\[
\frac{\mu}{2} \, \|\nabla u_n\|_{L_2(\Omega)}^2
\leq \RRe \gota(u_n) + \lambda_0 \, \|u_n\|_{L_2(\Omega)}^2
= \RRe (f_n,u_n)_{L_2(\Omega)} + \lambda_0 \, \|u_n\|_{L_2(\Omega)}^2
\leq 1 +\lambda_0
.  \]
So the sequence $(u_n)_{n \in \Ni}$ is bounded in $H^1_0(\Omega)$.
Passing to a subsequence if necessary, there exists a $u \in H^1_0(\Omega)$ 
such that $\lim u_n = u$ weakly in $H^1_0(\Omega)$.
Since the embedding $H^1_0(\Omega) \subset L_2(\Omega)$ is compact, 
one deduces that $\lim u_n = u$ in $L_2(\Omega)$.
Then $\|u\|_{L_2(\Omega)} = 1$ and in particular $u \neq 0$.

Let $v \in C_c^\infty(\Omega)$.
Then $\supp v$ is compact, so $\supp v \subset \Omega_n$ for large $n \in \Ni$.
Hence $\gota(u_n,v) = (f_n,v)_{L_2(\Omega)}$.
Taking the limit $n \to \infty$ it follows that $\gota(u,v) = 0$.
Then by density $\gota(u,v) = 0$ for all $v \in H^1_0(\Omega)$.
Hence $u \in D(A^D)$ and $A^D u = 0$.
Since $0 \not\in \sigma(A^D)$ one deduces that $u = 0$.
This is a contradiction.
So (\ref{eldat821;6}) is valid.

It follows from (\ref{eldat821;6}) that the operator $A_n^D$ is injective for 
large $n \in \Ni$.
Since $A_n^D$ has compact resolvent, this implies that $0 \not\in \sigma(A_n^D)$, 
which is Statement~\ref{ldat821-2}.
Then Statement~\ref{ldat821-3} follows immediately from (\ref{eldat821;6}).

`\ref{ldat821-1}'. 
It is well known that 
$\lim_{n \to \infty} (A_n^D + \lambda_0 \, I)^{-1}(f|_{\Omega_n}) = (A^D + \lambda_0 \, I)^{-1} f$
for all $f \in L_2(\Omega)$.
Then the uniform bound of Statement~\ref{ldat821-3} implies Statement~\ref{ldat821-1}.

`\ref{ldat821-4}'. 
Define $\ca_{\lambda_0}^{(n)}$ in the natural way for all $n \in \Ni$.
For all $n \in \Ni$ there exists by Lemma~\ref{ldat260}\ref{ldat260-1} a unique 
$\tilde u_n \in H^1_0(\Omega_n)$ such that 
$\ca_{\lambda_0}^{(n)} \tilde u_n = f_0|_{\Omega_n} + \sum_{k=1}^d \partial_k (f_k|_{\Omega_n})$.
Define similarly $\tilde u \in H^1_0(\Omega)$.
Then 
\begin{eqnarray*}
\frac{\mu}{2} \, \|\tilde u_n\|_{H^1(\Omega_n)}^2
& \leq & \RRe \gota_{\lambda_0}(\tilde u_n)  \\
& = & \RRe \Big( (f_0, \tilde u_n)_{L_2(\Omega_n)} - \sum_{k=1}^d (f_k, \partial_k \tilde u_n)_{L_2(\Omega_n)} \Big)
\leq \|\tilde u_n\|_{H^1_0(\Omega_n)} \sum_{k=0}^d \|f_k\|_{L_2(\Omega)}
.  
\end{eqnarray*}
Hence $\|\tilde u_n\|_{H^1(\Omega)} \leq \frac{2}{\mu} \sum_{k=0}^d \|f_k\|_{L_2(\Omega)}$
and the sequence $(\tilde u_n)_{n \in \Ni}$ is bounded in $H^1_0(\Omega)$.
Passing to a subsequence if necessary, there exists a $\hat u \in H^1_0(\Omega)$
such that $\lim \tilde u_n = \hat u$ weakly in $H^1_0(\Omega)$.
Because the inclusion of $H^1_0(\Omega)$ in $L_2(\Omega)$ is compact, 
it follows that $\lim \tilde u_n = \hat u$ in $L_2(\Omega)$.

Let $v \in C_c^\infty(\Omega)$.
Then $\supp v$ is compact, so $\supp v \subset \Omega_n$ if $n \in \Ni$ is large enough.
For large $n \in \Ni$ one has
$\gota_{\lambda_0}(\tilde u_n,v) = (f_0, v)_{L_2(\Omega_n)} - \sum_{k=1}^d (f_k, \partial_k v)_{L_2(\Omega_n)}$.
Taking the limit $n \to \infty$ gives
\begin{equation}
\gota_{\lambda_0}(\hat u,v) = (f_0, v)_{L_2(\Omega)} - \sum_{k=1}^d (f_k, \partial_k v)_{L_2(\Omega)}
.
\label{eldat821;1}
\end{equation}
Since $C_c^\infty(\Omega)$ is dense in $H^1_0(\Omega)$, it follows that (\ref{eldat821;1})
is valid for all $v \in H^1_0(\Omega)$.
So $\hat u = \tilde u$.
We proved that $\lim \tilde u_n = \tilde u$ in $L_2(\Omega)$.

Next 
\[
\gota(u,v)
= (f_0, v)_{L_2(\Omega)} - \sum_{k=1}^d (f_k, \partial_k v)_{L_2(\Omega)}
= \gota_{\lambda_0}(\tilde u,v)
= \gota(\tilde u,v) + \lambda_0 \, (\tilde u,v)_{L_2(\Omega)}
\]
for all $v \in H^1_0(\Omega)$.
So $\gota(u - \tilde u,v) = \lambda_0 \, (\tilde u,v)_{L_2(\Omega)}$.
Since $u - \tilde u \in H^1_0(\Omega)$ one deduces that 
$u - \tilde u \in D(A^D)$ and $A^D(u - \tilde u) = \lambda_0 \, \tilde u$.
Then $u = \tilde u + \lambda_0 \, (A^D)^{-1} \tilde u$.
Similarly $u_n = \tilde u_n + \lambda_0 \, (A_n^D)^{-1} \tilde u_n$ for all large $n \in \Ni$.
Because 
\begin{eqnarray*}
\lefteqn{
\|(A_n^D)^{-1} \tilde u_n - (A^D)^{-1} \tilde u\|_{L_2(\Omega)}
} \hspace*{10mm} {} \\*
& \leq & \|(A_n^D)^{-1} (\tilde u_n - \tilde u)|_{\Omega_n}\|_{L_2(\Omega_n)}
   + \|(A_n^D)^{-1} (\tilde u|_{\Omega_n}) - (A^D)^{-1} \tilde u\|_{L_2(\Omega)}  \\
& \leq & \|(A_n^D)^{-1}\|_{L_2(\Omega_n) \to L_2(\Omega_n)} \, \|\tilde u_n - \tilde u|_{\Omega_n}\|_{L_2(\Omega_n)}
   + \|(A_n^D)^{-1} (\tilde u|_{\Omega_n}) - (A^D)^{-1} \tilde u\|_{L_2(\Omega)}
\end{eqnarray*}
the statement follows from Statements~\ref{ldat821-3} and \ref{ldat821-1}.
\end{proof}

We need a technical lemma.
Note that Proposition~\ref{pdat214} implies that 
the function $u$ in the next lemma is continuous.

\begin{lemma} \label{ldat820}
Let $K \subset \Omega$ be a compact set and $p \in (d,\infty)$.
Then there exist $c > 0$ and $\kappa \in (0,1)$, depending only on
the ellipticity constant, the $L_\infty$-norm of the coefficients, $p$ and $d(K,\Gamma)$, 
such that 
\[
|u(x)| \leq c \, (\|u\|_{H^1(\Omega)} + \sum_{k=0}^d \|f_k\|_{L_p(\Omega)})
\]
and 
\[
|u(x) - u(y)| \leq c \, |x-y|^\kappa \, (\|u\|_{H^1(\Omega)} + \sum_{k=0}^d \|f_k\|_{L_p(\Omega)})
\]
for all $x,y \in K$, $u \in H^1_0(\Omega)$ and $f_0,f_1,\ldots,f_d \in L_p(\Omega)$ 
such that $\ca u = f_0 + \sum_{k=1}^d \partial_k f_k$.
\end{lemma}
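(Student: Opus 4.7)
The plan is to reduce the lemma to classical interior De Giorgi--Nash--Moser estimates, which are insensitive to the boundary. Set $r = \tfrac{1}{3} d(K,\Gamma) > 0$, so that $B(x_0, 2r) \subset \Omega$ for every $x_0 \in K$; on each such ball $u$ is a weak solution of the uniformly elliptic equation $\ca u = f_0 + \sum_{k=1}^d \partial_k f_k$ with bounded measurable coefficients and right-hand side of $L_p$-type with $p > d$.

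On each ball $B(x_0, 2r)$ I would invoke two standard interior estimates. First, a Moser-type $L_\infty$-bound: there exists $c_1 > 0$, depending only on the ellipticity constant, the $L_\infty$-norms of the coefficients, $p$ and $r$, such that
\[
\|u\|_{L_\infty(B(x_0, r))} \leq c_1 \Big( \|u\|_{L_2(B(x_0, 2r))} + \sum_{k=0}^d \|f_k\|_{L_p(B(x_0, 2r))} \Big).
\]
Second, an interior De Giorgi--Nash Hölder estimate: there exist $\kappa \in (0,1)$ and $c_2 > 0$ with the same dependencies such that
\[
|u(x) - u(y)| \leq c_2 \, |x-y|^\kappa \Big( \|u\|_{L_2(B(x_0, 2r))} + \sum_{k=0}^d \|f_k\|_{L_p(B(x_0, 2r))} \Big)
\]
for all $x, y \in B(x_0, r)$. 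Both results are standard in the present setting and can be extracted from the same reference \cite{GiM} that already supplies the Caccioppoli inequality of Proposition~\ref{pdat213}; the complex-valued lower-order coefficients $b_k, c_0$ pose no difficulty since the ellipticity of $(a_{kl})$ is real and the standard proofs go through after splitting real and imaginary parts.

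To finish, I would cover $K$ by finitely many balls $B(x_i, r/2)$ with $x_i \in K$, $i = 1, \ldots, N$, and estimate the local $L_2$- and $L_p$-norms by the corresponding global ones (note $\|u\|_{L_2(B(x_i, 2r))} \leq \|u\|_{H^1(\Omega)}$). This immediately yields the pointwise $L_\infty$-bound on $K$ and the Hölder bound for any $x, y \in K$ with $|x - y| < r/2$, since such a pair lies in a common ball $B(x_i, r)$. For pairs $x, y \in K$ with $|x - y| \geq r/2$ the Hölder bound follows from the $L_\infty$-bound just obtained via the trivial inequality
\[
|u(x) - u(y)| \leq 2 \|u\|_{L_\infty(K)} \leq 2 \, (r/2)^{-\kappa} \, |x-y|^\kappa \, \|u\|_{L_\infty(K)}.
\]
The only point of real substance is the availability of the two interior estimates above; once these are cited, the proof is entirely a covering argument with routine bookkeeping of constants, all of which can be arranged to depend only on the data listed in the statement.
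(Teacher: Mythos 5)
Your proposal is correct and in spirit takes the same route as the paper: reduce to interior De~Giorgi--Nash--Moser H\"older estimates on balls well inside $\Omega$ (the paper cites \cite{ERe2} Theorem~1.2 with $\Gamma=\emptyset$, which is exactly such an interior estimate after localisation), then patch up $K$ by a finite cover, with the far-apart pairs $|x-y|\ge r/2$ handled by the $L_\infty$-bound exactly as you do. The one place where you are a little too quick is the treatment of the complex lower-order coefficients $b_k,c_0$. Splitting $u$ into real and imaginary parts does not simply produce two independent scalar equations to which the scalar De~Giorgi--Nash theory applies directly; it produces a system that is decoupled in the principal part but coupled through lower-order terms, and the phrase ``the standard proofs go through'' glosses over the fact that an additional bootstrap is needed. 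The paper's proof is explicit on this point: it first establishes the estimate for the purely second-order operator ($b_k=c_k=c_0=0$) via \cite{ERe2} Theorem~1.2, and then recovers the general case ``by iteration, using Proposition~3.2 of \cite{ERe2}'' --- i.e.\ the lower-order terms are moved to the right-hand side, the resulting gain in local integrability of $u$ is fed back in, and one iterates finitely many times. If you add that one-sentence iteration step after your invocation of the interior estimates, your argument is complete and your covering argument (including the trick for $|x-y|\ge r/2$) is entirely adequate.
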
 
\begin{proof}
If $b_k = c_k = c_0 = 0$, then this is a special case of \cite{ERe2} Theorem~1.2
with the choice $\Gamma = \emptyset$, $\Upsilon = K$,
$\zeta = d(K,\Gamma)$ and $\alpha = 1$.
The general case follows by iteration,
using Proposition~3.2 of \cite{ERe2}.
\end{proof}

The next statement is Theorem~\ref{tdat825}\ref{tdat825-3}.

\begin{prop} \label{pdat807}
Let $\Phi \in C(\overline \Omega)$.
Let $\Omega_1,\Omega_2,\ldots \subset \Ri^d$ be open and bounded
with $\Omega_n \uparrow \Omega$.
Let $K \subset \Omega$ be compact.
Then 
\[
\lim_{n \to \infty} T^{\Omega_n}(\Phi|_{\partial \Omega_n}) 
= T^\Omega(\Phi|_{\partial \Omega})
\]
uniformly on $K$.
\end{prop}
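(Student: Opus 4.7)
The plan is to reduce to the case where $\Phi \in C(\overline\Omega) \cap H^1(\Omega)$ via a uniform approximation, and then exploit the $L_2$-convergence of the $H^1_0$-correctors provided by Lemma~\ref{ldat821}\ref{ldat821-4}, upgrading this to uniform convergence on $K$ by interior regularity.

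First I would establish a uniform operator bound. By Theorem~\ref{tdat235}, for each open bounded $\Omega' \subset \Omega$,
\[
\|T^{\Omega'}\psi\|_{C_b(\Omega')}
\leq c \, \bigl(\|(A^{\Omega',D})^{-1}\|_{L_2 \to L_2} + 1\bigr) \, \|\psi\|_{C(\partial \Omega')}
,  \]
and the constant $c$ depends only on the ellipticity, the $L_\infty$-norms of the coefficients and the diameter, all of which are controlled uniformly by their values on $\Omega$. Combined with Lemma~\ref{ldat821}\ref{ldat821-3}, this yields a constant $M > 0$ and an $N \in \Ni$ such that $\|T^{\Omega_n}\psi\|_{C_b(\Omega_n)} \leq M \|\psi\|_{C(\partial \Omega_n)}$ for all $n \geq N$ and all $\psi \in C(\partial \Omega_n)$.

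Next I would reduce to smooth data. Extend $\Phi$ to a compactly supported continuous function on $\Ri^d$ by Tietze and mollify to obtain $\Phi^{(m)} \in C^\infty(\Ri^d)$ with $\Phi^{(m)} \to \Phi$ uniformly on $\overline\Omega$. Each $\Phi^{(m)}|_{\overline \Omega}$ lies in $C(\overline\Omega) \cap H^1(\Omega)$. Using the uniform bound $M$, one gets for $n \geq N$,
\[
\bigl\|T^{\Omega_n}(\Phi|_{\partial\Omega_n}) - T^{\Omega_n}(\Phi^{(m)}|_{\partial\Omega_n})\bigr\|_{C_b(\Omega_n)}
\leq M \,\|\Phi - \Phi^{(m)}\|_{C(\overline\Omega)}
,\]
and analogously for $T^\Omega$. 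An $\varepsilon/3$ argument then reduces the claim to proving, for each fixed $m$, that $T^{\Omega_n}(\Phi^{(m)}|_{\partial\Omega_n}) \to T^\Omega(\Phi^{(m)}|_{\partial\Omega})$ uniformly on $K$.

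For fixed $m$, Lemma~\ref{ldat260}\ref{ldat260-3} gives $\ca \Phi^{(m)} \in H^{-1}(\Omega)$ of the form $f_0 + \sum_k \partial_k f_k$ with $f_0,\dots,f_d \in L_2(\Omega)$. Writing $u_n^{(m)} = T^{\Omega_n}(\Phi^{(m)}|_{\partial\Omega_n}) = \Phi^{(m)}|_{\Omega_n} - v_n^{(m)}$ with $v_n^{(m)} \in H^1_0(\Omega_n)$ satisfying $\ca v_n^{(m)} = \ca \Phi^{(m)}|_{\Omega_n}$, and analogously $u^{(m)} = \Phi^{(m)} - v^{(m)}$ on $\Omega$, Lemma~\ref{ldat821}\ref{ldat821-4} yields $v_n^{(m)} \to v^{(m)}$ in $L_2(\Omega)$. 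Consequently $u_n^{(m)} \to u^{(m)}$ in $L_2(K')$ for any compact $K' \subset \Omega$.

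The main obstacle is then to upgrade this $L_2$-convergence to uniform convergence on $K$. Choose a compact neighbourhood $K'$ of $K$ with $K' \subset \Omega$ and take $n$ large enough that $K' \subset \Omega_n$. Since $u_n^{(m)}$ is $\ca$-harmonic on $\Omega_n$ with $\|u_n^{(m)}\|_{C_b(\Omega_n)} \leq M\|\Phi^{(m)}\|_{C(\overline\Omega)}$, the Caccioppoli inequality of Proposition~\ref{pdat213} gives a uniform $H^1$-bound on $u_n^{(m)}$ over a slightly enlarged compact set. Then Lemma~\ref{ldat820} (applied with $f_k = 0$) yields uniform interior H\"older estimates, hence equicontinuity of $(u_n^{(m)})_n$ on $K$. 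Equicontinuity plus $L_2$-convergence forces uniform convergence on $K$ (a standard Arzel\`a--Ascoli argument: any subsequential $C(K)$-limit must agree with $u^{(m)}$ by the $L_2$-convergence, so the whole sequence converges uniformly). Chaining this with the $\varepsilon/3$ reduction completes the proof.
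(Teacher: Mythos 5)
Your proposal has the same overall structure as the paper's proof: reduce to smooth data $\Phi^{(m)} = \widetilde\Phi|_{\overline\Omega}$ with $\widetilde\Phi \in C_c^\infty(\Ri^d)$ via a uniform-operator-bound argument (Lemma~\ref{ldat821}\ref{ldat821-3} combined with the norm estimate (\ref{eldat821;20})), then for smooth data use Lemma~\ref{ldat821}\ref{ldat821-4} for $L_2$-convergence of the correctors, and finally upgrade to uniform convergence on $K$ via interior equicontinuity. That is the paper's strategy.

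There is, however, a technical misstep at the last step. You propose applying Lemma~\ref{ldat820} to $u_n^{(m)}$ (with $f_k = 0$, since $\ca u_n^{(m)} = 0$), after using Proposition~\ref{pdat213} to get a local $H^1$-bound. But Lemma~\ref{ldat820} is stated for $u \in H^1_0(\Omega)$, and $u_n^{(m)}$ is \emph{not} in $H^1_0(\Omega_n)$ — it is the $\ch^1_\ca$-component of $\Phi^{(m)}|_{\Omega_n}$, with nonzero boundary trace. A local $H^1$-bound does not put it in $H^1_0$. You would need either a separate interior De Giorgi--Nash estimate (not provided by the paper's toolkit) or a cutoff argument. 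The paper avoids this entirely by applying Lemma~\ref{ldat820} to the \emph{corrector} $v_n^{(m)} = \Phi^{(m)}|_{\Omega_n} - u_n^{(m)}$, which is in $H^1_0(\Omega_n)$ and satisfies $\ca v_n^{(m)} = \ca\Phi^{(m)}|_{\Omega_n} = f_0|_{\Omega_n} + \sum_k \partial_k(f_k|_{\Omega_n})$ with $f_k \in L_\infty$ (thanks to the smoothness of $\widetilde\Phi$; note $L_2$, which you invoked, is not sufficient for Lemma~\ref{ldat820}, which needs $p > d$). A direct ellipticity estimate, not Caccioppoli, then gives the uniform $H^1_0$-bound on $v_n^{(m)}$ needed for Lemma~\ref{ldat820}. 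Once $v_n^{(m)}|_K$ is equicontinuous, so is $u_n^{(m)}|_K$, since $\Phi^{(m)}$ is smooth; the rest of your Arzel\`a--Ascoli argument then goes through. So the route is right, but the final lemma must be applied to the corrector rather than the harmonic part.
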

\begin{proof} 
We adopt the notation as in Lemma~\ref{ldat821}.
First suppose that there exists a $\widetilde \Phi \in C_c^\infty(\Ri^d)$ such that
$\Phi = \widetilde \Phi|_{\overline \Omega}$.
By Lemma~\ref{ldat821}\ref{ldat821-2} we may assume that 
$0 \not\in \sigma(A_n^D)$ for all $n \in \Ni$.
Then Lemma~\ref{ldat260}\ref{ldat260-2} and Proposition~\ref{pdat269} imply that
for all $n \in \Ni$ there exists a unique 
$u_n \in H^1(\Omega_n) \cap C_b(\Omega_n)$ such that $\ca^{(n)} u_n = 0$ and 
$\Phi|_{\Omega_n} - u_n \in H^1_0(\Omega_n)$.
Write $v_n = \Phi|_{\Omega_n} - u_n$.
Define similarly $u \in C_b(\Omega) \cap H^1(\Omega)$ and $v \in H^1_0(\Omega)$.
Define $f_0 = c_0 \, \Phi|_\Omega + \sum_{k=1}^d c_k \, \partial_k (\widetilde \Phi|_\Omega) \in L_\infty(\Omega)$
and $f_k = - b_k \, \Phi|_\Omega - \sum_{l=1}^d a_{kl} \, \partial_l (\widetilde \Phi|_\Omega) \in L_\infty(\Omega)$
for all $k \in \{ 1,\ldots,d \} $.
Then 
\[
\ca^{(n)} v_n 
= \ca^{(n)}(\Phi|_{\Omega_n}) 
= f_0|_{\Omega_n} + \sum_{k=1}^d \partial_k (f_k|_{\Omega_n})
\]
for all $n \in \Ni$ and a similar identity is valid for $\ca v$.
It follows from Lemma~\ref{ldat821}\ref{ldat821-4} that $\lim v_n = v$ in $L_2(\Omega)$.
In particular the sequence $(v_n)_{n \in \Ni}$ is bounded in $L_2(\Omega)$.
Since
\begin{eqnarray*}
\frac{\mu}{2} \, \|v_n\|_{H^1_0(\Omega)}^2
& = & \frac{\mu}{2} \, \|v_n\|_{H^1_0(\Omega_n)}^2
\leq \RRe \gota^{(n)}_{\lambda_0}(v_n)  \\
& = & \RRe (f_0, v_n)_{L_2(\Omega)} 
   - \sum_{k=1}^d (f_k, \partial_k v_n)_{L_2(\Omega)}
   + \lambda_0 \, \|v_n\|_{L_2(\Omega)}^2
\end{eqnarray*}
for all $n \in \Ni$, it follows that the sequence 
$(v_n)_{n \in \Ni}$ is also bounded in $H^1_0(\Omega)$.
Then Lemma~\ref{ldat820} gives that the sequence 
$(v_n|_K)_{n \in \Ni}$ is equicontinuous on~$K$.
Passing to a subsequence if necessary, there exists a $\tilde v \in C(K)$ such that 
$\lim v_n|_K = \tilde v$ uniformly on~$K$.
But $\lim v_n|_K = v|_K$ in $L_2(K)$.
So $\tilde v = v|_K$ and $\lim v_n|_K = v|_K$ uniformly on~$K$.
Since $u = \Phi|_\Omega - v$ and $u_n = \Phi|_{\Omega_n} - v_n$ for all $n \in \Ni$, 
the statement is proved if
there exists a $\widetilde \Phi \in C_c^\infty(\Ri^d)$ such that
$\Phi = \widetilde \Phi|_{\overline \Omega}$.

Now we prove the general case.
By Lemma~\ref{ldat821}\ref{ldat821-2} and \ref{ldat821-3} we may assume that 
$0 \not\in \sigma(A_n^D)$ for all $n \in \Ni$ and there exists an $M > 0$ such that 
\[
\|(A_n^D)^{-1}\|_{L_2(\Omega_n) \to L_2(\Omega_n)} \leq M
\]
for all $n \in \Ni$.
Then by (\ref{eldat821;20}) there exists a $\hat c > 0$ such that 
\[
\|T^{\Omega_n} \varphi\|_{C_b(\Omega_n)}
\leq \hat c  \, \|\varphi\|_{C(\partial \Omega_n)}
\]
for all $n \in \Ni$ and $\varphi \in C(\partial \Omega_n)$, and, in addition
\[
\|T \varphi\|_{C_b(\Omega)}
\leq \hat c  \, \|\varphi\|_{C(\Gamma)}
\]
for all $\varphi \in C(\Gamma)$.

Let $\varepsilon > 0$.
There exists a $\widetilde \Phi \in C_c^\infty(\Ri^d)$ such that 
\[
\|\widetilde \Phi|_{\overline \Omega} - \Phi\|_{C(\overline \Omega)} \leq \varepsilon
.  \]
Let $n \in \Ni$ and suppose that $K \subset \Omega_n$.
Then 
\begin{eqnarray*}
\lefteqn{
\|T^{\Omega_n}(\Phi|_{\partial \Omega_n}) - T(\Phi|_\Gamma)\|_{C(K)}
} \hspace*{20mm}  \\*
& \leq & \|T^{\Omega_n}(\Phi|_{\partial \Omega_n}) 
             - T^{\Omega_n}(\widetilde \Phi|_{\partial \Omega_n})\|_{C(K)}
   + \|T^{\Omega_n}(\widetilde \Phi|_{\partial \Omega_n}) - T(\widetilde \Phi|_{\partial \Omega})\|_{C(K)}
       \\*
& & \hspace*{10mm} {}   
+ \|T(\widetilde \Phi|_{\partial \Omega}) - T(\Phi|_\Gamma)\|_{C(K)}  \\*
& \leq & \|T^{\Omega_n}(\Phi|_{\partial \Omega_n} - \widetilde \Phi|_{\partial \Omega_n})\|_{C_b(\Omega_n)}
   + \|T^{\Omega_n}(\widetilde \Phi|_{\partial \Omega_n}) - T(\widetilde \Phi|_{\partial \Omega})\|_{C(K)}
       \\*
& & \hspace*{10mm} {}   
   + \|T(\widetilde \Phi|_\Gamma - \Phi|_\Gamma)\|_{C_b(\Omega)}  \\*
& \leq & \hat c \, \varepsilon 
   + \|T^{\Omega_n}(\widetilde \Phi|_{\partial \Omega_n}) - T(\widetilde \Phi|_{\partial \Omega})\|_{C(K)}
   + \hat c \, \varepsilon 
.
\end{eqnarray*}
Taking the limsup $n \to \infty$ and using the first case
one deduces that 
\[
\limsup_{n \to \infty} \|T^{\Omega_n}(\Phi|_{\partial \Omega_n}) - T(\Phi|_\Gamma)\|_{C(K)} 
\leq 2 \hat c \, \varepsilon
\]
and the proposition follows.
\end{proof}

Next we prove that $T$ behaves well on classical solutions.
This is Statement~\ref{tdat825-4} of Theorem~\ref{tdat825}.

\begin{prop} \label{pdat808}
Let $\Phi \in C(\overline \Omega) \cap H^1_\loc(\Omega)$ and suppose that $\ca \Phi = 0$.
Then $T(\Phi|_\Gamma) = \Phi|_\Omega$.
\end{prop}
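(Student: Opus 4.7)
The plan is to use the interior domain approximation from Proposition~\ref{pdat807} combined with the observation that on any subdomain whose closure sits strictly inside $\Omega$, the function $\Phi$ itself already solves the Dirichlet problem for its own boundary values.

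First I would choose a sequence of open bounded sets $\Omega_n \subset \Omega$ with $\overline{\Omega_n}$ a compact subset of $\Omega$ and $\Omega_n \uparrow \Omega$; the concrete choice $\Omega_n = \{ x \in \Omega : d(x,\Gamma) > \frac{1}{n} \} \cap B(0,n)$ works. Because $\overline{\Omega_n}$ is compact in $\Omega$ and $\Phi \in C(\overline \Omega) \cap H^1_\loc(\Omega)$, the restriction $\Phi|_{\Omega_n}$ lies in $C(\overline{\Omega_n}) \cap H^1(\Omega_n)$; moreover, since $\ca \Phi = 0$ on $\Omega \supset \Omega_n$, this restriction is also $\ca^{(n)}$-harmonic. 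By Lemma~\ref{ldat821}\ref{ldat821-2} I may pass to a tail so that $0 \notin \sigma(A_n^D)$ for every $n \in \Ni$, which makes the Perron operator $T^{\Omega_n}$ available via Theorem~\ref{tdat235}.

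The key step is the identification $T^{\Omega_n}(\Phi|_{\partial \Omega_n}) = \Phi|_{\Omega_n}$ for each such $n$. By the defining property of $T^{\Omega_n}$, the difference $\Phi|_{\Omega_n} - T^{\Omega_n}(\Phi|_{\partial \Omega_n})$ belongs to $H^1_0(\Omega_n)$, and both terms are $\ca^{(n)}$-harmonic on $\Omega_n$. Since $0$ is not a Dirichlet eigenvalue of $\ca^{(n)}$ on $\Omega_n$, this difference must vanish. Applying Proposition~\ref{pdat807} to the sequence $(\Omega_n)$ then gives, uniformly on compact subsets of $\Omega$,
\[
T(\Phi|_\Gamma) = \lim_{n \to \infty} T^{\Omega_n}(\Phi|_{\partial \Omega_n}) = \lim_{n \to \infty} \Phi|_{\Omega_n} = \Phi|_\Omega,
\]
where the second limit is trivial because $\Phi$ is a fixed continuous function on $\overline \Omega$ and any compact $K \subset \Omega$ is contained in $\Omega_n$ for all sufficiently large $n$.

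The one obstacle worth naming is arranging simultaneously that $\overline{\Omega_n}$ is compactly contained in $\Omega$ (so that the restriction of $\Phi$ to $\Omega_n$ truly lies in $H^1(\Omega_n) \cap C(\overline{\Omega_n})$), that $\Omega_n \uparrow \Omega$ in the sense required by Proposition~\ref{pdat807}, and that $0 \notin \sigma(A_n^D)$ for all $n$. The explicit choice of $\Omega_n$ above covers the first two requirements, while Lemma~\ref{ldat821}\ref{ldat821-2} handles the third after discarding finitely many indices. Note that no regularity of the approximating boundaries $\partial \Omega_n$ is required; all the work has already been done in establishing Proposition~\ref{pdat807}.
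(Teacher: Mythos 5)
Your proof is correct and uses the same two essential ingredients as the paper: the interior domain approximation $\Omega_n \uparrow \Omega$ from Proposition~\ref{pdat807} and the key identity $T^{\Omega_n}(\Phi|_{\partial\Omega_n}) = \Phi|_{\Omega_n}$, which holds once $\overline{\Omega_n} \subset \Omega$ guarantees $\Phi|_{\Omega_n} \in C(\overline{\Omega_n}) \cap H^1(\Omega_n)$ and $\ca^{(n)}(\Phi|_{\Omega_n}) = 0$. The only difference is one of economy: the paper's proof reintroduces a smooth approximant $\widetilde\Phi \in C_c^\infty(\Ri^d)$, together with Lemma~\ref{ldat821}\ref{ldat821-3} and the uniform bound $\|T^{\Omega_n}\varphi\|_{C_b(\Omega_n)} \leq \hat c \|\varphi\|_{C(\partial\Omega_n)}$, to run an $\varepsilon$-argument before appealing to Proposition~\ref{pdat807} for the smooth function only; you instead observe that Proposition~\ref{pdat807} is already stated and proved for arbitrary $\Phi \in C(\overline\Omega)$, so it may be applied directly to the given $\Phi$, which removes the need for both the smooth approximant and the uniform resolvent bound. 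That shortcut is legitimate and yields a cleaner argument with no loss of generality.
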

\begin{proof}
There exist open and bounded $\Omega_1,\Omega_2,\ldots \subset \Ri^d$
such that $\Omega_n \uparrow \Omega$ and $\overline{\Omega_n} \subset \Omega$
for all $n \in \Ni$.
We adopt the notation as in Lemma~\ref{ldat821}.
By Lemma~\ref{ldat821}\ref{ldat821-2} and \ref{ldat821-3} we may assume that 
$0 \not\in \sigma(A_n^D)$ for all $n \in \Ni$ and there exists an $M > 0$ such that 
$\|(A_n^D)^{-1}\|_{L_2(\Omega_n) \to L_2(\Omega_n)} \leq M$ for all $n \in \Ni$.
Then by (\ref{eldat821;20}) there exists a $\hat c > 0$ such that 
\begin{equation}
\|T^{\Omega_n} \varphi\|_{C_b(\Omega_n)}
\leq \hat c  \, \|\varphi\|_{C(\partial \Omega_n)}
\label{epdat808;2}
\end{equation}
for all $n \in \Ni$ and $\varphi \in C(\partial \Omega_n)$,
and, in addition
\[
\|T \varphi\|_{C_b(\Omega)}
\leq \hat c  \, \|\varphi\|_{C(\Gamma)}
\]
for all $\varphi \in C(\Gamma)$.

Let $K \subset \Omega$ be compact.
Let $\varepsilon > 0$.
By density there exists a $\widetilde \Phi \in C_c^\infty(\Ri^d)$ such that 
$\|\widetilde \Phi|_{\overline \Omega} - \Phi\|_{C(\overline \Omega)} \leq \varepsilon$.
Let $n \in \Ni$ and suppose that $K \subset \Omega_n$.
Then 
\begin{eqnarray}
\|T(\Phi|_\Gamma) - \Phi\|_{C(K)}
& \leq & \|T(\Phi|_\Gamma) - T(\widetilde \Phi|_\Gamma)\|_{C(K)}
   + \|T(\widetilde \Phi|_{\partial \Omega}) 
         - T^{\Omega_n}(\widetilde \Phi|_{\partial \Omega_n})\|_{C(K)}  \nonumber  \\*
& & \hspace*{10mm} {}
   + \|T^{\Omega_n}(\widetilde \Phi|_{\partial \Omega_n}) - \Phi\|_{C(K)} 
.
\label{epdat808;1}
\end{eqnarray}
We estimate the first and last terms separately.

For the first term one estimates
\[
\|T(\Phi|_\Gamma) - T(\widetilde \Phi|_\Gamma)\|_{C(K)}
\leq \hat c \, \|\Phi|_\Gamma - \widetilde \Phi|_\Gamma\|_{C(\Gamma)}
\leq \hat c \, \varepsilon
.  \]
Consider the last term in (\ref{epdat808;1}).
One has $\Phi|_{\overline{\Omega_n}} \in C(\overline{\Omega_n}) \cap H^1(\Omega_n)$
and $\ca^{(n)}(\Phi|_{\Omega_n}) = 0$.
Hence $T^{\Omega_n}(\Phi|_{\partial \Omega_n}) = \Phi|_{\Omega_n}$
by Theorem~\ref{tdat120}.
Consequently 
\begin{eqnarray*}
\|T^{\Omega_n}(\widetilde \Phi|_{\partial \Omega_n}) - \Phi\|_{C(K)}
& = & \|T^{\Omega_n}(\widetilde \Phi|_{\partial \Omega_n}) 
     - T^{\Omega_n}(\Phi|_{\partial \Omega_n})\|_{C(K)}  \\
& \leq & \|T^{\Omega_n}(\widetilde \Phi|_{\partial \Omega_n}) 
     - T^{\Omega_n}(\Phi|_{\partial \Omega_n})\|_{C_b(\Omega_n)}  \\
& \leq & \hat c \, \|\widetilde \Phi|_{\partial \Omega_n} - \Phi|_{\partial \Omega_n}\|_{C(\partial \Omega_n)}
\leq \hat c \, \varepsilon
\end{eqnarray*}
by the uniform estimates of (\ref{epdat808;2}).

Then (\ref{epdat808;1}) gives
\[
\|T(\Phi|_\Gamma) - \Phi\|_{C(K)}
\leq 2 \hat c \, \varepsilon 
   + \|T(\widetilde \Phi|_{\partial \Omega}) 
     - T^{\Omega_n}(\widetilde \Phi|_{\partial \Omega_n})\|_{C(K)} 
.  \]
This is for all $n \in \Ni$ with $K \subset \Omega_n$.
Taking the limit $n \to \infty$ and using Proposition~\ref{pdat807}
one deduces that 
$\|T(\Phi|_\Gamma) - \Phi\|_{C(K)} \leq 2 \hat c \, \varepsilon$.
So $\|T(\Phi|_\Gamma) - \Phi\|_{C(K)} = 0$ 
and the proposition follows.
\end{proof}

The proof of Theorem~\ref{tdat825} is complete.

\begin{remark} \label{rdat236}
For the Laplacian Keldy\v{s} proved that $T^{-\Delta}$ is characterised
as the positive bounded linear operator $T \colon C(\Gamma) \to C_b(\Omega)$
that maps to the classical solution whenever it exists, see
\cite{Netuka} Section~3.
By \cite{KreuterThesis} Example~8.14 one cannot simply omit the condition on positivity.
Since in our setting the maximum principle can be violated, the operator $T^\ca$
is not positive in general.
\end{remark}

\section{Regular points} \label{Sdat3}

Throughout this section we adopt the notation and assumptions as in 
the first part of the introduction.
Recall that a point $z \in \Gamma$ is called {\bf $\ca$-regular} if 
\[
\lim_{x \to z} (T\varphi)(x) = \varphi(z)
\]
for all $\varphi \in C(\Gamma)$.
If $-\ca$ is the Laplacian, then we simply call $z$ {\bf regular}.
The domain $\Omega$ is called {\bf $\ca$-regular} if $z$ is $\ca$-regular for all $z \in \Gamma$.
So $\Omega$ is $\ca$-regular if and only if for each $\varphi \in C(\Gamma)$ 
the Dirichlet problem $(D_\varphi)$ has a unique classical solution.
In particular, $\Omega$ is Wiener regular if and only if every point of the boundary is regular.
By Wiener's theorem \cite{Wiener} a point $z \in \Gamma$ is regular 
if and only if 
\begin{equation}
\int_0^1 \frac{\capp( B(z,r) \setminus \Omega )}{r^{d-2}} \, \frac{dr}{r} 
= \infty
.  
\label{eSdat1;2}
\end{equation}
By Kellogg's theorem (see \cite{AdH} Theorems~6.3.2 and 6.3.4) 
the set 
\begin{equation}
S = \{ z \in \Gamma : z \mbox{ is not a regular point} \}
\label{eSdat3;1}
\end{equation}
is a polar set, that is $\capp(S) = 0$.

We need a basic approximation result as in \cite{LSW} Lemma~3.1.

\begin{lemma} \label{ldat830}
Let $z \in \Gamma$.
Then $z$ is $\ca$-regular if and only if 
\[
\lim_{x \to z} \Big( T(\Phi|_\Gamma) \Big)(x) = \Phi(z)
\]
for all $\Phi \in C^1(\Ri^d)$.
\end{lemma}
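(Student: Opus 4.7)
The plan is a standard density-plus-continuity argument, using that $T \colon C(\Gamma) \to C_b(\Omega)$ is bounded by Theorem~\ref{tdat235}.

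The forward implication is immediate: if $z$ is $\ca$-regular, then by definition $\lim_{x \to z} (T\varphi)(x) = \varphi(z)$ for every $\varphi \in C(\Gamma)$, and applying this to $\varphi = \Phi|_\Gamma$ with $\Phi \in C^1(\Ri^d)$ gives the stated limit since $\Phi|_\Gamma(z) = \Phi(z)$.

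For the converse, suppose $\lim_{x \to z}(T(\Phi|_\Gamma))(x) = \Phi(z)$ for all $\Phi \in C^1(\Ri^d)$. Let $\varphi \in C(\Gamma)$ and $\varepsilon > 0$. Since $\Gamma$ is a compact subset of $\Ri^d$, the Tietze extension theorem together with mollification produces a $\Phi \in C^1(\Ri^d)$ (in fact $C^\infty_c(\Ri^d)$) such that $\|\Phi|_\Gamma - \varphi\|_{C(\Gamma)} \leq \varepsilon$; in particular $|\Phi(z) - \varphi(z)| \leq \varepsilon$. By the boundedness of $T$ from Theorem~\ref{tdat235}, there is a constant $M > 0$ with $\|T\psi\|_{C_b(\Omega)} \leq M \|\psi\|_{C(\Gamma)}$ for all $\psi \in C(\Gamma)$, so
\[
\|T\varphi - T(\Phi|_\Gamma)\|_{C_b(\Omega)} \leq M\,\varepsilon.
\]

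Now for $x \in \Omega$ decompose
\[
|(T\varphi)(x) - \varphi(z)| \leq |(T\varphi)(x) - (T(\Phi|_\Gamma))(x)| + |(T(\Phi|_\Gamma))(x) - \Phi(z)| + |\Phi(z) - \varphi(z)|.
\]
The first term is at most $M\varepsilon$ by the previous display, the third is at most $\varepsilon$, and the middle term tends to $0$ as $x \to z$ by the hypothesis applied to $\Phi$. Taking the limsup yields $\limsup_{x \to z}|(T\varphi)(x) - \varphi(z)| \leq (M+1)\varepsilon$, and since $\varepsilon > 0$ was arbitrary the limit is zero, proving that $z$ is $\ca$-regular.

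There is no real obstacle here; the only point requiring a (routine) argument is that $\{\Phi|_\Gamma : \Phi \in C^1(\Ri^d)\}$ is dense in $C(\Gamma)$, which follows at once from Tietze plus mollification. The continuity of $T$ supplied by Theorem~\ref{tdat235} is what makes the density argument work without any appeal to a maximum principle.
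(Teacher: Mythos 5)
Your proof is correct and is exactly the argument the paper compresses into its one-line proof ("This follows from the fact that the space $\{\Phi|_\Gamma : \Phi \in C^1(\Ri^d)\}$ is dense in $C(\Gamma)$"); you have simply unfolded the density-plus-boundedness argument explicitly, using the continuity of $T$ from Theorem~\ref{tdat235}.
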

\begin{proof}
This follows from the fact that the space $ \{ \Phi|_\Gamma : \Phi \in C^1(\Ri^d) \} $
is dense in $C(\Gamma)$.
\end{proof}

The proof of the following theorem is inspired by \cite{Chicco1}.

\begin{thm} \label{tdat831}
Let $z \in \Gamma$ and let $p \in (d,\infty]$.
Then the following are equivalent.
\begin{tabeleq}
\item \label{tdat831-2}
$z$ is regular.
\item \label{tdat831-1}
$z$ is $\ca$-regular.
\item \label{tdat831-3}
Let $H$ be as in Corollary~{\rm \ref{cdat833}}.
Then $\lim_{x \to z} \Big( H(\varphi, F) \Big) (x) = \varphi(z)$
for all $\varphi \in C(\Gamma)$ and $F \in W^{-1,p}(\Omega)$.
\end{tabeleq}
\end{thm}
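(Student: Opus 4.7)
The plan is to prove the cycle (iii)$\Rightarrow$(ii)$\Rightarrow$(i)$\Rightarrow$(iii). The implication (iii)$\Rightarrow$(ii) is immediate from Corollary~\ref{cdat833}: since $H(\varphi,0) = T\varphi$, taking $F=0$ in (iii) yields (ii).

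For (i)$\Rightarrow$(iii), I first use Lemma~\ref{ldat830} combined with the continuity of $H$ in both arguments to reduce the claim to data of the form $\varphi = \Phi|_\Gamma$ with $\Phi \in C^1(\Ri^d)$. Since the coefficients of $\ca$ are bounded and $\Phi \in W^{1,\infty}$, one has $\ca\Phi \in W^{-1,\infty}(\Omega) \subset W^{-1,p}(\Omega)$. Writing $u = H(\Phi|_\Gamma,F)$, the characterising properties in Corollary~\ref{cdat833} give $\Phi|_\Omega - u \in H^1_0(\Omega)$ with $\ca(\Phi|_\Omega - u) = \ca\Phi - F \in W^{-1,p}(\Omega)$. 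Thus the claim reduces to the following pointwise boundary behaviour: whenever $w \in H^1_0(\Omega) \cap C_b(\Omega)$, where boundedness comes from Proposition~\ref{pdat214}, satisfies $\ca w = G$ for some $G \in W^{-1,p}(\Omega)$ with $p > d$, then $\lim_{x\to z} w(x) = 0$ at every Laplace-regular $z$. In the principal-part-only case this is the boundary continuity theorem of Littman--Stampacchia--Weinberger \cite{LSW}. To include lower-order terms, I would split $\ca w = \ca_0 w + Rw$ with $\ca_0 u = -\sum_{k,l} \partial_k(a_{kl}\,\partial_l u)$ and $R$ collecting the lower-order contributions; since $w$ is bounded, $Rw \in W^{-1,p}(\Omega)$ via Lemma~\ref{ldat202.3}, so $\ca_0 w = G - Rw \in W^{-1,p}(\Omega)$, reducing matters to the divergence-form case to which LSW applies directly.

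For (ii)$\Rightarrow$(i), I follow Chicco's barrier approach \cite{Chicco1}. Assuming $\ca$-regularity of $z$, I apply (ii) to the continuous boundary datum $\varphi(y) = |y-z|$, obtaining an $\ca$-harmonic $T\varphi \in C_b(\Omega)$ with $(T\varphi)(x) \to 0$ as $x \to z$. Using that the principal part of $\ca$ is uniformly elliptic, so that its associated capacity is comparable to the Newtonian capacity, one extracts from $T\varphi$---after a cut-off in a small ball $B(z,r)$ and correction by an explicit $C^2$ function absorbing the lower-order terms---a classical local barrier for $-\Delta$ at $z$. By Bouligand's theorem the existence of such a local barrier is equivalent to $z$ being Laplace-regular.

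The main obstacle lies in (i)$\Rightarrow$(iii): extending the LSW boundary-continuity theorem to operators with arbitrary bounded lower-order coefficients, in a setting where neither Stampacchia's divergence condition nor any form of the maximum principle is available. The iteration scheme based on $\cb_\lambda$ for $\lambda$ large, central to Section~\ref{Sdat2}, is the key technical tool, letting the lower-order contributions be treated as bounded right-hand side perturbations and circumventing the need for positivity. A subtler point is that the density reduction from general $\varphi \in C(\Gamma)$ to $\Phi \in C^1(\Ri^d)$-extensions must use the uniform bound (\ref{eldat821;20}) for $T$, together with an analogous continuous dependence of the inhomogeneous part on $F \in W^{-1,p}(\Omega)$ supplied by Corollary~\ref{cdat833}.
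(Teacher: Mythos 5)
Your cycle of implications has the right shape, and the step (iii)$\Rightarrow$(ii) is correct and coincides with the paper's argument. However, both remaining implications take a genuinely different route from the paper, and each has a concrete gap.

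For (i)$\Rightarrow$(iii), you propose to split $\ca w = \ca_0 w + Rw$ and put $Rw$ into $W^{-1,p}(\Omega)$. This fails for the terms $c_k\,\partial_k w$: since $w$ is merely in $H^1(\Omega)$, the product $c_k\,\partial_k w$ lies only in $L_2(\Omega)$, which by Lemma~\ref{ldat202.3}\ref{ldat202.3-2} gives an element of $W^{-1,2}(\Omega)$, \emph{not} of $W^{-1,p}(\Omega)$ for $p>d\geq 2$. Boundedness of $w$ rescues the $b_k\,w$ and $c_0\,w$ terms (which are paired with $v$ or $\nabla v$, not $\nabla w$) but does nothing for $c_k\,\partial_k w$. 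This is exactly why the paper does not reduce to the pure second-order operator $\ca_0$: the $c_k$-terms are retained inside the reference form $\gotb_1$ (which contains $a_{kl}$, the first-order terms $c_k\,\partial_k$, and the zeroth-order $\lambda=1$ term), and only the data $F$ and the $b_k$- and $c_0$-terms are absorbed into a correction. The link between $\cb_1$-regularity and Laplace-regularity is then taken from Stampacchia, Th\'eor\`eme~10.2 of \cite{Stam2}, for which $\cb_1$ qualifies. Your sketch also omits a second necessary ingredient: the correction must be continuous up to $\overline\Omega$, not merely bounded on $\Omega$, in order to transfer the boundary limit from $u-w$ back to $u$. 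The paper achieves this by extending the coefficients beyond $\overline\Omega$ and solving the auxiliary problem in a larger domain $\widetilde\Omega\supset\overline\Omega$, producing $w\in C_b(\widetilde\Omega)\cap H^1_0(\widetilde\Omega)$ so that $w|_{\overline\Omega}\in C(\overline\Omega)$. Nothing in your reduction produces boundary continuity of the correction.

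For (ii)$\Rightarrow$(i), the barrier approach is where the loss of the maximum principle really bites. Without a maximum or comparison principle for $\ca$, it is unclear how to turn the $\ca$-harmonic function $T\varphi$ (with $\varphi(y)=|y-z|$) into a superharmonic barrier for $-\Delta$: $T\varphi$ need not be nonnegative near $z$, and the suggested ``correction by an explicit $C^2$ function absorbing the lower-order terms'' is precisely the kind of comparison argument that is unavailable in this setting. Even granting the correction, one would obtain at best a barrier for the principal part $\ca_0$, not for $-\Delta$, and closing that gap is again LSW/Stampacchia. The paper avoids barriers entirely: by the same domain-extension trick it shows that $\ca$-regularity at $z$ implies $\cb_1$-regularity at $z$, then appeals to Stampacchia's equivalence of $\cb_1$-regularity with Laplace-regularity. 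I would recommend restructuring both implications around the operator $\cb_1$ and the extension device, rather than peeling off all lower-order terms or constructing a barrier.
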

\begin{proof}
`\ref{tdat831-3}$\Rightarrow$\ref{tdat831-1}'.
Trivial.

`\ref{tdat831-1}$\Rightarrow$\ref{tdat831-2}'.
Suppose $z$ is $\ca$-regular.
We shall show that the point $z$ is $\cb_1$-regular.
Let $\Phi \in C(\overline \Omega) \cap H^1(\Omega)$.
Write $u = T^{\cb_1}(\Phi|_\Gamma)$.
Then $u \in C_b(\Omega) \cap H^1(\Omega)$, $u - \Phi \in H^1_0(\Omega)$
and $\gotb_1(u,v) = 0$ for all $v \in H^1_0(\Omega)$ by Theorem~\ref{tdat120}.
There exists an open and bounded $\widetilde \Omega \subset \Ri^d$
such that $\overline \Omega \subset \widetilde \Omega$.
Extend $a_{kl}$ on $\widetilde \Omega \setminus \Omega$ by 
$a_{kl}(x) = \delta_{kl}$ for all $x \in \widetilde \Omega \setminus \Omega$.
Further extend $b_k$, $c_k$ and $c_0$ by $0$ on $\widetilde \Omega \setminus \Omega$.
Let $\widetilde{\gota}$ be the associated form on $\widetilde \Omega$.
By Lemma~\ref{ldat260}\ref{ldat260-1} and Proposition~\ref{pdat214} there exists a 
$w \in C_b(\widetilde \Omega) \cap H^1_0(\widetilde \Omega)$ such that 
\[
\widetilde{\gota}(w,v)
= \int_\Omega \sum_{k=1}^d b_k \, u \, \overline{\partial_k v}
   + \int_\Omega (c_0 - \one) \, u \, \overline v
\]
for all $v \in H^1_0(\widetilde \Omega)$.
Then $w|_{\overline \Omega} \in C(\overline \Omega) \cap H^1(\Omega)$.
If $v \in H^1_0(\Omega)$, then
\begin{eqnarray*}
\gota(u - w|_\Omega,v)
& = & \gota(u,v) - \widetilde{\gota}(w,v)  \\
& = & \gota(u,v) - \int_\Omega \sum_{k=1}^d b_k \, u \, \overline{\partial_k v}
   - \int_\Omega (c_0 - \one) \, u \, \overline v  \\
& = & \gotb_1(u,v)
= 0
.
\end{eqnarray*}
Now $\Phi - w|_{\overline \Omega} \in C(\overline \Omega) \cap H^1(\Omega)$
and 
\[
\Phi - w|_{\overline \Omega} - (u - w|_{\overline \Omega}) 
= \Phi - u
\in H^1_0(\Omega)
.  \]
Hence $T^{\ca,\Omega}((\Phi - w)|_\Gamma) = u - w|_\Omega$
by Theorem~\ref{tdat120}.
Since $z$ is $\ca$-regular one establishes that 
$\lim_{x \to z} (u - w|_\Omega)(x) = (\Phi - w)(z)$.
Because $w|_{\overline \Omega} \in C(\overline \Omega)$ this implies that 
$\lim_{x \to z} u(x) = \Phi(z)$.
By Lemma~\ref{ldat830} one deduces that $z$ is $\cb_1$-regular.
Hence by \cite{Stam2} Th\'eor\`eme~10.2 the point $z$ is regular.

`\ref{tdat831-2}$\Rightarrow$\ref{tdat831-3}'.
Suppose $z$ is regular.
Then by \cite{Stam2} Th\'eor\`eme~10.2 the point $z$ is $\cb_1$-regular.
Let $\Phi \in C(\overline \Omega) \cap H^1(\Omega)$
and $F \in W^{-1,p}(\Omega)$.
This time write $u = H(\Phi|_\Gamma,F)$, using the operator $\ca$.
Then $u \in C_b(\Omega) \cap H^1(\Omega)$, $u - \Phi \in H^1_0(\Omega)$
and 
\begin{equation}
F(v)
= \gota(u,v)
= \gotb_1(u,v) + \int_\Omega \sum_{k=1}^d b_k \, u \, \overline{\partial_k v}
   + \int_\Omega (c_0 - \one) \, u \, \overline v
\label{etdat831;1}
\end{equation}
for all $v \in H^1_0(\Omega)$.
There exists an open and bounded $\widetilde \Omega \subset \Ri^d$
such that $\overline \Omega \subset \widetilde \Omega$.
Extend $a_{kl}$ on $\widetilde \Omega \setminus \Omega$ by 
$a_{kl}(x) = \delta_{kl}$ for all $x \in \widetilde \Omega \setminus \Omega$.
Further extend $c_k$ by $0$ on $\widetilde \Omega \setminus \Omega$.
Let $\widetilde{\gotb}_1$ be the associated form on $\widetilde \Omega$.
By Lemma~\ref{ldat202.3}\ref{ldat202.3-1}
there exist $f_1,\ldots,f_d \in L_2(\Omega)$ such that 
$F = - \sum_{k=1}^d \partial_k f_k$ as distribution on $\Omega$.
By Lemma~\ref{ldat260}\ref{ldat260-1} and Proposition~\ref{pdat214} there exists a 
$w \in C_b(\widetilde \Omega) \cap H^1_0(\widetilde \Omega)$ such that 
\begin{equation}
\widetilde{\gotb}_1(w,v)
= \sum_{k=1}^d \int_\Omega f_k \, \overline{\partial_k v}
   - \int_\Omega \sum_{k=1}^d b_k \, u \, \overline{\partial_k v}
   - \int_\Omega (c_0 - \one) \, u \, \overline v
\label{etdat831;2}
\end{equation}
for all $v \in H^1_0(\widetilde \Omega)$.
Then $w|_{\overline \Omega} \in C(\overline \Omega) \cap H^1(\Omega)$.
Using (\ref{etdat831;1}) and (\ref{etdat831;2}) one deduces that 
\[
\gotb_1(u,v) 
= \widetilde{\gotb}_1(w,v)
= \gotb_1(w|_\Omega,v)
\]
for all $v \in H^1_0(\Omega)$.
Hence $\gotb_1(u - w|_\Omega,v) = 0$ for all $v \in H^1_0(\Omega)$.
Now $\Phi - w|_{\overline \Omega} \in C(\overline \Omega) \cap H^1(\Omega)$
and 
\[
\Phi - w|_{\overline \Omega} - (u - w|_{\overline \Omega}) 
= \Phi - u
\in H^1_0(\Omega)
.  \]
Hence $T^{\cb_1,\Omega}((\Phi - w)|_\Gamma) = u - w|_\Omega$
by Theorem~\ref{tdat120}.
Since $z$ is $\cb_1$-regular one establishes that 
$\lim_{x \to z} (u - w|_\Omega)(x) = (\Phi - w)(z)$.
Using again that $w|_{\overline \Omega} \in C(\overline \Omega)$ this implies that 
$\lim_{x \to z} u(x) = \Phi(z)$.
Arguing as in the proof of Lemma~\ref{ldat830} one deduces that $z$ is $\ca$-regular.
\end{proof}

Theorem~\ref{tdat831} was mentioned in the introduction.

\begin{cor} \label{tdat834}
The set $\Omega$ is $\ca$-regular if and only if $\Omega$ is Wiener regular.
\end{cor}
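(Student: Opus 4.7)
The plan is to derive this corollary directly from Theorem~\ref{tdat831} by unfolding the two definitions of regularity of a domain into pointwise statements at every boundary point. Concretely, $\Omega$ is $\ca$-regular by definition exactly when every $z \in \Gamma$ is $\ca$-regular, and the classical characterisation of Wiener regularity (stated in Section~\ref{Sdat3} as a consequence of Wiener's theorem, see~(\ref{eSdat1;2})) gives that $\Omega$ is Wiener regular if and only if every $z \in \Gamma$ is regular in the sense of $-\Delta$.

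With these reformulations in place, I would invoke the equivalence \ref{tdat831-1}$\Leftrightarrow$\ref{tdat831-2} of Theorem~\ref{tdat831} applied separately at each point $z \in \Gamma$. This gives the biconditional
\[
(\forall z \in \Gamma : z \text{ is } \ca\text{-regular})
\iff
(\forall z \in \Gamma : z \text{ is regular}),
\]
which is exactly the statement of the corollary. No further estimates or approximations are needed, since all the analytic content (in particular the reduction to the case of Stampacchia's divergence-type operator $\cb_1$, and the use of a cut-off extension to a larger domain $\widetilde \Omega$) has already been carried out in the proof of Theorem~\ref{tdat831}.

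There is, strictly speaking, no real obstacle here: the only thing to double-check is that the notion of ``Wiener regular'' used in the statement matches the pointwise characterisation via regular boundary points, which was recorded in the paragraph following~(\ref{eSdat3;1}). Once this identification is made, the corollary is a one-line consequence of Theorem~\ref{tdat831}.
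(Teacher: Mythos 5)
Your argument is exactly the intended one: the paper leaves this corollary without a written proof precisely because it follows by applying the pointwise equivalence \ref{tdat831-1}$\Leftrightarrow$\ref{tdat831-2} of Theorem~\ref{tdat831} at every $z\in\Gamma$, together with the definitions of $\ca$-regularity of $\Omega$ and of Wiener regularity recorded in the paragraph preceding~(\ref{eSdat1;2}). Nothing is missing.
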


As a consequence the Dirichlet problem for a general elliptic operator
admits a classical solution if $\Omega$ is Wiener regular.

\begin{cor} \label{cdat320}
Suppose that $\Omega$ is Wiener regular.
Then for all $\varphi \in C(\Gamma)$ the Dirichlet problem $(D_\varphi)$
has a classical solution.
\end{cor}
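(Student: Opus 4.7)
The plan is to produce the classical solution as the Perron solution $T\varphi$ provided by Theorem~\ref{tdat120}, and then use $\ca$-regularity of every boundary point to extend it continuously to $\overline\Omega$ with boundary values $\varphi$.

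Concretely, given $\varphi \in C(\Gamma)$, let $u = T\varphi \in C_b(\Omega) \cap H^1_\loc(\Omega)$. By Theorem~\ref{tdat235} the function $u$ is $\ca$-harmonic. Since $\Omega$ is Wiener regular, Corollary~\ref{tdat834} implies that every $z \in \Gamma$ is $\ca$-regular, which by definition means $\lim_{x \to z} u(x) = \varphi(z)$ for each $z \in \Gamma$. Define $\tilde u \colon \overline\Omega \to \Ci$ by $\tilde u|_\Omega = u$ and $\tilde u|_\Gamma = \varphi$.

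It remains to verify $\tilde u \in C(\overline\Omega)$. Continuity at points of $\Omega$ is automatic since $u \in C(\Omega)$ (for instance by Lemma~\ref{ldat820} applied locally, or directly because $\ca$-harmonic $H^1_\loc$ functions are continuous by the Nash--De~Giorgi regularity already used in the proof of Proposition~\ref{pdat263}). To check continuity at $z \in \Gamma$, let $\varepsilon > 0$ and use continuity of $\varphi$ on $\Gamma$ to pick $\delta_1 > 0$ so that $|\varphi(z) - \varphi(z')| < \varepsilon$ for all $z' \in \Gamma$ with $|z - z'| < \delta_1$; simultaneously use $\ca$-regularity at each such $z'$, together with the pointwise limit $\lim_{x \to z} u(x) = \varphi(z)$, to pick $\delta_2 > 0$ such that $|u(x) - \varphi(z)| < \varepsilon$ for all $x \in \Omega \cap B(z, \delta_2)$. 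Combining these two estimates on the ball $B(z, \min(\delta_1, \delta_2))$ gives the required continuity of $\tilde u$ at $z$. Thus $\tilde u \in C(\overline\Omega) \cap H^1_\loc(\Omega)$, it is $\ca$-harmonic on $\Omega$, and $\tilde u|_\Gamma = \varphi$, so $\tilde u$ is a classical solution of $(D_\varphi)$.

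There is no genuine obstacle here: once the Perron solution $T\varphi$ is in hand from Theorem~\ref{tdat120} and Wiener regularity is identified with $\ca$-regularity via Corollary~\ref{tdat834}, the construction of the classical solution is just a matter of patching $T\varphi$ with $\varphi$ along the boundary. The only mildly delicate point is making sure the continuous extension argument at boundary points uses the uniform limit behaviour dictated by $\ca$-regularity rather than a sequential one, which is immediate from the definition.
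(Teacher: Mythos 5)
Your proof is correct and takes essentially the same approach as the paper: define the extension of $T\varphi$ to $\overline\Omega$ by setting it equal to $\varphi$ on $\Gamma$, and deduce continuity from $\ca$-regularity of $\Omega$, which follows from Wiener regularity via Corollary~\ref{tdat834}. The paper's proof is identical but more terse, stating the continuity of the patched function directly; your elaboration of the $\varepsilon$--$\delta$ argument at boundary points (and the remark that only $\ca$-regularity at the point $z$ itself, combined with continuity of $\varphi$, is needed) is a harmless spelling-out of what the paper leaves implicit.
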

\begin{proof}
Define $u \colon \overline \Omega \to \Ci$ by 
\[
u(x) 
= \left\{ \begin{array}{ll}
   (T \varphi)(x) & \mbox{if } x \in \Omega,  \\[5pt]
   \varphi(x) & \mbox{if } x \in \Gamma .
          \end{array} \right.
\]
Then $u \in C(\overline \Omega)$ since $\Omega$ is $\ca$-regular.
So $u$ is a classical solution.
\end{proof}

This gives a mostly new proof of the main result in \cite{AE9}.
Theorem~\ref{tdat831}  also shows that the function $u$ in Proposition~\ref{pdat214} tends to zero at regular 
points of the boundary.
More precisely, the following is valid.

\begin{cor} \label{cdat832}
Let $p \in (d,\infty]$ and $F \in W^{-1,p}(\Omega)$.
Let $u \in H^1_0(\Omega)$ be such that 
$\ca u = F$.
Then $u \in C_b(\Omega)$ and $\lim_{x \to z} u(x) = 0$ for every regular $z \in \Gamma$.
\end{cor}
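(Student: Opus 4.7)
The plan is to identify $u$ with the function $H(0,F)$ produced by Corollary~\ref{cdat833} and then invoke the boundary behaviour supplied by Theorem~\ref{tdat831}. Nothing beyond the results just proved in this section should be needed.

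First I would dispose of the continuity claim $u \in C_b(\Omega)$. By Lemma~\ref{ldat202.3}\ref{ldat202.3-1}, the functional $F \in W^{-1,p}(\Omega)$ admits a representation of the form $F = -\sum_{k=1}^d \partial_k f_k$ with $f_1,\ldots,f_d \in L_p(\Omega)$. Then $\ca u = 0 + \sum_{k=1}^d \partial_k (-f_k)$ on $\Omega$, so the boundedness and (local H\"older) continuity of $u$ follow directly from Proposition~\ref{pdat214}.

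For the boundary limit, the key observation is that $u$ must coincide with $H(0,F)$. Indeed, taking $\Phi = 0 \in C(\overline\Omega) \cap H^1(\Omega)$ in Corollary~\ref{cdat833}\ref{cdat833-2} gives $H(0,F) \in H^1_0(\Omega)$, and \ref{cdat833-1} gives $\ca H(0,F) = F$. Since $0$ is not a Dirichlet eigenvalue, Lemma~\ref{ldat260}\ref{ldat260-1} asserts that $\ca|_{H^1_0(\Omega)}$ is an isomorphism onto $H^{-1}(\Omega)$, so any two elements of $H^1_0(\Omega)$ mapped by $\ca$ to the same distribution in $H^{-1}(\Omega) \supset W^{-1,p}(\Omega)$ must agree. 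Hence $u = H(0,F)$.

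Let $z \in \Gamma$ be a regular point. Applying Theorem~\ref{tdat831}, equivalence \ref{tdat831-2}$\Leftrightarrow$\ref{tdat831-3}, with $\varphi = 0 \in C(\Gamma)$, we obtain
\[
\lim_{x \to z} u(x) = \lim_{x \to z} H(0,F)(x) = 0,
\]
which is the desired conclusion. There is no real obstacle here; the corollary is essentially a bookkeeping exercise combining Proposition~\ref{pdat214}, Corollary~\ref{cdat833}, Lemma~\ref{ldat260}\ref{ldat260-1} and Theorem~\ref{tdat831}.
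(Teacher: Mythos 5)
Your proof is correct and follows the paper's approach exactly: the paper's own proof is simply the observation that $u = H(0,F)$, with the identification (via Lemma~\ref{ldat260}\ref{ldat260-1} and Corollary~\ref{cdat833}) and the boundary behaviour (via Theorem~\ref{tdat831}\ref{tdat831-3}) left implicit. You have merely spelled out the bookkeeping, including the separate verification of $u\in C_b(\Omega)$ through Lemma~\ref{ldat202.3}\ref{ldat202.3-1} and Proposition~\ref{pdat214}, all of which is sound.
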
 
\begin{proof}
One has $u = H(0, F)$.
\end{proof}

\begin{cor} \label{cdat834}
Suppose that $\Omega$ is Wiener regular.
Let $p \in (d,\infty]$ and $F \in W^{-1,p}(\Omega)$.
Let $u \in H^1_0(\Omega)$ be such that 
$\ca u = F$.
Then $u \in C_0(\Omega)$.
\end{cor}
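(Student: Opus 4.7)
The plan is to obtain this as an almost immediate consequence of the preceding corollary. By Corollary~\ref{cdat832}, one already has $u \in C_b(\Omega)$ together with
\[
\lim_{x \to z} u(x) = 0
\]
for every regular $z \in \Gamma$. Under the additional hypothesis that $\Omega$ is Wiener regular, every point of $\Gamma$ is regular (this is the definition). Hence the above limit holds for \emph{all} $z \in \Gamma$.

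First I would record that $u$ is in fact continuous on $\Omega$ (not merely bounded): this is part of the conclusion of Corollary~\ref{cdat832}, inherited from Proposition~\ref{pdat214}. Then I would define the extension $\tilde u \colon \overline{\Omega} \to \Ci$ by
\[
\tilde u(x) = \begin{cases} u(x), & x \in \Omega, \\ 0, & x \in \Gamma. \end{cases}
\]
The main point is to verify that $\tilde u$ is continuous on $\overline{\Omega}$. At interior points this is clear from $u \in C(\Omega)$. At a boundary point $z \in \Gamma$, continuity from inside $\Omega$ is precisely the statement that $\lim_{x \to z,\, x \in \Omega} u(x) = 0 = \tilde u(z)$, which we have by the previous paragraph; continuity along $\Gamma$ is trivial since $\tilde u$ vanishes identically there. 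Combining these two verifications in a standard $\varepsilon$-neighbourhood argument yields $\tilde u \in C(\overline{\Omega})$ with $\tilde u|_\Gamma = 0$, which is exactly the statement $u \in C_0(\Omega)$.

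There is really no obstacle here; the entire content has been absorbed into Corollary~\ref{cdat832}, which in turn rests on Theorem~\ref{tdat831} (the equivalence of regularity for $\ca$ and for the Laplacian). The role of the hypothesis ``Wiener regular'' is purely to upgrade ``$\lim = 0$ at quasi every, or at every regular, boundary point'' to ``$\lim = 0$ at every boundary point,'' which is exactly what is needed for the zero extension to be continuous up to $\Gamma$.
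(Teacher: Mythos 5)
Your proof is correct and is exactly the argument the paper has in mind: the paper itself gives no proof beyond the remark that this recovers Proposition~2.9 of~\cite{AE9}, but the intended derivation is precisely to combine Corollary~\ref{cdat832} with the fact (stated in the paper just after the definitions in Section~\ref{Sdat3}) that $\Omega$ is Wiener regular if and only if every boundary point is regular, and then observe that the zero extension of $u$ is continuous on $\overline{\Omega}$, which is the standard characterisation of $C_0(\Omega)$ for bounded~$\Omega$.
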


This is Proposition~2.9 in \cite{AE9}.

\medskip

In the preceding corollaries it is important that $u$ is a solution.
In fact, for arbitrary functions in $C_b(\Omega) \cap H^1_0(\Omega)$ the 
pointwise behaviour at the boundary may be very bad.
We give an elementary example
which relies on the following lemma on compact sets with 
capacity zero, which we use several more times in this paper.

\begin{lemma} \label{ldat834.2}
Let $K \subset \Ri^d$ be a compact set with $\capp K = 0$.
Let $U \subset \Ri^d$ be open with $K \subset U$ and let $\varepsilon > 0$.
Then there exist $\chi \in C_c^\infty(\Ri^d)$ and open $V \subset \Ri^d$ such that
$K \subset V \subset U$, $\chi(x) = 1$ for all $x \in V$, $0 \leq \chi \leq \one$,
$\supp \chi \subset U$ and $\|\chi\|_{H^1(\Ri^d)} \leq \varepsilon$.
\end{lemma}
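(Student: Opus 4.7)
The plan is to unpack the definition of capacity zero to produce a small $H^1$-function that is already equal to $1$ (not merely $\geq 1$) on a neighborhood of $K$, and then localize inside $U$ and smooth. All three operations (truncation, multiplication by a cutoff, mollification) will be combined so that each can be controlled in $H^1$.

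First I would use the defining property of $\capp K = 0$: for every $\eta > 0$ there exists $\psi \in H^1(\Ri^d)$ with $\psi \geq 1$ a.e.\ on an open neighborhood $W$ of $K$ and $\|\psi\|_{H^1(\Ri^d)} < \eta$. Replacing $\psi$ by the pointwise truncation $(\psi \wedge \one)^+$ (which is a contraction in $H^1$), I may assume $0 \leq \psi \leq 1$ and $\psi = 1$ a.e.\ on $W$. Next, since $K$ is compact and $K \subset U$, I can fix once and for all a $\chi_0 \in C_c^\infty(\Ri^d)$ with $0 \leq \chi_0 \leq 1$, $\supp \chi_0 \subset U$, and $\chi_0 = 1$ on some open neighborhood $W'$ of $K$. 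The product $\tilde\psi := \chi_0 \, \psi$ then satisfies $0 \leq \tilde\psi \leq 1$, $\supp \tilde\psi \subset U$, and $\tilde\psi = 1$ a.e.\ on the open neighborhood $W \cap W'$ of $K$; moreover
\[
\|\tilde\psi\|_{H^1(\Ri^d)} \leq (1 + \|\nabla \chi_0\|_{L_\infty(\Ri^d)}) \, \|\psi\|_{H^1(\Ri^d)},
\]
so choosing $\eta$ small makes $\|\tilde\psi\|_{H^1}$ as small as desired. Finally I would mollify: with a standard mollifier $\rho_\delta$, set $\chi := \rho_\delta * \tilde\psi$. This is smooth, satisfies $0 \leq \chi \leq 1$, and $\|\chi\|_{H^1} \leq \|\tilde\psi\|_{H^1}$. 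For $\delta$ small enough, $\supp \chi \subset U$ (since $\supp \tilde\psi$ is a compact subset of $U$), and $\chi = 1$ on the open set $V := \{ x : B(x,\delta) \subset W \cap W' \}$, which contains $K$ once $\delta$ is less than $d(K, \Ri^d \setminus (W \cap W'))$.

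There is no real obstacle; the only care is in the ordering. Truncation must precede the multiplication by $\chi_0$ so that the identity $\tilde\psi \equiv 1$ survives on a neighborhood of $K$, and mollification must come last, because mollifying a function that is $1$ on an open set keeps it $1$ on a slightly shrunken open set (still containing $K$), whereas mollifying before truncation would destroy the pointwise bound $0 \leq \psi \leq 1$ is not a problem but combining mollification with the cutoff $\chi_0$ afterwards would be wasteful. The $H^1$ smallness is absorbed into a single choice of $\eta = \eta(\varepsilon, \|\nabla\chi_0\|_\infty)$.
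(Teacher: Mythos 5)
Your proof is correct, and it takes a mildly different route from the paper's. The paper invokes \cite{FOT} Lemma~2.2.7(ii), which already supplies a \emph{smooth} compactly supported $\chi_1$ with $\chi_1\geq 1$ on $K$ and small $H^1$ norm; because that $\chi_1$ is only $\geq 1$ on $K$ itself, the paper then truncates the doubled function $2\chi_1$ (the factor $2$, together with continuity, forces the truncation to equal $1$ on an open neighbourhood of $K$), regularises to recover smoothness, and only then multiplies by the cutoff $\tau$. You instead work directly from the raw definition of $\capp K=0$, which hands you an $H^1$ function $\psi$ that is already $\geq 1$ a.e.\ on an open neighbourhood $W$ of $K$; this spares you the doubling trick and lets you truncate, multiply by the cutoff, and mollify once at the end. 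Both orderings are sound because truncation and mollification are $H^1$-contractions and multiplication by a fixed $C_c^\infty$ cutoff is $H^1$-bounded, and in both cases the mollification radius must be small enough both to keep the support inside $U$ and to preserve the plateau at height~$1$ around $K$. The only cosmetic imprecision in your write-up is the claimed constant $(1+\|\nabla\chi_0\|_\infty)$ in the $H^1$ bound for $\chi_0\psi$; the correct Leibniz-rule estimate gives a factor of the form $(2(1+\|\nabla\chi_0\|_\infty^2))^{1/2}$, but since the constant is absorbed into the choice of $\eta$, this does not affect the argument.
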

\begin{proof}
There exists a $\tau \in C_c^\infty(\Ri^d)$ such that 
$\tau(x) = 1$ for all $x$ in a neighbourhood of $K$, $0 \leq \tau \leq 1$
and $\supp \tau \subset U$.
By \cite{FOT} Lemma~2.2.7(ii) there exists a $\chi_1 \in C_c^\infty(\Ri^d)$ such that 
$\chi_1(x) \geq 1$ for all $x \in K$ and $\|\chi_1\|_{H^1(\Ri^d)} \leq \varepsilon$.
Define $\chi_2 = 0 \vee 2 \chi_1 \wedge \one$.
Then $\chi_2 \in W^{1,\infty}(\Ri^d)$, $0 \leq \chi_2 \leq \one$,
$\chi_2(x) = 1$ for all $x$ in a neighbourhood of $K$
and $\|\chi_2\|_{H^1(\Ri^d)} \leq 2 \varepsilon$.
Regularising $\chi_2$ one obtains that there exists a 
$\chi_3 \in C_c^\infty(\Ri^d)$ such that
$\chi_3(x) = 1$ for all $x$ in a neighbourhood of $K$, $0 \leq \chi_3 \leq \one$
and $\|\chi_3\|_{H^1(\Ri^d)} \leq 2 \varepsilon$.
Finally define $\chi = \tau \, \chi_3$. 
Then $\|\chi\|_{H^1(\Ri^d)} \leq 2 \varepsilon (2 (1 + \|\nabla \tau\|_\infty^2))^{1/2}$,
$\chi(x) = 1$ for all $x$ in a neighbourhood of $K$, $0 \leq \chi \leq \one$
and $\supp \chi \subset U$.
\end{proof}

\begin{exam} \label{xdat834.4}
Let $\Omega = \{ x \in \Ri^d : |x| < 1 \} $ be the unit ball.
We shall show that there exists a $u \in C_b(\Omega) \cap H^1_0(\Omega)$ such that 
$u(x)$ does not converge as $x \to z$ for any $z \in \Gamma$.

Choose a sequence $(x_n)_{n \in \Ni}$ in $\Omega$ such that 
$x_n \neq x_m$ for all $n,m \in \Ni$ with $n \neq m$ and 
such that $\Gamma$ is the set of all accumulation points of $(x_n)_{n \in \Ni}$.
For all $n \in \Ni$ there is an $r_n \in (0,2^{-n})$ such that 
$B(x_n, 2 r_n) \subset \Omega$ and 
$B(x_n, 2 r_n) \cap B(x_m, 2 r_m) = \emptyset$ for all $n,m \in \Ni$
with $n \neq m$.
Let $n \in \Ni$. 
Then $\capp \{ x_n \} = 0$ since $d \geq 2$.
Hence by Lemma~\ref{ldat834.2} there exists a $u_n \in C_c^\infty(\Ri^d)$ 
such that $\supp u_n \subset B(x_n, r_n)$, $u_n(x_n) = 1$, $0 \leq u_n \leq \one$ and 
$\|u_n\|_{H^1_0(\Omega)} \leq 2^{-n}$.
Define $u = \sum_{n=1}^\infty u_n$.
Then $u \in C^\infty(\Omega) \cap H^1_0(\Omega)$ and 
$0 \leq u \leq \one$.
But $u(x_n) = 1$ and $u(x_n + r_n \, e_1) = 0$ for all $n \in \Ni$,
where $e_1$ is the first basis vector in $\Ri^d$.
Hence $u(x)$ does not converge as $x \to z$ for any $z \in \Gamma$.
\end{exam}

There is also a converse of Corollary~\ref{cdat832}.
The following is a remarkable extension of \cite{AE9} Lemma~2.2
(where $u \in C_0(\Omega) \cap H^1_\loc(\Omega)$ and $\chi = \one_\Omega$).

\begin{thm} \label{tdat243}
Let $u \in C_b(\Omega) \cap H^1_\loc(\Omega)$ and $\chi \in C_b(\Omega) \cap H^1(\Omega)$
with $0 \leq \chi \leq \one$.
Suppose that $\ca u \in H^{-1}(\Omega)$
and $\limsup_{x \to z} (\chi \, \RRe u)(x) \leq 0$ for quasi every $z \in \Gamma$.
Then $(\chi \, \RRe u)^+ \in H^1_0(\Omega)$.
\end{thm}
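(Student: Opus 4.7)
The plan is, for each $\varepsilon > 0$, to show $(\chi \RRe u - \varepsilon)^+ \in H^1_0(\Omega)$ with an $H^1$-bound independent of $\varepsilon$, and then let $\varepsilon \downarrow 0$. Write $f = \chi \RRe u$, so that $f \in C_b(\Omega) \cap H^1_\loc(\Omega)$ is real. Fix a polar $P \subset \Gamma$ outside which $\limsup_{x \to z} f(x) \leq 0$. For each $\varepsilon > 0$ the open set $U_\varepsilon := \{x \in \Omega : f(x) > \varepsilon\}$ satisfies $K_\varepsilon := \overline{U_\varepsilon} \cap \Gamma \subset P$: if $z \in K_\varepsilon$, a sequence $x_n \in U_\varepsilon$ with $x_n \to z$ witnesses $\limsup_{x \to z} f(x) \geq \varepsilon > 0$. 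Thus $K_\varepsilon$ is a compact polar set.

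By Lemma~\ref{ldat834.2}, for every $\delta > 0$ I pick $\eta_\delta \in C_c^\infty(\Ri^d)$ with $\eta_\delta = 1$ on an open neighbourhood $V_\delta$ of $K_\varepsilon$, $0 \leq \eta_\delta \leq \one$ and $\|\eta_\delta\|_{H^1(\Ri^d)} \leq \delta$. Define $\phi_{\varepsilon,\delta} := (f - \varepsilon)^+ (1 - \eta_\delta)$. Its support lies in $\overline{U_\varepsilon \setminus V_\delta}$, which is closed, bounded, and disjoint from $\Gamma$ (any boundary contribution would lie in $K_\varepsilon \setminus V_\delta = \emptyset$); so the support is a compact subset of $\Omega$. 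Being bounded, in $H^1_\loc(\Omega)$, and compactly supported in $\Omega$, $\phi_{\varepsilon,\delta}$ lies in $H^1_0(\Omega)$.

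The crux is a Caccioppoli estimate obtained by testing $\ca u$ against $\phi_{\varepsilon, \delta}$. From $\ca u \in H^{-1}(\Omega)$ one has $|\langle \ca u, \phi_{\varepsilon,\delta}\rangle| \leq \|\ca u\|_{H^{-1}} \|\phi_{\varepsilon,\delta}\|_{H^1_0(\Omega)}$, and from the compact support $\langle \ca u, \phi_{\varepsilon,\delta}\rangle = \gota(u, \phi_{\varepsilon,\delta})$. Writing
\[
\nabla \phi_{\varepsilon, \delta} = -(f - \varepsilon)^+ \nabla \eta_\delta + (1-\eta_\delta) \one_{U_\varepsilon} \bigl((\RRe u)\nabla \chi + \chi \nabla \RRe u\bigr),
\]
the leading $a_{kl}$-piece of $\RRe \gota(u, \phi_{\varepsilon,\delta})$ produces the nonnegative term $\int_{U_\varepsilon}(1-\eta_\delta)\chi \sum_{k,l} a_{kl}(\partial_l \RRe u)(\partial_k \RRe u) \geq \mu \int_{U_\varepsilon} (1 - \eta_\delta) \chi |\nabla \RRe u|^2$ via ellipticity. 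The remaining pieces are at most linear in $\nabla u$, weighted by $\nabla \eta_\delta$, $\nabla \chi$, or bounded functions, and are controlled by $\|u\|_\infty$, $\|\chi\|_{H^1}$, $\|\ca u\|_{H^{-1}}$ and $\|\nabla \eta_\delta\|_{L_2} \leq \delta$; Young's inequality absorbs any $|\nabla \RRe u|^2$ contributions they generate into the good term. Letting $\delta \downarrow 0$ with monotone convergence on the good term yields $\int_{U_\varepsilon} \chi |\nabla \RRe u|^2 \leq C$ with $C$ independent of both $\varepsilon$ and $\delta$.

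Since $\nabla f = (\RRe u) \nabla \chi + \chi \nabla \RRe u$ and $0 \leq \chi \leq \one$, this together with $\chi \in H^1(\Omega)$ and $u \in L_\infty(\Omega)$ gives $\nabla(f-\varepsilon)^+ = \one_{U_\varepsilon} \nabla f \in L_2(\Omega)$ uniformly in $\varepsilon$, so $(f-\varepsilon)^+ \in H^1(\Omega)$. Writing $(f - \varepsilon)^+ - \phi_{\varepsilon, \delta} = (f - \varepsilon)^+ \eta_\delta$ and letting $\delta \downarrow 0$, the $H^1(\Omega)$-norm of $(f - \varepsilon)^+ \eta_\delta$ tends to $0$ by $\|\eta_\delta\|_{H^1} \leq \delta$, boundedness of $(f - \varepsilon)^+$, and dominated convergence on $\eta_\delta \nabla (f - \varepsilon)^+$ along a pointwise a.e.\ subsequence. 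Since $\phi_{\varepsilon, \delta} \in H^1_0(\Omega)$ and $H^1_0(\Omega)$ is closed in $H^1(\Omega)$, this gives $(f - \varepsilon)^+ \in H^1_0(\Omega)$, and letting $\varepsilon \downarrow 0$ by dominated convergence on $\nabla(f - \varepsilon)^+ = \one_{\{f > \varepsilon\}} \nabla f$ yields $f^+ \in H^1_0(\Omega)$. The main obstacle is the Caccioppoli estimate: because $\chi$ can vanish, the natural weight in the good term is the single power $\chi$ rather than $\chi^2$, and the complex lower-order coefficients $b_k, c_0$ couple $\RRe u$ and $\IIm u$, so cross terms involving $\nabla \IIm u$ cannot be absorbed into the $|\nabla \RRe u|^2$-term and must be bounded as perturbations via $u \in L_\infty(\Omega)$.
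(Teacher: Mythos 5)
Your overall scaffolding (compactly supported test function, $\varepsilon$-truncation, $\eta_\delta$-cutoff near the polar set, then letting $\delta\downarrow 0$ and $\varepsilon\downarrow 0$) is sound, and the verification that $\phi_{\varepsilon,\delta}$ has compact support in $\Omega$ is correct. But the heart of the argument --- the Caccioppoli estimate $\int_{U_\varepsilon}\chi|\nabla\RRe u|^2\leq C$ with $C$ independent of $\varepsilon$ and $\delta$ --- does not go through with your test function, and this is a genuine gap, not a detail that Young's inequality repairs.

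Concretely, after splitting $\nabla\phi_{\varepsilon,\delta}$, the ellipticity term gives the good contribution $\mu\int_{U_\varepsilon}(1-\eta_\delta)\,\chi\,|\nabla\RRe u|^2$, with the weight $(1-\eta_\delta)\chi$ to the \emph{first} power. Two of the bad terms then cannot be reconciled with this weight. (i) The piece $-(f-\varepsilon)^+\nabla\eta_\delta$ produces (up to $\|u\|_\infty$) $\int_{U_\varepsilon}\chi\,|\nabla\eta_\delta|\,|\nabla\RRe u|$; pairing this against the good term by Cauchy--Schwarz forces a factor $\chi/(1-\eta_\delta)$, which is unbounded on the transition region of $\eta_\delta$. (ii) The piece $(1-\eta_\delta)\one_{U_\varepsilon}(\RRe u)\nabla\chi$ produces $\int_{U_\varepsilon}(1-\eta_\delta)\,|\nabla\chi|\,|\nabla\RRe u|$; absorbing this into $\mu\int(1-\eta_\delta)\chi|\nabla\RRe u|^2$ requires $|\nabla\chi|^2/\chi\in L_1(\Omega)$, which is not assumed and can fail when $\chi$ vanishes. (Using $\chi\geq\varepsilon/\|u\|_\infty$ on $U_\varepsilon$ would salvage (ii), but only at the cost of a $1/\varepsilon$ factor that destroys uniformity in $\varepsilon$.) Nor can these terms be bounded ``on the side'' independently of the good term, since the raw quantity $\|\one_{U_\varepsilon}\nabla\RRe u\|_{L_2}$ is not a priori finite for $u\in H^1_\loc(\Omega)$. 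A small remark: the worry about $\nabla\IIm u$ cross terms is unfounded --- since $c_k$ is real and $\phi_{\varepsilon,\delta}$ is real, $\RRe(c_k(\partial_k u)\overline{\phi_{\varepsilon,\delta}})=c_k(\partial_k\RRe u)\phi_{\varepsilon,\delta}$ and $\nabla\IIm u$ never appears.

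The paper resolves exactly this mismatch by a bootstrap. In Lemma~\ref{ldat243.1} the test function is $\bigl((1-\chi_n)^2\chi^2\RRe u - \delta\bigr)^+$ with the cutoffs \emph{squared}: differentiating leaves one stray factor of $(1-\chi_n)\chi$ that pairs cleanly with the $\chi^2(1-\chi_n)^2$ weight in the good term under Cauchy--Schwarz, so all bad terms are controlled by $\|\nabla\chi\|_{L_2}$, $\|\nabla\chi_n\|_{L_2}$ and $\|u\|_\infty$. This gives two conclusions: by Fatou's lemma, $(\chi\RRe u)^+\in H^1(\Omega)$ (the weaker bound $\int\chi^2|\nabla\RRe u|^2\leq M$ suffices for this, which is precisely what the squared test function delivers); and, by closedness of $H^1_0(\Omega)$, $(\chi^2\RRe u)^+\in H^1_0(\Omega)$. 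The theorem then applies Lemma~\ref{ldat243.1} a second time with $u$ replaced by $(\chi\RRe u)^+$ (now known to lie in $H^1(\Omega)$, so $\ca(\chi\RRe u)^+\in H^{-1}(\Omega)$ by Lemma~\ref{ldat260}\ref{ldat260-3}) and with $\chi$ replaced by $\one_\Omega$, for which $\nabla\chi=0$ and the problematic term disappears entirely. To repair your proof you would need this same two-stage structure (or some other device that compensates for the single power of $\chi$); the direct estimate you wrote down cannot be made uniform in $\varepsilon,\delta$.
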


The proof requires quite some work and the main step is first to prove
the theorem with $\chi^2$ instead of $\chi$ and as an 
intermediate step that $(\chi^2 \, \RRe u)^+ \in H^1(\Omega)$.

\begin{lemma} \label{ldat243.1}
Let $u \in C_b(\Omega) \cap H^1_\loc(\Omega)$ and $\chi \in C_b(\Omega) \cap H^1(\Omega)$
with $0 \leq \chi \leq \one$.
Suppose that $\ca u \in H^{-1}(\Omega)$
and $\limsup_{x \to z} (\chi^2 \, \RRe u)(x) \leq 0$ for quasi every $z \in \Gamma$.
Then $(\chi \, \RRe u)^+ \in C_b(\Omega) \cap H^1(\Omega)$ and 
$(\chi^2 \, \RRe u)^+ \in H^1_0(\Omega)$.
\end{lemma}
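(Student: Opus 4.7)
The plan is to establish the two conclusions in sequence. Write $v = \RRe u$; since $0 \leq \chi \leq \one$, we have $(\chi v)^+ = \chi v^+$ and $(\chi^2 v)^+ = \chi^2 v^+$. Both products lie in $C_b(\Omega)$ because $\chi, v \in C_b(\Omega)$ with $\chi \leq 1$. What remains is the global $H^1$-regularity of $\chi v^+$ and the $H^1_0$-membership of $\chi^2 v^+$.

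For the $H^1$-regularity the idea is a Caccioppoli-type estimate. Choose cutoffs $\zeta_n \in C_c^\infty(\Omega)$ with $0 \leq \zeta_n \leq 1$ and $\zeta_n \uparrow \one_\Omega$. The test function $\phi_n = \chi^2 v^+ \zeta_n$ lies in $H^1_0(\Omega)$ because $v \in H^1_\loc(\Omega) \cap L_\infty(\Omega)$, $\chi \in H^1(\Omega) \cap L_\infty(\Omega)$, and $\zeta_n$ has compact support. For such $\phi_n$ the pairing $\langle \ca u, \phi_n\rangle$ (well-defined since $\ca u \in H^{-1}(\Omega)$) equals $\gota(u, \phi_n)$. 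Expanding
\[
\partial_k \phi_n = 2 \chi (\partial_k \chi) v^+ \zeta_n + \chi^2 \one_{\{v > 0\}} (\partial_k v) \zeta_n + \chi^2 v^+ \partial_k \zeta_n
\]
and taking real parts, the principal part contributes $\int \zeta_n \chi^2 \sum_{k,l} a_{kl} (\partial_l v^+)(\partial_k v^+) \geq \mu \int \zeta_n \chi^2 |\nabla v^+|^2$ by ellipticity (since $a_{kl}$ is real and $v^+$ is real). All cross terms with $\nabla \chi$ or $\nabla \zeta_n$, together with the lower-order contributions (including the $\IIm$-parts from complex $b_k, c_0$ which see the bounded function $\IIm u$), are absorbed by Young's inequality into $\tfrac{\mu}{2} \int \zeta_n \chi^2 |\nabla v^+|^2$ plus a constant depending only on $\|u\|_\infty$, $\|\chi\|_{H^1(\Omega)}$, $\|\ca u\|_{H^{-1}(\Omega)}$ and the $L_\infty$-bounds of the coefficients. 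Letting $n \to \infty$ with Fatou yields $\int_\Omega \chi^2 |\nabla v^+|^2 < \infty$; since $\nabla(\chi v^+) = v^+ \nabla \chi + \chi \nabla v^+ \in L_2(\Omega)^d$, it follows that $\chi v^+ \in H^1(\Omega)$, and hence $\chi^2 v^+ = \chi \cdot (\chi v^+) \in H^1(\Omega) \cap C_b(\Omega)$ as well.

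For the second conclusion, set $w = \chi^2 v^+ \geq 0$. The hypothesis together with $(\chi^2 v)^+ = \chi^2 v^+$ gives $\lim_{x \to z} w(x) = 0$ for quasi every $z \in \Gamma$. Fix $\epsilon > 0$ and consider $K_\epsilon = \{z \in \Gamma : \limsup_{x \to z} w(x) \geq \epsilon\}$; this set is closed (hence compact, since $\Gamma$ is) and, as a subset of the exceptional polar set, satisfies $\capp K_\epsilon = 0$. Given $\eta > 0$ and any open $U \supset K_\epsilon$, Lemma~\ref{ldat834.2} furnishes $\tau \in C_c^\infty(\Ri^d)$ with $\supp \tau \subset U$, $0 \leq \tau \leq 1$, $\tau \equiv 1$ on an open neighborhood of $K_\epsilon$, and $\|\tau\|_{H^1(\Ri^d)} \leq \eta$. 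The function $(w - \epsilon)^+ (1 - \tau)$ has compact support in $\Omega$: any approach point in $\Gamma$ would lie in $\overline{\{w > \epsilon\}} \cap \Gamma \subset K_\epsilon$, where however $\tau = 1$. Hence $(w - \epsilon)^+ (1 - \tau) \in H^1_0(\Omega)$, and the decomposition
\[
w - (w - \epsilon)^+ (1 - \tau) = (w \wedge \epsilon) + (w - \epsilon)^+ \tau
\]
shows that $w$ is $H^1$-approximated by $H^1_0(\Omega)$-functions: $\|w \wedge \epsilon\|_{L_2} \leq \epsilon |\Omega|^{1/2}$ and $\|\nabla (w \wedge \epsilon)\|_{L_2}^2 = \int_{\{0 < w < \epsilon\}} |\nabla w|^2 \to 0$ as $\epsilon \to 0$ by dominated convergence, while choosing $U$ small enough that $\int_{\supp \tau} |\nabla w|^2 < \eta^2$ (possible since $K_\epsilon$ has Lebesgue measure zero) gives $\|(w - \epsilon)^+ \tau\|_{H^1(\Omega)} \leq C \eta$ with $C$ depending on $\|w\|_\infty$. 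Therefore $w \in \overline{H^1_0(\Omega)} = H^1_0(\Omega)$.

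The main obstacle is the first step: producing the Caccioppoli bound uniformly in $n$ in the presence of complex lower-order coefficients and the fact that only $v^+$ (not $u$) is truncated, so that the cross terms involving $\IIm u$ and $\IIm b_k, \IIm c_0$ must be carefully estimated against the coercive $\int \chi^2 |\nabla v^+|^2$. The structure is classical (cf.~\cite{GiM}), but the bookkeeping to keep the constants independent of the cutoff is what makes this step work and is the reason the lemma is stated with $\chi^2$ rather than $\chi$ in the hypothesis.
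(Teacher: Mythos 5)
Your reduction $(\chi\,\RRe u)^+ = \chi\,(\RRe u)^+$ and the second half of your argument (approximating $\chi^2 v^+$ by the compactly supported functions $(w-\epsilon)^+(1-\tau)$, then sending $\epsilon, U, \eta \to 0$) are both sound and, in the second case, a cleaner route than the paper's weak-compactness argument. The difficulty is the first step, and it is a genuine gap, not merely bookkeeping.

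In your Caccioppoli step you test $\ca u$ against $\phi_n = \chi^2 v^+ \zeta_n$ with interior cutoffs $\zeta_n \uparrow \one_\Omega$ and claim that all cross terms are absorbed, producing a bound on $\int \chi^2 \zeta_n |\nabla v^+|^2$ depending only on $\|u\|_\infty$, $\|\chi\|_{H^1}$, $\|\ca u\|_{H^{-1}}$ and the coefficients. This cannot close: the gradient of your test function contains the term $\chi^2 v^+ \nabla\zeta_n$, and nothing controls $\|\chi^2 v^+ \nabla\zeta_n\|_{L_2}$ uniformly in $n$. It enters in at least three places — the principal-part cross term $\sum_{k,l}\int a_{kl}(\partial_l v^+)\,v^+\chi^2\,\partial_k\zeta_n$, the $b_k$ terms, and the upper bound $|\langle\ca u,\phi_n\rangle|\le \|\ca u\|_{H^{-1}}\|\phi_n\|_{H^1}$ — and absorbing it by Young's inequality produces $\int \chi^2\,|\nabla\zeta_n|^2/\zeta_n$, which for a generic interior cutoff diverges as $n\to\infty$. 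There is no quantitative decay of $\chi^2 v^+$ near $\Gamma$ to compensate (the hypothesis is only a qualitative quasi-everywhere $\limsup$), so this term is not absorbable. A telling symptom is that your first step never uses the boundary condition $\limsup_{x\to z}(\chi^2\RRe u)(x)\le 0$ at all, yet the conclusion $(\chi\,\RRe u)^+\in H^1(\Omega)$ is false without it — e.g.\ for the Laplacian on a disk a Hadamard-type harmonic $u\in C(\overline\Omega)\setminus H^1(\Omega)$ with the \emph{opposite} sign on $\Gamma$ makes $(\chi v)^+ = u\notin H^1$.

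The paper circumvents this by building the boundary decay into the test function from the start: it uses $u_n = \bigl((\one-\chi_n)^2\chi^2\RRe u - \delta\bigr)^+$, where $\chi_n$ is a small-capacity cutoff ($\|\chi_n\|_{H^1}\le 1/n$, from Lemma~\ref{ldat834.2}) equal to $1$ near the exceptional compact set $K_{1/n}=\{z\in\Gamma:\limsup_{x\to z}(\chi^2\RRe u)(x)\ge 1/n\}$. The compact support of $u_n$ in $\Omega$ then comes for free: away from $K_{1/n}$ the quantity inside $(\cdot)^+$ drops below $\delta>1/n$ near $\Gamma$ by the boundary hypothesis, while near $K_{1/n}$ the factor $(\one-\chi_n)^2$ kills it. In $\nabla u_n$, the cutoff gradient appears as $\nabla\chi_n$, whose $L_2$-norm is at most $1/n$, so it \emph{is} controllable — unlike your $\nabla\zeta_n$. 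This is the missing ingredient: the cutoff must be attached to the small-capacity bad set, not to the distance-to-boundary level sets. Once that bound is in hand, the paper extracts both conclusions from the same estimate; you could keep your cleaner route to the second conclusion, but the first step must be redone along these lines.
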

\begin{proof}
For all $\varepsilon > 0$ define the set
\[
K_\varepsilon 
= \{ z \in \Gamma : \limsup_{x \to z} (\chi^2 \, \RRe u)(x) \geq \varepsilon \} 
.  \]
We first show that $K_\varepsilon$ is closed.
Let $z_1,z_2,\ldots \in K_\varepsilon$, $z \in \Gamma$ and suppose that 
$\lim z_n = z$.
For all $n \in \Ni$ there exists an $x_n \in \Omega$ such that 
$|x_n - z_n| < \frac{1}{n}$ and $(\chi^2 \, \RRe u)(x_n) \geq \varepsilon - \frac{1}{n}$.
Then $\limsup_{x \to z} (\chi^2 \, \RRe u)(x) 
\geq \limsup_{n \to \infty} (\chi^2 \, \RRe u)(x_n) \geq \varepsilon$,
so $z \in K_\varepsilon$.
Hence $K_\varepsilon$ is closed and therefore compact. 
Obviously $\capp(K_\varepsilon) = 0$.
Note that $u \in L_2(\Omega)$, since $\Omega$ and $u$ are bounded.

Let $\delta > 0$ and $n \in \Ni$ with $\frac{1}{n} < \delta$.
We emphasise that all the following estimates involving constants $M_1,M_2,\ldots$ 
are uniform in both $\delta$ and~$n$.
By Lemma~\ref{ldat834.2} there exists a $\chi_n \in C_c^\infty(\Ri^d)$
such that $0 \leq \chi_n \leq \one$, $\|\chi_n\|_{H^1(\Ri^d)} \leq \frac{1}{n}$ and 
$\chi_n(z) = 1$ for all $z \in K_{1/n}$.
Passing to a subsequence if necessary, we may assume that $\lim \chi_n = 0$ 
almost everywhere on $\Ri^d$.
Define 
\[
u_n = \Big( (\one - \chi_n)^2 \, \chi^2 \, \RRe u - \delta \, \one \Big)^+
.  \]
(In order to avoid clutter we drop the dependence of $\delta$ in the notation 
of $u_n$.)
Then $\supp u_n \subset \Omega$ since $\frac{1}{n} < \delta$.
Hence there is an open $\Omega_1 \subset \Ri^d$ such that 
$\supp u_n \subset \Omega_1 \subset \overline{\Omega_1} \subset \Omega$.
Then $u_n \in H^1_0(\Omega_1)$.
By assumption there exists $M_1 > 0$ such that 
$|\langle \ca u, v \rangle| \leq M_1 \|v\|_{H^1(\Omega)}$ for all $v \in C_c^\infty(\Omega)$.
Hence
\begin{eqnarray}
\lefteqn{
\Big| \sum_{k,l=1}^d \int_{\Omega_1} a_{kl} \, (\partial_l u) \, \overline{\partial_k v}
   + \sum_{k=1}^d \int_{\Omega_1} b_k \, u \, \overline{\partial_k v}
   + \sum_{k=1}^d \int_{\Omega_1} c_k (\partial_k u) \, \overline v
   + \int_{\Omega_1} c_0 \, u \, \overline v \Big|
} \hspace*{120mm}    \label{etdat243;1}   \\*
& \leq & M_1 \, \|v\|_{H^1(\Omega)}
\nonumber
\end{eqnarray}
for all $v \in C_c^\infty(\Omega_1)$.
Since $u|_{\Omega_1} \in H^1(\Omega_1)$ it follows that (\ref{etdat243;1})
is valid for all $v \in H^1_0(\Omega_1)$ and in particular for $v = u_n$.
Therefore
\[
\Big| \sum_{k,l=1}^d \int_\Omega a_{kl} \, (\partial_l u) \, \overline{\partial_k u_n}
   + \sum_{k=1}^d \int_\Omega b_k \, u \, \overline{\partial_k u_n}
   + \sum_{k=1}^d \int_\Omega c_k (\partial_k u) \, \overline{u_n}
   + \int_\Omega c_0 \, u \, \overline{u_n} \Big|
\leq M_1 \, \|u_n\|_{H^1(\Omega)}
.  \]
Note that 
\[
\|u_n\|_{L_2(\Omega)}
\leq \|(\one - \chi_n)^2 \, \chi^2 \, \RRe u\|_{L_2(\Omega)}
\leq \|u\|_{L_2(\Omega)}
.  \]
Secondly
\[
\partial_k u_n
= \Big( (\one - \chi_n)^2 \, \chi^2 \partial_k \RRe u 
        - 2 (\RRe u) (\one - \chi_n) \, \chi^2 \, \partial_k \chi_n 
        + 2 (\RRe u) (\one - \chi_n)^2 \, \chi \, \partial_k \chi \Big)
 \, \one_{[u_n > 0]}
\]
for all $k \in \{ 1,\ldots,d \} $.
Hence
\begin{eqnarray*}
\|\partial_k u_n\|_{L_2(\Omega)}
& \leq & \|\one_{[u_n > 0]} \, (\one - \chi_n) \, \chi \, \partial_k \RRe u\|_{L_2(\Omega)}
   + 2 \|u\|_{C_b(\Omega)} \, (\|\partial_k \chi_n\|_{L_2(\Omega)} + \|\partial_k \chi\|_{L_2(\Omega)} )  
\end{eqnarray*}
for all $k \in \{ 1,\ldots,d \} $.
Then 
\[
\|\nabla u_n\|_2^2
\leq 3 \|\one_{[u_n > 0]} \, (\one - \chi_n) \, \chi \, \nabla \RRe u\|_2^2
   + 12 (1 + \|\nabla \chi\|_2^2) \, \|u\|_{C_b(\Omega)}^2
\]
and 
\[
\|u_n\|_{H^1(\Omega)}
\leq 2 \|\one_{[u_n > 0]} \, (\one - \chi_n) \, \chi \, \nabla \RRe u\|_2
   + 4 (1 + \|\nabla \chi\|_2) \, \|u\|_{C_b(\Omega)} + \|u\|_{L_2(\Omega)}
.  \]
We wish to estimate $\|\one_{[u_n > 0]} \, (\one - \chi_n) \, \chi \, \nabla \RRe u\|_2^2$.

Clearly 
\begin{eqnarray*}
M_1 \, \|u_n\|_{H^1(\Omega)}
& \geq &
\Big| \sum_{k,l=1}^d \int_\Omega a_{kl} \, (\partial_l u) \, \overline{\partial_k u_n}
   + \sum_{k=1}^d \int_\Omega b_k \, u \, \overline{\partial_k u_n}
   + \sum_{k=1}^d \int_\Omega c_k (\partial_k u) \, \overline{u_n}
   + \int_\Omega c_0 \, u \, \overline{u_n} \Big|  \\
& \geq & \RRe \sum_{k,l=1}^d \int_\Omega a_{kl} \, (\partial_l u) \, \overline{\partial_k u_n}
      \\*
& & \hspace{10mm} {}
   + \RRe \sum_{k=1}^d \int_\Omega b_k \, u \, \overline{\partial_k u_n}
   + \RRe\sum_{k=1}^d \int_\Omega c_k (\partial_k u) \, \overline{u_n}
   + \RRe \int_\Omega c_0 \, u \, \overline{u_n}  \\
& = & \int_\Omega \sum_{k,l=1}^d \one_{[u_n > 0]} \, (\one - \chi_n)^2 \, \chi^2 \, a_{kl} \, 
      (\partial_l \RRe u) \, \partial_k \RRe u  \\
& & \hspace{10mm} {}
   - 2 \int_\Omega \sum_{k,l=1}^d \one_{[u_n > 0]} \, a_{kl} \, 
     (\partial_l \RRe u) \, (\RRe u) \, (\one - \chi_n) \, \chi^2 \, \partial_k \chi_n  \\
& & \hspace{10mm} {}
   + 2 \int_\Omega \sum_{k,l=1}^d \one_{[u_n > 0]} \, a_{kl} \, 
     (\partial_l \RRe u) \, (\RRe u) \, (\one - \chi_n)^2 \, \chi \, \partial_k \chi  \\
& & \hspace{10mm} {}
   + \RRe \int_\Omega \sum_{k=1}^d b_k \, u \, \overline{\partial_k u_n}  \\
& & \hspace{10mm} {}
   + \int_\Omega \sum_{k=1}^d c_k \, (\partial_k \RRe u) \, 
        \Big( (\one - \chi_n)^2 \, \chi^2 \, \RRe u - \delta \, \one \Big)^+  \\
& & \hspace{10mm} {}
   + \RRe \int_\Omega c_0 \, u \, 
        \Big( (\one - \chi_n)^2 \, \chi^2 \, \RRe u - \delta \, \one \Big)^+   \\
& = & I_1 + I_2 + I_3 + I_4 + I_5 + I_6
\geq I_1 - |I_2| - |I_3| - |I_4| - |I_5| - |I_6|
.
\end{eqnarray*}
We estimate the six terms separately.

Ellipticity gives
\begin{eqnarray*}
I_1 
& = & \int_\Omega \sum_{k,l=1}^d \one_{[u_n > 0]} \, (\one - \chi_n)^2 \, \chi^2 \, a_{kl} \, 
      (\partial_l \RRe u) \, \partial_k \RRe u  \\
& \geq & \mu \int_\Omega \one_{[u_n > 0]} \, (\one - \chi_n)^2 \, \chi^2 
       \sum_{k=1}^d |\partial_k \RRe u|^2
= \mu \, \|\one_{[u_n > 0]} \, (\one - \chi_n) \, \chi \, \nabla \RRe u\|_2^2
.  
\end{eqnarray*}
Using the Cauchy--Schwarz inequality one obtains
\begin{eqnarray*}
|I_2|
& = & \Big| - 2 \int_\Omega \sum_{k,l=1}^d \one_{[u_n > 0]} \, a_{kl} \, 
     (\partial_l \RRe u) \, (\RRe u) \, (\one - \chi_n) \, \chi^2 \, \partial_k \chi_n \Big|   \\
& \leq & M_2 \, \|u\|_{C_b(\Omega)} \, \|\one_{[u_n > 0]} \, (\one - \chi_n) \, \chi \, \nabla \RRe u\|_2 \, \|\nabla \chi_n\|_2  \\
& \leq & M_2 \, \|u\|_{C_b(\Omega)} \, \|\one_{[u_n > 0]} \, (\one - \chi_n) \, \chi \, \nabla \RRe u\|_2
,
\end{eqnarray*}
where $M_2 = 2 \sum_{k,l=1}^d \|a_{kl}\|_\infty$.
Similarly
\begin{eqnarray*}
|I_3|
& = & \Big| 2 \int_\Omega \sum_{k,l=1}^d \one_{[u_n > 0]} \, a_{kl} \, 
     (\partial_l \RRe u) \, (\RRe u) \, (\one - \chi_n)^2 \, \chi \, \partial_k \chi \Big|   \\
& \leq & M_2 \, \|u\|_{C_b(\Omega)} \, \|\nabla \chi\|_2 \, \|\one_{[u_n > 0]} \, (\one - \chi_n) \, \chi \, \nabla \RRe u\|_2
\end{eqnarray*}
and
\begin{eqnarray*}
|I_4|
& = & \Big| \RRe \int_\Omega \sum_{k=1}^d b_k \, u \, \overline{\partial_k u_n} \Big|   \\
& \leq & M_3 \, \|u\|_{L_2(\Omega)} \, \Big( \|\one_{[u_n > 0]} \, (\one - \chi_n) \, \nabla \RRe u\|_2
                               + 2 (1 + \|\nabla \chi\|_2) \, \|u\|_{C_b(\Omega)} \Big)  \\
& = & M_3 \, \|u\|_{L_2(\Omega)} \, \|\one_{[u_n > 0]} \, (\one - \chi_n) \, \nabla \RRe u\|_2
   + 2 M_3 \, (1 + \|\nabla \chi\|_2) \, \|u\|_{L_2(\Omega)} \, \|u\|_{C_b(\Omega)}
,
\end{eqnarray*}
where $M_3 = \sum_{k=1}^d \|b_k\|_\infty$.
For the next term we use the inequality $(t - \delta)^+ \leq t^+$ for all $t \in \Ri$ 
to obtain
\begin{eqnarray*}
|I_5|
& = & \Big| \int_\Omega \sum_{k=1}^d c_k \, (\partial_k \RRe u) \, 
        \Big( (\one - \chi_n)^2 \, \chi^2 \, \RRe u - \delta \, \one \Big)^+ \Big|   \\
& \leq & \int_\Omega \sum_{k=1}^d |c_k| \, |\partial_k \RRe u| \, 
        \one_{[u_n > 0]} \, \Big( (\one - \chi_n)^2 \, \chi^2 \RRe u - \delta \, \one \Big)^+   \\
& \leq & \int_\Omega \sum_{k=1}^d |c_k| \, |\partial_k \RRe u| \, 
         \one_{[u_n > 0]} \, (\one - \chi_n)^2 \, \chi^2 \, |\RRe u|   \\
& \leq & M_4 \, \|\one_{[u_n > 0]} \, (\one - \chi_n) \, \chi \, \nabla \RRe u\|_2 \, \|u\|_{L_2(\Omega)}  
,  
\end{eqnarray*}
where $M_4 = \sum_{k=1}^d \|c_k\|_\infty$.
Finally
\[
|I_6|
= \Big| \RRe \int_\Omega c_0 \, u \, 
        \Big( (\one - \chi_n)^2 \, \chi^2 \, \RRe u - \delta \, \one \Big)^+ \Big|
\leq \|c_0\|_\infty \, \|u\|_{L_2(\Omega)}^2
.  \]

Since 
\[
\mu \, \|\one_{[u_n > 0]} \, (\one - \chi_n) \, \chi \, \nabla \RRe u\|_2^2
\leq I_1
\leq M_1 \, \|u_n\|_{H^1(\Omega)} + |I_2| + |I_3| + |I_4| + |I_5| + |I_6|
\]
we obtain
\begin{eqnarray*}
\mu \, \|\one_{[u_n > 0]} \, (\one - \chi_n) \, \chi \, \nabla \RRe u\|_2^2
& \leq & M_5 \, \|\one_{[u_n > 0]} \, (\one - \chi_n) \, \chi \, \nabla \RRe u\|_2 
   + M_6  \\
& \leq & \frac{\mu}{2} \, \|\one_{[u_n > 0]} \, (\one - \chi_n) \, \chi \, \nabla \RRe u\|_2^2
   + \frac{1}{2 \mu} \, M_5^2 + M_6,
\end{eqnarray*}
where 
$M_5 = 2 M_1 + M_2 \, (1 + \|\nabla \chi\|_2) \, \|u\|_{C_b(\Omega)}
       + (M_3 + M_4) \, \|u\|_{L_2(\Omega)}$ and 
$M_6 = M_1 \, \|u\|_{L_2(\Omega)} + 4 M_1 \, (1 + \|\nabla \chi\|_2) \, \|u\|_{C_b(\Omega)}
   + 2 M_3 \, (1 + \|\nabla \chi\|_2) \, \|u\|_{L_2(\Omega)} \, \|u\|_{C_b(\Omega)} 
   + \|c_0\|_\infty \, \|u\|_{L_2(\Omega)}^2$.
Therefore 
\begin{equation}
\|\one_{[u_n > 0]} \, (\one - \chi_n) \, \chi \, \nabla \RRe u\|_2^2
\leq M_7
,  
\label{eldat243.1;1}
\end{equation}
where $M_7 = \mu^{-2} \, M_5^2 + 2 \mu^{-1} \, M_6$.
We will used this estimate in two different ways.

First note that 
\begin{equation}
|(\one_{[\chi^2 \RRe u - \delta > 0]} \, \chi \, \nabla \RRe u)(x)|^2
\leq \liminf_{n \to \infty} |(\one_{[u_n > 0]} \, (\one - \chi_n) \, \chi \, \nabla \RRe u)(x)|^2
\label{eldat243.1;2}
\end{equation}
for almost every $x \in \Omega$ with $(\chi^2 \, \RRe u)(x) - \delta > 0$.
Actually one has an equality and the liminf is a limit.
On the other hand, if $(\chi^2 \, \RRe u)(x) - \delta \leq 0$, then 
(\ref{eldat243.1;2}) is trivially valid since the left hand side vanishes.
So (\ref{eldat243.1;2}) is valid for almost every $x \in \Omega$.
Then by Fatou's lemma one deduces that 
\[
\int_\Omega |\one_{[\chi^2 \RRe u - \delta > 0]} \, \chi \, \nabla \RRe u|^2
\leq M_7
.  \]
This is for all $\delta > 0$.
Then the monotone convergence theorem (or again Fatou's lemma) gives
\[
\int_\Omega |\one_{[\chi \RRe u > 0]} \, \chi \, \nabla \RRe u|^2
= \int_\Omega |\one_{[\chi^2 \RRe u > 0]} \, \chi \, \nabla \RRe u|^2
\leq M_7
.  \]
Let $k \in \{ 1,\ldots,d \} $.
Then 
\[
\partial_k (\chi \, \RRe u)^+
= \one_{[\chi \RRe u > 0]} \Big( \chi \, \partial_k \RRe u + (\RRe u) \, \partial_k \chi \Big)
\in L_2(\Omega)
.  \]
Hence $(\chi \, \RRe u)^+ \in H^1(\Omega)$.

Alternatively, returning to the estimate (\ref{eldat243.1;1}) 
one deduces that  
\[
\|u_n\|_{H^1_0(\Omega)}^2
\leq \|u\|_{L_2(\Omega)}^2 
   + 3 \|\one_{[u_n > 0]} \, (\one - \chi_n) \, \chi \, \nabla \RRe u\|_2^2
   + 12 \, (1 + \|\nabla \chi\|_2^2) \, \|u\|_{C_b(\Omega)}^2
\leq M_8
,  \]
where 
$M_8 = \|u\|_{L_2(\Omega)}^2 + 3 M_7 + 12 \, (1 + \|\nabla \chi\|_2^2) \, \|u\|_{C_b(\Omega)}^2$.
We emphasise that this bound is uniform in $\delta$ and~$n$.

The sequence $(u_n)_{n \in \Ni, \; n > 1/\delta}$ is bounded in $H^1_0(\Omega)$.
Passing to a subsequence if necessary, there exists a $v \in H^1_0(\Omega)$
such that $\lim u_n = v$ weakly in $H^1_0(\Omega)$.
Since $\lim u_n = (\chi^2 \, \RRe u - \delta \, \one)^+$ in $L_2(\Omega)$
it follows that $(\chi^2 \, \RRe u - \delta \, \one)^+ = v \in H^1_0(\Omega)$.
Moreover, 
\[
\|(\chi^2 \, \RRe u - \delta \, \one)^+\|_{H^1_0(\Omega)}
\leq \liminf_{n \to \infty} \|u_n\|_{H^1_0(\Omega)}
\leq M_8^{1/2}
.  \]
Hence the sequence $((\chi^2 \, \RRe u - 2^{-m} \, \one)^+)_{m \in \Ni}$ is bounded in $H^1_0(\Omega)$.
Now $\lim (\chi^2 \, \RRe u - 2^{-m} \, \one)^+ = (\chi^2 \, \RRe u)^+$ in $L_2(\Omega)$.
Arguing again with a weakly convergent subsequence one deduces that 
$(\chi^2 \, \RRe u)^+ \in H^1_0(\Omega)$.
The proof of Lemma~\ref{ldat243.1} is complete.
\end{proof}

\begin{proof}[{\bf Proof of Theorem~\ref{tdat243}.}]
First note that $\limsup_{x \to z} (\chi^2 \, \RRe u)(x) \leq 0$
for quasi every $z \in \Gamma$.
So by Lemma~\ref{ldat243.1} one deduces that $(\chi \, \RRe u)^+ \in H^1(\Omega)$.
Therefore $\ca ( (\chi \, \RRe u)^+ ) \in H^{-1}(\Omega)$ by Lemma~\ref{ldat260}\ref{ldat260-3}.
Further $\limsup_{x \to z} (\one_\Omega^2 \RRe (\chi \, \RRe u)^+ )(x) \leq 0$
for quasi every $z \in \Gamma$.
Hence we can apply Lemma~\ref{ldat243.1} with $u$ replaced by $(\chi \, \, \RRe u)^+$ and 
$\chi$ by $\one_\Omega$ to conclude that 
$(\chi \, \RRe u)^+ = \one_\Omega^2 \RRe (\chi \, \RRe u)^+ \in H^1_0(\Omega)$.
This completes the proof of Theorem~\ref{tdat243}.
\end{proof}

Theorem~\ref{tdat243} has many consequences.

\begin{cor} \label{cdat243.3}
Let $u \in C_b(\Omega) \cap H^1_\loc(\Omega)$ and $\chi \in C_b(\Omega) \cap H^1(\Omega)$
with $0 \leq \chi \leq \one$.
Suppose that $\ca u \in H^{-1}(\Omega)$
and $\lim_{x \to z} (\chi \, u)(x) = 0$ for quasi every $z \in \Gamma$.
Then $\chi \, u \in H^1_0(\Omega)$.
\end{cor}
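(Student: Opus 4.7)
The plan is to deduce Corollary~\ref{cdat243.3} from Theorem~\ref{tdat243} by applying it four times, to $u$, $-u$, $-iu$, and $iu$, in order to split $\chi u$ into its real and imaginary parts and further into their positive and negative parts.

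First I would observe that the hypothesis $\lim_{x\to z}(\chi u)(x) = 0$ (quasi-everywhere on $\Gamma$) immediately yields both
\[
\lim_{x\to z}(\chi\,\RRe u)(x) = 0
\qquad\text{and}\qquad
\lim_{x\to z}(\chi\,\IIm u)(x) = 0
\]
for quasi every $z\in\Gamma$. In particular, for each sign choice $\varepsilon\in\{+1,-1\}$, one has $\limsup_{x\to z}(\chi\cdot\varepsilon\,\RRe u)(x)\leq 0$ and $\limsup_{x\to z}(\chi\cdot\varepsilon\,\IIm u)(x)\leq 0$ quasi everywhere.

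Next I would apply Theorem~\ref{tdat243} to $u$ itself to get $(\chi\,\RRe u)^+\in H^1_0(\Omega)$. To handle the negative part, I would apply Theorem~\ref{tdat243} to $-u$, noting that $-u\in C_b(\Omega)\cap H^1_\loc(\Omega)$ and that $\ca(-u) = -\ca u\in H^{-1}(\Omega)$ by linearity of $\ca$, giving $(\chi\,\RRe u)^- = (\chi\,\RRe(-u))^+\in H^1_0(\Omega)$. Hence $\chi\,\RRe u\in H^1_0(\Omega)$. For the imaginary part I would use that $\ca$ is $\Ci$-linear in its argument (the test function is conjugated, not $u$), so $\ca(\mp iu) = \mp i\,\ca u\in H^{-1}(\Omega)$; since $\RRe(-iu)=\IIm u$ and $\RRe(iu)=-\IIm u$, applying Theorem~\ref{tdat243} to $-iu$ and $iu$ yields $(\chi\,\IIm u)^\pm\in H^1_0(\Omega)$, and therefore $\chi\,\IIm u\in H^1_0(\Omega)$. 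Combining, $\chi u = \chi\,\RRe u + i\,\chi\,\IIm u\in H^1_0(\Omega)$.

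There is no real obstacle here; the only thing to verify carefully is the $\Ci$-linearity of $\ca\colon H^1_\loc(\Omega)\to\cd'(\Omega)$ (so that rotations of $u$ by complex scalars preserve the hypothesis $\ca u\in H^{-1}(\Omega)$) and the elementary fact that the pointwise boundary condition on $\chi u$ passes to both components and to both sign choices, which is immediate since $\RRe$ and $\IIm$ are continuous real-linear functionals on $\Ci$.
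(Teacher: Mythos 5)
Your proof is correct and matches the paper's argument exactly: the paper also obtains $(\chi\,\RRe u)^+$ directly from Theorem~\ref{tdat243} applied to $u$, and the other three parts from $-u$, $-iu$, $iu$, then sums. The only point you flag for verification, the $\Ci$-linearity of $\ca$ in its first argument, is indeed immediate from the defining formula (only the test function is conjugated), so the hypothesis $\ca u\in H^{-1}(\Omega)$ does transfer to $\pm u$, $\pm iu$.
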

\begin{proof}
Replacing $u$ by $-u$ or $\pm i u$ gives 
$(\chi \, \RRe u)^-, (\chi \, \IIm u)^+, (\chi \, \IIm u)^- \in H^1_0(\Omega)$.
Hence $\chi \, u \in H^1_0(\Omega)$.
\end{proof}

We emphasise that one may choose $\chi = \one_\Omega$ in Theorem~\ref{tdat243}
or Corollary~\ref{cdat243.3}.
The next corollary is Theorem~\ref{tdat834.6}, which we writing again 
for the convenience of the reader.

\begin{cor} \label{cdat244}
Let $\varphi \in C(\Gamma)$.
Let $u \in C_b(\Omega) \cap H^1_\loc(\Omega)$ be $\ca$-harmonic.
Then the following are equivalent.
\begin{tabeleq}
\item \label{cdat244-1}
$u = T \varphi$. 
\item \label{cdat244-2}
There exists a polar set $P \subset \Gamma$ such that 
$
\lim\limits_{x \to z} u(x) = \varphi(z)
$
for all $z \in \Gamma \setminus P$.
\end{tabeleq}
\end{cor}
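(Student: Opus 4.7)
The plan is to deduce the corollary essentially for free from Theorem~\ref{tdat831} (regular points are independent of the operator) together with Corollary~\ref{cdat243.3} and the standing assumption that $0$ is not a Dirichlet eigenvalue. The hard analytic work of Theorem~\ref{tdat243} is doing the heavy lifting; what remains is bookkeeping.

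For the implication \ref{cdat244-1}$\Rightarrow$\ref{cdat244-2}, I would take $P = S$ as in (\ref{eSdat3;1}), the set of irregular points of $\Gamma$. By Kellogg's theorem $P$ is polar, and by Theorem~\ref{tdat831} every regular point is $\ca$-regular, hence for every $z \in \Gamma \setminus P$ we have $\lim_{x \to z}(T\varphi)(x) = \varphi(z)$. This gives \ref{cdat244-2} whenever $u = T\varphi$.

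For the converse \ref{cdat244-2}$\Rightarrow$\ref{cdat244-1}, I would set $w = u - T\varphi \in C_b(\Omega) \cap H^1_\loc(\Omega)$. Since both $u$ and $T\varphi$ are $\ca$-harmonic, $\ca w = 0$, and in particular $\ca w \in H^{-1}(\Omega)$. Let $P_1 \subset \Gamma$ be the polar set provided by \ref{cdat244-2}, and let $P_2 \subset \Gamma$ be the polar set provided by the first implication applied to $T\varphi$. Then $P_1 \cup P_2$ is still polar and $\lim_{x \to z} w(x) = 0$ for every $z \in \Gamma \setminus (P_1 \cup P_2)$, i.e.\ quasi-everywhere on $\Gamma$.

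Now I apply Corollary~\ref{cdat243.3} with $\chi = \one_\Omega$ (which is manifestly in $C_b(\Omega) \cap H^1(\Omega)$ with $0 \leq \chi \leq \one$) to conclude that $w \in H^1_0(\Omega)$. Since $w$ is an $\ca$-harmonic element of $H^1_0(\Omega)$ and $0$ is not a Dirichlet eigenvalue, $w = 0$ and hence $u = T\varphi$. The only place where anything nontrivial happens is the invocation of Corollary~\ref{cdat243.3}, so the main obstacle was already cleared when proving Theorem~\ref{tdat243}; here the assembly is essentially immediate.
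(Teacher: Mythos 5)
Your proof is correct and follows essentially the same route as the paper: forward direction via Theorem~\ref{tdat831} and Kellogg's theorem with $P = S$; converse by forming $\hat u = u - T\varphi$, observing $\ca\hat u = 0 \in H^{-1}(\Omega)$, combining the given polar set with $S$, invoking Corollary~\ref{cdat243.3} with $\chi = \one_\Omega$ to get $\hat u \in H^1_0(\Omega)$, and concluding $\hat u = 0$ from the standing assumption that $0$ is not a Dirichlet eigenvalue.
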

\begin{proof}
Let $S$ be the set of non-regular points of $\Gamma$, see (\ref{eSdat3;1}).
Then $\capp S = 0$.

`\ref{cdat244-1}$\Rightarrow$\ref{cdat244-2}'. 
This follows from Theorem~\ref{tdat831} and the choice $P = S$.

`\ref{cdat244-2}$\Rightarrow$\ref{cdat244-1}'. 
Let $\widehat P = P \cup S$ and $\hat u = u - T \varphi$.
Then $\capp \widehat P = 0$ and 
$\lim_{x \to z} \hat u(x) = 0$ for all $z \in \Gamma \setminus \widehat P$
by Theorem~\ref{tdat831}.
Moreover, $\ca \hat u = 0$.
Therefore $\hat u \in H^1_0(\Omega)$ by Corollary~\ref{cdat243.3} with the choice $\chi = \one_\Omega$.
Hence $A^D \hat u = 0$.
Since $0 \not\in \sigma(A^D)$ one concludes that $\hat u = 0$.
Therefore $u = T \varphi$.
\end{proof}

One also obtains a generalisation of \cite{Keldys} Theorem~IX, 
where the following corollary was proved for the Laplacian.

\begin{cor} \label{cdat246}
Let $u,v \in C_b(\Omega) \cap H^1_\loc(\Omega)$ be $\ca$-harmonic.
Suppose that 
\[
\lim_{x \to z} u(x) 
= \lim_{x \to z} v(x) 
\]
for all regular $z \in \Gamma$
(the limits exist and are equal).
Then $u = v$.
\end{cor}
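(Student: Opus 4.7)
The plan is to reduce to a uniqueness statement for $\ca$-harmonic functions in $H^1_0(\Omega)$, exactly as in the proof of Corollary~\ref{cdat244}. Set $w = u - v$. Then $w \in C_b(\Omega) \cap H^1_\loc(\Omega)$ and, by linearity of $\ca$, we have $\ca w = 0$. By hypothesis, $\lim_{x \to z} w(x) = 0$ at every regular boundary point $z \in \Gamma$.

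Next I would invoke Kellogg's theorem (recalled in the paper in the form $\capp(S) = 0$, where $S \subset \Gamma$ is the set of non-regular points, cf.\ (\ref{eSdat3;1})). This immediately upgrades the pointwise boundary decay of $w$ to decay at quasi every $z \in \Gamma$, which is the precise hypothesis needed to feed into the boundary behaviour lemma of Section~\ref{Sdat3}.

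The decisive step is then Corollary~\ref{cdat243.3} applied with $\chi = \one_\Omega$: since $\one_\Omega \in C_b(\Omega) \cap H^1(\Omega)$ (because $\Omega$ is bounded, $\one_\Omega \in H^1(\Omega)$ trivially in the sense required there, and $0 \leq \one_\Omega \leq \one$), since $w \in C_b(\Omega) \cap H^1_\loc(\Omega)$, since $\ca w = 0 \in H^{-1}(\Omega)$, and since $\lim_{x \to z}(\one_\Omega \cdot w)(x) = 0$ for quasi every $z \in \Gamma$, we conclude $w = \one_\Omega \cdot w \in H^1_0(\Omega)$.

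Finally, $w \in H^1_0(\Omega)$ with $\ca w = 0$ means $w \in D(A^D)$ and $A^D w = 0$. Our standing assumption is that $0$ is not a Dirichlet eigenvalue, so $w = 0$, that is, $u = v$. I do not anticipate any real obstacle here; the work has all been done in Theorem~\ref{tdat243} and its corollaries, together with Kellogg's theorem, and the argument is essentially the one already used in the implication \ref{cdat244-2}$\Rightarrow$\ref{cdat244-1} of Corollary~\ref{cdat244}, applied to the difference $u-v$ with the common limit playing the role of $\varphi$ and $0$ playing the role of $T\varphi$.
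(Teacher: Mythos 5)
Your proof is correct and follows exactly the route the paper intends: the paper does not write out a proof of Corollary~\ref{cdat246}, but the intended argument is precisely the Kellogg-plus-Corollary~\ref{cdat243.3} reduction you describe, applied to $w = u - v$ (and this is the same mechanism as in the implication \ref{cdat244-2}$\Rightarrow$\ref{cdat244-1} of Corollary~\ref{cdat244}). Your direct treatment of $w$ is in fact the right move, since Corollary~\ref{cdat244} would require exhibiting the common boundary limit as a function in $C(\Gamma)$, which your reduction avoids.
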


Corollary~\ref{cdat243.3} can be used to obtain a new description of 
the Perron solution.

\begin{cor} \label{cdat347}
Let $\varphi \in C(\Gamma)$ and $u \in C_b(\Omega) \cap H^1_\loc(\Omega)$.
Suppose that $\ca u \in H^{-1}(\Omega)$ and 
$\lim_{x \to z} u(x) = \varphi(z)$ for quasi every $z \in \Gamma$.
By Lemma~\ref{ldat260}\ref{ldat260-1} there exists a unique $v \in H^1_0(\Omega)$ such that 
$\ca v = \ca u$.
Then $v \in C_b(\Omega)$ and $T \varphi = u - v$.
\end{cor}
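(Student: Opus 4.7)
The plan is to apply Corollary~\ref{cdat243.3} with $\chi = \one_\Omega$ to the difference $w := u - T\varphi$. If one can show $w \in H^1_0(\Omega)$, then $\ca w = \ca u$ combined with the uniqueness in Lemma~\ref{ldat260}\ref{ldat260-1} forces $v = w$; rearranging then gives both $T\varphi = u - v$ and $v = u - T\varphi \in C_b(\Omega)$ (as the difference of two functions in $C_b(\Omega)$).

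First I would verify the hypotheses of Corollary~\ref{cdat243.3} for $w$. By Theorem~\ref{tdat235} one has $T\varphi \in C_b(\Omega) \cap H^1_\loc(\Omega)$, so $w$ lies in the same space as $u$. Since $T\varphi$ is $\ca$-harmonic, testing against any $\psi \in C_c^\infty(\Omega)$ shows that $\ca w = \ca u$ as distributions, and hence $\ca w \in H^{-1}(\Omega)$ by hypothesis. The constant function $\one_\Omega$ belongs to $C_b(\Omega) \cap H^1(\Omega)$ because $\Omega$ is bounded, and trivially $0 \leq \one_\Omega \leq \one$. It remains to check that $\lim_{x \to z} w(x) = 0$ for quasi every $z \in \Gamma$. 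Let $S \subset \Gamma$ be the set of non-regular boundary points; by Kellogg's theorem (see the discussion around~(\ref{eSdat3;1})) the set $S$ is polar. By hypothesis there is a polar set $P \subset \Gamma$ with $\lim_{x \to z} u(x) = \varphi(z)$ for all $z \in \Gamma \setminus P$. For each $z \in \Gamma \setminus (P \cup S)$ the point $z$ is regular, so Theorem~\ref{tdat831} gives $\lim_{x \to z}(T\varphi)(x) = \varphi(z)$, and therefore $\lim_{x \to z} w(x) = 0$. Since a finite union of polar sets is polar, this limit holds for quasi every $z \in \Gamma$.

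Corollary~\ref{cdat243.3} now yields $w \in H^1_0(\Omega)$. Since $\ca w = \ca u$, the uniqueness statement in Lemma~\ref{ldat260}\ref{ldat260-1} identifies $w$ with $v$, so $v = u - T\varphi$, which simultaneously establishes $T\varphi = u - v$ and the boundedness and continuity of $v$. I do not anticipate any genuine obstacle: the real analytic content has already been absorbed into Corollary~\ref{cdat243.3} (and hence into the Caccioppoli-type argument of Lemma~\ref{ldat243.1}) and into the identification of $\ca$-regular with regular points in Theorem~\ref{tdat831}. The only mild points of care are the verification that $\one_\Omega$ is a permissible choice of $\chi$, and that the union $P \cup S$ of two polar sets remains polar so that the boundary limit holds quasi everywhere.
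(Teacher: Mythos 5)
Your proposal is correct and follows essentially the same route as the paper: both define $w = u - T\varphi$, observe that $\ca w = \ca u \in H^{-1}(\Omega)$ and that $w$ tends to zero at quasi every boundary point (in the paper this fact is just asserted, while you unpack it via Theorem~\ref{tdat831} and Kellogg's theorem exactly as in the proof of Corollary~\ref{cdat244}), apply Corollary~\ref{cdat243.3} with $\chi = \one_\Omega$, and conclude by the uniqueness in Lemma~\ref{ldat260}\ref{ldat260-1}. The only difference is that you spell out the quasi-everywhere boundary limit and the membership $v = u - T\varphi \in C_b(\Omega)$ more explicitly, which the paper leaves implicit.
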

\begin{proof}
Define $\tilde v = u - T \varphi$.
Then $\tilde v \in C_b(\Omega) \cap H^1_\loc(\Omega)$ and 
$\ca \tilde v = \ca u \in H^{-1}(\Omega)$.
Moreover, $\lim_{x \to z} \tilde v(x) = 0$ for quasi every $z \in \Gamma$.
Hence by Corollary~\ref{cdat243.3} one deduces that $\tilde v \in H^1_0(\Omega)$.
Then the uniqueness of $v$ implies that $v = \tilde v$.
\end{proof}

For Proposition~\ref{pdat241} we need a kind of dual map of $\ca$, so 
that we do not need to assume that $u \in H^1_\loc(\Omega)$.
Define the map $\ca^t \colon H^1_\loc(\Omega) \to \cd'(\Omega)$ by
\[
\langle \ca^t u,v \rangle_{\cd'(\Omega) \times \cd(\Omega)}
= \sum_{k,l=1}^d \int_\Omega a_{lk} \, (\partial_l u) \, \overline{\partial_k v}
   - \sum_{k=1}^d \int_\Omega c_k \, u \, \overline{\partial_k v}
   - \sum_{k=1}^d \int_\Omega \overline{b_k} \, (\partial_k u) \, \overline v
   + \int_\Omega \overline{c_0} \, u \, \overline v
\]
for all $u \in H^1_\loc(\Omega)$ and $v \in C_c^\infty(\Omega)$.

\begin{prop} \label{pdat241}
Suppose that $a_{kl}, c_k \in W^{1,\infty}(\Omega)$ for all 
$k,l \in \{ 1,\ldots,d \} $.
Let $\varphi \in C(\Gamma)$ and $u \in C_b(\Omega)$.
Suppose $\lim_{x \to z} u(x) = \varphi(z)$ for quasi every $z \in \Gamma$ and
that there exists a $v \in H^1_0(\Omega)$ such that 
\[
\int_\Omega u \, \overline{\ca^t \tau}
= \gota(v,\tau)
\]
for all $\tau \in C_c^\infty(\Omega)$.
Then $u \in H^1_\loc(\Omega)$, $v \in C_b(\Omega)$ and $T \varphi = u - v$.
\end{prop}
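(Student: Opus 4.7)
The plan is to reduce the proposition to Corollary~\ref{cdat347} by first upgrading $u \in C_b(\Omega)$ to $u \in H^1_\loc(\Omega)$. The key observation is that, under the $W^{1,\infty}$-hypothesis on $a_{kl}$ and $c_k$, the distribution $\ca^t\tau$ is an $L_\infty(\Omega)$-function with compact support inside $\supp\tau$ for every $\tau \in C_c^\infty(\Omega)$, so that the integral $\int_\Omega u\,\overline{\ca^t\tau}$ is a sensible replacement for the pairing $\langle \ca u,\tau\rangle$ even when $u$ merely lies in $L_\infty(\Omega)$. For $v_0 \in H^1_\loc(\Omega)$, a direct integration by parts (pushing the derivatives in $\ca^t\tau$ back onto $v_0$) gives the identity $\int_\Omega v_0\,\overline{\ca^t\tau} = \gota(v_0,\tau)$; in particular the hypothesis becomes the distributional formulation of $\ca u = \ca v$.

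The main obstacle is Step~1: deducing $u \in H^1_\loc(\Omega)$ from this very weak hypothesis. Subtracting the IBP identity (applied to $v$) from the hypothesis yields $\int_\Omega w\,\overline{\ca^t\tau} = 0$ for every $\tau \in C_c^\infty(\Omega)$, where $w := u - v \in L_\infty(\Omega)$. Thus $w$ is a bounded $\ca$-harmonic function in the distributional sense. Classical interior regularity for elliptic operators with Lipschitz principal coefficients now provides $w \in H^1_\loc(\Omega)$: mollifying $w$ locally, one obtains smooth approximants $w_\varepsilon$ which are approximately $\ca$-harmonic (the defect coming from commutators between $\ca$ and the mollifier, controllable thanks to $a_{kl},c_k \in W^{1,\infty}(\Omega)$), and a Caccioppoli-type interior $H^1$-estimate then yields uniform bounds that pass to the limit. (Alternatively one may argue by duality with the adjoint operator, whose interior $H^2_\loc$-regularity under the $W^{1,\infty}$-hypothesis is classical.) Either route gives $w \in H^1_\loc(\Omega)$ and therefore $u = w + v \in H^1_\loc(\Omega)$.

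With $u \in H^1_\loc(\Omega)$ in hand, the IBP identity applied to $u$ rewrites the hypothesis as $\gota(u,\tau) = \gota(v,\tau)$ for all $\tau \in C_c^\infty(\Omega)$, that is $\ca u = \ca v$ as distributions. Since $v \in H^1_0(\Omega) \subset H^1(\Omega)$, Lemma~\ref{ldat260}\ref{ldat260-3} gives $\ca v \in H^{-1}(\Omega)$, whence also $\ca u \in H^{-1}(\Omega)$. All hypotheses of Corollary~\ref{cdat347} are now in force for $u$ and $\varphi$, and the uniqueness assertion of Lemma~\ref{ldat260}\ref{ldat260-1} identifies the unique $H^1_0(\Omega)$-solution of $\ca(\cdot) = \ca u$ with the $v$ produced by the hypothesis. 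Corollary~\ref{cdat347} therefore concludes that $v \in C_b(\Omega)$ and $T\varphi = u - v$, completing the proof.
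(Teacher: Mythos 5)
Your proof is correct and follows essentially the same route as the paper's: subtract the integration-by-parts identity for $v$ from the hypothesis to get $\int_\Omega (u-v)\,\overline{\ca^t\tau}=0$, invoke interior elliptic regularity for the very weak formulation (the paper cites \cite{AEG} Proposition~A.1 where you sketch a mollification/Caccioppoli or adjoint-duality argument) to obtain $u-v\in H^1_\loc(\Omega)$, and then reduce to Corollary~\ref{cdat347}. One small slip: $w=u-v$ lies in $L_2(\Omega)$ but need not be in $L_\infty(\Omega)$, since $v\in H^1_0(\Omega)$ is not a priori bounded; this is harmless, as $L_2$-boundedness is all the regularity argument uses.
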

\begin{proof}
By assumption one has $\int_\Omega (u - v) \, \overline{\ca^t \tau} = 0$
for all $\tau \in C_c^\infty(\Omega)$.
Hence $u - v \in H^1_\loc(\Omega)$ by elliptic regularity, see \cite{AEG} Proposition~A.1.
So $u \in H^1_\loc(\Omega)$ and 
\[
\langle \ca u, \tau \rangle_{\cd'(\Omega) \times \cd(\Omega)}
= \int_\Omega u \, \overline{\ca^t \tau}
= \gota(v,\tau)
= \langle \ca v, \tau \rangle_{\cd'(\Omega) \times \cd(\Omega)}
\]
for all $\tau \in C_c^\infty(\Omega)$.
Therefore $\ca u = \ca v \in H^{-1}(\Omega)$ by Lemma~\ref{ldat260}\ref{ldat260-3}
and the result follows from Corollary~\ref{cdat347}.
\end{proof}

As a special case of Proposition~\ref{pdat241} we obtain a new result for the Laplacian.

\begin{cor} \label{cdat242}
Let $\varphi \in C(\Gamma)$ and $u \in C_b(\Omega)$.
Suppose $\lim_{x \to z} u(x) = \varphi(z)$ for quasi every $z \in \Gamma$ and
$\Delta u \in H^{-1}(\Omega)$.
Let $v \in H^1_0(\Omega)$ be such that $\Delta v = \Delta u$ as distribution.
Then $v \in C_b(\Omega)$ and $T^{-\Delta} \varphi = u - v$.
\end{cor}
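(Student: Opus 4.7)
The plan is to deduce this corollary as a direct specialisation of Proposition~\ref{pdat241} to the Laplacian, i.e.\ $\ca = -\Delta$. First I would observe that this corresponds to taking $a_{kl} = \delta_{kl}$ and $b_k = c_k = c_0 = 0$, so all coefficients are constants and in particular satisfy the regularity hypothesis $a_{kl}, c_k \in W^{1,\infty}(\Omega)$ demanded by Proposition~\ref{pdat241}. The hypotheses $\varphi \in C(\Gamma)$, $u \in C_b(\Omega)$, the quasi-everywhere boundary convergence, and the existence of $v \in H^1_0(\Omega)$ are all supplied directly by the statement, so the only thing that really needs checking is that the distributional identity $\Delta v = \Delta u$ is equivalent to the integral identity required in Proposition~\ref{pdat241}.

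To verify this equivalence, I would note that since $a_{lk} = \delta_{lk}$ is symmetric and the lower-order coefficients vanish, the definition of $\ca^t$ preceding Proposition~\ref{pdat241} collapses to $\ca^t \tau = -\Delta \tau$ for $\tau \in C_c^\infty(\Omega)$, and $\gota(v,\tau) = \int_\Omega \nabla v \cdot \overline{\nabla \tau}$. For $\tau \in C_c^\infty(\Omega)$ and $v \in H^1_0(\Omega)$, a routine integration by parts then gives
\[
\gota(v,\tau) = \int_\Omega \nabla v \cdot \overline{\nabla \tau} = -\langle \Delta v, \tau \rangle_{\cd'(\Omega) \times \cd(\Omega)},
\]
while the hypothesis $\Delta u \in H^{-1}(\Omega) \subset \cd'(\Omega)$ together with $u \in C_b(\Omega) \subset L_2(\Omega)$ yields
\[
\int_\Omega u \, \overline{\ca^t \tau} = -\int_\Omega u \, \Delta \overline{\tau} = -\langle \Delta u, \tau \rangle_{\cd'(\Omega) \times \cd(\Omega)}.
\]
Hence the identity $\int_\Omega u \, \overline{\ca^t \tau} = \gota(v,\tau)$ for all $\tau \in C_c^\infty(\Omega)$ is exactly the assumption $\Delta u = \Delta v$ as distributions.

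With the hypotheses verified, Proposition~\ref{pdat241} immediately delivers both conclusions: $v \in C_b(\Omega)$ and $T^{-\Delta}\varphi = u - v$. There is essentially no obstacle beyond keeping the sign convention straight, since the paper's operator $\ca$ associated with the form $\gota(u,v) = \int \nabla u \cdot \overline{\nabla v}$ is the positive operator $-\Delta$; this is exactly why the conclusion is expressed via $T^{-\Delta}$ rather than $T^{\Delta}$.
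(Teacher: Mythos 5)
Your proposal is correct and follows exactly the paper's intended route: Corollary~\ref{cdat242} is stated in the paper as a direct specialisation of Proposition~\ref{pdat241} to $\ca = -\Delta$, with no separate proof given, and your verification that the constant coefficients satisfy the $W^{1,\infty}$ hypothesis and that the integral identity unwinds to $\Delta u = \Delta v$ is precisely the bookkeeping required.
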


This corollary extends \cite{AD2} Theorem~1.1, where it was assumed that 
$\lim_{x \to z} u(x) = \varphi(z)$ for all $z \in \Gamma$.

\section{Approximation with $C(\overline \Omega)$-functions} \label{Sdat4}

The aim of this section is to prove Theorem~\ref{tdat121}.
In particular, we show that for all $\varphi \in C(\Gamma)$ the Perron solution 
$T \varphi$ is an element of $H^1(\Omega)$ if and only if $\varphi$
has an extension in $C(\overline \Omega) \cap H^1(\Omega)$.
The main tool is the next theorem, which is even valid for $H^1_\loc$-functions
instead of $H^1$-functions.

Throughout this section we adopt the notation and assumptions as in 
the first part of the introduction.

\begin{thm} \label{tdat401}
Let $\varphi \in C(\Gamma)$ and $u \in C_b(\Omega) \cap H^1_\loc(\Omega)$
be such that $\ca u \in H^{-1}(\Omega)$ and 
$\lim_{x \to z} u(x) = \varphi(z)$ for quasi every $z \in \Gamma$.
Then for all $\varepsilon > 0$ there exists a $\Phi \in C(\overline \Omega) \cap H^1_\loc(\Omega)$
such that $\Phi|_\Gamma = \varphi$,
$u - \Phi \in H^1_0(\Omega)$ and $\|u - \Phi\|_{H^1(\Omega)} < \varepsilon$.
\end{thm}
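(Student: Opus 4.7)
The plan is to construct $\Phi$ by interpolating $u$ with a smooth continuous extension of $\varphi$, using a capacity-small cutoff supported near the polar exceptional set. Denote by $P \subset \Gamma$ the set of points where either $\lim_{x \to z} u(x)$ does not exist or differs from $\varphi(z)$; by hypothesis $\capp P = 0$. First I would apply Tietze's theorem to produce a continuous extension of $\varphi$ to $\overline{\Omega}$ and then regularize by mollification with variable scale proportional to $d(\cdot, \Gamma)$, obtaining $\Psi \in C(\overline{\Omega}) \cap C^\infty(\Omega) \cap H^1_\loc(\Omega)$ with $\Psi|_\Gamma = \varphi$. Next, exploiting $\capp P = 0$ via an exhaustion of $P$ by compact subsets and Lemma~\ref{ldat834.2}, I would construct, for each $\eta > 0$, a cutoff $\chi_\eta \in C_c(\Ri^d) \cap H^1(\Ri^d)$ with $0 \leq \chi_\eta \leq 1$, $\chi_\eta \equiv 1$ on an open neighborhood of $P$, and $\|\chi_\eta\|_{H^1(\Ri^d)} < \eta$.

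I would then set $\Phi := (1 - \chi_\eta)\,u + \chi_\eta\,\Psi$, which lies in $H^1_\loc(\Omega)$ and extends continuously to $\overline{\Omega}$ with $\Phi|_\Gamma = \varphi$: at $z \in P$ one has $\Phi = \Psi$ in a neighborhood of $z$, while at $z \in \Gamma \setminus P$ both $u$ and $\Psi$ tend to $\varphi(z)$, so the convex combination does too. The difference $u - \Phi = \chi_\eta (u - \Psi)$ has boundary limit $0$ at quasi every $z \in \Gamma$ (the exceptional polar set is absorbed into $P$), and the summands $\|\chi_\eta(u-\Psi)\|_{L_2(\Omega)}$ and $\|(\nabla\chi_\eta)(u-\Psi)\|_{L_2(\Omega)}$ are directly controlled by $\|u-\Psi\|_\infty \cdot \eta$ via the bound on $\|\chi_\eta\|_{H^1(\Ri^d)}$.

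The hard part will be controlling the remaining gradient contribution $\|\chi_\eta\,\nabla(u-\Psi)\|_{L_2(\Omega)}$, since neither $\nabla u$ nor $\nabla \Psi$ is a priori in $L_2$ on a neighborhood of $\Gamma$. I expect to handle this through a Caccioppoli-type energy estimate in the spirit of the argument in Lemma~\ref{ldat243.1}: testing $\ca u \in H^{-1}(\Omega)$ against a suitable admissible proxy for $\chi_\eta^2(u-\Psi)$, constructed by inserting a secondary cutoff and a $\delta$-truncation (as in the construction of $u_n$ in the proof of Lemma~\ref{ldat243.1}) so the test function genuinely belongs to $H^1_0(\Omega)$ with compact support, and then absorbing the principal gradient term by ellipticity. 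This should yield a bound
\[
\int_\Omega \chi_\eta^2 \, |\nabla(u - \Psi)|^2 \leq C \, \|u - \Psi\|_\infty^2 \, \|\chi_\eta\|_{H^1(\Ri^d)}^2 + \mbox{terms that vanish with } \eta,
\]
whence for $\eta$ small enough $u - \Phi \in H^1(\Omega)$ has $H^1$-norm below $\varepsilon$. Lemma~\ref{ldat260}\ref{ldat260-3} then gives $\ca(u - \Phi) \in H^{-1}(\Omega)$, and Corollary~\ref{cdat243.3} applied with $\chi = \one_\Omega$ promotes $u - \Phi$ to $H^1_0(\Omega)$, completing the proof.
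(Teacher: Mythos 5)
The central obstacle you flag yourself---controlling $\int_\Omega \chi_\eta^2\,|\nabla(u-\Psi)|^2$---is not resolved by the strategy you sketch, and I do not see how it can be. Two separate difficulties arise. First, the $\nabla\Psi$-contribution is genuinely uncontrollable: for a general bounded open $\Omega$ there exist $\varphi\in C(\Gamma)$ (e.g.\ Hadamard's example on the disc, discussed after Corollary~\ref{cdat403}) that admit \emph{no} extension in $C(\overline\Omega)\cap H^1(\Omega)$ at all, so $\nabla\Psi$ may fail to be in $L_2$ on any neighbourhood of $\Gamma$, and shrinking $\|\chi_\eta\|_{H^1(\Ri^d)}$ gives no leverage on $\int_{\supp\chi_\eta\cap\Omega}|\nabla\Psi|^2$. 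Second, even for the $\nabla u$-part, the Caccioppoli-type estimate you invoke (the mechanism behind Lemma~\ref{ldat243.1}) does not produce the bound you write: testing $\ca u\in H^{-1}(\Omega)$ against a proxy for $\chi_\eta^2 u$ yields, after absorption by ellipticity, an estimate of the shape $\int\chi_\eta^2|\nabla u|^2\lesssim \|u\|_\infty^2\,\|\nabla\chi_\eta\|_2^2 + \|\ca u\|_{H^{-1}(\Omega)}^2$, and the second term is a fixed positive constant that does \emph{not} vanish as $\eta\downarrow 0$. Your displayed inequality, with ``terms that vanish with $\eta$'' covering this contribution, is the missing step and is false as stated.

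The paper's proof avoids both difficulties by a different local surgery: instead of interpolating $u$ with an extrinsic extension $\Psi$, Lemma~\ref{ldat402} replaces $u$ (locally, near a point $z_0\in\Gamma$) by the \emph{truncation} $\widetilde w=(1-\chi^2)w+\chi^2\bigl((\varphi(z_0)-\beta)\vee w\wedge(\varphi(z_0)+\beta)\bigr)$. This has two advantages you lose. The truncation keeps $\nabla\widetilde w$ comparable to $\nabla w$---no new unbounded gradient enters---and more importantly $\nabla(\widetilde w-w)$ carries the factor $\one_{[|w-\varphi(z_0)|>\beta]}$, a set that shrinks to the (Lebesgue-null, capacity-null) bad set $K$. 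The proof then has a two-step structure: first establish \emph{finiteness} of $\int_{V_0}\one_{[|w-\varphi(z_0)|>\beta]}|\nabla w|^2$ by applying Theorem~\ref{tdat243}, and only \emph{then} use $|K|=0$ plus dominated convergence to choose a smaller open $V\supset K$ on which this integral is $<\delta^2/4$, before picking $\chi$ with small $H^1$-norm supported in $V$. There is no single-pass quantitative Caccioppoli estimate in terms of $\|\chi\|_{H^1}$ alone; smallness comes from shrinking the domain of integration. Finally, one local modification only gets the boundary error down to $O(\beta)$ uniformly on $B(z_0,r_0)$, so the paper iterates over scales $\beta=1/n$ with a careful bookkeeping (condition~\ref{ldat402-7}) to ensure the approximations cohere and converge; this multiscale induction has no counterpart in your single interpolation step.
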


The proof requires a delicate induction procedure. 
It relies on the next lemma.

\begin{lemma} \label{ldat402}
Let $\varphi \in C(\Gamma)$ and $w \in C_b(\Omega) \cap H^1_\loc(\Omega)$
be both real valued.
Let $z_0 \in \Gamma$ and $r_0,\beta,\delta > 0$.
Suppose that 
\begin{eqnarray*}
& & \ca w \in H^{-1}(\Omega),  \\
& & \lim_{x \to z} w(x) = \varphi(z) \quad \mbox{for quasi every } z \in \Gamma, \mbox{ and}  \\
& & |\varphi(z) - \varphi(z_0)| \leq \beta \quad \mbox{for all }z \in \Gamma \cap B(z_0,4 r_0) .
\end{eqnarray*}
Then there exists a real valued $\widetilde w \in C_b(\Omega) \cap H^1_\loc(\Omega)$ such that 
the following is valid.
\begin{tabel}
\item \label{ldat402-1}
$\widetilde w - w \in H^1_0(\Omega)$.
Consequently also $\ca \widetilde w \in H^{-1}(\Omega)$.
\item \label{ldat402-2}
$\|\widetilde w - w\|_{H^1_0(\Omega)} \leq \delta$.
\item \label{ldat402-3}
$\limsup_{x \to z} |\widetilde w(x) - \varphi(z)|
\leq \limsup_{x \to z} |w(x) - \varphi(z)|$
for all $z \in \Gamma$.
\item \label{ldat402-4}
$\lim_{x \to z} \widetilde w(x) = \varphi(z)$ for quasi every $z \in \Gamma$.
\item \label{ldat402-5}
$\limsup_{x \to z} |\widetilde w(x) - \varphi(z)|
\leq 2 \beta$ for all $z \in \Gamma \cap B(z_0,r_0)$.
\item \label{ldat402-6}
$\widetilde w(x) = w(x)$ for all $x \in \Omega \setminus B(z_0,2 r_0)$.
\item \label{ldat402-7}
For all $z_1 \in \Gamma$, $R_1 \in [4 r_0, \infty)$, $\gamma \in [\beta,\infty)$
and $R \in (0,\frac{1}{2} \, R_1]$ with 
\begin{eqnarray*}
& & |\varphi(z) - \varphi(z_1)| \leq \gamma \quad \mbox{for all } z \in \Gamma \cap B(z_1,R_1), \mbox{ and}  \\
& & |w(x) - \varphi(z_1)| \leq 2 \gamma \quad \mbox{for all } x \in \Omega \cap B(z_1,R)
\end{eqnarray*}
it follows that 
\[
|\widetilde w(x) - \varphi(z_1)| \leq 2 \gamma \quad \mbox{for all } x \in \Omega \cap B(z_1,R)
.  \]
\end{tabel} 
\end{lemma}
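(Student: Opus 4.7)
The plan is to modify $w$ near $z_0$ by a local truncation, choosing the cutoff so as to simultaneously ensure the pointwise bound~(5) and the $H^1$-smallness~(2). First normalise: set $u := w - \varphi(z_0)$; since $\one_\Omega \in H^1(\Omega)$, Lemma~\ref{ldat260}\ref{ldat260-3} gives $\ca u = \ca w - \varphi(z_0)\,\ca\one_\Omega \in H^{-1}(\Omega)$, and the boundary hypothesis becomes $\lim_{x \to z} u(x) = \varphi(z) - \varphi(z_0)$ for quasi every $z \in \Gamma$, with $|\varphi(z) - \varphi(z_0)| \leq \beta$ on $\Gamma \cap B(z_0, 4r_0)$. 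Define the ``overshoot''
\[
g := (u - \beta)^+ - (u + \beta)^- \in C_b(\Omega) \cap H^1_\loc(\Omega),
\]
so that $u - g$ takes values in $[-\beta, \beta]$ and $|g| \leq |u|$ pointwise. Let $P := S_0 \cap \Gamma \cap \overline{B(z_0, r_0)}$, where $S_0 \subset \Gamma$ is the polar exceptional set on which the boundary limit of $w$ fails; then $P$ is polar.

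The construction is $\widetilde w := w - \chi g$ for a cutoff $\chi \in C_b(\Ri^d) \cap H^1(\Ri^d)$ with $0 \leq \chi \leq \one$, $\supp \chi \subset B(z_0, 2r_0)$, and $\chi = 1$ in an open neighbourhood (in $\Ri^d$) of $P$; such $\chi$ exists by Lemma~\ref{ldat834.2} applied to a compact polar set containing $P$ (a countable-sum construction handles the general case since $P$ has capacity zero). For quasi every $z \in \Gamma$ one has $\limsup_{x \to z} \chi(x)(u(x) - \beta) \leq 0$: either $\chi$ vanishes in a neighbourhood of $z$, or $z \in \Gamma \cap \overline{B(z_0, 2r_0)} \subset \Gamma \cap B(z_0, 4r_0)$ and the q.e.\ limit of $u$ at $z$ is at most $\beta$. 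Theorem~\ref{tdat243} applied to $u - \beta$ and to $-u - \beta$ (both in $C_b \cap H^1_\loc$ with $\ca$-image in $H^{-1}$) yields $(\chi(u-\beta))^+, (\chi(-u-\beta))^+ \in H^1_0(\Omega)$, whence $\chi g \in H^1_0(\Omega)$, proving~(1). Properties (3), (4), (6) follow from pointwise arguments: case analysis on the sign of $u \pm \beta$ together with $|\varphi(z) - \varphi(z_0)| \leq \beta$ on $\Gamma \cap B(z_0, 4r_0)$ yields $|\widetilde w - \varphi(z)| \leq |w - \varphi(z)|$ pointwise, giving~(3); (4) follows from (3) and the hypothesis on $w$; (6) is immediate from $\supp \chi$. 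For~(5) at $z \in \Gamma \cap B(z_0, r_0)$: if $z \notin P$, then $\lim u = \varphi(z) - \varphi(z_0) \in [-\beta, \beta]$ forces $\lim g = 0$, so $\lim \widetilde w = \varphi(z)$; if $z \in P$, then $\chi = 1$ near $z$, so near $z$ one has $\widetilde w - \varphi(z_0) = u - g \in [-\beta, \beta]$, giving $|\widetilde w - \varphi(z)| \leq 2\beta$ via $|\varphi(z) - \varphi(z_0)| \leq \beta$. For (7): $R_1 \geq 4r_0$ and $R \leq R_1/2$ force $|z_0 - z_1| \leq 2r_0 + R \leq R_1$, so $z_0 \in B(z_1, R_1)$ and $|\varphi(z_0) - \varphi(z_1)| \leq \gamma$; when $u(x) > \beta$ one has $\widetilde w(x) - \varphi(z_1) \leq w(x) - \varphi(z_1) \leq 2\gamma$ and $\widetilde w(x) - \varphi(z_1) \geq (w(x) - \varphi(z_1)) - (u(x) - \beta) = \varphi(z_0) - \varphi(z_1) + \beta \geq -\gamma$, and the other cases are analogous.

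The main technical obstacle is property~(2). The key observation is that $\chi$ is forced to equal $1$ only in a neighbourhood of the polar set $P$, which allows $\chi$ to be made arbitrarily small. Construct a sequence of cutoffs $\chi_n$ as above with $\supp \chi_n \subset U_n$ for open $U_n \searrow P$ and $\|\chi_n\|_{H^1(\Ri^d)} \to 0$, via Lemma~\ref{ldat834.2}; each $\widetilde w_n := w - \chi_n g$ still satisfies (1) and (3)--(7). The trivial estimates $\|\chi_n g\|_{L^2(\Omega)} \leq \|u\|_\infty \|\chi_n\|_{L^2(\Ri^d)}$ and $\|(\nabla \chi_n)\, g\|_{L^2(\Omega)} \leq \|u\|_\infty \|\nabla \chi_n\|_{L^2(\Ri^d)}$ both tend to zero. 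For the remaining term
\[
\|\chi_n \nabla g\|_{L^2(\Omega)}^2 = \int_\Omega \chi_n^2 \, \one_{\{|u| > \beta\}} \, |\nabla u|^2,
\]
fix a background cutoff $\chi_\ast \in C_c^\infty(\Ri^d)$ with $\chi_\ast = 1$ on $\overline{U_1}$ and $\supp \chi_\ast \subset B(z_0, 2r_0)$; Theorem~\ref{tdat243} gives $\chi_\ast g \in H^1_0(\Omega)$, hence $\chi_\ast \nabla g \in L^2(\Omega)$, providing the dominating function $\chi_\ast^2 \, \one_{\{|u|>\beta\}} \, |\nabla u|^2 \in L^1(\Omega)$. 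Since $P \subset \Gamma$, every $x \in \Omega$ eventually lies outside $\supp \chi_n$, so $\chi_n(x) \to 0$ pointwise on $\Omega$, and dominated convergence yields the integral $\to 0$. Choosing $n$ large enough that $\|\chi_n g\|_{H^1_0(\Omega)} \leq \delta$ and setting $\widetilde w := \widetilde w_n$ completes the construction.
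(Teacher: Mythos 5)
Your truncation $\widetilde w = w - \chi g$ is (up to $\chi$ versus $\chi^2$) the same construction the paper uses, and your overall strategy -- apply Theorem~\ref{tdat243} to obtain $\chi g \in H^1_0(\Omega)$, and shrink the cutoff to get property~\ref{ldat402-2} -- matches the paper's. Your dominated-convergence phrasing of step~\ref{ldat402-2} is a valid (and slightly cleaner) reformulation of the paper's explicit choice of a small open neighbourhood $V$ of the bad set, and the case analyses behind~\ref{ldat402-3}, \ref{ldat402-6} and \ref{ldat402-7} are correct.

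The genuine gap is the choice of the bad set. You take $P = S_0 \cap \Gamma \cap \overline{B(z_0, r_0)}$ where $S_0$ is the full polar exceptional set where the boundary limit of $w$ fails, and you need a \emph{continuous} cutoff $\chi$ with $\chi = 1$ on a genuine open neighbourhood of $P$, $\supp\chi \subset B(z_0,2r_0)$, and small $H^1$-norm. But $P$ is polar without being compact or even closed: the set of boundary points where a limit fails need not be closed. Lemma~\ref{ldat834.2} requires a compact set, and a polar set need not be contained in any compact polar set; a bounded polar set can be dense, so that its closure has positive capacity. The parenthetical ``countable-sum construction'' is not spelled out, and does not obviously produce a continuous $\chi$ equal to $1$ on an open neighbourhood of a possibly dense polar set. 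The paper sidesteps this by working with the smaller set
\[
K = \{ z \in \overline{B(z_0,r_0)} \cap \Gamma : \limsup_{x \to z} |w(x) - \varphi(z)| \geq \beta \},
\]
which is closed (by a diagonal $\limsup$ argument using continuity of $\varphi$), hence compact, and is still polar since it lies inside the exceptional set. Moreover $K$ is exactly where property~\ref{ldat402-5} forces $\chi=1$: for $z \in \Gamma \cap B(z_0, r_0) \setminus K$, your already established~\ref{ldat402-3} gives $\limsup_{x \to z} |\widetilde w(x) - \varphi(z)| \leq \limsup_{x \to z} |w(x) - \varphi(z)| < \beta \leq 2\beta$, so no cutoff condition is needed there. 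Replacing $P$ by $K$ closes the gap, and the rest of your argument then goes through unchanged.
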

\begin{proof}
Define 
\[
K = \{ z \in \overline{B(z_0,r_0)} \cap \Gamma : \limsup_{x \to z} |w(x) - \varphi(z)| \geq \beta \} 
.  \]
Then $K$ is compact by the same argument as at the beginning of the proof of 
Lemma~\ref{ldat243.1}.
Moreover, $\capp K = 0$ since $\lim_{x \to z} w(x) = \varphi(z)$ for quasi every 
$z \in \Gamma$.
In particular $|K| = 0$.
Let $\chi \in C_c^\infty(\Ri^d)$ be such that
such that $\chi(x) = 1$ for all $x$ in a neighbourhood of $K$,
$0 \leq \chi \leq \one$ 
and $\supp \chi \subset B(z_0,2 r_0)$.
There exist many such $\chi$.
At the moment we need the existence of one such $\chi$ and at a later 
stage we show with the aid of Lemma~\ref{ldat834.2}
that there exists one such that all properties, including~\ref{ldat402-2},
are valid for the function that we define next.

Define 
\begin{equation}
\widetilde w  
= (\one - \chi^2) \, w
   + \chi^2 \, \Big( (\varphi(z_0) - \beta) \vee w \wedge (\varphi(z_0) + \beta) \Big)
.  
\label{eldat402;3}
\end{equation}
Then $\widetilde w  \in C_b(\Omega) \cap H^1_\loc(\Omega)$.

Clearly $\widetilde w (x) = w(x)$ for all $x \in \Omega \setminus B(z_0,2 r_0)$,
which gives~\ref{ldat402-6}.

Let $z \in \Gamma \cap B(z_0,4 r_0)$.
Then $\varphi(z) \in [\varphi(z_0) - \beta, \varphi(z_0) + \beta]$.
If $x \in \Omega$, then
\[
\widetilde w (x) - \varphi(z)
= (1 - \chi^2(x)) \, (w(x) - \varphi(z))
   + \chi^2(x) \, \Big( (\varphi(z_0) - \beta) \vee w(x) \wedge (\varphi(z_0) + \beta) 
                         - \varphi(z) \Big)
.  \]
Since $|a \vee t \wedge b - c| \leq |t-c|$ for all $c \in [a,b]$ and $t \in \Ri$,
one deduces that 
\[
|\widetilde w (x) - \varphi(z)|
\leq (1 - \chi^2(x)) \, |w(x) - \varphi(z)|
   + \chi^2(x) \, |w(x) - \varphi(z)|
= |w(x) - \varphi(z)|
.  \]
Hence $\limsup_{x \to z} |\widetilde w (x) - \varphi(z)| \leq \limsup_{x \to z} |w(x) - \varphi(z)|$.
Next let $z \in \Gamma \setminus B(z_0,4 r_0)$.
If $x \in B(z,r_0)$, then $x \not\in B(z_0,2 r_0)$ and hence $\widetilde w (x) = w(x)$.
Therefore 
\[
\limsup_{x \to z} |\widetilde w (x) - \varphi(z)| = \limsup_{x \to z} |w(x) - \varphi(z)|
.  \]
This gives~\ref{ldat402-3}.
Since $\lim_{x \to z} w(x) = \varphi(z)$ for quasi every $z \in \Gamma$,
this implies that also $\lim_{x \to z} \widetilde w (x) = \varphi(z)$ for quasi every $z \in \Gamma$,
which gives~\ref{ldat402-4}.
Moreover, $\lim_{x \to z} (\widetilde w  - w)(x) = 0$ for quasi every $z \in \Gamma$.

Clearly 
\begin{eqnarray}
\widetilde w  - w
& = & \chi^2 \, \Big( (\varphi(z_0) - \beta) \vee w \wedge (\varphi(z_0) + \beta) 
                         - w \Big)  \nonumber \\
& = & \chi^2 \, \Big( (w - (\varphi(z_0) - \beta))^- -  (w - (\varphi(z_0) + \beta))^+ \Big)
\label{eldat402;1}
\end{eqnarray}
and 
\begin{equation}
\nabla (\widetilde w  - w)
= 2 \chi \, (\nabla \chi) \, \Big( (\varphi(z_0) - \beta) \vee w \wedge (\varphi(z_0) + \beta) 
                         - w \Big)
   + \chi^2 \, \one_{[|w - \varphi(z_0)| > \beta]} \, \nabla w
.
\label{eldat402;2}
\end{equation}
Hence $(\widetilde w - w)^- = \chi^2 \, (w - (\varphi(z_0) + \beta))^+$
and 
\[
\lim_{x \to z} \Big( \chi^2 \, (w - (\varphi(z_0) + \beta))^+ \Big)(x)
= \lim_{x \to z} (\widetilde w - w)^-(x) = 0
\]
for quasi every $z \in \Gamma$.
Therefore $\lim_{x \to z} \Big( \chi^2 \, (w - (\varphi(z_0) + \beta)) \Big)(x) \leq 0$
for quasi every $z \in \Gamma$.
Since $\ca w \in H^{-1}(\Omega)$ by assumption and also $\one_\Omega \in H^1(\Omega)$,
one deduces that $\ca(w - (\varphi(z_0) + \beta)) \in H^{-1}(\Omega)$.
It follows from Theorem~\ref{tdat243} that 
\[
\Big( \chi^2 \, (w - (\varphi(z_0) + \beta)) \Big)^+ \in H^1_0(\Omega)
.  \]
Similarly $\Big( \chi^2 \, (w - (\varphi(z_0) - \beta)) \Big)^- \in H^1_0(\Omega)$
and then by (\ref{eldat402;1}) also $\widetilde w - w \in H^1_0(\Omega)$, which gives~\ref{ldat402-1}.

Fix $\chi_0 \in C_c^\infty(\Ri^d)$ such that
such that $\chi_0(x) = 1$ for all $x$ in a neighbourhood $V_0$ of $K$,
$0 \leq \chi_0 \leq \one$ 
and $\supp \chi_0 \subset B(z_0,2 r_0)$.
We emphasise that such a $\chi_0$ exists.
Define $\widetilde w_0 \: (= \widetilde w)$ as in (\ref{eldat402;3}) with the choice $\chi = \chi_0$.
Then (\ref{eldat402;2}) implies that 
\[
\int_{V_0} \one_{[|w - \varphi(z_0)| > \beta]} \, |\nabla w|^2
\leq \|\widetilde w_0 - w\|_{H^1_0(\Omega)}^2
< \infty
.  \]
Recall that $|K| = 0$ and $K \subset V_0$.
Then it follows from the Lebesgue dominated convergence theorem
that there is an open $V \subset \Ri^d$ with $K \subset V \subset V_0$ such that 
\[
\int_V \one_{[|w - \varphi(z_0)| > \beta]} \, |\nabla w|^2 < \frac{1}{4} \delta^2
.  \]
Because $K$ is compact and $\capp K = 0$, one deduces from Lemma~\ref{ldat834.2}
that there exist $\chi \in C_c^\infty(\Ri^d)$ and an open neighbourhood $U$ of $K$
such that 
\[
\|\chi\|_{H^1(\Omega)}
\leq \frac{\delta}{8 (|\varphi(z_0)| + \beta + \|w\|_\infty)}
,  \]
$0 \leq \chi \leq \one$ and $\chi(x) = 1$ for all $x \in U$
and $\supp \chi \subset B(z_0,2 r_0) \cap V$.
With this choice of $\chi$ we prove the lemma.
Define $\widetilde w$ as in (\ref{eldat402;3}).
We already showed~\ref{ldat402-1}, \ref{ldat402-3}, \ref{ldat402-4} and \ref{ldat402-6}.

It follows from (\ref{eldat402;1}) that 
\[
\|\widetilde w  - w\|_2
\leq (|\varphi(z_0)| + \beta + \|w\|_\infty) \, \|\chi\|_2
\leq \frac{\delta}{8} 
\]
and since $\supp \chi \subset V$ one deduces from (\ref{eldat402;2}) that 
\begin{eqnarray*}
\|\nabla (\widetilde w  - w)\|_2
& \leq & 2 (|\varphi(z_0)| + \beta + \|w\|_\infty) \, \|\nabla \chi\|_2
   + \Big( \int_V \one_{[|w - \varphi(z_0)| > \beta]} \, |\nabla w|^2 \Big)^{1/2}  \\
& \leq & \frac{\delta}{4} + \frac{\delta}{2}
.  
\end{eqnarray*}
Hence 
$\|\widetilde w  - w\|_{H^1(\Omega)} \leq \delta$, which gives~\ref{ldat402-2}.

Let $z \in K$.
If $x \in \Omega \cap U$, then $\chi(x) = 1$ and $\varphi(z) \in [\varphi(z_0) - \beta, \varphi(z_0) + \beta]$.
So 
\[
|\widetilde w (x) - \varphi(z)|
= \Big| (\varphi(z_0) - \beta) \vee w(x) \wedge (\varphi(z_0) + \beta) 
                         - \varphi(z) \Big|
\leq 2 \beta 
.  \]
So $\limsup_{x \to z} |\widetilde w (x) - \varphi(z)| \leq 2 \beta$.
If $z \in \Gamma \cap B(z_0, r_0) \setminus K$, then by~\ref{ldat402-3} and the definition of $K$
we obtain that $\limsup_{x \to z} |\widetilde w (x) - \varphi(z)| \leq \beta \leq 2 \beta$.
That gives~\ref{ldat402-5}.

Finally we show~\ref{ldat402-7}.
Let $z_1 \in \Gamma$, $R_1 \in [4 r_0, \infty)$, $\gamma \in [\beta,\infty)$ 
and $R \in (0,\frac{1}{2} \, R_1]$.
Suppose that 
\begin{eqnarray*}
& & |\varphi(z) - \varphi(z_1)| \leq \gamma \quad \mbox{for all } z \in \Gamma \cap B(z_1,R_1), \mbox{ and}  \\
& & |w(x) - \varphi(z_1)| \leq 2 \gamma \quad \mbox{for all } x \in \Omega \cap B(z_1,R) .
\end{eqnarray*}
Let $x \in \Omega \cap B(z_1,R)$.
If $x \not\in B(z_0,2 r_0)$, then $\widetilde w (x) = w(x)$ and hence
\[
|\widetilde w (x) - \varphi(z_1)| 
= |w(x) - \varphi(z_1)| 
\leq 2 \gamma
.  \]
Now suppose that $x \in B(z_0,2 r_0)$.
Then 
$|z_0 - z_1| \leq |z_0 - x| + |x - z_1| < 2 r_0 + R \leq R_1$.
So $|\varphi(z_0) - \varphi(z_1)| \leq \gamma$.
Hence $\varphi(z_0) + \beta \leq \varphi(z_1) + \gamma + \beta \leq \varphi(z_1) + 2 \gamma$.
Similarly $\varphi(z_0) - \beta \geq \varphi(z_1) - 2 \gamma$.
Therefore 
\[
(\varphi(z_0) - \beta) \vee w(x) \wedge (\varphi(z_0) + \beta) 
\in [ \varphi(z_1) - 2 \gamma , \varphi(z_1) + 2 \gamma ]
\]
and
\[
| (\varphi(z_0) - \beta) \vee w(x) \wedge (\varphi(z_0) + \beta) 
  - \varphi(z_1) |
\leq 2 \gamma
.  \]
Then 
\begin{eqnarray*}
|\widetilde w (x) - \varphi(z_1)|
& \leq & (1 - \chi^2(x)) \, |w(x) - \varphi(z_1)|  \\*
& & \hspace*{10mm} {}
   + \chi^2(x) \, | (\varphi(z_0) - \beta) \vee w(x) \wedge (\varphi(z_0) + \beta) 
  - \varphi(z_1) |  \\
& \leq & (1 - \chi^2(x)) \cdot 2 \gamma + \chi^2(x) \cdot 2 \gamma
= 2 \gamma
.  
\end{eqnarray*}
This proves~\ref{ldat402-7} and completes the proof of the lemma.
\end{proof}

Now we are able to prove Theorem~\ref{tdat401}.

\begin{proof}[{\bf Proof of Theorem~\ref{tdat401}.}]
We first show that $\ca \RRe u \in H^{-1}(\Omega)$.
Since $\ca u \in H^{-1}(\Omega)$ there are $f_1,\ldots,f_d \in L_2(\Omega)$
such that 
\begin{equation}
\langle \ca u, v \rangle_{\cd'(\Omega) \times \cd(\Omega)}
= \sum_{k=1}^d \int_\Omega f_k \, \overline{\partial_k v}
\label{etdat401;2}
\end{equation}
for all $v \in C_c^\infty(\Omega)$.
Let $v \in C_c^\infty(\Omega)$ be real valued.
Taking the real part in (\ref{etdat401;2}) it follows that 
\begin{equation}
\langle \ca \RRe u, v \rangle_{\cd'(\Omega) \times \cd(\Omega)}
= \sum_{k=1}^d \int_\Omega (\RRe f_k) \, \overline{\partial_k v}
   + \sum_{k=1}^d \int_\Omega i (\IIm b_k) \, \overline u \, \overline{\partial_k v}
   + \int_\Omega i (\IIm c_0) \, \overline u \, \overline  v
.
\label{etdat401;3}
\end{equation}
Then by linearity (\ref{etdat401;3}) is valid for all $v \in C_c^\infty(\Omega)$.
Since $i (\IIm b_k) \, \overline u \in L_2(\Omega)$ and $i (\IIm c_0) \, \overline u \in L_2(\Omega)$
for all $k \in \{ 1,\ldots,d \} $ it follows that $\ca \RRe u \in H^{-1}(\Omega)$.

So without loss of generality we may assume that $u$ and $\varphi$ are real valued.
For all $\eta > 0$ define 
\[
K_\eta = \{ z \in \Gamma : \limsup_{x \to z} |u(x) - \varphi(z)| \geq \eta \} 
.  \]
Then $K_\eta$ is compact and $\capp K_\eta = 0$ since 
$\lim_{x \to z} u(x) = \varphi(z)$ for quasi every $z \in \Gamma$.

Define $u_0 = u$.
We shall show that there exist real valued $u_1,u_2,\ldots \in C_b(\Omega) \cap H^1_\loc(\Omega)$
such that for all $n \in \Ni$ one has
\begin{eqnarray*}
& & \ca u_n \in H^{-1}(\Omega),  \\
& & u_n - u_{n-1} \in H^1_0(\Omega) ,  \\
& & \|u_n - u_{n-1}\|_{H^1_0(\Omega)} \leq \varepsilon \, 2^{-n} ,  \\
& & \limsup_{x \to z} |u_n(x) - \varphi(z)| \leq \limsup_{x \to z} |u_{n-1}(x) - \varphi(z)| ,  \\
& & \lim_{x \to z} u_n(x) = \varphi(z) \quad \mbox{for quasi every } z \in \Gamma ,  \\
& & \limsup_{x \to z} |u_n(x) - \varphi(z)| \leq \tfrac{2}{n} \quad \mbox{for all } 
    z \in K_{1/n} ,  \\
& & u_n(x) = u_{n-1}(x) \quad \mbox{for all } x \in \Omega \mbox{ with } 
     d(x,\Gamma) \geq \tfrac{2}{n} , 
\end{eqnarray*}
and for all $z_1 \in \Gamma$, $R_1 \in [\frac{4}{n}, \infty)$, $\gamma \in [\frac{1}{n},\infty)$
and $R \in (0,\frac{1}{2} \, R_1]$ with 
\begin{equation}
\left[
\begin{array}{l}
|\varphi(z) - \varphi(z_1)| \leq \gamma \quad \mbox{for all } z \in \Gamma \cap B(z_1,R_1), \mbox{ and}  \\[5pt]
|u_{n-1}(x) - \varphi(z_1)| \leq 2 \gamma \quad \mbox{for all } x \in \Omega \cap B(z_1,R)
\end{array}
\right.
\label{etdat401;1}
\end{equation}
it follows that 
\[
|u_n(x) - \varphi(z_1)| \leq 2 \gamma \quad \mbox{for all } x \in \Omega \cap B(z_1,R)
.  \]
The proof is by induction.
Let $n \in \Ni$ and suppose that $u_{n-1}$ is defined.
Since $K_{1/n}$ is compact, there is an $N \in \Ni$ such that for all 
$k \in \{ 1,\ldots,N \} $ there exist $z_k \in K_{1/n}$ and 
$r_k \in (0,\frac{1}{n}]$ such that 
$\varphi(z) \in ( \varphi(z_k) - \frac{1}{n} , \varphi(z_k) + \frac{1}{n} )$
for all $z \in B(z_k, 4 r_k)$ and 
$K_{1/n} \subset \bigcup_{k=1}^N B(z_k, r_k)$.
Define $w_0 = u_{n-1}$.
We shall define $w_1,\ldots,w_N$ by induction.
Let $k \in \{ 1,\ldots,N \} $ and suppose that $w_{k-1}$ is defined. 
Apply Lemma~\ref{ldat402} with $w = w_{k-1}$, $z_0 = z_k$, $r_0 = r_k$,
$\beta = \frac{1}{n}$ and $\delta = \frac{\varepsilon}{N \, 2^n}$
and define $w_k = \widetilde w$.
Then define $u_n = w_N$.
It is easy to verify that $u_n$ satisfies the required properties.

Let $x \in \Omega$.
Let $N \in \Ni$ be such that $\frac{2}{N} < d(x,\Gamma)$.
Then $u_n(x) = u_N(x)$ for all $n \in \Ni$ with $n \geq N$.
Hence we can define $\Phi_0 \colon \Omega \to \Ri$ by 
\[
\Phi_0(x) = \lim_{n \to \infty} u_n(x)
.  \]
Since $u_n \in C(\Omega)$ for all $n \in \Ni$, one deduces that 
$\Phi_0 \in C(\Omega)$.
The series $\sum (u_n - u_{n-1})$ is absolutely convergent in $H^1(\Omega)$.
Hence $\lim (u_n - u_0)$ exists in $H^1(\Omega)$.
Therefore $\Phi_0 - u \in H^1(\Omega)$ and 
\[
\|\Phi_0 - u\|_{H^1(\Omega)}
= \|\sum_{n=1}^\infty (u_n - u_{n-1})\|_{H^1(\Omega)}
\leq \varepsilon
.  \]
Moreover, $\Phi_0 - u = \sum_{n=1}^\infty (u_n - u_{n-1}) \in H^1_0(\Omega)$.
Since $u \in H^1_\loc(\Omega)$, also $\Phi_0 \in H^1_\loc(\Omega)$.

Note that if $n \in \Ni$, then 
\[
\limsup_{x \to z} |u_n(x) - \varphi(z)|
\leq \tfrac{2}{n}
\]
for all $z \in K_{1/n}$ and hence, by definition of $K_{1/n}$
and the construction of $u_n$,
for all $z \in \Gamma$.
Fix $z_1 \in \Gamma$.
We shall show that $\lim_{x \to z_1} \Phi_0(x) = \varphi(z_1)$.
Let $\varepsilon > 0$.
There exists an $R_1 > 0$ such that 
$|\varphi(z) - \varphi(z_1)| \leq \varepsilon$ for all $z \in \Gamma \cap B(z_1, R_1)$.
Let $N \in \Ni$ be such that $\frac{4}{N} \leq R_1$ and 
$\frac{2}{N} \leq \varepsilon$.
Since $\limsup_{x \to z_1} |u_N(x) - \varphi(z_1)| \leq \frac{2}{N} \leq \varepsilon$,
there exists an $R \in (0,\frac{1}{2} \, R_1]$ such that 
$|u_N(x) - \varphi(z_1)| \leq 2 \varepsilon$ for all $x \in \Omega \cap B(z_1,R)$.
Choose $\gamma = \varepsilon$ in (\ref{etdat401;1}).
Then it follows by induction that $|u_n(x) - \varphi(z_1)| \leq 2 \varepsilon$
for all $x \in \Omega \cap B(z_1,R)$ and $n \in \Ni$ with $n \geq N$.
Consequently
\[
|\Phi_0(x) - \varphi(z_1)|
= \lim_{n \to \infty} |u_n(x) - \varphi(z_1)|
\leq 2 \varepsilon
\]
for all $x \in \Omega \cap B(z_1,R)$.
Hence $\lim_{x \to z_1} \Phi_0(x) = \varphi(z_1)$.

Finally define $\Phi \colon \overline \Omega \to \Ri$ by 
\[
\Phi(x) 
= \left\{ \begin{array}{ll}
   \Phi_0(x) & \mbox{if } x \in \Omega ,  \\[5pt]
   \varphi(x) & \mbox{if } x \in \Gamma .
           \end{array} \right.
\]
Then $\Phi \in C(\overline \Omega)$ and the proof is complete.
\end{proof}

\begin{cor} \label{cdat405}
Let $\varphi \in C(\Gamma)$.
Then there exists a $\Phi \in C(\overline \Omega) \cap H^1_\loc(\Omega)$
such that $\Phi|_\Gamma = \varphi$ and $\ca \Phi \in H^{-1}(\Omega)$.
\end{cor}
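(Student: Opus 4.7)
The plan is to apply Theorem~\ref{tdat401} directly with $u = T\varphi$. The three hypotheses of that theorem are exactly what the earlier sections give us for the Perron solution, so the corollary becomes almost immediate once we recognise the right candidate.

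First I would set $u = T\varphi$. By Theorem~\ref{tdat235} we have $u \in C_b(\Omega) \cap H^1_\loc(\Omega)$ and $\ca u = 0$; in particular $\ca u \in H^{-1}(\Omega)$. Next, Theorem~\ref{tdat831} (combined with Kellogg's theorem, which asserts that the set $S$ of non-regular boundary points is polar, see~(\ref{eSdat3;1})) gives $\lim_{x \to z} u(x) = \varphi(z)$ for every $z \in \Gamma \setminus S$, i.e.\ for quasi every $z \in \Gamma$. So $u$ satisfies all the hypotheses of Theorem~\ref{tdat401}.

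Applying Theorem~\ref{tdat401} with, say, $\varepsilon = 1$ then produces $\Phi \in C(\overline \Omega) \cap H^1_\loc(\Omega)$ with $\Phi|_\Gamma = \varphi$ and $u - \Phi \in H^1_0(\Omega)$. Since $u - \Phi \in H^1_0(\Omega) \subset H^1(\Omega)$, Lemma~\ref{ldat260}\ref{ldat260-3} yields $\ca(u - \Phi) \in H^{-1}(\Omega)$, and combined with $\ca u = 0$ this gives $\ca \Phi = -\ca(u - \Phi) \in H^{-1}(\Omega)$, as required.

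There is really no obstacle here: all the work has been done in Theorem~\ref{tdat401}, whose proof is the delicate inductive construction truncating $u$ near successively smaller exceptional sets $K_{1/n}$. The role of the corollary is simply to observe that the Perron solution $T\varphi$ is a legal input for Theorem~\ref{tdat401}, which in turn relies on the regularity-quasi-everywhere statement from Theorem~\ref{tdat831}. Note that this is what makes the statement of Theorem~\ref{tdat121} work: the same $\Phi$ satisfies $\ca \Phi \in H^{-1}(\Omega)$, so by Lemma~\ref{ldat260}\ref{ldat260-1} one can solve $\ca v = \ca \Phi$ with $v \in H^1_0(\Omega)$, and $\Phi|_\Omega - v = T\varphi$ follows from the defining property of $T$.
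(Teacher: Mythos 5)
Your proof is correct and takes essentially the same route as the paper: set $u = T\varphi$, verify that $u$ satisfies the hypotheses of Theorem~\ref{tdat401}, and then deduce $\ca\Phi \in H^{-1}(\Omega)$ from $u - \Phi \in H^1_0(\Omega)$ via Lemma~\ref{ldat260}\ref{ldat260-3}. The only cosmetic difference is that you invoke Theorem~\ref{tdat831} together with Kellogg's theorem directly, whereas the paper cites Corollary~\ref{cdat244}\ref{cdat244-1}$\Rightarrow$\ref{cdat244-2}, which is itself proved from exactly those two facts.
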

\begin{proof}
Let $u = T \varphi$.
Then $u \in C_b(\Omega) \cap H^1_\loc(\Omega)$, $\ca u = 0 \in H^{-1}(\Omega)$
and $\lim_{x \to z} u(x) = \varphi(z)$
for quasi every $z \in \Gamma$ by Corollary~\ref{cdat244}\ref{cdat244-1}$\Rightarrow$\ref{cdat244-2}.
Hence by Theorem~\ref{tdat401}
there exists a $\Phi \in C(\overline \Omega) \cap H^1_\loc(\Omega)$ 
such that $\Phi|_\Gamma = \varphi$ and $T \varphi - \Phi \in H^1_0(\Omega)$.
Then use Lemma~\ref{ldat260}\ref{ldat260-3}.
\end{proof}

Corollary~\ref{cdat405} allows us to describe the Perron solution,
as stated in Theorem~\ref{tdat121} in the introduction.

\begin{proof}[{\bf Proof of Theorem~\ref{tdat121}.}]
Let $\varphi \in C(\Gamma)$.
By Corollary~\ref{cdat405} there exists a 
$\Phi \in C(\overline \Omega) \cap H^1_\loc(\Omega)$ such that 
$\Phi|_\Gamma = \varphi$ and $\ca \Phi \in H^{-1}(\Omega)$.
Then by Lemma~\ref{ldat260}\ref{ldat260-1} there exists a unique
$v \in H^1_0(\Omega)$ such that $\ca v = \ca \Phi$.
Now apply Corollary~\ref{cdat347} with $u = \Phi|_\Omega$.
One deduces that $T \varphi = \Phi|_\Omega - v$.
\end{proof}

Now we can also determine for which $\varphi \in C(\Gamma)$ 
one has $T \varphi \in H^1(\Omega)$.

\begin{cor} \label{cdat403}
Let $\varphi \in C(\Gamma)$.
Then the following are equivalent.
\begin{tabeleq}
\item \label{cdat403-1}
$T \varphi \in H^1(\Omega)$.
\item \label{cdat403-2}
There exists a $\Phi \in C(\overline \Omega) \cap H^1(\Omega)$ 
such that $\Phi|_\Gamma = \varphi$.
\item \label{cdat403-3}
There exists a $u \in C_b(\Omega) \cap H^1(\Omega)$ such that 
$\lim_{x \to z} u(x) = \varphi(z)$ for quasi every $z \in \Gamma$.
\item \label{cdat403-4}
For all $\Phi \in C(\overline \Omega) \cap H^1_\loc(\Omega)$ 
such that $\ca \Phi \in H^{-1}(\Omega)$ and $\Phi|_\Gamma = \varphi$
one has $\Phi \in H^1(\Omega)$.
\item \label{cdat403-5}
For all $u \in C_b(\Omega) \cap H^1_\loc(\Omega)$ such that 
$\ca u \in H^{-1}(\Omega)$ and
$\lim_{x \to z} u(x) = \varphi(z)$ for quasi every $z \in \Gamma$
one has $u \in H^1(\Omega)$.
\end{tabeleq}
\end{cor}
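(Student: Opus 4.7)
The plan is to show that condition (ii) is equivalent to each of the other four; this gives the full five-way equivalence. The conceptual core is that (ii) supplies a concrete extension $\Phi_0 \in C(\overline\Omega) \cap H^1(\Omega)$ of $\varphi$, and subtracting $\Phi_0$ from any competitor reduces every implication to a statement about a function on $\Omega$ whose boundary limit vanishes quasi everywhere, at which point Corollary~\ref{cdat243.3} (applied with $\chi = \one_\Omega$) forces the difference into $H^1_0(\Omega)$ and closes the argument.

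For (ii)$\Leftrightarrow$(i), the forward direction is immediate from Theorem~\ref{tdat120}, which gives $\Phi|_\Omega - T\varphi \in H^1_0(\Omega)$, so $T\varphi \in H^1(\Omega)$. For (i)$\Rightarrow$(ii), I would set $u = T\varphi$, use Corollary~\ref{cdat244} to obtain $\lim_{x\to z}u(x) = \varphi(z)$ for quasi every $z \in \Gamma$, and then apply Theorem~\ref{tdat401} to extract $\Phi \in C(\overline\Omega) \cap H^1_\loc(\Omega)$ with $\Phi|_\Gamma = \varphi$ and $u - \Phi \in H^1_0(\Omega)$; since $u \in H^1(\Omega)$, this forces $\Phi \in H^1(\Omega)$. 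The proof of (ii)$\Leftrightarrow$(iii) is essentially the same: take $u = \Phi|_\Omega$ in one direction, and in the other observe that $u \in H^1(\Omega)$ gives $\ca u \in H^{-1}(\Omega)$ by Lemma~\ref{ldat260}\ref{ldat260-3}, then invoke Theorem~\ref{tdat401} exactly as above.

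For (ii)$\Leftrightarrow$(iv) and (ii)$\Leftrightarrow$(v), the easy directions go through Theorem~\ref{tdat121} (via Corollary~\ref{cdat405}) and through $u = T\varphi$ respectively: in each case the produced object already satisfies the hypothesis of the universal statement, which then yields (ii) or (i). The nontrivial directions (ii)$\Rightarrow$(iv) and (ii)$\Rightarrow$(v) use the comparison idea: given any candidate $\Phi$ or $u$ fulfilling the hypotheses and the extension $\Phi_0$ from (ii), the difference lies in $C_b(\Omega) \cap H^1_\loc(\Omega)$, has $\ca$-image in $H^{-1}(\Omega)$ (the two summands do, using Lemma~\ref{ldat260}\ref{ldat260-3} for $\Phi_0$), and tends to $0$ at quasi every boundary point; Corollary~\ref{cdat243.3} with $\chi = \one_\Omega$ then places the difference in $H^1_0(\Omega)$, so the candidate is in $H^1(\Omega)$. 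I do not foresee a serious obstacle, since all the heavy lifting is already in Theorem~\ref{tdat401}, Theorem~\ref{tdat121}, and Corollary~\ref{cdat243.3}; the only mild care is to verify at each comparison step that $\ca$ applied to the difference lies in $H^{-1}(\Omega)$ and that the subtracted boundary values cancel, both of which are automatic from the hypotheses.
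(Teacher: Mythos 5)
Your argument is correct and relies on exactly the same ingredients as the paper's proof: Theorem~\ref{tdat120}, Corollary~\ref{cdat244}, Theorem~\ref{tdat401} (equivalently Theorem~\ref{tdat121}/Corollary~\ref{cdat405}), Lemma~\ref{ldat260}\ref{ldat260-3}, and the crucial Corollary~\ref{cdat243.3} with $\chi=\one_\Omega$ to place a boundary-vanishing difference in $H^1_0(\Omega)$. The only difference is bookkeeping: the paper proves the single cycle (i)$\Leftrightarrow$(ii)$\Rightarrow$(iii)$\Rightarrow$(v)$\Rightarrow$(iv)$\Rightarrow$(ii) (with (ii)$\Rightarrow$(iii) and (v)$\Rightarrow$(iv) trivial, so the comparison argument is needed only once, in (iii)$\Rightarrow$(v)), whereas your hub-and-spoke layout centred on (ii) re-runs that comparison argument a couple of extra times; both are fine, and the substance is identical.
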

\begin{proof}
`\ref{cdat403-1}$\Leftrightarrow$\ref{cdat403-2}'.
This is a direct consequence of the last statement
in Theorem~\ref{tdat121}.

`\ref{cdat403-2}$\Rightarrow$\ref{cdat403-3}'.
Trivial.

`\ref{cdat403-3}$\Rightarrow$\ref{cdat403-5}'.
There exists a $\tilde u \in C_b(\Omega) \cap H^1(\Omega)$ such that 
$\lim_{x \to z} \tilde u(x) = \varphi(z)$ for quasi every $z \in \Gamma$.
Then $u - \tilde u \in C_b(\Omega) \cap H^1_\loc(\Omega)$, 
$\ca (u - \tilde u) \in H^{-1}(\Omega)$ and
$\lim_{x \to z} (u - \tilde u)(x) = 0$ for quasi every $z \in \Gamma$.
By Corollary~\ref{cdat243.3} it follows that $u - \tilde u \in H^1_0(\Omega)$,
whence $u \in H^1(\Omega)$.

`\ref{cdat403-5}$\Rightarrow$\ref{cdat403-4}'.
Trivial.

`\ref{cdat403-4}$\Rightarrow$\ref{cdat403-2}'.
Clear by Corollary~\ref{cdat405}.
\end{proof}

A famous example by Hadamard shows that there exists a $\varphi \in C(\Gamma)$ 
such that the classical solution of the Dirichlet problem $(D_\varphi)$ is 
not in $H^1(\Omega)$, where $\Omega = \{ x \in \Ri^2 : |x| < 1 \} $.
Hence the classical solution of the Dirichlet problem 
cannot be obtained by variational methods in general.
See \cite{MazyaShaposhnikova} Section~12.3 and 
\cite{ArendtUrbanEng} Example~6.68.
Therefore it is interesting to know for which $\varphi \in C(\Gamma)$ one has 
$T \varphi \in H^1(\Omega)$.

\medskip

The property $T \varphi \in H^1(\Omega)$ is independent of the 
elliptic operator.

\begin{cor} \label{cdat404}
Let $\varphi \in C(\Gamma)$.
Then the following are equivalent.
\begin{tabeleq}
\item \label{cdat404-1}
$T \varphi \in H^1(\Omega)$.
\item \label{cdat404-2}
$T^{-\Delta} \varphi \in H^1(\Omega)$.
\end{tabeleq}
\end{cor}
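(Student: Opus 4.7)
The plan is to observe that Corollary~\ref{cdat403} reduces the statement $T\varphi \in H^1(\Omega)$ to an equivalent condition that makes no reference to the elliptic operator~$\ca$ at all. Specifically, the equivalence \ref{cdat403-1}$\Leftrightarrow$\ref{cdat403-2} in Corollary~\ref{cdat403} says that $T\varphi \in H^1(\Omega)$ if and only if there exists an extension $\Phi \in C(\overline{\Omega}) \cap H^1(\Omega)$ with $\Phi|_\Gamma = \varphi$. Since this extension condition depends only on $\varphi$ and~$\Omega$, it is insensitive to the choice of elliptic operator.

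Concretely, I would argue as follows. The Laplacian $-\Delta$ satisfies all the standing hypotheses of the paper (it is uniformly elliptic with constant coefficients, and $0$ is not a Dirichlet eigenvalue of $-\Delta$ on any bounded open set). Therefore Corollary~\ref{cdat403} applies with $\ca$ replaced by $-\Delta$, and in particular yields the equivalence
\[
T^{-\Delta}\varphi \in H^1(\Omega)
\quad\Longleftrightarrow\quad
\exists\, \Phi \in C(\overline{\Omega}) \cap H^1(\Omega) \text{ with } \Phi|_\Gamma = \varphi.
\]
Applying Corollary~\ref{cdat403} once again with the original operator $\ca$ gives the analogous equivalence for $T\varphi$. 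Chaining these two equivalences through the common middle condition \ref{cdat403-2} yields $T\varphi \in H^1(\Omega) \Leftrightarrow T^{-\Delta}\varphi \in H^1(\Omega)$, which is what we want.

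There is no genuine obstacle: the work has all been done in Corollary~\ref{cdat403}, whose proof in turn depends on Theorem~\ref{tdat121} (hence Corollary~\ref{cdat405}) for the nontrivial direction. The only point worth mentioning explicitly in the write-up is the verification that $-\Delta$ satisfies the blanket assumption that $0$ is not a Dirichlet eigenvalue, which is immediate since the Dirichlet Laplacian on a bounded open set is strictly positive.
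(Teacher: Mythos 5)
Your proof is correct and is essentially the paper's own argument: the paper's proof of Corollary~\ref{cdat404} is the one-liner that condition~\ref{cdat403-2} of Corollary~\ref{cdat403} is operator-independent, and you have simply spelled out that observation by applying Corollary~\ref{cdat403} once for $\ca$ and once for $-\Delta$ and chaining through the common extension condition. The added remark that $-\Delta$ satisfies the standing hypotheses is accurate and harmless, though the paper takes it for granted.
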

\begin{proof}
Condition~\ref{cdat403-2} in Corollary~\ref{cdat403} is independent of the operator.
\end{proof}

One might wonder whether the property $\ca u \in H^{-1}(\Omega)$ for a 
function $u \in C(\overline \Omega) \cap H^1_\loc(\Omega)$ depends on the operator.
We show in Example~\ref{xdat410} that this is the case, in general.
Nevertheless, one has the following characterisation for $\ca u \in H^{-1}(\Omega)$.

\begin{prop} \label{pdat411}
Let $u \in C(\overline \Omega) \cap H^1_\loc(\Omega)$ and $\varphi \in C(\Gamma)$.
Suppose that $\lim_{x \to z} u(x) = \varphi(z)$ for quasi every $z \in \Gamma$.
Then the following are equivalent.
\begin{tabeleq}
\item \label{pdat411-1}
$\ca u \in H^{-1}(\Omega)$.
\item \label{pdat411-2}
$u - T \varphi \in H^1_0(\Omega)$.
\item \label{pdat411-3}
For all $\Phi \in C(\overline \Omega) \cap H^1_\loc(\Omega)$ with 
$\Phi|_\Gamma = \varphi$ and $\ca \Phi \in H^{-1}(\Omega)$ 
one has $u - \Phi \in H^1_0(\Omega)$.
\end{tabeleq}
\end{prop}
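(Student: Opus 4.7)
The plan is to establish the cycle \ref{pdat411-1}$\Rightarrow$\ref{pdat411-2}$\Rightarrow$\ref{pdat411-3}$\Rightarrow$\ref{pdat411-1} by assembling tools already developed in Sections~\ref{Sdat3} and \ref{Sdat4}: Corollary~\ref{cdat347} (which characterises $T\varphi$ via any $u$ achieving $\varphi$ quasi-everywhere with $\ca u \in H^{-1}(\Omega)$), Corollary~\ref{cdat405} (which produces extensions $\Phi$ of $\varphi$ satisfying $\ca \Phi \in H^{-1}(\Omega)$), and Lemma~\ref{ldat260}\ref{ldat260-3} (which says $\ca$ maps $H^1(\Omega)$ into $H^{-1}(\Omega)$). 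No new analytic input is required; the result is essentially a bookkeeping consequence of those three statements.

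For \ref{pdat411-1}$\Rightarrow$\ref{pdat411-2}: since $\Omega$ is bounded and $u \in C(\overline\Omega)$, one automatically has $u \in C_b(\Omega) \cap H^1_\loc(\Omega)$ with $\lim_{x\to z} u(x) = \varphi(z)$ for quasi every $z \in \Gamma$. Together with the assumed $\ca u \in H^{-1}(\Omega)$ this matches the hypotheses of Corollary~\ref{cdat347}, and the $v \in H^1_0(\Omega)$ produced there satisfies $T\varphi = u - v$, yielding \ref{pdat411-2}.

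For \ref{pdat411-2}$\Rightarrow$\ref{pdat411-3}: given $\Phi$ as in \ref{pdat411-3}, I would apply Corollary~\ref{cdat347} once more, this time with $\Phi|_\Omega$ playing the role of $u$. The hypotheses are all met: $\Phi$ is bounded on $\Omega$ since $\Phi \in C(\overline\Omega)$, $\ca \Phi \in H^{-1}(\Omega)$ is assumed, and $\Phi|_\Gamma = \varphi$ with continuity up to the boundary forces $\lim_{x \to z}\Phi(x) = \varphi(z)$ for every $z \in \Gamma$. The corollary then gives $\Phi|_\Omega - T\varphi \in H^1_0(\Omega)$, and subtracting from \ref{pdat411-2} yields $u - \Phi = (u - T\varphi) - (\Phi|_\Omega - T\varphi) \in H^1_0(\Omega)$.

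For \ref{pdat411-3}$\Rightarrow$\ref{pdat411-1}: invoke Corollary~\ref{cdat405} to produce some $\Phi \in C(\overline\Omega) \cap H^1_\loc(\Omega)$ with $\Phi|_\Gamma = \varphi$ and $\ca \Phi \in H^{-1}(\Omega)$. By \ref{pdat411-3} applied to this $\Phi$, one has $u - \Phi \in H^1_0(\Omega) \subset H^1(\Omega)$, so Lemma~\ref{ldat260}\ref{ldat260-3} gives $\ca(u-\Phi) \in H^{-1}(\Omega)$. Adding, $\ca u = \ca(u-\Phi) + \ca \Phi \in H^{-1}(\Omega)$. The only genuinely substantial ingredient in the entire cycle is the existence guaranteed by Corollary~\ref{cdat405} (itself built on the delicate approximation Theorem~\ref{tdat401}); without that existence result, \ref{pdat411-3} would be a vacuous hypothesis and the loop could not be closed.
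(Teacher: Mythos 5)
Your proof is correct and relies on the same core ingredients as the paper's: the existence result Corollary~\ref{cdat405} (for the extension $\Phi$) and the $H^1_0$-criterion coming from Section~\ref{Sdat3} (you package it via Corollary~\ref{cdat347}, the paper via Corollary~\ref{cdat243.3}, but the former is a direct consequence of the latter). You close the cycle as \ref{pdat411-1}$\Rightarrow$\ref{pdat411-2}$\Rightarrow$\ref{pdat411-3}$\Rightarrow$\ref{pdat411-1} while the paper uses \ref{pdat411-1}$\Rightarrow$\ref{pdat411-3}$\Rightarrow$\ref{pdat411-2}$\Rightarrow$\ref{pdat411-1}; this is a cosmetic reordering, not a different method.
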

\begin{proof}
`\ref{pdat411-1}$\Rightarrow$\ref{pdat411-3}'.
Let $\Phi \in C(\overline \Omega) \cap H^1_\loc(\Omega)$ and suppose that  
$\Phi|_\Gamma = \varphi$ and $\ca \Phi \in H^{-1}(\Omega)$.
Then $\ca(u - \Phi) \in H^{-1}(\Omega)$ and 
$\lim_{x \to z} (u - \Phi)(x) = 0$ for quasi every $z \in \Gamma$.
Hence Corollary~\ref{cdat243.3} implies that $u - \Phi \in H^1_0(\Omega)$.

`\ref{pdat411-3}$\Rightarrow$\ref{pdat411-2}'.
By Theorem~\ref{tdat121} there exists a 
$\Phi \in C(\overline \Omega) \cap H^1_\loc(\Omega)$ such that 
$\Phi|_\Gamma = \varphi$ and $\ca \Phi \in H^{-1}(\Omega)$.
Note that $T \varphi$ satisfies the same assumptions as the function $u$ 
by Corollary~\ref{cdat244}.
Then the implication \ref{pdat411-1}$\Rightarrow$\ref{pdat411-3} with 
$u$ replaced by $T \varphi$ gives $T \varphi - \Phi \in H^1_0(\Omega)$.
Consequently $u - T \varphi = (u - \Phi) - (T \varphi - \Phi) \in H^1_0(\Omega)$.

`\ref{pdat411-2}$\Rightarrow$\ref{pdat411-1}'.
Since $u = (u - T \varphi) + T \varphi$ one deduces that
$\ca u \in H^{-1}(\Omega)$.
\end{proof}

We give an example of an open bounded set $\Omega \subset \Ri^2$,
an elliptic operator and a function $u \in C(\overline \Omega) \cap H^1_\loc(\Omega)$
such that $\Delta u \in H^{-1}(\Omega)$, but $\ca u \not\in H^{-1}(\Omega)$.

\begin{exam} \label{xdat410}
For all $n \in \Ni$ define $I_n = (2^{-n}, 2^{-n} + 2 \cdot 4^{-n})$.
Then the sets $\overline{I_n}$ are 
pairwise disjoint.
Set $\Omega_1 = \bigcup_{n=1}^\infty I_n$.
Define $w \colon \overline{\Omega_1} \to \Ci$ by
\[
w(x)
= \left\{ \begin{array}{ll}
   0 & \mbox{if } x = 0 ,  \\[5pt]
   \frac{1}{n} \, 4^n (x - 2^{-n}) & \mbox{if } n \in \Ni \mbox{ and } x \in \overline{I_n} .
          \end{array} \right.
\]
Then $w \in C(\overline{\Omega_1}) \cap H^1_\loc(\Omega_1)$ 
and $\Delta_1 w = 0$, where $\Delta_1$ is the Laplacian on $\Omega_1$.
Next for all $n \in \Ni$ let $v_n \colon I_n \to \Ci$ be the tent on $I_n$
with height~$4^{-n}$. 
So 
\[
v_n(x) = 4^{-n} - |x- (2^{-n} + 4^{-n})|
.  \]
Then $v_n \in H^1_0(I_n)$ and 
$\|v_n\|_{H^1_0(I_n)}^2 = \frac{2}{3} \cdot 4^{-3n} + 2 \cdot 4^{-n}$.
Hence 
\[
v = \sum_{n=1}^\infty v_n \in H^1_0(\Omega_1)
.  \]
Next define $\Omega = \Omega_1 \times \Omega_1 \subset \Ri^2$.
Then $\Omega$ is open and bounded.
Consider $u = w \otimes w \in C(\overline \Omega) \cap H^1_\loc(\Omega)$.
Then $\Delta u = 0 \in H^{-1}(\Omega)$, where $\Delta$ is the Laplacian on $\Omega$.
In order to define the coefficients of the operator $\ca$ it is 
convenient to split the interval $I_n$ in the left part
$I_n^l = (2^{-n}, 2^{-n} + 4^{-n}]$ and right part
$I_n^r = (2^{-n} + 4^{-n}, 2^{-n} + 2 \cdot 4^{-n})$ for all $n \in \Ni$.
Define $\alpha \colon \Omega_1 \to [1,2]$ by 
\[
\alpha(x)
= \left\{ \begin{array}{ll}
   2 & \mbox{if } n \in \Ni \mbox{ and } x \in I_n^l ,  \\[5pt]
   1 & \mbox{if } n \in \Ni \mbox{ and } x \in I_n^r .
          \end{array} \right.
\]
For all $k,l \in \{ 1,2 \} $ define $a_{kl} \colon \Omega \to \Ri$ by 
$a_{kl} = (\alpha \otimes \alpha) \, \delta_{kl}$.
Moreover, set $b_k = c_k = 0$ and $c_0 = 0$, and let $\ca$ be the 
corresponding elliptic operator.

Now suppose that $\ca u \in H^{-1}(\Omega)$.
Then there is an $M > 0$ such that 
\[
|\langle \ca u, \tau \rangle_{\cd'(\Omega) \times \cd(\Omega)}|
\leq M \, \|\tau\|_{H^1_0(\Omega)}
\]
for all $\tau \in C_c^\infty(\Omega)$.
Let $N \in \Ni$.
For all $n \in \{ 1,\ldots,N \} $ let $f_n \in C_c^\infty(I_n)$.
Then $f = \sum_{n=1}^N f_n \in C_c^\infty(\Omega_1)$ and 
\begin{eqnarray*}
2 \Big| \Big( \int_{\Omega_1} \alpha \, w' \, \overline{f'} \Big)
        \Big( \int_{\Omega_1} \alpha \, w \, \overline f \Big) \Big|
& = & \Big| \sum_{k,l=1}^2 \int_\Omega a_{kl} \, 
         (\partial_k (w \otimes w)) \, \overline{ \partial_l (f \otimes f) } \Big|  \\
& = & |\langle \ca u, f \otimes f \rangle_{\cd'(\Omega) \times \cd(\Omega)}|
\leq M \, \|f \otimes f\|_{H^1_0(\Omega)}
.
\end{eqnarray*}
Hence by density, approximating $v_n$ by elements in $C_c^\infty(I_n)$,
one deduces that 
\[
2 \Big| \Big( \int_{\Omega_1} \sum_{n=1}^N \alpha \, w' \, \overline{v_n'} \Big)
        \Big( \int_{\Omega_1} \sum_{n=1}^N \alpha \, w \, \overline{v_n} \Big) \Big|
\leq M \, \Big\| \Big( \sum_{n=1}^N v_n \Big) \otimes \Big( \sum_{n=1}^N v_n \Big) \Big\|_{H^1_0(\Omega)}
\leq M \, \|v \otimes v\|_{H^1_0(\Omega)}
.  \]
But 
\[
\int_{I_n} \alpha \, w' \, v_n'
= \int_{I_n^l} 2 \cdot \tfrac{1}{n} \, 4^n  \cdot 1
   + \int_{I_n^r} 1 \cdot \tfrac{1}{n} \, 4^n  \cdot (-1)
= \frac{1}{n}
\]
and $\int_{I_n} \alpha \, w \, v_n > 0$ for all $n \in \Ni$.
Therefore 
\[
2 \Big( \int_{I_1} \alpha \, w \, v_1 \Big) \sum_{n=1}^N \frac{1}{n}
\leq M \, \|v \otimes v\|_{H^1_0(\Omega)}
< \infty
\]
for all $N \in \Ni$.
This is a contradiction.
\end{exam}

\section{Dirichlet problem with the approximative trace} \label{Sdat5}

Adopt the notation and assumptions as in the first part of the introduction.
In addition we provide $\Gamma$ with the $(d-1)$-dimensional Hausdorff measure, 
denoted by $\sigma$.
Throughout this section we assume that $\sigma(\Gamma) < \infty$.
Instead of data in $C(\Gamma)$ we wish to consider data in $L_2(\Gamma)$.

We recall from the introduction the notion of a trace.
Given $u \in H^1(\Omega)$ and $\varphi \in L_2(\Gamma)$ we say that 
$\varphi$ is an {\bf approximative trace}, or simply $\varphi$ is a {\bf trace} 
of $u$, or $u$ has {\bf trace} $\varphi$, if there are 
$u_1,u_2,\ldots \in H^1(\Omega) \cap C(\overline \Omega)$ such that
$\lim u_n = u$ in $H^1(\Omega)$ and $\lim u_n|_\Gamma = \varphi$ in $L_2(\Gamma)$.
If $u \in H^1(\Omega)$, then the set 
\[
\{ \varphi \in L_2(\Gamma) : \varphi \mbox{ is a trace of } u \}
\]
of all traces of~$u$ may be empty, and also it may contain two and hence infinitely many elements.
If $\Omega$ has a Lipschitz boundary, then for each
$u \in H^1(\Omega)$ there is a unique $\varphi \in L_2(\Gamma)$ 
such that $\varphi$ is a trace of~$u$.
Of course, we are mainly interested in more general domains.
Set 
\[
\Tr(\Omega)
= \{ \varphi \in L_2(\Gamma) : \mbox{there exists a $u \in H^1(\Omega)$ such that 
$\varphi$ is a trace of $u$} \}
.  \]
Recall from Lemma~\ref{ldat260}\ref{ldat260-2} that 
\[
H^1(\Omega) = H^1_0(\Omega) \oplus \ch_\ca^1(\Omega)
.  \]
Hence for all $w \in H^1(\Omega)$ there are $v \in H^1_0(\Omega)$ and 
$u \in \ch_\ca^1(\Omega)$ such that $w = v + u$.
Consequently, for all $\varphi \in \Tr(\Omega)$ there exists a $u \in \ch_\ca^1(\Omega)$
such that $\varphi$ is a trace of $u$.
We call $u$ a {\bf variational solution of the Dirichlet problem $(D_\varphi)$}.
In general there are several such variational solutions.

We next consider uniqueness of the variational solution.
We introduce 
\[
V(\Omega) = \{ u \in H^1(\Omega) : 0 \mbox{ is a trace of } u \} 
,  \]
the space of all $H^1(\Omega)$ functions with vanishing approximative trace.
Clearly $H^1_0(\Omega) \subset V(\Omega)$.

\begin{remark} \label{rdat540}
By \cite{Sau1} Corollary~7.39 one has 
\[
V(\Omega)
\subset \{ u|_\Omega : u \in H^1(\Ri^d) \mbox{ and } u = 0 \mbox{ a.e.\ on } 
     \Ri^d \setminus \overline \Omega \} 
.  \]
Consequently $V(\Omega) = H^1_0(\Omega)$ if $\Omega$ is stable for the 
Dirichlet problem, see \cite{AD} Theorem~4.7.
In particular, this is the case if $\Omega$ has a continuous boundary 
in the sense of graphs, see \cite{AD} Proposition~2.2.
\end{remark}

The characterisation for the uniqueness of the variational solution
is as follows.

\begin{thm} \label{tdat202}
The following conditions are equivalent.
\begin{tabeleq}
\item \label{tdat202-1}
For all $\varphi \in \Tr(\Omega)$ there is a unique variational solution
of the Dirichlet problem $(D_\varphi)$.
\item \label{tdat202-2}
$V(\Omega) = H^1_0(\Omega)$.
\item \label{tdat202-3}
$V(\Omega) \cap \ch^1_\ca(\Omega) = \{ 0 \} $.
\end{tabeleq}
\end{thm}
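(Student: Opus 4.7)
The plan is to prove the cyclic chain \ref{tdat202-2}$\Rightarrow$\ref{tdat202-3}$\Rightarrow$\ref{tdat202-1}$\Rightarrow$\ref{tdat202-2}. Throughout, I will exploit two simple observations: first, the decomposition $H^1(\Omega) = H^1_0(\Omega) \oplus \ch^1_\ca(\Omega)$ from Lemma~\ref{ldat260}\ref{ldat260-2} is a direct sum, so $H^1_0(\Omega) \cap \ch^1_\ca(\Omega) = \{0\}$; second, the set $V(\Omega)$ is stable under subtraction since approximating sequences can simply be subtracted.

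For \ref{tdat202-2}$\Rightarrow$\ref{tdat202-3}, assuming $V(\Omega) = H^1_0(\Omega)$, the directness of the sum gives $V(\Omega) \cap \ch^1_\ca(\Omega) = H^1_0(\Omega) \cap \ch^1_\ca(\Omega) = \{0\}$, which is immediate.

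For \ref{tdat202-3}$\Rightarrow$\ref{tdat202-1}, let $\varphi \in \Tr(\Omega)$. Existence of at least one variational solution was observed right after the definition (decompose any $w \in H^1(\Omega)$ with trace $\varphi$ along $H^1_0(\Omega) \oplus \ch^1_\ca(\Omega)$ and use that $H^1_0(\Omega) \subset V(\Omega)$, so the $\ch^1_\ca(\Omega)$-component still has $\varphi$ as a trace). If $u_1, u_2 \in \ch^1_\ca(\Omega)$ both have $\varphi$ as a trace, with witnessing sequences $(u_n^{(1)}), (u_n^{(2)}) \subset H^1(\Omega) \cap C(\overline \Omega)$, then $u_n^{(1)} - u_n^{(2)} \to u_1 - u_2$ in $H^1(\Omega)$ and $(u_n^{(1)} - u_n^{(2)})|_\Gamma \to 0$ in $L_2(\Gamma)$. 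Hence $u_1 - u_2 \in V(\Omega) \cap \ch^1_\ca(\Omega) = \{0\}$, giving uniqueness.

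For \ref{tdat202-1}$\Rightarrow$\ref{tdat202-2}, the inclusion $H^1_0(\Omega) \subset V(\Omega)$ is automatic, so I only need the reverse. Let $u \in V(\Omega)$ and decompose $u = v + h$ with $v \in H^1_0(\Omega)$ and $h \in \ch^1_\ca(\Omega)$ via Lemma~\ref{ldat260}\ref{ldat260-2}. Pick $u_n \in H^1(\Omega) \cap C(\overline \Omega)$ witnessing that $0$ is a trace of~$u$, and pick $v_n \in C_c^\infty(\Omega) \subset H^1(\Omega) \cap C(\overline \Omega)$ with $v_n \to v$ in $H^1(\Omega)$; then $v_n|_\Gamma = 0$, so $u_n - v_n$ witnesses that $0$ is a trace of $h$. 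Thus $h$ is a variational solution of $(D_0)$. Since the zero function is trivially another variational solution of $(D_0)$, the uniqueness assumption \ref{tdat202-1} forces $h = 0$, whence $u = v \in H^1_0(\Omega)$.

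The argument is entirely formal once the direct sum decomposition and the closure of $V(\Omega)$ under subtraction are in hand, so there is no serious obstacle; the only point requiring care is to verify, when subtracting approximating sequences, that the $L_2(\Gamma)$-limits of the boundary values also subtract correctly, which is immediate from linearity of limits.
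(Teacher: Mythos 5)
Your proof is correct and follows essentially the same route as the paper: the same direct-sum decomposition from Lemma~\ref{ldat260}\ref{ldat260-2}, the same subtraction of approximating sequences to pass traces through the decomposition, and the same use of the zero function as the trivial solution of $(D_0)$. The only difference is cosmetic: you run the cycle \ref{tdat202-2}$\Rightarrow$\ref{tdat202-3}$\Rightarrow$\ref{tdat202-1}$\Rightarrow$\ref{tdat202-2} while the paper runs \ref{tdat202-1}$\Rightarrow$\ref{tdat202-2}$\Rightarrow$\ref{tdat202-3}$\Rightarrow$\ref{tdat202-1}.
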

\begin{proof}
`\ref{tdat202-1}$\Rightarrow$\ref{tdat202-2}'.
Suppose~\ref{tdat202-1}.
Let $w \in V(\Omega)$.
Using Lemma~\ref{ldat260}\ref{ldat260-2} we can write $w = v + u$
with $v \in H^1_0(\Omega)$ and $u \in \ch^1_\ca(\Omega)$.
Then $0$ is a trace of $u$ and $u$ is $\ca$-harmonic.
Clearly also $0 \in L_2(\Gamma)$ is a trace of $0 \in H^1(\Omega)$
and $0$ is $\ca$-harmonic.
Since the variational solution is unique, one deduces that $u = 0$.
Then $w = v \in H^1_0(\Omega)$.

`\ref{tdat202-2}$\Rightarrow$\ref{tdat202-3}'.
$V(\Omega) \cap \ch^1_\ca(\Omega) = H^1_0(\Omega) \cap \ch^1_\ca(\Omega) = \{ 0 \} $.

`\ref{tdat202-3}$\Rightarrow$\ref{tdat202-1}'.
Let $\varphi \in L_2(\Gamma)$ and $u_1,u_2 \in \ch^1_\ca(\Omega)$ 
be such that $\varphi$ is a trace of both $u_1$ and $u_2$.
Then $0$ is a trace of $u_1 - u_2$.
So $u_1 - u_2 \in V(\Omega)$ and $u_1 - u_2 \in \ch^1_\ca(\Omega)$.
So $u_1 - u_2 \in V(\Omega) \cap \ch^1_\ca(\Omega) = \{ 0 \} $ and $u_1 = u_2$.
\end{proof}

It follows from Daners \cite{Daners2} Proposition~3.3
that there exists a measurable set $\Gamma_s \subset \Gamma$ such that 
\[
\{ \varphi \in L_2(\Gamma) : \varphi \mbox{ is a trace of } 0 \}
= L_2(\Gamma_s)
.  \]
For an elementary proof, see \cite{AE2} Lemma~4.14.
We define $\Gamma_r = \Gamma \setminus \Gamma_s$.
We say that {\bf the trace on $\Omega$ is unique} if every $u \in H^1(\Omega)$ 
has at most one (approximative) trace.
So the trace on $\Omega$ is unique if and only if $\sigma(\Gamma_s) = 0$.
The trace is unique if $\Omega$ has a continuous boundary (in the sense of graphs).
If $d = 2$ then it suffices that $\Omega$ is connected.
See \cite{Sau2} for these two and other results on the trace.

We next give an example of a bounded connected open set $\Omega \subset \Ri^2$ that is 
Wiener regular, on which the trace is unique, but where uniqueness of
a variational solution fails. 
We even construct an ($\ca$-harmonic) function $u \in C(\overline \Omega) \cap \ch_\ca^1(\Omega) \cap V(\Omega)$
such that $u \not\in H^1_0(\Omega)$.
Hence for every $\varphi \in \Tr(\Omega)$ and $u_0 \in \ch_\ca^1(\Omega)$ 
such that $\varphi$ is a trace of $u_0$, we obtain that also $u_0 + c \, u \in \ch_\ca^1(\Omega)$
and $\varphi$ is a trace of $u_0 + c \, u$ for all $c \in \Ci$.
In particular, for each $\varphi \in \Tr(\Omega)$ 
there are infinitely many variational solutions
of the Dirichlet problem $(D_\varphi)$.

\begin{exam} \label{xdat203}
Let $\cc \subset \Ri$ be the usual middle third Cantor set and set $S = \cc \times \{ 0 \} $.
Define $\Omega = B(0,2) \setminus S$.
Clearly $S$ is compact, so $\Omega$ is open and bounded in $\Ri^2$.
It is not hard to see that $\Omega$ is also connected.
Obviously $\sigma(S) = 0$.

We recall that if $E \subset \Ri^d$ and $a \in \Ri^d$, then the set $E$ is 
called {\bf thin at $a$} if 
\[
\int_0^1 \frac{\capp( E \cap B(a,r) )}{r^{d-2}} \, \frac{dr}{r} 
< \infty
.  \]
If $U \subset \Ri^d$ is open, then a point $a \in \partial U$ is 
a regular point for $U$ if and only if $U^{\rm c}$ is not thin at $a$
by the theorem on page~143 in \cite{Wiener}.
It follows from \cite{Tsuji} Theorem~1 that every element of $S$ is a 
regular point for $\Omega$.
Hence every element of $\Gamma$ is a regular point for~$\Omega$ and $\Omega$ is 
Wiener regular.
In particular $\capp(B(a,r) \setminus \Omega) > 0$ for all $a \in S$ and $r \in (0,1)$.
Since $B(a,r) \setminus \Omega = B(a,r) \cap S$ one establishes that $\capp S > 0$.

Define $\varphi \in C(\Gamma)$ by
\[
\varphi(z) 
= \left\{ \begin{array}{ll}
    1 & \mbox{if } z \in S,  \\[5pt]
    0 & \mbox{if } z \in \partial B(0,2) .
          \end{array} \right.
\]
Since $\Omega$ is Wiener regular, there exists a unique classical solution
of the Dirichlet problem with boundary data $\varphi$.
Let $u \in C(\overline \Omega) \cap H^1_\loc(\Omega)$ be the unique $\ca$-harmonic function
such that $u|_\Gamma = \varphi$.
So $u|_\Omega = T \varphi$.
We next show that $u \in H^1(\Omega)$.
Let $w \in C_c^\infty(B(0,2))$ be such that $w|_{B(0,1)} = \one$.
Then $\varphi = w|_\Gamma$
and $T \varphi = T(w|_\Gamma) \in H^1(\Omega)$ by Theorem~\ref{tdat120}.
Therefore $u \in H^1(\Omega) \cap C(\overline \Omega)$, so by definition
$u|_\Gamma$ is a trace of~$u$.
Since $\sigma(S) = 0$ one deduces that $u|_\Gamma = \varphi = 0$ 
$\sigma$-almost everywhere.
So $0$ is a trace of $u$ and $u \in V(\Omega)$.
Because $\capp S > 0$, the function $u$ does not vanish quasi-everywhere
on $\Gamma$ and hence $u \not\in H^1_0(\Omega)$.
So in this example $V(\Omega) \neq H^1_0(\Omega)$.
Note that $u \in C(\overline \Omega) \cap \ch_\ca^1(\Omega) \cap V(\Omega)$, 
but $u \not\in H^1_0(\Omega)$ and $u \not\in C_0(\Omega)$.

Note that $\|T \varphi\|_{L_2(\Omega)} = \|u\|_{L_2(\Omega)} > 0$
and $\|\varphi\|_{L_2(\Gamma)} = 0$.
Hence there does not exist a $c > 0$ such that 
$\|T \psi\|_{L_2(\Omega)} \leq c \, \|\psi\|_{L_2(\Gamma)}$ for all 
$\psi \in C(\Gamma)$.
In particular, the map $T \colon C(\Gamma) \to C_b(\Omega)$
does not extend to a bounded linear map from $L_2(\Gamma)$ into $L_2(\Omega)$.
\end{exam}

The example shows that the classical solution (and in particular 
the Perron solution) is a much finer notion than the 
variational solution for a given continuous approximative trace.
In the example one has $\varphi \in C(\Gamma) \cap \Tr(\Omega) \setminus \{ 0 \}$ and 
the Dirichlet problem $(D_\varphi)$ has a unique classical solution $u \in C(\overline \Omega)$.
But since $\varphi = 0$ $\sigma$-a.e.\ on $\Gamma$, the zero-function
is another variational solution.
The problem does not occur if $V(\Omega) = H^1_0(\Omega)$.

Suppose $V(\Omega) = H^1_0(\Omega)$, that is for all $\varphi \in \Tr(\Omega)$ the variational solution
of the Dirichlet problem $(D_\varphi)$ is unique.
Define the map $\gamma \colon \Tr(\Omega) \to \ch_\ca^1(\Omega)$ by 
\[
\gamma(\varphi) = u
,  \]
where $u \in \ch_\ca^1(\Omega)$ is the unique element
with trace~$\varphi$.

We next wish to prove that $\gamma$ maps bounded functions in $\Tr(\Omega)$
into bounded functions.
We need a lemma.

\begin{lemma} \label{ldat215}
Let $\varphi \in \Tr(\Omega) \cap L_\infty(\Gamma)$.
Then there exist $u_1,u_2,\ldots \in H^1(\Omega) \cap C(\overline \Omega)$
and $u \in H^1(\Omega)$ such that $\lim u_n = u$ in $H^1(\Omega)$,
$\lim u_n|_\Gamma = \varphi$ in $L_2(\Gamma)$ and 
$\|u_n\|_{L_\infty(\Omega)} \leq 2 \|\varphi\|_{L_\infty(\Gamma)}$
for all $n \in \Ni$.
\end{lemma}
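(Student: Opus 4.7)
The plan is to truncate the sequence coming from the definition of $\Tr(\Omega)$. Let $M = \|\varphi\|_{L_\infty(\Gamma)}$. By hypothesis there exist $w \in H^1(\Omega)$ and $w_n \in H^1(\Omega) \cap C(\overline \Omega)$ with $w_n \to w$ in $H^1(\Omega)$ and $w_n|_\Gamma \to \varphi$ in $L_2(\Gamma)$; the trace $\varphi$ is bounded but the $w_n$ need not be. I will cut down the real and imaginary parts of each $w_n$ at level $M$.

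Define $T_M \colon \Ri \to \Ri$ by $T_M(t) = (-M) \vee t \wedge M$ and set
\[
u_n = T_M(\RRe w_n) + i \, T_M(\IIm w_n), \qquad u = T_M(\RRe w) + i \, T_M(\IIm w)
.  \]
Since $T_M$ is $1$-Lipschitz and preserves continuity, one has $u_n \in H^1(\Omega) \cap C(\overline \Omega)$, and the pointwise estimate $|u_n|^2 \leq 2 M^2$ gives $\|u_n\|_{L_\infty(\Omega)} \leq \sqrt{2}\, M \leq 2 M$. The trace convergence is immediate: because $|\RRe \varphi|, |\IIm \varphi| \leq M$ almost everywhere on $\Gamma$, $T_M$ fixes both components of $\varphi$, so the $1$-Lipschitz property yields
\[
\|u_n|_\Gamma - \varphi\|_{L_2(\Gamma)}
\leq \sqrt{2} \, \|w_n|_\Gamma - \varphi\|_{L_2(\Gamma)}
\to 0
.  \]

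The hard part will be showing $u_n \to u$ in $H^1(\Omega)$, which reduces, component by component, to continuity of the composition operator $v \mapsto T_M v$ on the real Sobolev space $H^1(\Omega,\Ri)$. Using the chain-rule identity $\nabla T_M v = \one_{[|v| < M]} \nabla v$, I will split
\[
\nabla T_M v_n - \nabla T_M v
= \one_{[|v_n| < M]} (\nabla v_n - \nabla v)
   + \Big( \one_{[|v_n| < M]} - \one_{[|v| < M]} \Big) \nabla v
.  \]
The first summand is bounded in $L_2(\Omega)$ by $\|v_n - v\|_{H^1(\Omega)}$ and hence tends to $0$. For the second, I will pass to a subsequence along which $v_n \to v$ pointwise almost everywhere; the indicators then converge almost everywhere off the level set $\{|v| = M\}$, while on the level set $\nabla v = 0$ almost everywhere by the standard Stampacchia fact for Sobolev functions. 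Dominated convergence eliminates the second summand, and the usual Urysohn-type subsequence argument upgrades subsequential convergence to convergence of the whole sequence. The substantive obstacle is precisely this classical continuity of truncation on $H^1$; granted that, all three required properties of $(u_n)$ are immediate from the construction.
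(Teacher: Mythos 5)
Your proof is correct and follows essentially the same route as the paper's: truncate the real and imaginary parts of the approximating sequence at level $M=\|\varphi\|_{L_\infty(\Gamma)}$, yielding the bound $\sqrt{2}\,M\le 2M$. The paper simply asserts the $H^1$-continuity of the truncation operator, whereas you fill in the standard Stampacchia chain-rule/dominated-convergence argument for it.
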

\begin{proof}
First suppose that $\varphi$ is real valued.
There are $u_1,u_2,\ldots \in H^1(\Omega) \cap C(\overline \Omega)$
and $u \in H^1(\Omega)$ such that $\lim u_n = u$ in $H^1(\Omega)$ and 
$\lim u_n|_\Gamma = \varphi$ in $L_2(\Gamma)$.
Taking the real part we may assume that $u_n$ and $u$ are
real valued for all $n \in \Ni$.
Define $v_n = (-\|\varphi\|_{L_\infty(\Gamma)} ) \vee u_n \wedge \|\varphi\|_{L_\infty(\Gamma)}$
for all $n \in \Ni$.
Then $v_n \in H^1(\Omega) \cap C(\overline \Omega)$ for all $n \in \Ni$.
Moreover, 
$\lim v_n = (-\|\varphi\|_{L_\infty(\Gamma)} ) \vee u \wedge \|\varphi\|_{L_\infty(\Gamma)}
\in H^1(\Omega)$ in $H^1(\Omega)$ 
and $\lim v_n|_\Gamma = \varphi$ in $L_2(\Gamma)$.
Clearly $\|v_n\|_{L_\infty(\Omega)} \leq \|\varphi\|_{L_\infty(\Gamma)}$
for all $n \in \Ni$.

If $\varphi$ is general, then write $\varphi = \RRe \varphi + i \IIm \varphi$
and apply the above to $\RRe \varphi$ and $\IIm \varphi$.
\end{proof}

\begin{prop} \label{pdat216}
Suppose that $V(\Omega) = H^1_0(\Omega)$.
Let $\varphi \in \Tr(\Omega) \cap L_\infty(\Gamma)$.
Then $\gamma(\varphi) \in L_\infty(\Omega)$ and 
$\|\gamma(\varphi)\|_{L_\infty(\Omega)} \leq 2 c \, \|\varphi\|_{L_\infty(\Gamma)}$,
where $c = \|T\|_{C(\Gamma) \to C_b(\Omega)}$.
\end{prop}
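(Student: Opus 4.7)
The plan is to find a bounded approximating sequence inside $\ch_\ca^1(\Omega)$ by means of the operator $T$, and then pass to the limit. First I would invoke Lemma~\ref{ldat215} to obtain $u_n \in H^1(\Omega) \cap C(\overline{\Omega})$ and $u \in H^1(\Omega)$ such that $u_n \to u$ in $H^1(\Omega)$, $u_n|_\Gamma \to \varphi$ in $L_2(\Gamma)$, and $\|u_n\|_{L_\infty(\Omega)} \leq 2\|\varphi\|_{L_\infty(\Gamma)}$ for all $n \in \Ni$. The key observation is that Theorem~\ref{tdat120} gives $u_n|_\Omega - T(u_n|_\Gamma) \in H^1_0(\Omega)$, so if we decompose $u_n = v_n + w_n$ according to Lemma~\ref{ldat260}\ref{ldat260-2} with $v_n \in H^1_0(\Omega)$ and $w_n \in \ch_\ca^1(\Omega)$, then $w_n = T(u_n|_\Gamma)$. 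In particular
\[
\|w_n\|_{L_\infty(\Omega)}
= \|T(u_n|_\Gamma)\|_{C_b(\Omega)}
\leq c \, \|u_n|_\Gamma\|_{C(\Gamma)}
\leq c \, \|u_n\|_{L_\infty(\Omega)}
\leq 2 c \, \|\varphi\|_{L_\infty(\Gamma)}
.\]

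Next, since the projection $H^1(\Omega) \to \ch_\ca^1(\Omega)$ along $H^1_0(\Omega)$ is continuous (the decomposition in Lemma~\ref{ldat260}\ref{ldat260-2} being a topological direct sum), the decomposition $u = v + w$ of the $H^1$-limit satisfies $w_n \to w$ in $H^1(\Omega)$. I would then verify that $\varphi$ is an approximative trace of $w$: choosing $\tilde v_n \in C_c^\infty(\Omega)$ with $\tilde v_n \to v$ in $H^1(\Omega)$, the sequence $u_n - \tilde v_n$ lies in $H^1(\Omega) \cap C(\overline{\Omega})$, converges to $u - v = w$ in $H^1(\Omega)$, and $(u_n - \tilde v_n)|_\Gamma = u_n|_\Gamma \to \varphi$ in $L_2(\Gamma)$. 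Combined with the uniqueness granted by Theorem~\ref{tdat202} (which applies since $V(\Omega) = H^1_0(\Omega)$), this identifies $w = \gamma(\varphi)$.

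To conclude, since $w_n \to \gamma(\varphi)$ in $H^1(\Omega)$, a subsequence converges almost everywhere on $\Omega$, and the uniform $L_\infty$-bound on $(w_n)$ transfers to the pointwise limit to give $|\gamma(\varphi)(x)| \leq 2 c \, \|\varphi\|_{L_\infty(\Gamma)}$ for almost every $x \in \Omega$. There is no real obstacle here; the substantive point is the identification $w_n = T(u_n|_\Gamma)$, which is precisely what allows the $L_\infty$-estimate for Perron solutions from Theorem~\ref{tdat120} to be imported into the variational setting.
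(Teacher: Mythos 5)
Your proposal is correct and follows essentially the same route as the paper: truncate the approximating sequence via Lemma~\ref{ldat215}, decompose along $H^1(\Omega) = H^1_0(\Omega) \oplus \ch_\ca^1(\Omega)$, identify the harmonic part with $T$ applied to the boundary restriction to import the $L_\infty$-bound from Theorem~\ref{tdat120}, and pass to an almost-everywhere convergent subsequence. The only cosmetic difference is that the paper invokes the closed graph theorem to get continuity of the projection onto $\ch_\ca^1(\Omega)$ and summarily asserts $\gamma(\varphi)=u$, whereas you spell out the trace identification by subtracting $C_c^\infty$ approximants of the $H^1_0$-component; the underlying content is identical.
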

\begin{proof}
By Lemma~\ref{ldat215} there are $w_1,w_2,\ldots \in H^1(\Omega) \cap C(\overline \Omega)$
and $w \in H^1(\Omega)$ such that $\lim w_n = w$ in $H^1(\Omega)$,
$\lim w_n|_\Gamma = \varphi$ in $L_2(\Gamma)$ and 
$\|w_n\|_{L_\infty(\Omega)} \leq 2 \|\varphi\|_{L_\infty(\Gamma)}$
for all $n \in \Ni$.
For all $n \in \Ni$ write $w_n = v_n + u_n$ and $w = v + u$,
where $v_n,v \in H^1_0(\Omega)$ and $u_n,u \in \ch_\ca^1(\Omega)$.
Since $H^1_0(\Omega)$ and $\ch_\ca^1(\Omega)$ are closed in $H^1(\Omega)$,
it follows from the closed graph theorem that $\lim u_n = u$ in $H^1(\Omega)$.

We know that $\gamma(\varphi) = u$.
Then $u_n = T(w_n|_\Gamma)$ for all $w \in \Ni$ by Theorem~\ref{tdat120}.
So 
\[
\|u_n\|_{L_\infty(\Omega)} 
\leq c \, \|w_n|_\Gamma\|_{L_\infty(\Gamma)}
\leq 2 c \, \|\varphi\|_{L_\infty(\Gamma)}
\]
for all $n \in \Ni$.
Now $\lim u_n = u$ in $H^1(\Omega)$
and hence also in $L_2(\Omega)$.
Passing to a subsequence if necessary we may assume that 
$\lim u_n = u$ almost everywhere.
Hence 
$\|\gamma(\varphi)\|_{L_\infty(\Omega)} 
= \|u\|_{L_\infty(\Omega)} 
\leq 2 c \, \|\varphi\|_{L_\infty(\Gamma)}$ as required.
\end{proof}

\begin{thm} \label{tdat910}
Suppose that $V(\Omega) = H^1_0(\Omega)$.
Then $T \varphi = \gamma(\varphi)$ for all $\varphi \in C(\Gamma) \cap \Tr(\Omega)$.
\end{thm}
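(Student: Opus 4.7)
Let $u := \gamma(\varphi)$. Under $V(\Omega) = H^1_0(\Omega)$ the variational solution is unique, so $u \in \ch_\ca^1(\Omega)$ is well defined; since $\varphi \in C(\Gamma) \subset L_\infty(\Gamma)$, Proposition~\ref{pdat216} gives $u \in L_\infty(\Omega)$, and the Nash--De Giorgi theory places $u$ in $C_b(\Omega) \cap H^1_\loc(\Omega)$ with $\ca u = 0$. Both $u$ and $T\varphi$ are therefore $\ca$-harmonic, bounded and continuous on $\Omega$. The plan is to show $u - T\varphi \in H^1_0(\Omega)$; combined with $\ca(u - T\varphi) = 0$ and $0 \notin \sigma(A^D)$, this forces $u = T\varphi$.

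I would apply Lemma~\ref{ldat215} to $u$ and its approximative trace $\varphi$ to obtain $w_n \in C(\overline \Omega) \cap H^1(\Omega)$ with $w_n \to u$ in $H^1(\Omega)$, $w_n|_\Gamma \to \varphi$ in $L_2(\Gamma)$, and $\|w_n\|_\infty \leq 2\|\varphi\|_\infty$. Set $u_n := T(w_n|_\Gamma)$. By Theorem~\ref{tdat120}, $w_n - u_n \in H^1_0(\Omega)$, so $u_n$ is the $\ch_\ca^1(\Omega)$-component of $w_n$ in the splitting from Lemma~\ref{ldat260}\ref{ldat260-2}; as this projection is continuous (closed graph theorem applied to two closed complementary subspaces of $H^1(\Omega)$), $u_n \to u$ in $H^1(\Omega)$. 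Theorem~\ref{tdat235} provides the uniform bound $\|u_n\|_\infty \leq 2c\|\varphi\|_\infty$.

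Linearity of $T$ gives $u_n - T\varphi = T(w_n|_\Gamma - \varphi)$, which is $\ca$-harmonic with $\|u_n - T\varphi\|_\infty \leq 3c\|\varphi\|_\infty$. Local Nash--De Giorgi equicontinuity together with the $L_2(\Omega)$-convergence $u_n \to u$ implies that along a subsequence $u_n - T\varphi \to u - T\varphi$ locally uniformly on $\Omega$, so $u - T\varphi \in C_b(\Omega) \cap H^1_\loc(\Omega)$ and $\ca(u - T\varphi) = 0 \in H^{-1}(\Omega)$. By Corollary~\ref{cdat243.3} applied with $\chi = \one_\Omega$ (and to $\pm(u - T\varphi)$ and $\pm i(u - T\varphi)$), it suffices to establish that $\lim_{x \to z}(u - T\varphi)(x) = 0$ for quasi every $z \in \Gamma$.

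The hard part will be this quasi-everywhere boundary convergence. For each $n$ Corollary~\ref{cdat244} gives $\lim_{x \to z} u_n(x) = w_n(z)$ q.e.\ $z \in \Gamma$ and $\lim_{x \to z} T\varphi(x) = \varphi(z)$ q.e.; extracting a further subsequence, $w_n \to \varphi$ $\sigma$-a.e.\ on $\Gamma$, which is compatible with quasi-everywhere statements since polar subsets of $\Gamma$ have $\sigma$-measure zero. The obstacle is to interchange the limits $n \to \infty$ and $x \to z$, which a priori depend on one another. My plan is to apply Theorem~\ref{tdat401} to each $u_n - T\varphi$ to produce $\Phi_n \in C(\overline \Omega) \cap H^1_\loc(\Omega)$ with $\Phi_n|_\Gamma = w_n|_\Gamma - \varphi$, $(u_n - T\varphi) - \Phi_n \in H^1_0(\Omega)$, and $\|(u_n - T\varphi) - \Phi_n\|_{H^1(\Omega)} < 1/n$; a diagonal extraction should then exhibit $u - T\varphi$ as a limit of $H^1(\Omega)$-functions whose approximative traces on $\Gamma$ vanish in $L_2(\Gamma)$, so that the hypothesis $V(\Omega) = H^1_0(\Omega)$ forces $u - T\varphi \in H^1_0(\Omega)$ and completes the proof.
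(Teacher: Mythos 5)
Your setup is sound through the reduction: by Proposition~\ref{pdat216} and Nash--De Giorgi regularity, $u := \gamma(\varphi)$ lies in $C_b(\Omega) \cap H^1(\Omega)$ and is $\ca$-harmonic, so $u - T\varphi$ is a bounded $\ca$-harmonic element of $C_b(\Omega) \cap H^1_\loc(\Omega)$; once one shows $u - T\varphi \in H^1_0(\Omega)$, the injectivity of $A^D$ finishes. The gap is in the final paragraph. Applying Theorem~\ref{tdat401} to $u_n - T\varphi$ only produces $\Phi_n \in C(\overline\Omega) \cap H^1_\loc(\Omega)$ with $(u_n - T\varphi) - \Phi_n \in H^1_0(\Omega)$, and $\Phi_n$ lies in $H^1(\Omega)$ if and only if $u_n - T\varphi$ does, which in turn holds if and only if $T\varphi \in H^1(\Omega)$ --- and that is precisely part of what is being proved, since $\gamma(\varphi) \in H^1(\Omega)$ by construction. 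So the proposed ``diagonal extraction'' cannot deliver a sequence in $C(\overline\Omega) \cap H^1(\Omega)$ converging in $H^1(\Omega)$ to $u - T\varphi$ without presupposing the conclusion; you cannot invoke $V(\Omega) = H^1_0(\Omega)$ without first knowing $u - T\varphi \in H^1(\Omega)$. At a deeper level, the route is the wrong one: the hypothesis on $u = \gamma(\varphi)$ is an approximative trace (an $L_2(\Gamma)$ limit of restrictions of $H^1$-approximating sequences), not a quasi-everywhere pointwise boundary limit, and in general neither implies the other; Corollary~\ref{cdat243.3} and Theorem~\ref{tdat401} operate on pointwise boundary limits and cannot be brought to bear on $u$ here.

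The paper sidesteps this entirely. For $\Phi \in C(\overline\Omega) \cap H^1(\Omega)$ one has $T(\Phi|_\Gamma) = \gamma(\Phi|_\Gamma)$ directly, since both equal the $\ch_\ca^1(\Omega)$-component of $\Phi$ in the splitting $H^1(\Omega) = H^1_0(\Omega) \oplus \ch_\ca^1(\Omega)$ (for $T$ by Theorem~\ref{tdat120}, for $\gamma$ by uniqueness of the variational solution under $V(\Omega) = H^1_0(\Omega)$). By Stone--Weierstra\ss\ there are $\Phi_n \in C(\overline\Omega) \cap H^1(\Omega)$ with $\Phi_n|_\Gamma \to \varphi$ in $C(\Gamma)$. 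Then $T(\Phi_n|_\Gamma) \to T\varphi$ in $C_b(\Omega)$ by Theorem~\ref{tdat235}, while $\gamma(\Phi_n|_\Gamma) \to \gamma(\varphi)$ in $L_\infty(\Omega)$ by linearity of $\gamma$ and the $L_\infty$-bound of Proposition~\ref{pdat216} applied to $\varphi - \Phi_n|_\Gamma \in \Tr(\Omega) \cap L_\infty(\Gamma)$. Both limits dominate a.e.\ convergence, so $T\varphi = \gamma(\varphi)$ a.e. This argument never looks at boundary behaviour of $u - T\varphi$, and in particular never needs to know in advance that $T\varphi \in H^1(\Omega)$; the key lever is the $L_\infty$-continuity of $\gamma$, which your proposal sets up (via Lemma~\ref{ldat215}) but then does not exploit.
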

\begin{proof}
Clearly $T (\Phi|_\Gamma) = \gamma(\Phi|_\Gamma)$ for all $\Phi \in H^1(\Omega) \cap C(\overline \Omega)$
by Theorem~\ref{tdat120}.

Let $\varphi \in C(\Gamma) \cap \Tr(\Omega)$.
By Stone--Weierstra\ss\ there exist $\Phi_1,\Phi_2,\ldots \in H^1(\Omega) \cap C(\overline \Omega)$ such that 
$\lim \Phi_n|_\Gamma = \varphi$ in $C(\Gamma)$.
Then $T \varphi = \lim T (\Phi_n|_\Gamma)$ in $C_b(\Omega)$ and in particular pointwise 
on $\Omega$.
If $n \in \Ni$, then 
\[
\|\gamma(\varphi) - \gamma(\Phi_n|_\Gamma)\|_{L_\infty(\Omega)} 
\leq 2 c \, \|\varphi - \Phi_n|_\Gamma\|_{C(\Gamma)}
\]
by Proposition~\ref{pdat216}, where $c = \|T\|_{C(\Gamma) \to C_b(\Omega)}$.
Hence $\lim \gamma(\Phi_n|_\Gamma) = \gamma(\varphi)$ almost everywhere
on $\Omega$.
So $\lim T (\Phi_n|_\Gamma) = \gamma(\varphi)$ almost everywhere
on $\Omega$.
Therefore $T \varphi = \gamma(\varphi)$ almost everywhere.
\end{proof}

One would expect that if $\varphi \in C(\Gamma)$ and 
$T \varphi \in H^1(\Omega)$,
then $\varphi$ is a trace of $T \varphi$.
The proof relies on Theorem~\ref{tdat401}, whose proof
was quite involved and depended on capacity methods.

\begin{thm} \label{tdat206}
Let $\varphi \in C(\Gamma)$.
Suppose that $T \varphi \in H^1(\Omega)$.
Then $\varphi$ is a trace of $T \varphi$.
\end{thm}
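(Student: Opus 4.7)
The plan is to apply Theorem~\ref{tdat401} directly, since all the hard approximation work has already been done there.

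First I would verify that $u = T\varphi$ satisfies the hypotheses of Theorem~\ref{tdat401}: by Theorem~\ref{tdat120} we have $T\varphi \in C_b(\Omega) \cap H^1_\loc(\Omega)$, clearly $\ca(T\varphi) = 0 \in H^{-1}(\Omega)$ since $T\varphi$ is $\ca$-harmonic, and the pointwise boundary behaviour $\lim_{x \to z} (T\varphi)(x) = \varphi(z)$ for quasi every $z \in \Gamma$ is the implication \ref{cdat244-1}$\Rightarrow$\ref{cdat244-2} of Corollary~\ref{cdat244}.

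Next, for each $n \in \Ni$, Theorem~\ref{tdat401} (applied with $\varepsilon = 1/n$) yields a function $\Phi_n \in C(\overline{\Omega}) \cap H^1_\loc(\Omega)$ satisfying $\Phi_n|_\Gamma = \varphi$, $T\varphi - \Phi_n \in H^1_0(\Omega)$, and $\|T\varphi - \Phi_n\|_{H^1(\Omega)} < 1/n$. Here the hypothesis $T\varphi \in H^1(\Omega)$ enters crucially: combined with $T\varphi - \Phi_n \in H^1_0(\Omega) \subset H^1(\Omega)$ it gives $\Phi_n = T\varphi - (T\varphi - \Phi_n) \in H^1(\Omega)$. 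Hence $\Phi_n \in H^1(\Omega) \cap C(\overline{\Omega})$, which is exactly the regularity class required in the definition of an approximative trace.

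Finally I would conclude by checking the two convergences in the definition of trace. By the $H^1$-norm estimate we have $\lim_{n \to \infty} \Phi_n = T\varphi$ in $H^1(\Omega)$. Since $\Phi_n|_\Gamma = \varphi$ identically on $\Gamma$ for every $n$, and $\varphi \in C(\Gamma) \subset L_2(\Gamma)$ (using $\sigma(\Gamma) < \infty$), the convergence $\Phi_n|_\Gamma \to \varphi$ in $L_2(\Gamma)$ is trivial. By definition this shows that $\varphi$ is an approximative trace of $T\varphi$, completing the proof. There is no real obstacle here; the entire content has been absorbed into Theorem~\ref{tdat401}, and the only thing this theorem contributes is the observation that under the extra assumption $T\varphi \in H^1(\Omega)$ the approximants produced there automatically land in $H^1(\Omega) \cap C(\overline{\Omega})$, so the trace condition can be read off at once.
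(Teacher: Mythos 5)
Your proof is correct and follows exactly the paper's argument: verify the hypotheses of Theorem~\ref{tdat401} for $u = T\varphi$ via Corollary~\ref{cdat244}, extract a sequence of approximants $\Phi_n$, and use the hypothesis $T\varphi \in H^1(\Omega)$ to upgrade the approximants from $H^1_\loc$ to $H^1$ so the trace definition applies. No differences of substance.
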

\begin{proof}
It follows from Corollary~\ref{cdat244} and the choice $u = T \varphi$ in Theorem~\ref{tdat401} 
that for all $n \in \Ni$ there exists a $\Phi_n \in C(\overline \Omega) \cap H^1_\loc(\Omega)$
such that $\Phi_n|_\Gamma = \varphi$, $T \varphi - \Phi_n \in H^1_0(\Omega)$
and in addition $\|T \varphi - \Phi_n\|_{H^1_0(\Omega)} \leq \frac{1}{n}$ for all $n \in \Ni$.
Since $T \varphi \in H^1(\Omega)$ also $\Phi_n \in H^1(\Omega)$ 
for all $n \in \Ni$.
So $\Phi_n \in C(\overline \Omega) \cap H^1(\Omega)$ for all $n \in \Ni$.
Hence $\lim \Phi_n = T \varphi$ in $H^1(\Omega)$ and 
obviously $\lim \Phi_n|_\Gamma = \varphi$ in $L_2(\Gamma)$.
Therefore $\varphi$ is a trace of $T \varphi$.
\end{proof}

Theorems~\ref{tdat910} and  \ref{tdat206}  allow us to give another characterisation
of those $\varphi \in C(\Gamma)$ for which the Perron solution $T \varphi$ is 
an element of $H^1(\Omega)$.
See also Corollary~\ref{cdat403}.

\begin{thm} \label{tdat222}
Suppose that $V(\Omega) = H^1_0(\Omega)$.
Let $\varphi \in C(\Gamma)$.
Then the following are equivalent.
\begin{tabeleq}
\item \label{tdat222-1}
$T \varphi \in H^1(\Omega)$.
\item \label{tdat222-2}
$\varphi \in \Tr(\Omega)$.
\end{tabeleq}
\end{thm}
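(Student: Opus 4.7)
The plan is to close the equivalence using the two theorems just established in this section, \ref{tdat910} and \ref{tdat206}, together with the definition of $\Tr(\Omega)$ and of the variational solution map $\gamma$.

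For \ref{tdat222-1}$\Rightarrow$\ref{tdat222-2}, I would invoke Theorem~\ref{tdat206} directly: if $T\varphi \in H^1(\Omega)$, then $\varphi$ is a trace of $T\varphi$, so by the very definition of $\Tr(\Omega)$ we have $\varphi \in \Tr(\Omega)$. This is a one-line consequence and needs no additional work.

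For the converse \ref{tdat222-2}$\Rightarrow$\ref{tdat222-1}, I would combine the uniqueness hypothesis $V(\Omega) = H^1_0(\Omega)$ with Theorem~\ref{tdat910}. Since $\varphi \in \Tr(\Omega)$, there exists $w \in H^1(\Omega)$ with $\varphi$ as a trace; using the decomposition $H^1(\Omega) = H^1_0(\Omega) \oplus \ch_\ca^1(\Omega)$ of Lemma~\ref{ldat260}\ref{ldat260-2} and the fact that every element of $H^1_0(\Omega)$ has trace $0$ (being in $V(\Omega)$), the $\ca$-harmonic component $\gamma(\varphi) \in \ch_\ca^1(\Omega) \subset H^1(\Omega)$ is well-defined and has $\varphi$ as a trace. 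Since $\varphi \in C(\Gamma) \cap \Tr(\Omega)$, Theorem~\ref{tdat910} gives $T\varphi = \gamma(\varphi)$, and thus $T\varphi \in H^1(\Omega)$.

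There is no real obstacle here because all the heavy lifting has already been done: the uniqueness machinery in Theorem~\ref{tdat202}, the $L_\infty$-bound of Proposition~\ref{pdat216} feeding Theorem~\ref{tdat910}, and the delicate approximation argument of Theorem~\ref{tdat401} underlying Theorem~\ref{tdat206}. The proof of Theorem~\ref{tdat222} is essentially a bookkeeping step that records the duality between the two directions: Perron-solutions-with-finite-energy and traces of $H^1$-functions.
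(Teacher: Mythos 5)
Your proposal is correct and follows exactly the same route as the paper: Theorem~\ref{tdat206} for \ref{tdat222-1}$\Rightarrow$\ref{tdat222-2}, and Theorem~\ref{tdat910} (noting $\gamma(\varphi) \in \ch^1_\ca(\Omega) \subset H^1(\Omega)$) for the converse. The extra elaboration you give on why $\gamma(\varphi)$ is well-defined is sound but already folded into Theorem~\ref{tdat910}'s hypothesis $V(\Omega) = H^1_0(\Omega)$, so the paper states it more tersely.
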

\begin{proof}
`\ref{tdat222-1}$\Rightarrow$\ref{tdat222-2}'.
It follows from Theorem~\ref{tdat206} that $\varphi$ is a trace of $T \varphi$.
Hence $\varphi \in \Tr(\Omega)$.

`\ref{tdat222-2}$\Rightarrow$\ref{tdat222-1}'.
By assumption $\varphi \in C(\Gamma) \cap \Tr(\Omega)$.
Hence $T \varphi = \gamma(\varphi) \in H^1(\Omega)$ by Theorem~\ref{tdat910}.
\end{proof}

The next example shows that the condition $V(\Omega) = H^1_0(\Omega)$ in Theorem~\ref{tdat222}
cannot be omitted.

\begin{exam} \label{xdat222.5}
Let $\Omega$ be the Wiener regular bounded open set as in Example~\ref{xdat203}.
We shall show that there exists a $\varphi \in C(\Gamma)$ 
such that $\varphi|_{\partial B(0,2)} = 0$ and $T \varphi \not\in H^1(\Omega)$.
Hence $\varphi = 0$ almost everywhere on $\Gamma$, 
so $\varphi \in C(\Gamma) \cap \Tr(\Omega)$.

Set $d_0 = \frac{\log2}{\log 3}$.
Recall that $S = \cc \times \{ 0 \} $.
Then $S$ is a $d_0$-set in the sense of Jonsson--Wallin by 
\cite{JW} Example~II.2.
Fix $s \in (0,\frac{d_0}{2})$.
Since $s < 1 \wedge d_0$ it follows from 
\cite{CarvalhoCaetano} Theorem~3.2 that there exists a Weierstra\ss-type function $\psi \in C(S)$ such that 
$\dim_H G(\psi) = d_0 + 1 - s$,
where $\dim_H$ denotes the Hausdorff dimension and $G(\psi)$ is the graph of $\psi$.
Define $\varphi \in C(\Gamma)$ by 
\[
\varphi(z) 
= \left\{ \begin{array}{ll}
    \psi(z) & \mbox{if } z \in S,  \\[5pt]
    0 & \mbox{if } z \in \partial B(0,2) .
          \end{array} \right.
\]
Since $\Omega$ is Wiener regular, it follows from Corollary~\ref{cdat320}
that the Dirichlet problem $(D_\varphi)$ has a (unique) classical 
solution $u \in C(\overline \Omega) \cap H^1_\loc(\Omega)$.
Then $T \varphi = u|_\Omega$
and obviously $\varphi \in C(\Gamma) \cap \Tr(\Omega)$.

Now suppose that $T \varphi \in H^1(\Omega)$.
Then $u \in C(\overline \Omega) \cap H^1(\Omega) \subset H^1(B(0,2))$
by \cite{BW2} Proposition~3.6, since $\sigma(S) < \infty$.
Because every element in $H^1(B(0,2))$ can be extended 
to an element of $H^1(\Ri^2) = B^{22}_1(\Ri^2)$, a Besov space,
it follows from \cite{JW} Theorem~VII.1 that 
$\psi = u|_S \in B^{22}_{d_0/2}(S)$.
Then \cite{CarvalhoCaetano} Theorem~3.3 implies that 
$\dim_H G(\psi) \leq d_0 + 1 - \frac{d_0}{2}$.
But $\dim_H G(\psi) = d_0 + 1 - s$.
So $\frac{d_0}{2} \leq s$, which is a contradiction.
Hence $T \varphi \not\in H^1(\Omega)$.
\end{exam}

If $\Omega$ is Wiener regular, then it follows from 
Biegert--Warma \cite{BW} Theorem~3.2(c)$\Rightarrow$(b) and the 
Wiener condition (\ref{eSdat1;2}) that
each $u \in H^1_0(\Omega) \cap C(\overline \Omega)$ vanishes 
(pointwise) on~$\Gamma$.
This is no longer true if $u \in V(\Omega) \cap C(\overline \Omega)$.
In fact, in Example~\ref{xdat203} the set $\Omega$ is Wiener regular, 
but nevertheless there exists a $u \in V(\Omega) \cap C(\overline \Omega)$
such that $u|_\Gamma \neq 0$ in $C(\Gamma)$.
Next we show that any such $u$ vanishes $\sigma$-a.e.\ on~$\Gamma$.

\begin{prop} \label{pdat208}
Let $u \in V(\Omega) \cap C(\overline \Omega)$.
Then $u|_\Gamma = 0$ $\sigma$-a.e.
\end{prop}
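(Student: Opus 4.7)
The plan is to combine the $L_2(\Gamma_s)$-characterization of approximative traces of zero with the extension theorem of Remark~\ref{rdat540} and a quasi-continuity argument, exploiting that polar sets in $\Ri^d$ for $d \geq 2$ have Hausdorff dimension at most $d-2$ and therefore vanishing $(d-1)$-dimensional Hausdorff measure.

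First I observe that $u|_\Gamma$ is itself a trace of $u$ (via the constant sequence $u_n = u$), while $0$ is a trace of $u$ by the hypothesis $u \in V(\Omega)$. Subtracting, $u|_\Gamma$ is a trace of $0$, so $u|_\Gamma \in L_2(\Gamma_s)$ by the characterization of $\Gamma_s$ established just before the proposition; this already gives $u|_\Gamma = 0$ $\sigma$-a.e.\ on $\Gamma_r$. To upgrade the conclusion to the whole boundary, I would invoke Remark~\ref{rdat540} to obtain an extension $\tilde u \in H^1(\Ri^d)$ with $\tilde u = 0$ almost everywhere on $\Ri^d \setminus \overline\Omega$; since $\sigma(\Gamma) < \infty$ forces $\Gamma$ to have $d$-dimensional Lebesgue measure zero and $u$ is continuous on $\overline\Omega$, the extension satisfies $\tilde u = u \one_{\overline\Omega}$ almost everywhere.

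Passing to a quasi-continuous representative $\hat u$ of $\tilde u$, standard arguments yield $\hat u = 0$ quasi-everywhere on the open set $\Ri^d \setminus \overline\Omega$ together with $\hat u = u$ quasi-everywhere on $\overline\Omega$ (approach each $z \in \Gamma$ from $\Omega$ along a sequence outside the exceptional polar set of $\hat u$ and combine continuity of $u$ with quasi-continuity of $\hat u$). For quasi every $z \in \Gamma$ that is a limit point of $\Ri^d \setminus \overline\Omega$, an analogous limit from the exterior forces $\hat u(z) = 0$, so $u(z) = 0$. Combined with the fact that polar sets are $\sigma$-null, this gives $u = 0$ $\sigma$-a.e.\ on the exterior-accessible part of $\Gamma$.

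The principal obstacle is the ``interior slit'' portion $\Gamma \cap \Int(\overline\Omega)$, where no exterior approach is available. For such points a full $\Ri^d$-neighborhood sits inside $\overline\Omega$, and my plan is to use the approximating sequence $u_n \in H^1(\Omega) \cap C(\overline\Omega)$ from the definition of $V(\Omega)$, with $u_n \to u$ in $H^1(\Omega)$ and $u_n|_\Gamma \to 0$ in $L_2(\Gamma)$: continuity of $u_n$ across the slit promotes $u_n$ to an $H^1$-function on this neighborhood, a local trace inequality on the $(d-1)$-dimensional slit transfers the $L_2(\Gamma)$-convergence to the slit, and passing to the $H^1$-limit yields $u = 0$ $\sigma$-a.e.\ on the slit as well. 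Combining the two stages produces $u|_\Gamma = 0$ $\sigma$-a.e.\ on all of $\Gamma$.
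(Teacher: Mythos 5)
Your opening observation is sound and matches the paper's first step: taking $u_n = u$ shows $u|_\Gamma$ is a trace of $u$, while $u \in V(\Omega)$ gives an approximating sequence with traces tending to $0$; subtracting the two sequences shows $u|_\Gamma$ is a trace of $0$, hence $u|_\Gamma \in L_2(\Gamma_s)$ and $u|_\Gamma = 0$ $\sigma$-a.e.\ on $\Gamma_r$. But the remainder of your plan diverges from the paper and both of its branches have genuine gaps.

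For the ``interior slit'' $\Gamma \cap \Int(\overline\Omega)$: you appeal to ``a local trace inequality on the $(d-1)$-dimensional slit.'' No such inequality holds for an arbitrary closed set $S = \Gamma \cap B$ with $0 < \sigma(S) < \infty$. A bounded trace $H^1(B) \to L_2(S)$ requires some lower Ahlfors regularity of $S$, which is not assumed; for example $S$ could consist of shrinking $(d-1)$-dimensional pieces accumulating at a point with $\sigma(S)$ finite but no uniform trace bound. So ``passing to the $H^1$-limit yields $u = 0$ $\sigma$-a.e.\ on the slit'' does not follow from $u_n \to u$ in $H^1(B)$ and $u_n|_\Gamma \to 0$ in $L_2(\Gamma)$. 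Indeed, the whole point of the approximative trace is that a trace operator on $H^1$ does not exist in this generality, so invoking one is essentially circular. (Your preliminary step, that continuity across a slit of finite $\sigma$-measure gives $H^1$ across the slit, is fine but should cite \cite{BW2}, which the paper uses elsewhere for exactly this.)

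For the ``exterior-accessible'' part: quasi-continuity of the extension $\tilde u$ only says $\hat u|_{\Ri^d \setminus U_k}$ is continuous for open sets $U_k$ of small capacity. To conclude $\hat u(z) = 0$ by approaching $z$ from outside $\overline\Omega$, you need $(\Ri^d \setminus \overline\Omega)\cap B(z,\delta)$ to meet $\Ri^d\setminus U_k$ for every $\delta$, i.e.\ the exterior must be capacity-thick near $z$; this can fail. And even granting $\hat u(z) = 0$, you would still need $\hat u(z) = u(z)$, which is not automatic (the Lebesgue-point average picks up the density of $\Omega$ at $z$, giving $\hat u(z) = \delta(z)\, u(z)$, not $u(z)$). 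None of this is addressed.

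The paper's proof avoids both problems by an entirely different decomposition of $\Gamma_s$: it sets $Z = \{ z \in \Gamma_s : \limsup_{r\downarrow 0} |\Omega \cap B(z,r)|/r^d > 0\}$, uses \cite{Sau2} Theorem~4.15 to get $\sigma(Z) = 0$, and then for $z \in \Gamma_s \setminus Z$ combines the relative isoperimetric inequality (\cite{AFP} (3.43)) with $\partial^*\Omega \subset \Gamma_r \cup Z$ to show $\sigma(\Gamma_r \cap B(z,\delta)) > 0$ for every $\delta > 0$; continuity of $u$ together with $u|_{\Gamma_r} = 0$ $\sigma$-a.e.\ then forces $u(z) = 0$. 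The citation of \cite{Sau2} Theorem~4.15 is the crucial ingredient you are missing, and there is no way around some such geometric input.
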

\begin{proof}
Write $\varphi = u|_\Gamma \in C(\Gamma)$.
Recall that $\Gamma_r$ and $\Gamma_s$ are Borel measurable subsets of $\Gamma$.
Then $\varphi|_{\Gamma_r} = 0$ $\sigma$-a.e.\ by construction/definition.
Define 
\[
Z = \{ z \in \Gamma_s : \limsup_{r \downarrow 0} 
    \frac{|\Omega \cap B(z,r)|}{r^d} > 0 \}
.  \]
Then $\sigma(Z) = 0$ by \cite{Sau2} Theorem~4.15.
We shall show that $\varphi(z) = 0$ for all $z \in \Gamma_s \setminus Z$.

Let $z \in \Gamma_s \setminus Z$ and let $\delta > 0$.
We first prove that $\sigma(\Gamma_r \cap B(z,\delta)) > 0$.
Indeed, suppose that $\sigma(\Gamma_r \cap B(z,\delta)) = 0$.
By \cite{AFP} (3.43) there exists a $\gamma > 0$
such that 
\begin{equation} \label{epdat208;1}
\Big( |\Omega \cap B(z,\delta)| \wedge |\Omega^{\rm c} \cap B(z,\delta)| \Big)^{(d-1)/d}
\leq \gamma \, P(\Omega, B(z,\delta))
,
\end{equation}
where $P(\Omega, B(z,\delta))$ denotes the perimeter of $\Omega$ in $B(z,\delta)$.
Now 
\[
P(\Omega, B(z,\delta)) = \sigma( (\partial^* \Omega) \cap B(z,\delta))
\]
by \cite{Federer} Theorem~4.5.11 and \cite{AFP} (3.62), where 
$\partial^* \Omega$ is the measure theoretic boundary of $\Omega$.
Note that 
\[
\Ri^d \setminus \partial^* \Omega
= \{ p \in \Ri^d : \lim_{r \downarrow 0} \frac{|\Omega \cap B(p,r)|}{r^d} = 0
   \mbox{ or} \lim_{r \downarrow 0} \frac{|\Omega^{\rm c} \cap B(p,r)|}{r^d} = 0
  \} 
.  \]
Clearly $\partial^* \Omega \subset \Gamma$.
Moreover, $\Gamma_s \setminus Z \subset \Gamma \setminus \partial^* \Omega$.
Hence $\partial^* \Omega \subset \Gamma_r \cup Z$.
Then 
\[
P(\Omega, B(z,\delta))
= \sigma( (\partial^* \Omega) \cap B(z,\delta)) 
\leq \sigma( \Gamma_r \cap B(z,\delta)) + \sigma(Z)
= 0 
.  \]
On the other hand, $|\Omega \cap B(z,\delta)| > 0$ since $z \in \partial \Omega$
and $\Omega$ is open.
Also $\limsup_{r \downarrow 0} \frac{|\Omega \cap B(z,r)|}{r^d} = 0$
since $z \in \Gamma_s \setminus Z$.
Then $\lim_{r \downarrow 0} \frac{|\Omega \cap B(z,r)|}{r^d} = 0$
and consequently 
$\lim_{r \downarrow 0} \frac{|\Omega^{\rm c} \cap B(z,r)|}{|B(z,r)|} = 1$.
So $|\Omega^{\rm c} \cap B(z,\delta)| > 0$.
This contradicts (\ref{epdat208;1}).

Let $z \in \Gamma_s \setminus Z$.
We proved that $\sigma(\Gamma_r \cap B(z,\delta)) > 0$
for all $\delta > 0$.
Since $\varphi|_{\Gamma_r} = 0$ $\sigma$-a.e.\ and $\varphi$ is continuous,
one deduces that $\varphi(z) = 0$.
\end{proof}

We conclude with a sufficient condition which implies 
for a Perron solution to vanish.

\begin{prop} \label{pdat209}
Suppose that $\sigma(\Gamma \cap B(z,\delta)) > 0$ for all $z \in \Gamma$ 
and $\delta > 0$.
Let $\varphi \in C(\Gamma)$ and suppose that $T \varphi \in V(\Omega)$.
Then $\varphi = 0$ and $T \varphi = 0$.
\end{prop}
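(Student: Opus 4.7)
The plan is to first deduce that $\varphi \equiv 0$ on $\Gamma$, from which $T\varphi = T0 = 0$ follows immediately by linearity and the uniqueness part of Theorem~\ref{tdat120}.

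First I would exploit that $T\varphi$ carries two traces. Since $T\varphi \in V(\Omega) \subset H^1(\Omega)$, Theorem~\ref{tdat206} gives that $\varphi$ is an approximative trace of $T\varphi$; by definition of $V(\Omega)$, also $0$ is a trace of $T\varphi$. Subtracting sequences approximating these two traces yields $(w_n) \subset H^1(\Omega) \cap C(\overline \Omega)$ with $w_n \to 0$ in $H^1(\Omega)$ and $w_n|_\Gamma \to \varphi$ in $L_2(\Gamma)$. Hence $\varphi$ is a trace of $0 \in H^1(\Omega)$, so $\varphi \in L_2(\Gamma_s)$; equivalently, $\varphi = 0$ $\sigma$-a.e.\ on $\Gamma_r$.

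The heart of the proof is then to upgrade the hypothesis $\sigma(\Gamma \cap B(z,\delta)) > 0$ to the stronger assertion that $\sigma(\Gamma_r \cap B(z,\delta)) > 0$ for every $z \in \Gamma$ and $\delta > 0$. If $\sigma(\Gamma_r \cap B(z,\delta))$ were zero, then by the splitting $\Gamma = \Gamma_r \sqcup \Gamma_s$ we would have $\sigma(\Gamma_s \cap B(z,\delta)) > 0$; combined with the set $Z$ from the proof of Proposition~\ref{pdat208} satisfying $\sigma(Z) = 0$, one locates some $z' \in (\Gamma_s \setminus Z) \cap B(z,\delta)$. Now I would reuse the relative isoperimetric argument from Proposition~\ref{pdat208} at the point $z'$: for $\delta' > 0$ small enough that $B(z',\delta') \subset B(z,\delta)$, that argument forces $\sigma(\Gamma_r \cap B(z',\delta')) > 0$, contradicting $\sigma(\Gamma_r \cap B(z,\delta)) = 0$. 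I expect this to be the main technical obstacle, since it requires carefully re-running the isoperimetric estimate from Proposition~\ref{pdat208} rather than just quoting its statement.

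With the claim in hand, I would conclude as follows. Fix $z \in \Gamma$; for each $n \in \Ni$, the claim yields $\sigma(\Gamma_r \cap B(z,1/n)) > 0$, and since $\varphi = 0$ $\sigma$-a.e.\ on $\Gamma_r$, there exists $z_n \in \Gamma_r \cap B(z,1/n)$ with $\varphi(z_n) = 0$. Then $z_n \to z$ and the continuity of $\varphi$ forces $\varphi(z) = 0$. As $z$ was arbitrary, $\varphi \equiv 0$ on $\Gamma$, and $T\varphi = 0$ follows.
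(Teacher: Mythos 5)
Your proof is correct, but it takes a genuinely different (and slightly more laborious) route than the paper's. The paper invokes Corollary~\ref{cdat244} together with Theorem~\ref{tdat401} to produce a single $\Phi \in C(\overline\Omega) \cap H^1(\Omega)$ with $\Phi|_\Gamma = \varphi$ and $T\varphi - \Phi \in H^1_0(\Omega)$; since $T\varphi \in V(\Omega)$ and $H^1_0(\Omega) \subset V(\Omega)$, this $\Phi$ lies in $V(\Omega) \cap C(\overline\Omega)$, and Proposition~\ref{pdat208} can then be quoted \emph{as a black box} to conclude that $\varphi = \Phi|_\Gamma = 0$ $\sigma$-a.e.\ on all of $\Gamma$. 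At that point the hypothesis $\sigma(\Gamma \cap B(z,\delta)) > 0$ plus continuity of $\varphi$ gives $\varphi \equiv 0$ with no further geometric work. Your route instead derives via Theorem~\ref{tdat206} and the definition of $V(\Omega)$ that $\varphi$ is a trace of $0$, which gives only $\varphi \in L_2(\Gamma_s)$, i.e.\ $\varphi = 0$ $\sigma$-a.e.\ on $\Gamma_r$; as you anticipated, this is strictly weaker, and the price is that you then need to upgrade the hypothesis to $\sigma(\Gamma_r \cap B(z,\delta)) > 0$, which requires opening up the proof of Proposition~\ref{pdat208} (the set $Z$ and the relative isoperimetric estimate) rather than merely quoting its statement. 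The upgrade step is sound: for $z' \in (\Gamma_s\setminus Z) \cap B(z,\delta)$ and $\delta'$ small, the argument in Proposition~\ref{pdat208} indeed forces $\sigma(\Gamma_r \cap B(z',\delta')) > 0$, yielding the contradiction. Both routes ultimately rest on Theorem~\ref{tdat401} (yours via Theorem~\ref{tdat206}) and the isoperimetric machinery of Proposition~\ref{pdat208}, so the logical dependencies are comparable; the paper's version is a bit cleaner precisely because applying Proposition~\ref{pdat208} to the $C(\overline\Omega)$-extension $\Phi$ gives $\sigma$-a.e.\ vanishing on the whole of $\Gamma$ rather than only on $\Gamma_r$.
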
 
\begin{proof}
It follows from Corollary~\ref{cdat244} and Theorem~\ref{tdat401}
that there exists a  $\Phi \in C(\overline \Omega) \cap H^1(\Omega)$
such that $\Phi|_\Gamma = \varphi$ and $T \varphi - \Phi \in H^1_0(\Omega)$.
Then $\Phi \in V(\Omega)$.
By Proposition~\ref{pdat208} there exists a measurable set 
$N \subset \Gamma$ such that $\sigma(N) = 0$ and 
$\Phi(z) = 0$ for all $z \in \Gamma \setminus N$.
Let $z \in \Gamma$.
If $n \in \Ni$, then 
$\sigma((\Gamma \setminus N) \cap B(z,\frac{1}{n})) > 0$, hence there 
exists a $z_n \in (\Gamma \setminus N) \cap B(z,\frac{1}{n})$.
Then $\varphi(z) = \lim \varphi(z_n) = 0$.
\end{proof}

By the next lemma, the first condition in the previous proposition is satisfied if
$\Omega$ is regular in topology, that is if $\cov{\overline \Omega} = \Omega$.
On the other hand, it is not sufficient to suppose that $\Omega$ is 
regular in capacity by Example~\ref{xdat203}.

\begin{lemma} \label{ldat210}
Suppose $\Omega$ is regular in topology.
Then $\sigma(\Gamma \cap B(z,\delta)) > 0$ for all $z \in \Gamma$ and $\delta > 0$.
\end{lemma}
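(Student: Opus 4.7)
The plan is to exploit regularity in topology, i.e.\ $\cov{\overline \Omega} = \Omega$, to place two open balls $B(x,r) \subset \Omega$ and $B(y,r) \subset \Ri^d \setminus \overline \Omega$ inside $B(z,\delta)$, and then to transfer the positive $(d-1)$-dimensional content of a transversal disk to $\Gamma \cap B(z,\delta)$ via an orthogonal projection argument.

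First I would set up the geometry. Since $z \in \Gamma = \overline \Omega \setminus \Omega$, the hypothesis $\cov{\overline \Omega} = \Omega$ says that $z$ is not an interior point of $\overline \Omega$, so every neighbourhood of $z$ contains points of $\Ri^d \setminus \overline \Omega$. Combined with $z \in \overline \Omega$, this lets me pick $x \in \Omega \cap B(z,\delta/2)$ and $y \in B(z,\delta/2) \setminus \overline \Omega$, and then $r \in (0, \delta/2)$ small enough that $B(x,r) \subset \Omega$ and $B(y,r) \subset \Ri^d \setminus \overline \Omega$. By convexity of $B(z,\delta)$, both balls, and every line segment joining a point of $B(x,r)$ to one of $B(y,r)$, stay inside $B(z,\delta)$.

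Next set $e = (y-x)/|y-x|$, let $H = \{w \in \Ri^d : w \cdot e = 0\}$, and let $\pi \colon \Ri^d \to H$ denote the orthogonal projection, which is Lipschitz with constant~$1$. For each $v \in H$ with $|v| < r$, the segment from $x+v$ to $y+v$ runs from the open set $\Omega$ to the open set $\Ri^d \setminus \overline \Omega$; by connectedness it meets $\Gamma$ at some point $p(v) \in \Gamma \cap B(z,\delta)$. Since $p(v) - (x+v) \in \Ri e$, one has $\pi(p(v)) = \pi(x) + v$.

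To conclude, write $S = \{p(v) : v \in H,\ |v| < r\}$. Then $S \subset \Gamma \cap B(z,\delta)$ and $\pi(S) = \pi(x) + \{v \in H : |v| < r\}$, which is an open $(d-1)$-dimensional ball of positive $\sigma$-measure. Because $\pi$ does not increase the $(d-1)$-dimensional Hausdorff measure,
\[
\sigma(\Gamma \cap B(z,\delta)) \geq \sigma(S) \geq \sigma(\pi(S)) > 0.
\]
The argument is elementary; the only mild subtlety is that the selection $v \mapsto p(v)$ need not be continuous or even measurable, but this is irrelevant because the Lipschitz bound for Hausdorff measure applies to arbitrary sets.
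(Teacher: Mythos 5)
Your proof is correct, and it takes a genuinely different route from the paper. The paper argues via geometric measure theory: it invokes the relative isoperimetric inequality (cited as \cite{AFP}~(3.43)) to conclude that $P(\Omega, B(z,\delta)) > 0$ from the positivity of $|\Omega \cap B(z,\delta)|$ and $|\Omega^{\rm c} \cap B(z,\delta)|$, then uses Federer's theorem together with the inclusion $\partial^*\Omega \subset \Gamma$ to pass from the perimeter to $\sigma(\Gamma \cap B(z,\delta))$. This requires the standing assumption $\sigma(\Gamma) < \infty$ (to know $\Omega$ has locally finite perimeter, so the identity $P(\Omega,\cdot) = \sigma(\partial^*\Omega \cap \cdot)$ applies) and relies on the machinery of sets of finite perimeter. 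Your argument is more elementary and self-contained: you locate two small balls on opposite sides of the boundary inside $B(z,\delta)$, observe that each transversal segment must cross $\Gamma$ by connectedness, and then project onto the orthogonal hyperplane, exploiting only that a $1$-Lipschitz map does not increase $(d-1)$-dimensional Hausdorff outer measure (which indeed applies to arbitrary sets, as you correctly note, so the possible non-measurability of $v \mapsto p(v)$ is harmless). Your version has the advantage of not using the finiteness of $\sigma(\Gamma)$ nor any perimeter theory, and even gives the explicit lower bound $\sigma(\Gamma \cap B(z,\delta)) \geq \omega_{d-1}\, r^{d-1}$ for a concretely produced radius $r$; the paper's proof, on the other hand, is shorter given that Proposition~\ref{pdat208} has already set up the isoperimetric and Federer machinery in the surrounding section.
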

\begin{proof}
Let $z \in \Gamma$ and $\delta > 0$.
Then $|\Omega \cap B(z,\delta)| > 0$ and $|\Omega^c \cap B(z,\delta)| > 0$.
Hence by (\ref{epdat208;1}) one deduces that $P(\Omega, B(z,\delta)) > 0$.
Arguing as in the proof of Proposition~\ref{pdat208} one obtains
\[
0 
< P(\Omega, B(z,\delta))
= \sigma( (\partial^* \Omega) \cap B(z,\delta)) 
\leq \sigma(\Gamma \cap B(z,\delta))
\]
as required.
\end{proof}

\begin{cor} \label{cdat531}
If $\Omega$ is regular in topology, then 
\[
C(\overline \Omega) \cap \ch^1_\ca(\Omega) \cap V(\Omega) = \{ 0 \} 
.  \]
\end{cor}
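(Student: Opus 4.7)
The plan is to reduce Corollary~\ref{cdat531} to a direct combination of Proposition~\ref{pdat209} and Lemma~\ref{ldat210}, using Proposition~\ref{pdat808} to identify any such candidate function with a Perron solution. Concretely, let $u \in C(\overline \Omega) \cap \ch^1_\ca(\Omega) \cap V(\Omega)$ and set $\varphi = u|_\Gamma \in C(\Gamma)$. The first step is to recognise $u|_\Omega$ as a Perron solution: since $u \in C(\overline \Omega) \cap H^1_\loc(\Omega)$ and $\ca u = 0$, Proposition~\ref{pdat808} (that is, Theorem~\ref{tdat825}\ref{tdat825-4}) gives $T \varphi = u|_\Omega$.

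Next, I would transfer the hypothesis $u \in V(\Omega)$ to $T \varphi$: since $V(\Omega) \subset H^1(\Omega)$ and $T \varphi = u|_\Omega$, we immediately get $T \varphi \in V(\Omega)$. The topological regularity hypothesis $\cov{\overline \Omega} = \Omega$ enters through Lemma~\ref{ldat210}, which supplies the measure-theoretic hypothesis $\sigma(\Gamma \cap B(z,\delta)) > 0$ for every $z \in \Gamma$ and $\delta > 0$ required by Proposition~\ref{pdat209}.

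Applying Proposition~\ref{pdat209} then yields $\varphi = 0$ and $T \varphi = 0$, so $u|_\Omega = 0$. By continuity of $u$ on $\overline \Omega$ we conclude $u = 0$, completing the proof.

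Since every ingredient is already available, there is no genuine obstacle; the only thing to be careful about is that Proposition~\ref{pdat808} is applicable (which only requires $u \in C(\overline \Omega) \cap H^1_\loc(\Omega)$ with $\ca u = 0$, both of which follow from $u \in C(\overline \Omega) \cap \ch^1_\ca(\Omega)$), and that the chain $V(\Omega) \subset H^1(\Omega)$ is indeed what allows the transfer $T \varphi \in V(\Omega)$ in the final application of Proposition~\ref{pdat209}.
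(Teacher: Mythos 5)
Your proof is correct and follows exactly the same route as the paper's (which simply says: $u = T(u|_\Gamma)$ by Proposition~\ref{pdat808}, then apply Lemma~\ref{ldat210} and Proposition~\ref{pdat209}). Your write-up merely makes each step explicit, including the verification that $T\varphi \in V(\Omega)$, which the paper leaves implicit.
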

\begin{proof}
Let $u \in C(\overline \Omega) \cap \ch^1_\ca(\Omega) \cap V(\Omega)$.
Then $u = T(u|_\Gamma)$ by Proposition~\ref{pdat808}.
Now use Lemma~\ref{ldat210} and Proposition~\ref{pdat209}.
\end{proof}

Applying the arguments in \cite{Sau1} Example~7.55 to the domain in Example~\ref{xdat203},
one can construct a topologically and Wiener regular bounded set
$\Omega$ with $\sigma(\Gamma) < \infty$
such that $H^1_0(\Omega)\subsetneqq V(\Omega)$.
By Theorem~\ref{tdat202} the variational solution of the Dirichlet problem 
is not unique on this set $\Omega$.
Still, for every $\varphi\in C(\Gamma)$ with $T\varphi\in H^1(\Omega)$ there exists a unique
$u \in C(\overline \Omega) \cap \ch^1_\ca(\Omega)$
with trace $\varphi$ by Theorem~\ref{tdat206} and Corollary~\ref{cdat531}.

\subsection*{Acknowledgements}
The second-named author is most grateful for the hospitality extended
to him during a fruitful stay at the University of Ulm.
He wishes to thank the University of Ulm for financial support.
The third-named author had a wonderful time during his visit at the 
University of Auckland in early 2020.
He is most grateful for the hospitality, the productive collaboration and the 
financial support.
This work is supported by the Marsden Fund Council from Government funding,
administered by the Royal Society of New Zealand.

\end{document}